\renewcommand\@mkboth[2]{\markboth{#1}{}}
\newtheorem{thm}{Theorem}[section]
\newtheorem{lem}[thm]{Lemma}
\newtheorem{prop}[thm]{Proposition}
\newtheorem{claim}{Claim}[thm]
\newtheorem{cor}[thm]{Corollary}
\newtheorem{fact}[thm]{Fact}
\theoremstyle{definition}
\newtheorem{defn}[thm]{Definition}
\newtheorem{question}{Question}
\theoremstyle{remark}
\newtheorem{exmp}[thm]{Example}
\newtheorem{remark}[thm]{Remark}
\newenvironment{enumerate-(a)}{\begin{enumerate}[label={\upshape (\alph*)}, leftmargin=2pc]}{\end{enumerate}}
\newenvironment{enumerate-(a)-r}{\begin{enumerate}[label={\upshape (\alph*)}, leftmargin=2pc,resume]}{\end{enumerate}}
\newenvironment{enumerate-(a)-5}{\begin{enumerate}[label={\upshape (\alph*)}, leftmargin=2pc,start=5]}{\end{enumerate}}
\newenvironment{enumerate-(A)}{\begin{enumerate}[label={\upshape (\Alph*)}, leftmargin=2pc]}{\end{enumerate}}
\newenvironment{enumerate-(A)-r}{\begin{enumerate}[label={\upshape (\Alph*)}, leftmargin=2pc,resume]}{\end{enumerate}}
\newenvironment{enumerate-(i)}{\begin{enumerate}[label={\upshape (\roman*)}, leftmargin=2pc]}{\end{enumerate}}
\newenvironment{enumerate-(i)-r}{\begin{enumerate}[label={\upshape (\roman*)}, leftmargin=2pc,resume]}{\end{enumerate}}
\newenvironment{enumerate-(I)}{\begin{enumerate}[label={\upshape (\Roman*)}, leftmargin=2pc]}{\end{enumerate}}
\newenvironment{enumerate-(I)-r}{\begin{enumerate}[label={\upshape (\Roman*)}, leftmargin=2pc,resume]}{\end{enumerate}}
\newenvironment{enumerate-(1)}{\begin{enumerate}[label={\upshape (\arabic*)}, leftmargin=2pc]}{\end{enumerate}}
\newenvironment{enumerate-(1)-r}{\begin{enumerate}[label={\upshape (\arabic*)}, leftmargin=2pc,resume]}{\end{enumerate}}
\newenvironment{itemizenew}{\begin{itemize}[leftmargin=2pc]}{\end{itemize}}
\newenvironment{enumerate-(star)}{\begin{enumerate}[label={\upshape{(\( \star_{ \arabic*} \))}}, leftmargin=2pc]}{\end{enumerate}}
\newcommand{\Lin}{\mathsf{Lin}}
\newcommand{\Scat}{\mathsf{Scat}}
\newcommand{\ALin}{\mathsf{Lin}_{\infty}}
\newcommand{\AScat}{\mathsf{Scat}_{\infty}}
\newcommand{\QScat}{\Q_\kappa \text{-}\mathsf{Scat}_{\kappa}}
\newcommand{\Fin}{\mathsf{Fin}}
\newcommand{\LO}{\mathsf{LO}}
\newcommand{\WO}{\mathsf{WO}}
\newcommand{\DLO}{\mathsf{DLO}}
\newcommand{\ADLO}{\mathsf{DLO}_\infty}
\DeclareMathOperator{\rk}{rk_H}
\DeclareMathOperator{\Int}{Int}
\DeclareMathOperator{\Supp}{Supp}
\DeclareMathOperator{\id}{id}
\newcommand{\emb}{\preceq_{\LO}}
\newcommand{\cvx}{\trianglelefteq_{\LO}}
\newcommand{\ncvx}{\ntrianglelefteq_{\LO}}
\newcommand{\cvxeq}{\underline{\bowtie}_{\LO}}
\newcommand{\iso}{\cong_{\LO}}
\newcommand{\qo}{\trianglelefteq^{\mathcal{L}}_\LO}
\newcommand{\qok}{\trianglelefteq^{\mathcal{L}}_\kappa}
\newcommand{\qoa}{\trianglelefteq^{\mathcal{L}}_{\aleph_1}}
\newcommand{\qon}{\trianglelefteq^{\mathcal{L}}_{\LO_\N}}
\newcommand{\nqo}{\ntrianglelefteq^{\mathcal{L}}_{\LO}}
\newcommand{\qog}{\trianglelefteq^{\L_{\prec \boldsymbol{\gamma}+\boldsymbol{1}}}}
\newcommand{\qolg}{\trianglelefteq^{{\mathcal{L}}_{\prec \boldsymbol{\gamma}}}}
\newcommand{\eq}{\mathrel{\underline{\bowtie}}^{\mathcal{L}}_\LO}
\newcommand{\eqk}{\mathrel{\underline{\bowtie}}^{\mathcal{L}}_\kappa}
\DeclareMathOperator{\ot}{\text{ot}}
\newcommand{\z}{\zeta}
\renewcommand{\o}{\omega}
\newcommand{\g}{\gamma}
\renewcommand{\a}{\alpha}
\renewcommand{\b}{\beta}
\renewcommand{\d}{\delta}
\renewcommand{\aa}{\boldsymbol{\alpha}}
\newcommand{\bb}{\boldsymbol{\beta}}
\renewcommand{\gg}{\boldsymbol{\gamma}}
\newcommand{\Z}{\mathbb{Z}} 
\newcommand{\Q}{\mathbb{Q}} 
\newcommand{\R}{\mathbb{R}} 
\newcommand{\N}{\mathbb{N}} 
\newcommand{\1}{\mathbf{1}}
\newcommand{\n}{\mathbf{n}}
\renewcommand{\L}{\mathcal{L}}
\DeclareSymbolFont{symbols4}{LS1}{stixbb}{b}{it}
\DeclareMathSymbol{\csube}{\mathrel}{symbols4}{"9F}
\DeclareSymbolFont{symbols4}{LS1}{stixbb}{b}{it}
\DeclareMathSymbol{\csub}{\mathrel}{symbols4}{"9D}
\title{Piecewise convex embeddability on linear orders}
\date{December 2, 2023}
\keywords{Linear orders, convex embeddability, piecewise convex embeddability, Borel reducibility}
\author[M.~Iannella]{Martina Iannella}
\address{Institute of Discrete Mathematics and Geometry, Vienna University of Technology, 1040 Vienna --- Austria}
\email{martina.iannella@tuwien.ac.at}
\author[A.~Marcone]{Alberto Marcone}
\address{Dipartimento di scienze matematiche, informatiche e fisiche, Universit\`a di Udine, Via delle Scienze 208, 33100 Udine --- Italy}
\email{alberto.marcone@uniud.it}
\author[L.~Motto Ros]{Luca Motto Ros} \address{Dipartimento di matematica \guillemotleft{Giuseppe Peano}\guillemotright, Universit\`a di Torino, Via Carlo Alberto 10, 10121 Torino --- Italy} \email{luca.mottoros@unito.it}
\author[V.~Weinstein]{Vadim Weinstein} 
\address{Center for Ubiquitous Computing, Erkki Koiso-Kanttilan katu 3, door E P.O Box 4500, 90014 University of Oulu --- Finland} 
\email{vadim.weinstein@iki.fi}
\subjclass[2020]{Primary: 03E15, 06A05, 06A07.}
\thanks{Iannella, Marcone, and Motto Ros were partially supported by the Italian PRIN 2017 Grant ``Mathematical Logic: models, sets, computability", prot. 2017NWTM8R. 
Marcone and Motto Ros were also supported by the Italian PRIN 2022 ``Models, sets and classifications'', prot. 2022TECZJA.
We thank Thilo Weinert for bringing \cite{Orr95}, which led to the proof of Theorem \ref{thm:ccs_implies_star}, to our attention.}
\begin{document}

\maketitle

\begin{abstract}
Given a nonempty set $\L$ of linear orders, we say that the linear order $L$ is $\L$-convex embeddable into the linear order $L'$ if it is possible to partition $L$ into convex sets indexed by some element of $\L$ which are isomorphic to convex subsets of $L'$ ordered in the same way.
This notion generalizes convex embeddability and (finite) piecewise convex embeddability (both studied in \cite{IMMRW22}), which are the special cases $\L = \{\1\}$ and \( \L = \Fin \).
We focus mainly on the behavior of these relations on the set of countable linear orders, first characterizing when they are transitive, and hence a quasi-order. 
We then study these quasi-orders from a combinatorial point of view, and analyze their complexity with respect to Borel reducibility.
Finally, we extend our analysis to uncountable linear orders.
\end{abstract}

\tableofcontents

\section{Introduction}

Given two linear orders $L$ and $L'$, we say that $L$ \textbf{embeds} into $L'$ (in symbols $L \preceq L'$) if there exists a function $f \colon L \to L'$ which is an embedding, i.e.\ it is such that $n \leq_L m \iff f(n) \leq_{L'} f(m)$ for every $n,m \in L$. 
The embeddability relation \( \preceq \) and its restriction \( \emb \) to the Polish space \( \LO \) of countable linear orders are very well-studied. 
The quasi-order \( \trianglelefteq \) of \textbf{convex embeddability} is obtained from \( \preceq \) by considering only embeddings with convex range, namely, $L \trianglelefteq L'$ if there exists an embedding $f$ from $L$ to $L'$ such that $f(L)$ is a convex subset of $L'$. 
Although this notion has been mentioned in older literature, going back at least to \v{C}ech's 1936 monograph (see the English version \cite{Ce69}), the first systematic study of $\trianglelefteq$ appears in \cite{IMMRW22}, which again mostly concentrates on its restriction \( \cvx \) to the space \( \LO \), and applies it to obtain anticlassification results for arcs and knots.

Pursuing the research started in \cite{IMMRW22}, here we prove a number of additional facts about \( \trianglelefteq \). Moreover,  we extend both the old and the new results on \( \trianglelefteq \) to a new 
family of binary relations \( \trianglelefteq^\L \) on linear orders, called \textbf{$\L$-convex embeddability}, which is parametrized by certain classes $\L$ of linear orders. 
Although most of our results work in a more general setup, for the sake of simplicity in this introduction we restrict the attention to classes \( \L \subseteq \Lin \), where $\Lin = \Fin \cup \LO$ is the standard Borel space of finite or countably infinite linear orders.
Special cases of $\L$-convex embeddability are embeddability for countable linear orders (when $\L = \Lin$) and convex embeddability (when $\L = \{\1\}$).
However, our definition includes many other natural situations, some of which are motivated by the counterpart of convex embeddability for circular orders%
\footnote{Indeed, all the results in the present paper could easily be transferred to the context of circular orders as well. However, since the results and techniques would essentially remain the same, for the sake of conciseness and clarity we decided to stick to the notationally simpler case of linear orders.}
called piecewise convex embeddability in \cite{IMMRW22}; in our setup,  that notion essentially corresponds to \( \trianglelefteq^\Fin\). 
We also extend our analysis to the restriction of \( \trianglelefteq^\L \) (and hence, as a special case, of \( \trianglelefteq \)) to \emph{uncountable} linear orders, complementing (and contrasting) the celebrated result by Justin Moore~\cite{Moo06} on the embeddability relation \(  \preceq \) on such orders.

Let us give more details and explain the main results of the paper.
Let \( \L \subseteq \Lin \) be nonempty and downward \( \preceq \)-closed.
Given linear orders \( L \) and \( L' \), we write \( L \trianglelefteq^\L L'\) if there is \( K \in \L \) such that \( L \) can be written as a \( K \)-sum \( L = \sum_{k \in K} L_k\) of convex pieces so that for some embedding \( f \colon L \to L' \) each \( f(L_k) \) is a convex subset of \( L' \).
Notice that 
$\trianglelefteq^\L$ coincides with \( \trianglelefteq \) if $\L=\{\1\}$, while if $\L=\Lin$ (and we restrict to countable linear orders) then $\trianglelefteq^\L$ coincides with \( \preceq \). 
However, it is not always the case that \( \trianglelefteq^\L \) is a quasi-order, as such relation might fail to be transitive for some families \( \L \). 
In Theorem \ref{thm:ccs_trans} we prove that \( \trianglelefteq^\L \) is transitive, and hence a quasi-order, exactly when $\L$ satisfies an additional combinatorial property that we call \textbf{ccs} (for \textbf{closed under convex sums}, see Definition \ref{def:appropriate}).
The ccs property is rather technical, but is satisfied by every nonempty class \( \L \subseteq \Lin \) which is closed under products and suborders.

Given a ccs family \( \L \), our primary goal is to study the combinatorial properties and the complexity with respect to Borel reducibility of the quasi-order \( \qo \) of \( \L\)-convex embeddability on \( \LO \), and of its induced equivalence relation \( \eq \). 

It turns out that for any ccs $\L \subsetneq \Lin$, the quasi-order $\qo$ shares with $\cvx$ most of the combinatorial properties that were established in \cite{IMMRW22} and which are quite different from those of $\emb$, i.e.\ of \( \trianglelefteq^\Lin_\LO \). 
More in detail, recall that a quasi-order \(\precsim\) on a set \(X\) is a well quasi-order (briefly: a wqo) if for each sequence \((x_n)_{n \in \o}\) of elements of \(X\), there exist \(n<m\) such that \(x_n \precsim x_m\), or equivalently, if \(\precsim\) has no infinite descending chain and no infinite antichain (\cite{Ros82}). 
In 1948 Fra\"{i}ss\'{e} conjectured that $\LO$ is well quasi-ordered by \( \emb \), and Laver in 1971 showed that this is indeed the case~\cite{La71}.
Moreover $\LO$ has a maximum with respect to the partial order induced by \(\preceq_{\LO}\) (namely, the equivalence class of non-scattered linear orders), and it has a basis of size \( 2 \) constituted by $\o$ and $\o^*$. 
In contrast, we obtain the following results when $\L \subsetneq \Lin$ is ccs:
\begin{itemizenew}  
\item 
In \( \qo \) there are chains of order type \( (\R, {<}) \) and antichains of size \( 2^{\aleph_0} \), thus \( \qo \) is far from being a wqo
(Theorem~\ref{thm:chains_qo}).
\item 
\(\LO\) does not have maximal elements with respect to \(\qo\), and the dominating number of \(\qo\) is \(2^{\aleph_0}\), the largest possible value (Theorem~\ref{no_max}).  
\item 
If \( \L \subseteq \Fin \) (which by ccs amounts to \( \L = \{ \1 \} \) or \( \L = \Fin \)), then there are \( 2^{\aleph_0} \)-many \( \qo\)-incomparable \( \qo \)-minimal elements in \( \LO \), and thus every basis for \( \qo \) has maximal size \( 2^{\aleph_0} \) (Theorem~\ref{thm:basis2}); if instead \( \Fin \subsetneq \L \), then \( \{ \omega, \omega^* \} \) is a basis for \( \qo \) (Theorem~\ref{thm:twoelementsbasis}), as it was the case for \( \emb \).
\item 
The unbounding number of \(\qo\) is $\aleph_1$ (Theorem~\ref{thm:unbounding_number}).
\item 
\( \qo \) has the fractal property with respect to its upper cones (Theorem~\ref{thm:fractal}).
\end{itemizenew} 
The first four results generalize those obtained for $\cvx$ in \cite{IMMRW22}, while the last one is new also in that case, and shows in particular that \( \cvx \) exhibits a high degree of complexity everywhere.

We next move to the analysis of the descriptive set-theoretic complexity of \( \qo \).
Recall that both $\emb$ and $\cvx$ are proper analytic quasi-orders.
In contrast, the complexity of the quasi-order $\qo$ as a subset of \( \LO \times \LO\) depends on the complexity of the class $\L$, and  although it is always \( \boldsymbol{\Sigma}^1_2 \) and non-Borel, it can 
fail to be analytic (Proposition~\ref{prop:complexityinthesquare} and Theorem~\ref{cor:complexityinthesquare}). 

We also analyze the complexity with respect to Borel reducibility \( \leq_B \) of the equivalence relation \( \eq \),
which is called \textbf{\( \L \)-convex biembeddability}.
When $\L = \Lin$, the relation $\eq$ is the biembeddability relation $\equiv_{\LO}$ on $\LO$. 
Since \( \emb \) is a wqo, it follows that $\equiv_{\LO}$ is an analytic equivalence relation with $\aleph_1$-many equivalence classes, and thus ${\id(X)} \nleq_B {\equiv_{\LO}}$ for any uncountable Polish space $X$ (see \cite[Lemma 3.17]{CamMarMot2018} for a proof). 
In particular, $\equiv_{\LO}$ is far from being complete for analytic equivalence relations. 
When $\L = \{\1\}$, instead, $\eq$ is the relation of convex biembeddability $\cvxeq$ on $\LO$ studied in \cite[Section 3.2]{IMMRW22}, where it is shown that \( {\iso} \leq_B {\cvxeq} \) and \( {\cvxeq} \leq_{\text{\scriptsize \textit{Baire}}} {\iso} \). Thus ${E_1} \nleq_{\text{\scriptsize \textit{Baire}}} {\cvxeq}$ and \( \cvxeq \) is not complete for analytic equivalence relations again. In both cases, from the non-completeness of \( \eq \) it follows that \( \qo \) is not complete for analytic quasi-orders either.
Here we show that when $\L$ is ccs and different from both $\Lin$ and $\{\1\}$, we are in a rather different scenario:
\begin{itemizenew}
    \item 
    ${E_1} \mathrel{\leq_B} {\eq}$ (Theorem \ref{thm:red_E1_eq}).
    \item 
    ${\iso} <_B {\eq}$, and in fact ${\eq} \nleq_{\text{\scriptsize \emph{Baire}}} {\iso}$ (Corollary \ref{cor:bor_compl_eq}).
\end{itemizenew}
We currently do not know if for some \( \{ \1 \} \subsetneq \L \subsetneq \Lin \) the relation \( \eq \) is complete for analytic equivalence relations.

If one considers the natural variant of embeddability \( \emb \) for \emph{coloured} linear orders, denoted by \( \preceq_{\LO_\N} \), then it is no longer a wqo and it actually has maximal complexity, namely, it is a complete analytic quasi-order (\cite{MR04}). 
In contrast, we show that for all other ccs $\L \subsetneq \Lin$ the quasi-order $\qo$ is Borel bi-reducible with its natural version \( \trianglelefteq^\L_{\LO_\N} \) for coloured linear orders (Theorem~\ref{thm:coloured}), hence no complexity is gained by adding colours. 
In particular, taking \( \L = \{ \1 \} \) this shows that, in contrast with \( \preceq_{\LO_\N} \), the convex embeddability relation \( \trianglelefteq_{\LO_\N} \) on countable coloured linear orders is not a complete analytic quasi-order because its induced equivalence relation is Baire bi-reducible with \( \iso \). 

Having shown the richness of the quasi-orders \( \qo \), it is natural to ask how many different ccs classes \( \L \subseteq \Lin \) are there. 
Besides the ones mentioned so far (namely: \( \{ \1 \} \), \( \Fin\) and \( \Lin \)) and the collections \( \WO \) of well-orders and \( \Scat \) of scattered linear orders, we show that there are many other natural examples (Propositions~\ref{prop:classes_zeta} and~\ref{prop:classes_gamma}) and we establish some Borel reductions among the various $\qo$ (Theorems \ref{thm:red_qog_qolg},~\ref{thm:red_threshold} and \ref{thm:red_fin_zeta}).

Finally, we observe that most of the combinatorial techniques developed to obtain the above results actually work for uncountable linear orders as well, sometimes with minor variations. 
For an infinite cardinal $\kappa$, we hence consider classes $\L$ included in the collection $\Lin_\kappa$ of linear orders of size at most $\kappa$, and study the restriction $\qok$ of $\trianglelefteq^\L$ to the set $\LO_\kappa$ of linear orders of size $\kappa$.
Working in the framework of generalized descriptive set theory when appropriate, we obtain the following results:
\begin{itemizenew}
\item 
It is consistent with \( \mathsf{ZFC} \) that for all infinite cardinals \( \kappa \) which are successors of a regular cardinal and for every class $\L \subsetneq \Lin_\kappa$ such that $\qok$ is a $\kappa$-analytic quasi-order, the relation \( \eqk \) of $\L$-convex biembeddability over \( \LO_\kappa \) is complete for \( \kappa \)-analytic equivalence relations (Theorem~\ref{thm:completenessforuncountableLO}).
\item 
If \( \kappa \) is an infinite cardinal and $\L$ is included in the set of linear orders of size smaller than the cofinality of $\kappa$, then there are continuum many incomparable minimal elements in $\LO_\kappa$ with respect to $\qok$ (Theorem~\ref{thm:basisforuncountable}).
\item 
For all infinite cardinals \( \kappa \) and all classes $\L \subseteq \Lin_\kappa$ containing only scattered orders and such that $\qok$ is transitive, the quasi-order $\qok$ contains chains isomorphic to any linear order of size $\kappa$, and it has the fractal property with respect to its upper cones (Theorems~\ref{thm:chains_qo_unctbl} and~\ref{thm:fractal_unctbl}).
\item 
For all infinite cardinals \( \kappa \) and classes $\L \subsetneq \Lin_\kappa$ such that $\qok$ is transitive, the quasi-order $\qok$ has dominating number $2^\kappa$ and unbounding number $\kappa^+$ (Theorems~\ref{no_max_unctbl} and~\ref{thm:unbounding_number_unctbl}).
\end{itemizenew}
The first result is in contrast with the situation for countable linear orders, while the second one contrasts 
the five-elements basis theorem for the embeddability relation over uncountable linear orders~\cite{Moo06}, as it implies that there is no finite or countable basis for the $\L$-convex embeddability relation on such class when \( \L \) consists of small enough linear orders.
This notably includes the case of convex embeddability.

Finally, since in the results on uncountable linear orders we often assume that \( \qok \) is transitive, it is natural to ask for a characterization of such a property. We show in Theorem~\ref{thm:ccs_trans_kappa} that if the infinite cardinal $\kappa$ satisfies a very mild condition, namely $\kappa^{<\kappa} = \kappa$, then  for any nonempty downward $\preceq$-closed $\L \subseteq \Lin_\kappa$, the transitivity of $\L$-convex embeddability over \( \LO_\kappa \) is again equivalent to the ccs property.


\section{Preliminaries}

When $L$ is a linear order we denote by $\leq_L$ the order coded by \(L\), by \( <_L \) its strict part, and by \(L\) its domain. 
We denote by \( \min L \) and \( \max L \) the minimum and maximum of \( L \), if they exist.
If \(L_0,L_1 \subseteq L\), we write \(L_0 \leq_L L_1\) (resp.\ \(L_0 <_L L_1\)) if and only if \(n \leq_L m\) (resp.\ \(n <_L m\)) for every \(n \in L_0\) and \(m \in L_1\). 
Notice that if \(L_0 \leq_L L_1\) then either \(L_0\) and \(L_1\) are disjoint and \( L_0 <_L L_1 \), or else \( \max L_0 \) and \( \min L_1 \) exist, they are equal, and the only element in \( L_0 \cap L_1 \) is \(\max L_0=\min L_1\).
	
We recall some isomorphism-invariant operations on the class of linear orders. 

\begin{itemizenew}
\item The reverse \( L^* \) of a linear order \( L \) is the linear order on   the domain of \( L \) defined by setting $x \leq_{L^*} y \iff y \leq_L x$.
\item If \( L \) and \( K \) are linear orders, their sum \( L+K \) is the linear order defined on the disjoint union of \( L \) and \( K \) by setting ${x} \leq_{L+K} {y}$ if and only if either $x \in L$ and $y\in K$, or $x, y \in L$ and $x \leq_{L} y$, or \( x,y \in K \) and \( x \leq_K y \).
\item In a similar way, given a linear order \(K\) and a sequence of linear orders \((L_k)_{k \in K}\) we can define the \(K\)-sum $\sum_{k \in K} L_k$ on the disjoint union of the \( L_k \)'s by setting \( x \leq_{\sum_{k \in K} L_k} y \) if and only if there are \( k <_K k' \) such that \( x \in L_k \) and \( y \in L_{k'} \), or \( x,y \in L_k \) for the same \( k \in K \) and \( x \leq_{L_k} y \). 
Formally, \( \sum_{k \in K} L_k \) is thus defined on the set \( \{ (x,k) \mid k \in K , x \in L_k \} \) 	by stipulating that \( (x,k) \leq_{\sum_{k \in K} L_k} (x',k') \) if and only if \( k <_K k' \) or else \( k = k' \) and \( x \leq_{L_k} x' \).
\item The product \(LK\) of two linear orders \( L \) and \( K \) is the cartesian product \( L \times K\) ordered antilexicographically. Equivalently, \( LK  = \sum_{k \in K} L \). 	
\end{itemizenew}

\begin{defn}
  A subset \( I \) of the domain of a linear order \( L \) is
  \textbf{(\( L \)-)convex} if \( x \leq_L y \leq_L z \) with
  \( x,z \in I \) implies \( y \in I \). In this case we write \(I \csube L\).
\end{defn}

If \(m,n \in L\), we adopt the notations $[m,n]_L$, $(m,n)_L$, $(-\infty,n]_L$, $(-\infty,n)_L$, $[n,+\infty)_L$, and $(n,+\infty)_L$ (suppressing the subscript $L$ when it is clear from the context) to indicate the obvious \( L \)-convex sets. 
Notice however that in general not all $L$-convex sets are of one of these forms.
	
\begin{defn}
Let $L$ and $L'$ be linear orders. We say that an embedding \(f\)
from $L$ to $L'$ is a \textbf{convex embedding} if $f(L)$ is an
\( L' \)-convex set. We write $L\trianglelefteq L'$ when such \(f\)
exists, and call \textbf{convex embeddability} the resulting binary relation.
\end{defn}

\begin{remark}\label{rem:cvx_Q}
If a linear order \( L \) is isomorphic to \( (\Q,{\leq})\) and \(L_0 \trianglelefteq L\), then \(L_0 \) is isomorphic to one of \( \1 \), \( (\Q,{\leq}) \), \( \1 + (\Q,{\leq}) \), \( (\Q,{\leq}) + \1 \), or \( \1 + (\Q,{\leq}) + \1\), where $\1$ is the linear order with domain $\{0\}$.
\end{remark}

When studying the combinatorial properties of our quasi-orders we use the following standard terminology.

\begin{defn}\label{def:combinatorics}
Let \(\precsim\) be a quasi-order on a set $X$. We say that \(\mathcal{F} \subseteq X\) is a \textbf{dominating family} (for \( \precsim \)) if for every \(L \in X\) there exists \(L' \in \mathcal{F}\) such that \(L \precsim L'\). 
The \textbf{dominating number} \( \mathfrak{d}(\precsim) \) of \( \precsim \) is the smallest size of a dominating family for \( \precsim \).
The \textbf{unbounding number} \( \mathfrak{b}(\precsim) \) is instead the smallest size of a subset of $X$ which is unbounded with respect to \( \precsim \).
Moreover, we say that \( \mathcal{B} \subseteq X \) is a \textbf{basis} for \( \precsim \) if for every \( L \in X \) there is \( L' \in \mathcal{B} \) such that \( L' \precsim L \).
\end{defn}
	
For descriptive set theory, we adopt the standard terminology and notations from~\cite{Kec95}. 
Each element $L$ of the Polish space $2^{\N\times\N}$ can be seen as a code of a binary relation on \(\N\), namely, the one relating \( n \) and \( m \) if and only if $L(n,m) = 1$. 
We denote by $\Lin$ the set of codes for nonempty linear orders defined either on a finite subset of $\N$ or on the whole $\N$, i.e. $\Lin = \Fin \cup \LO$ where
\[
  \Fin = \{L \in 2^{\N\times\N} \mid L \mbox{ codes a finite reflexive linear order}\}
\]
and
\[
  \LO = \{L \in 2^{\N\times\N} \mid L \mbox{ codes a reflexive linear order on } \N\}.
\]
It is easy to see that $\LO$ is a closed subset of the Polish space $2^{\N\times\N}$, thus it is a Polish space as well, while $\Lin$ is a standard Borel space because \( \Fin \) is $F_\sigma$. 
We also let \(\Scat\) and $\WO$ be the subsets of $\Lin$ consisting of scattered%
\footnote{Recall that a linear order is scattered if the rationals do not embed into it.}
linear orders and well-orders, respectively. They are not standard Borel spaces, as they are proper coanalytic subsets of \( \Lin \). 

All the operations on the class of linear orders defined at the beginning of this section can be construed as Borel maps from \( \Lin \) or \( \Lin \times \Lin \) to \( \Lin \), and their restrictions to \( \LO \) are continuous and have range contained in \( \LO \).

For every \(n \in \N\), we denote by \(\boldsymbol{n}\) the element of
\(\Fin\) with domain \(\{0,...,n-1\}\) ordered as usual. 
Similarly, for every infinite  ordinal \( \alpha < \omega_1 \) we fix a well-order \( \boldsymbol{\alpha} \in \LO \) with order type \( \alpha \).
We also fix isomorphic copies of \( (\N,{\leq}) \), \( (\Z,{\leq}) \) and \( (\Q,{\leq}) \) in \( \LO \), and denote them by \(\o\), \(\z\) and \(\eta\), respectively.

Generalizing the approach used above, given any cardinal \( \kappa \) we can form the space 
\[ 
\LO_\kappa = \{ L \in 2^{\kappa \times \kappa} \mid L \text{ codes a reflexive linear order on } \kappa \} 
\] 
of (codes for) linear orders on \( \kappa \), and set $\LO_{< \kappa} = \bigcup_{0< \lambda < \kappa} \LO_\lambda$ and \( \Lin_\kappa = \LO_\kappa \cup \LO_{< \kappa} \). 
In particular, \( \LO \), \( \Fin \) and \( \Lin \), are essentially the same of \( \LO_{\aleph_0} \), \( \LO_{<\aleph_0}\) and \( \Lin_{\aleph_0} \), respectively. Using a similar notation, we denote by \( \Scat_\kappa \) the subset of \( \Lin_\kappa \) consisting of scattered linear orders, and similarly for \( \WO_\kappa \). 
We also let \( \DLO_\kappa \) be the subset of \( \Lin_\kappa \) consisting of dense linear orders without first and last element. 
By Cantor's theorem, \( \DLO_{\aleph_0} \) contains just \( \eta \), up to isomorphism, but when \( \kappa > \aleph_0 \) the class \( \DLO_\kappa \) is much richer. 
Finally, we let \( \ALin = \bigcup \{ \Lin_\kappa \mid \kappa \text{ is a cardinal} \} \), and define \( \WO_\infty \), \( \AScat\) and \( \DLO_\infty \) analogously.
All the concepts and notations introduced above can be straightforwardly adapted to work within \( \Lin_\kappa \) or \( \ALin\).

Recall that we denote by $\emb$ the restriction of the quasi-order $\preceq$ of \textbf{embeddability} between linear orders to the space $\LO$, and that $\LO$ is well quasi-ordered under this relation and has a maximum, so that $\mathfrak{d}(\emb)=1$.

We denote by $\cvx$ the restriction to $\LO$ of the quasi-order $\trianglelefteq$ of convex embeddability between linear orders. 
The following proposition collects some combinatorial properties of $\cvx$ obtained in \cite{IMMRW22} that we need (and generalize) in the following sections.

\begin{prop}\label{prop:comb_prop_cvx}
\begin{enumerate-(a)}
    \item \label{open_int_cvx} 
    There is an embedding from \((\Int(\R),\subseteq)\) into \((\LO,\cvx)\), where \(\Int(\R)\) is the set of open intervals of \(\R\).
    
    \item \label{prop:WO_unbounded_cvx} \(\WO \cap \LO\) is a maximal \(\omega_1\)-chain without an upper bound in \(\LO\)
  with respect to \(\cvx\), and \( \mathfrak{b}(\cvx) = \aleph_1 \).
    
    \item \label{no_max_cvx}
    \(\LO\) does not have maximal elements with respect to \(\cvx\).
    
    \item \label{prop:dom_fam_cvx}
    Every dominating family with respect
    to \(\cvx\) has size \(2^{\aleph_0}\), i.e.\ \( \mathfrak{d}(\cvx) = 2^{\aleph_0}\).
    
    \item \label{prop:basisforcvx}
    There exist \(2^{\aleph_0}\)-many \( \cvx \)-incomparable \(\cvx\)-minimal elements in \( \LO \). In particular, if \( \mathcal{B} \) is 
    a basis with respect to \( \cvx \) then \( |\mathcal{B}| = 2^{\aleph_0} \).
\end{enumerate-(a)}
\end{prop}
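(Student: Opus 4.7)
The plan is to tackle the five items in turn, with (a) serving as the combinatorial workhorse that powers (d) and (e); since all statements are established in \cite{IMMRW22}, the goal here is to outline the main ideas rather than redo the full arguments.

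For (a), I would encode each open interval by a distinguishing countable linear order. Fix an enumeration $(q_n)_{n \in \o}$ of $\Q$ and, for each $I \in \Int(\R)$, set
\[
L_I = \sum_{q_n \in I \cap \Q} (\n + \1),
\]
i.e.\ a dense arrangement of finite blocks of pairwise distinct sizes, one per rational of $I$. If $I \subseteq J$, then $L_I$ is a convex suborder of $L_J$ by restriction, giving $L_I \cvx L_J$. Conversely, the finite blocks of any such $L_K$ are intrinsically recognizable as its maximal finite convex subsets, so any convex embedding $L_I \to L_J$ must send each block of $L_I$ into a block of $L_J$ of at least the same size; the distinct sizes then track which rational each block corresponds to, and a density argument forces $I \cap \Q \subseteq J \cap \Q$, hence $I \subseteq J$. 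The assignment $I \mapsto L_I$ is therefore the desired order embedding.

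For (b), convex subsets of an ordinal are themselves ordinals (initial, final, or intermediate segments), so $\boldsymbol{\alpha} \cvx \boldsymbol{\beta} \iff \alpha \leq \beta$ for countable ordinals and $\WO \cap \LO$ is an $\omega_1$-chain. No countable $L \in \LO$ can $\cvx$-bound the chain, since its well-ordered convex subsets have order types bounded below a fixed countable ordinal. Maximality in $(\LO,\cvx)$ follows by showing that any non-well-order contains a reversed $\omega$ as a convex suborder (blocking embeddability into any ordinal) and is itself countable (so cannot dominate a cofinal portion of $\WO \cap \LO$). The chain witnesses $\mathfrak{b}(\cvx) \leq \aleph_1$, while the reverse bound is obtained by observing that any countable subfamily of $\LO$ is $\cvx$-bounded, for instance by a suitable $\eta$-sum of its members. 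For (c), given $L \in \LO$ one produces a strictly $\cvx$-larger $L'$ by attaching to $L$ a scattered ``tail'' whose presence cannot be accommodated inside $L$ itself; a Hausdorff-rank argument on the added tail, combined with an iteration when $L$ already exhibits such a tail, ensures $L' \ncvx L$.

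For (d) and (e), both cardinality statements build on (a). For (e), I would refine the $L_I$'s with rigid scattered decorations to extract a family $(M_x)_{x \in 2^\omega}$ of pairwise $\cvx$-incomparable, $\cvx$-minimal countable linear orders; since each $M_x$ is minimal, any basis must contain an element $\cvx$-equivalent to $M_x$, and mutual incomparability then forces $|\mathcal{B}| \geq 2^{\aleph_0}$. For (d), one similarly arranges $2^{\aleph_0}$-many linear orders engineered so that no single countable $L' \in \LO$ can simultaneously $\cvx$-dominate more than a sub-continuum portion of them, so any dominating family must have size $2^{\aleph_0}$.

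The main technical obstacle is the rigidity verification in (a): one must show that convex embeddings between the $L_I$'s genuinely respect the block decomposition and transport the block-size labeling. Outside the blocks the order is dense, so a priori a convex embedding could interleave blocks unexpectedly; ruling this out requires recognizing the finite maximal convex subsets as a definable feature of the isomorphism type, and analyzing how order-preserving maps must act on them. Once (a) is established with the correct rigidity, items (d) and (e) follow by counting and antichain considerations tailored to linear orders, while (b) and (c) reduce to direct combinatorial verifications on well-orders and scattered tails.
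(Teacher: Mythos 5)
The paper does not reprove this proposition (it is imported from \cite{IMMRW22}), but the machinery it uses to generalize each item --- the orders \( \eta^f \) obtained by replacing each rational with a finite block of a distinct size, the rigidity statement of Lemma~\ref{lem:loc_not_scat} forcing blocks to map onto blocks, the supremum-of-order-types-of-well-ordered-convex-subsets argument for unboundedness, appending a tail \( \boldsymbol{\alpha} \) with \( \boldsymbol{\alpha} \ncvx L \) for non-maximality, and the counting argument showing that a single countable order can convexly absorb only countably many members of the big antichain --- is exactly what you describe. Your \( L_I = \sum_{q_n \in I\cap\Q}(\n+\1) \) is the paper's \( \eta^f_I \) for an injective \( f \), and your outlines of (d) and (e) match Proposition~\ref{prop:min_el_qo}, Theorem~\ref{thm:basis2} and the proof of Theorem~\ref{no_max_unctbl}. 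One refinement is needed in (a): ``into a block of at least the same size'' is too weak to identify the rational a block came from; you must use convexity of the image of all of \( L_I \) to upgrade this to ``onto a block of exactly the same size'' (this is the content of Lemma~\ref{lem:loc_not_scat}\ref{lem:loc_not_scat-b}), after which injectivity of the size-labelling pins down the rational and \( I\cap\Q\subseteq J\cap\Q \) follows without any further density argument.

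One intermediate claim in (b) is false as stated: a non-well-order need not contain \( \omega^* \) as a \emph{convex} suborder (\( \eta \) is a counterexample, by Remark~\ref{rem:cvx_Q}). This does no harm, because the conclusion it is meant to support --- that a non-well-order does not convexly embed into any ordinal --- already follows from the trivial fact that it does not even embed into an ordinal. Everything else is a correct, if compressed, rendering of the standard arguments.
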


In descriptive set theory, binary relations are compared using Borel reducibility, denoted by \( \leq_B \). 
We refer to \cite{Gao09} for definitions, notations, and basic results, and in particular we use $\sim_B$ for the equivalence relation induced by $\leq_B$.
We deal also with the following weaker notion. 
Let \( E \) and \( F \) be binary relations on the topological spaces \( X \) and \( Y \). We say that \(E\) is \textbf{Baire reducible} to \(F\), and we write \({E} \leq_{\text{\scriptsize\textit{Baire}}} {F}\), if there exists a Baire measurable map \(\varphi\colon  X \to Y\) reducing \(E\) to \(F\) (i.e.\ ${x_1}\mathrel{E}{x_2}\iff {\varphi(x_1)}\mathrel{F}{\varphi(x_2)}$, for all $x_1, x_2 \in X$).   

An important class of analytic equivalence relations consists of those induced by a Borel action of a Polish group on a standard Borel space. In this paper we often consider the isomorphism relation on $\LO$, denoted by \( \iso \), which is $S_{\infty}$-complete (i.e.\ complete for the class of equivalence relations induced by a Borel action of the infinite symmetric group \(S_\infty\) --- see \cite{FS89}) and proper analytic. 

Let \(E_1\) be the equivalence relation defined on \(\R^\N\) by \((x_n)_{n \in \N} \mathrel{E_1} (y_n)_{n \in \N}\) if and only if there exists $m$ such that $x_n = y_n$ for all $n \geq m$.
%
The following result of Shani about $E_1$ generalizes a classical
theorem by Kechris and Louveau \cite{KL97}. (The additional part follows from the fact that by~\cite[Theorem 8.38]{Kec95} every Baire measurable map between Polish spaces is continuous on a comeager set.)

\begin{thm}[{\cite[Theorem 4.8]{Sha21}}]\label{E1_orbit}
The restriction of \(E_1\) to any comeager subset of \(\R^\N\) is not Borel reducible to any orbit equivalence relation. Thus in particular ${E_1} \not\leq_{\text{\scriptsize \textit{Baire}}} {\iso}$.
\end{thm}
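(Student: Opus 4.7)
My plan is to split the statement into two pieces: Shani's main theorem, and the \emph{in particular} Baire-reducibility consequence. For the first piece I would simply invoke \cite[Theorem 4.8]{Sha21}, since reproving it is a substantial project in its own right. For orientation, Shani's argument refines the Kechris--Louveau proof that $E_1$ is not classifiable by countable structures: given a hypothetical Borel reduction $\varphi$ of $E_1 \rest C$ to the orbit equivalence relation on some Polish $G$-space, one exploits the comeagerness of $C$ to build mutually generic sequences of reals whose tails can be freely modified, pulls back the $G$-action through $\varphi$ to extract a canonical $E_1$-invariant map into $G$, and derives a contradiction with the tail structure of $E_1$-classes. Passing from the full space $\R^\N$ to a comeager subset is exactly what forces the genericity argument to be run inside $C$ rather than globally.

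For the \emph{in particular} assertion I would give a short reduction to the first part. Assume toward a contradiction that $\varphi \colon \R^\N \to \LO$ is a Baire measurable map such that $x \mathrel{E_1} y$ if and only if $\varphi(x) \iso \varphi(y)$. By \cite[Theorem 8.38]{Kec95}, there exists a comeager set $C \subseteq \R^\N$ on which $\varphi \rest C$ is continuous, and in particular Borel. Then $\varphi \rest C$ witnesses that $E_1 \rest C$ is Borel reducible to $\iso$, which is itself the orbit equivalence relation of the natural Borel action of $S_\infty$ on $\LO$. This contradicts the first sentence of the theorem applied to the comeager set $C$.

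The only genuine obstacle is Shani's theorem itself, whose proof requires the delicate genericity constructions sketched above. Once it is in hand, the passage from Borel to Baire measurable reductions is essentially automatic and amounts to a single invocation of the Kuratowski-style theorem on Baire measurable maps between Polish spaces, together with the observation that the relevant comeager subset $C$ is still within the scope of Shani's result.
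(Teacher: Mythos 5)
Your proposal matches the paper's treatment exactly: the main assertion is quoted from Shani's Theorem 4.8, and the \emph{in particular} clause is derived by using \cite[Theorem 8.38]{Kec95} to restrict a Baire measurable reduction to a comeager set on which it is continuous, then applying the first part together with the fact that \( \iso \) is the orbit equivalence relation of the natural \( S_\infty \)-action. Nothing further is needed.
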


We finally recall some results about the complexity with respect to Borel reducibility of the equivalence relation $\cvxeq$ induced by $\cvx$ (see \cite[Corollaries 3.13 and 3.27]{IMMRW22}):

\begin{thm}\label{thm:complexity_cvxeq}
    \begin{enumerate-(a)}
        \item\label{thm:complexity_cvxeq-a} ${\iso} \mathrel{\leq_B} {\cvxeq}$.
        \item\label{thm:complexity_cvxeq-b} ${E_1} \mathrel{\nleq_{\text{\scriptsize \textit{Baire}}}} {\cvxeq}$.
    \end{enumerate-(a)}
\end{thm}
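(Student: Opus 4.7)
For part (a), the approach is to exhibit a Borel map $\Phi \colon \LO \to \LO$ witnessing $\iso \leq_B \cvxeq$. The forward implication $L \iso L' \Rightarrow \Phi(L) \cvxeq \Phi(L')$ will be automatic provided $\Phi$ is defined functorially. The substantive task is to ensure that the existence of a pair of convex embeddings between $\Phi(L)$ and $\Phi(L')$ forces an isomorphism between the underlying $L$ and $L'$. A natural design is to interleave $L$ with rigid scattered markers: replace each element of $L$ by a fixed scattered block $G$ (such as $\omega^{*}+\omega$), separating consecutive blocks by copies of $\eta$, and consider $\Phi(L)=\sum_{x\in L}(G+\eta)$. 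The rigidity of $G$ together with the sharp restriction on convex subtypes of $\eta$ granted by Remark~\ref{rem:cvx_Q} should force convex embeddings to send each $G$-block bijectively to a $G$-block of the target, yielding an order-preserving correspondence between the indexing copies of $L$ and $L'$. The main obstacle is calibrating the gadgets so that a convex embedding cannot smuggle part of a $G$-block across a boundary and so that two such embeddings (one in each direction) collectively pin down the same combinatorial skeleton, rather than merely mutual embeddings.

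For part (b), I would argue by contraposition, relying on the auxiliary fact
\[
{\cvxeq} \;\leq_{\text{\scriptsize \textit{Baire}}}\; {\iso},
\]
which I would establish as a separate lemma. The idea is that on a comeager portion of $\LO$---namely the generic orders, which are isomorphic to $\eta$---the relation $\cvxeq$ collapses onto only finitely many classes by Remark~\ref{rem:cvx_Q}. This lets one build a Baire measurable $\psi\colon \LO \to \LO$ whose value canonically encodes the $\cvxeq$-class of the input as an isomorphism type, by combining the generic case with a Baire-measurable selection on the meager complement.

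Granted the auxiliary reduction, suppose towards contradiction that ${E_1}\leq_{\text{\scriptsize \textit{Baire}}}{\cvxeq}$ is witnessed by some Baire measurable $\varphi\colon \R^{\N}\to \LO$. Then the composition $\psi\circ\varphi$ is a Baire measurable reduction of $E_1$ to $\iso$, directly contradicting Theorem~\ref{E1_orbit}. The main obstacle in this part is the construction and verification of the canonisation $\psi$ witnessing $\cvxeq \leq_{\text{\scriptsize \textit{Baire}}} \iso$; once this is established, the composition step is routine.
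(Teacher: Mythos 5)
This theorem is recalled from \cite{IMMRW22} rather than proved here, but the paper does reprove part~\ref{thm:complexity_cvxeq-a} in generalized form (Proposition~\ref{thm:cong_qo} and Theorem~\ref{thm:iso_breduces_eq}), so there is a concrete construction to compare against. Your gadget for part~\ref{thm:complexity_cvxeq-a} does not work as described. Taking $G=\omega^*+\omega=\z$ and $\Phi(L)=\sum_{x\in L}(\z+\eta)$, consider $L=\eta$ and $L'=\1+\eta$: the final segment of $\Phi(\eta)$ starting at the beginning of the block indexed by $0$ is a convex subset isomorphic to $(\z+\eta)+(\z+\eta)\eta=\Phi(\1+\eta)$, and trivially $\Phi(\eta)\trianglelefteq\Phi(\1+\eta)$, so $\Phi(\eta)\cvxeq\Phi(\1+\eta)$ while $\eta\not\cong\1+\eta$. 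This is exactly the ``smuggling across a boundary'' obstacle you flag, and it is fatal: interleaved markers alone cannot prevent a convex embedding from starting at an arbitrary block boundary and absorbing an extra block. The paper's map is $\varphi(L)=(\1+\z L+\1)\,\eta$: the two $\1$'s are the only points of $\1+\z L+\1$ lacking an immediate predecessor and successor, so any convex embedding must send $\1+\z L+\1$ \emph{onto} a single summand $(\1+\z L'+\1)\times\{m\}$, whence $\z L\cong\z L'$ and $L\cong L'$. The essential ingredient your construction is missing is the bracketing endpoints that pin the image to exactly one copy.

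For part~\ref{thm:complexity_cvxeq-b} your route is the one the paper attributes to \cite{IMMRW22}: establish ${\cvxeq}\leq_{\text{\scriptsize\textit{Baire}}}{\iso}$ and combine with Theorem~\ref{E1_orbit}. However, your proposed proof of the auxiliary lemma is where essentially all the content lies, and your sketch does not supply it. A Baire reduction must satisfy the reduction equivalence on \emph{all} of $\LO$; only the measurability is relaxed. The comeager set of dense orders carries a single $\cvxeq$-class and is the trivial part of the problem, whereas the meager complement is precisely where $\cvxeq$ is complicated --- indeed the range of the part~\ref{thm:complexity_cvxeq-a} reduction consists of non-dense orders, so $\iso$ Borel reduces into $\cvxeq$ restricted to that meager set. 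Saying one ``combines the generic case with a Baire-measurable selection on the meager complement'' assumes the existence of a canonical assignment of isomorphism types to $\cvxeq$-classes of arbitrary countable linear orders, which is the whole difficulty. A further (fixable but unaddressed) point is that composing two Baire-measurable reductions is not automatically Baire measurable, since continuous preimages of sets with the Baire property need not have the Baire property; the argument has to be routed through the ``comeager restriction'' formulation of Theorem~\ref{E1_orbit} with some care.
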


\section{The ccs property}\label{sec:ccs_property}
	
In this section we introduce a binary relation among linear orders which captures the idea of ``piecewise''
convex embeddability, where the pieces are orderly indexed by an element of a fixed class $\L \subseteq \ALin$. 
Unless otherwise stated, \emph{from now on we let \( \L \) be a nonempty downward \( \preceq \)-closed subset of \( \ALin \)}. 
Among such classes we find those of the form \(\L_{\preceq L_0} = \{L \in \ALin \mid L \preceq L_0\}\) and \(\L_{\prec L_0} = \{L \in \ALin \mid L \prec L_0\}\), for some \( L_0 \in \ALin \).

    \begin{defn}
        Given linear orders $K$ and $L$, a \textbf{$K$-convex partition of $L$} is a partition $(L_k)_{k \in K}$ of $L$ such that $k <_K k'$ if and only if $L_k <_L L_{k'}$ for every $k,k' \in K$.
    \end{defn}

Notice that if $(L_k)_{k \in K}$ is a $K$-convex partition of $L$, each $L_k$ is a convex subset of $L$. 
	
\begin{defn}\label{def:QO}
Given $\L \subseteq \ALin$ as above and linear orders \( L,L' \), we write $L \trianglelefteq^\L L'$ if and only if there exist $K\in \L$, a $K$-convex partition $(L_k)_{k \in K}$ of $L$, and an embedding $f$ of $L$ into $L'$ such that $f(L_k)\csube L'$ for all $k \in K$. 
The binary relation \( \trianglelefteq^\L \) is called \textbf{\( \L \)-convex embeddability}. 
\end{defn}

Equivalently, \( L \trianglelefteq^\L L' \) if and only if there exist \( K \in \L\) and a family \( (L_k)_{k \in K} \) of nonempty linear orders such that, up to isomorphism, \( L = \sum_{k \in K} L_k \) and there is an embedding \( f \colon L \to L' \) such that \( f(L_k) \csube L' \) for all \( k \in K\). 
Yet another equivalent reformulation of \( L \trianglelefteq^\L L'\) is the following: there are \( K \in \L \), \( K' \in \ALin \), an embedding \( f \colon K \to K' \), a \( K \)-convex partition \( (L_k)_{k \in K} \) of \( L \), and a \( K' \)-convex partition \( (L'_k)_{k \in K'} \) of \( L' \) such that \( L_k \cong L'_{f(k)}\) for all \( k \in K \).

Although in general \( \trianglelefteq^\L \) needs not to be a quasi-order, we also consider its ``strict part'' \( \triangleleft^\L \) defined by \( L \triangleleft^\L L' \) if \( L \trianglelefteq^\L L' \) but \( L' \not\trianglelefteq^\L L \), and write \( L \mathrel{\underline{\bowtie}}^{\mathcal{L}} L' \) if both \( L \trianglelefteq^\L L'\) and \( L' \trianglelefteq^\L L\).
As usual, we denote by \(\qo\) the restriction of \( \trianglelefteq^\L\) to the set \(\LO\) of (codes for) linear orders on the whole \(\N\), and similarly for \( \triangleleft^\L_\LO \) and \( \underline{\bowtie}^\L_\LO \).

If \( \L = \{ \1 \} = \L_{\preceq \1} \), then \( \trianglelefteq^\L \) is simply the relation of convex embeddability \( \trianglelefteq \).
Moreover, if \( \L \subseteq \L' \) then \( {L \trianglelefteq^\L L'} \Rightarrow {L \trianglelefteq^{\L'}} L'\) for all linear orders \( L,L'\).
Since each \( \L \) is tacitly assumed to be nonempty and downward \( \preceq \)-closed, it follows that \( \{ \1 \} \subseteq \L \), and hence ${L \trianglelefteq L'} \Rightarrow {L \trianglelefteq^{\L} L'}$. 

When \( \L \subseteq \Lin \), at the other extreme we have the case \( \L = \Lin = \L_{\preceq \eta} \) (equivalently: \( \L \nsubseteq \Scat\)). 
In this case, if \( L \) is countable and \( L' \) is an arbitrary linear order, then \( {L \preceq L'} \Rightarrow {L \trianglelefteq^\L L'} \), as we can always partition $L$ in singletons. 
More generally, by the same reasoning we have the following useful fact, which applies to arbitrary families \( \L \).

\begin{fact} \label{fct:basic}
If $L \in \L$ and \( L' \) is arbitrary, then $L \trianglelefteq^{\L} L'$ if and only if $L \preceq L'$.
\end{fact}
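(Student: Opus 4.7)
The plan is to unpack the definition in both directions; essentially nothing beyond that is required.

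For the forward implication, assume $L \trianglelefteq^\L L'$. By Definition~\ref{def:QO}, this already furnishes an embedding $f \colon L \to L'$ (with some additional convexity property on its restrictions to pieces of a partition, but this extra information is irrelevant here). Hence $L \preceq L'$.

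For the backward implication, assume $L \preceq L'$, witnessed by some embedding $f \colon L \to L'$. The idea is to use $K = L$ itself, which is legitimate because the hypothesis $L \in \L$ is exactly what allows us to pick the indexing linear order inside $\L$. Define $L_k = \{k\}$ for every $k \in K = L$. Then $(L_k)_{k \in K}$ is trivially a $K$-convex partition of $L$, since $k <_K k'$ is the same as $L_k <_L L_{k'}$ by construction. Moreover, each $f(L_k) = \{f(k)\}$ is a singleton, hence automatically $L'$-convex. Therefore the same embedding $f$ witnesses $L \trianglelefteq^\L L'$.

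There is no real obstacle here: the only conceptual point is the observation that when $L$ itself is available as an element of $\L$, one can always take the finest possible partition of $L$ (into singletons) and thereby reduce $\L$-convex embeddability to plain embeddability. This is why the fact holds for \emph{every} class $\L$ (not merely those in $\Lin$), and why it will be used repeatedly in the sequel as a bridge between $\trianglelefteq^\L$ and $\preceq$.
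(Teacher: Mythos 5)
Your proof is correct and matches the paper's own justification exactly: the paper proves this fact by the same one-line observation that one may take $K = L \in \L$ and partition $L$ into singletons, whose images are automatically convex, while the forward direction is immediate from Definition~\ref{def:QO}.
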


Another useful fact is the following:

\begin{prop} \label{prop:crucial}
For every \( \L \subseteq \Lin \) and \( L \in \AScat\), we have \( L \trianglelefteq^{\L} \eta\) if and only if \( L \in \L \). 
\end{prop}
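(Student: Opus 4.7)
The plan is to prove the two directions separately, with the backward direction being essentially immediate from Fact 3.3 and the forward direction relying crucially on Remark 2.3.

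For the backward direction $(\Leftarrow)$, suppose $L \in \L$. Since $\L \subseteq \Lin$, $L$ is at most countable, so by Cantor's classical embedding theorem $L \preceq \eta$. Fact 3.3 applied with $L' = \eta$ then yields $L \trianglelefteq^\L \eta$ (alternatively, one can exhibit the witness explicitly: take $K = L \in \L$, partition $L$ into singletons, and let $f \colon L \to \eta$ be any embedding, noting that the images $f(L_k)$ are singletons and hence trivially convex in $\eta$). Note that here the hypothesis $L \in \AScat$ plays no role.

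For the forward direction $(\Rightarrow)$, assume $L \trianglelefteq^\L \eta$, and let $K \in \L$, a $K$-convex partition $(L_k)_{k \in K}$ of $L$, and an embedding $f \colon L \to \eta$ with $f(L_k) \csube \eta$ for every $k \in K$ witness this. Each restriction $f \rest L_k \colon L_k \to \eta$ is then a convex embedding, so $L_k \trianglelefteq \eta$. Since $\eta \cong (\Q,{\leq})$, Remark 2.3 forces $L_k$ to be isomorphic to one of $\1$, $\eta$, $\1 + \eta$, $\eta + \1$, or $\1 + \eta + \1$. However $L$ is scattered, and $L_k$ is (isomorphic to) a suborder of $L$, hence also scattered; the only scattered possibility on the list is $\1$. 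Therefore every $L_k$ is a singleton, and consequently $L \cong \sum_{k \in K} \1 \cong K$. Since $K \in \L$ and $\L$ is downward $\preceq$-closed (in particular, closed under isomorphism), we conclude $L \in \L$.

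I do not anticipate a real obstacle: the forward direction is a direct combinatorial consequence of the classification in Remark 2.3 together with the scatteredness of $L$, and the backward direction reduces to the general Fact 3.3. The only points requiring mild care are (i) observing that each piece $L_k$ inherits scatteredness from $L$, and (ii) that the $K$-sum of singletons is isomorphic to $K$ itself, so that membership in $\L$ transfers from $K$ to $L$ via the closure properties of $\L$.
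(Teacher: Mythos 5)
Your proposal is correct and follows essentially the same route as the paper: the forward direction uses Remark~\ref{rem:cvx_Q} together with scatteredness of \( L \) to force each piece \( L_k \) to be a singleton, whence \( L \cong K \in \L \), and the backward direction is Fact~\ref{fct:basic} plus countability of \( L \in \L \subseteq \Lin \). You have merely spelled out the details (convexity of the pieces, inheritance of scatteredness, closure of \( \L \) under isomorphism) that the paper leaves implicit.
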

\begin{proof}
Assume that \( K \in \L \), \( (L_k)_{k \in K} \), and \( f \colon L \to \eta \) witness \( L \trianglelefteq^\L \eta \). 
By Remark~\ref{rem:cvx_Q} and \( L \in \Scat \), each \( L_k \) is a singleton and hence \( L \iso K \in \L \). 
The other direction is immediate by Fact \ref{fct:basic} and countability of $L \in \L \subseteq \Lin$.
\end{proof}

Combining the above observations, one can determine the mutual relationships among the relations \( \trianglelefteq^\L \). More precisely, say that \( \trianglelefteq^\L \) \textbf{refines} \( \trianglelefteq^{\L'} \) if \( {\trianglelefteq^\L} \subseteq {\trianglelefteq^{\L'}} \), i.e.\ \( {L \trianglelefteq^\L L'} \Rightarrow {L \trianglelefteq^{\L'} L'}\) for all linear orders \( L \) and \( L' \).%
\footnote{The result would not change if one restricts this definition to \emph{countable} linear orders.}

\begin{prop}\label{prop:refine}
Let \( \L, \L' \subseteq \Lin \). 
Then the relation \( \trianglelefteq^\L \) refines \( \trianglelefteq^{\L'} \) if and only if \( \L \subseteq \L' \).
\end{prop}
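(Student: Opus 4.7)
The backward direction is immediate: given $\L \subseteq \L'$, any witness $K \in \L$ for $L \trianglelefteq^\L L'$ automatically lies in $\L'$, and the same convex partition certifies $L \trianglelefteq^{\L'} L'$. For the forward direction I argue by contrapositive: assume $\L \not\subseteq \L'$ and fix $K \in \L \setminus \L'$; I shall exhibit a pair $(L, L')$ of countable linear orders for which $L \trianglelefteq^\L L'$ but $L \not\trianglelefteq^{\L'} L'$. The construction splits on whether $K$ is scattered.

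If $K$ is scattered (in particular, if $K$ is finite), I take $(L, L') = (K, \eta)$. Countability of $K$ gives $K \preceq \eta$, whence Fact~\ref{fct:basic} yields $K \trianglelefteq^\L \eta$. Conversely, Proposition~\ref{prop:crucial} applied to $\L'$ gives $K \trianglelefteq^{\L'} \eta \iff K \in \L'$, which is false.

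If $K$ is non-scattered, the downward closure of $\L$ applied to $\eta \preceq K$ yields $\eta \in \L$, while the downward closure of $\L'$ applied to $K \preceq \eta$ (valid by countability of $K$) together with $K \notin \L'$ yields $\eta \notin \L'$. I claim that the pair $(L, L') = (\omega \cdot \eta, \eta)$ distinguishes the two relations. On the one hand, $\omega \cdot \eta \preceq \eta$ (countability) and $\omega \cdot \eta \in \L$ (downward closure from $\eta \in \L$), so Fact~\ref{fct:basic} yields $\omega \cdot \eta \trianglelefteq^\L \eta$. On the other hand, suppose for contradiction that $\omega \cdot \eta \trianglelefteq^{\L'} \eta$, witnessed by some $K^* \in \L'$ and a convex partition $(M_{k^*})_{k^* \in K^*}$ of $\omega \cdot \eta$ whose pieces convex-embed into $\eta$. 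By Remark~\ref{rem:cvx_Q}, each $M_{k^*}$ is isomorphic to one of $\1, \eta, \1+\eta, \eta+\1, \1+\eta+\1$. The core observation is that every non-singleton convex subset $C$ of $\omega \cdot \eta$ contains a pair of consecutive elements: given two comparable points of $C$, either they share an $\omega$-fiber (forcing $C$ to contain an interval of length at least $2$ inside that fiber) or they sit in different fibers (forcing $C$ to contain a cofinal tail of some $\omega$-fiber by convexity), and in both cases successive points of an $\omega$-fiber belong to $C$. Since none of $\eta, \1+\eta, \eta+\1, \1+\eta+\1$ admits a consecutive pair, each $M_{k^*}$ must be a singleton, forcing $K^* \cong \omega \cdot \eta$; by downward closure of $\L'$ this would give $\omega \cdot \eta \in \L'$, hence $\eta \in \L'$ (since $\eta \preceq \omega \cdot \eta$), contradicting $\eta \notin \L'$.

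The main obstacle lies in the non-scattered case: whereas Proposition~\ref{prop:crucial} disposes of scattered $K$ through the single test order $\eta$, for non-scattered $K$ one has to design a test order whose non-singleton convex subsets already fail to be convex-embeddable into $\eta$. The antilexicographic product $\omega \cdot \eta$ works precisely because its discrete $\omega$-fibers introduce a successor pair into every nontrivial convex subset, a feature that is absent from all five forms of convex subsets of $\eta$ and which pins down the index set of any putative partition up to isomorphism.
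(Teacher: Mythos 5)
Your proof is correct and takes essentially the same route as the paper's: the scattered case is handled exactly as there, via Fact~\ref{fct:basic} and Proposition~\ref{prop:crucial}, and the non-scattered case rests on the same obstruction that every non-singleton convex subset of a product with a discrete factor contains a consecutive pair while convex subsets of $\eta$ do not. The only cosmetic difference is the choice of test pair: you use $(\omega\eta,\eta)$ where the paper uses $(\eta,\mathbf{2}\eta)$.
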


\begin{proof}
As observed, one direction is obvious, so let us assume that \( \trianglelefteq^\L \) refines \( \trianglelefteq^{\L'} \). 
Proposition~\ref{prop:crucial} implies that \( \L \cap \Scat \subseteq \L' \cap \Scat \), so we only need to show that if \( \L = \Lin \) then \( \L' = \Lin \) too. 
But if \( \eta \in \L \) then \( \eta \trianglelefteq^\L \mathbf{2} \eta \) by Fact~\ref{fct:basic}, which by our initial assumption implies \( \eta \trianglelefteq^{\L'} \mathbf{2} \eta \). 
Assume towards a contradiction that \( \L' \neq \Lin \), i.e.\ \( \L' \subseteq \Scat \). 
Let \( K \in \L' \), \( (L'_k)_{k \in K} \) and \( f \colon \eta \to \mathbf{2} \eta \) witness \( \eta \trianglelefteq^{\L'} \mathbf{2} \eta \).
Since \( K \in \Scat \), at least one of the convex sets \( L'_k \) contains a copy of \( \eta \) by Remark~\ref{rem:cvx_Q}, hence \( \eta \trianglelefteq \mathbf{2} \eta \), which is not the case.
\end{proof}

Since \( \trianglelefteq \) refines \( \trianglelefteq^\L\) for all the families \( \L \) under consideration, it easily follows that the relation \( \trianglelefteq^\L \) is always reflexive. However, the next example shows that \( \trianglelefteq^\L\) might fail to be transitive.
		
\begin{exmp}\label{qo_no_trans}
Consider $\L = \L_{\preceq \mathbf{2}}$. It is immediate that $\z \mathbf{3} \trianglelefteq^{\L} \z +\1 + \z \mathbf{2} \trianglelefteq^{\L} (\z +\1) \mathbf{3}$, but $\z \mathbf{3} \ntrianglelefteq^{\L} (\z +\1) \mathbf{3}$ because to find an embedding as in Definition \ref{def:QO} we need to have a linear order $K\in \L $ with three elements, which is not the case. 
More generally, if $\L = \L_{\preceq \n}$ with $n>1$, we have that $\z (\mathbf{2n-1}) \trianglelefteq^{\L} (\z +\1) (\mathbf{n-1}) + \z \n \trianglelefteq^{\L} (\z +\1) (\mathbf{2n-1})$, but $\z (\mathbf{2n-1}) \ntrianglelefteq^{\L} (\z +\1) (\mathbf{2n-1})$. Hence transitivity fails for all binary relations \( \trianglelefteq^{\L_{\preceq \mathbf{n}}}\) with \( n > 1 \).
\end{exmp}

Since we want to work with quasi-orders, we thus have to first determine when \( \trianglelefteq^\L \) is transitive.
Consider linear orders \(L, L', L''\) such that \(L \trianglelefteq^{\L} L'\) with witnesses \(K \in \L\), \((L_k)_{k \in K}\) and \(f \colon L \to L'\), and \(L' \trianglelefteq^{\L} L''\)  with witnesses \(K' \in \L\), \((L'_{k'})_{k' \in K'}\) and \(f'\colon L' \to L''\). We would like to have that \(L \trianglelefteq^{\L} L''\). To this aim, for every \(k \in K\) define the set 
\[
K'_k=\{k' \in K' \mid f(L_k) \cap L'_{k'} \neq \emptyset\}.
\] 
Notice that each \(K'_k\) is a nonempty convex subset of \(K'\), and that \(\forall k_0,k_1 \in K\ (k_0 <_K k_1 \Rightarrow K'_{k_0} \leq_{K'} K'_{k_1})\) because $f(L_k) \csube L'$ for each \( k \in K \) by choice of \( f \). 
Now, consider the linear order
\[
M = \sum_{k \in K} K'_k , 
\]
i.e.\ $M$ is the set $\{(k',k) \mid k \in K \text{ and } k' \in K'_k\}$ ordered antilexicographically. For every \((k',k) \in M\), let 
\[
L_{(k',k)}=\{n \in L \mid n \in L_k \text{ and } f(n) \in L'_{k'}\}.
\]
Notice that $L_{(k',k)}$ is a nonempty convex subset of \( L_k\), and hence of \(L\), and that \( f(L_{(k,k')}) \csube L'_{k'} \), hence $(f'\circ f)(L_{(k',k)}) \csube L''$. Thus, \emph{if \(M \) were a member of  \(\L\)}, then $M$, \(\big(L_{(k',k)}\big)_{(k',k) \in M}\) and $f' \circ f$ would witness \(L \trianglelefteq^{\L} L''\). 
This motivates the following definition.
	
\begin{defn}\label{def:appropriate}
Let $\L$ be \( \preceq \)-downward closed.
We say that $\L$ is \textbf{closed under convex sums}, or \textbf{ccs} for short, if for every $K, K' \in \L$ and for every \((K'_k)_{k \in K}\) such that each \(K'_k\) is a nonempty convex subset of $K'$ and
\[
\forall k_0,k_1 \in K\ (k_0 <_K k_1 \Rightarrow K'_{k_0} \leq_{K'} K'_{k_1}),
\]
we have that \(\sum_{k \in K} K'_k \in \L\).
\end{defn}

Many natural classes are ccs, for example: \(\{ \1 \}\), $\Fin$, $\WO$, $\Scat$, and $\Lin$. 
Moreover, it is immediate to see that if $\L$ is ccs then so is $\L^*= \{L^* \mid L \in \L\}$, and since $\sum_{k \in K} K'_k$ is a suborder of $K' K$ then any downward \( \preceq \)-closed $\L$ which is closed under products is ccs.
In Section~\ref{sec:examples} we however exhibit examples of ccs classes that are not closed under products.
On the other hand, notice  that the ccs property does not hold for all $\L$ which are downward \( \preceq \)-closed.
Indeed, a crucial property of the convex sums involved in Definition~\ref{def:appropriate} is that if \(K'_{k_0} \cap K'_{k_1} =\{k'\}\)  for some distinct \(k_0,k_1 \in K\), then \(k'\) ``appears'' at least twice in \(\sum_{k \in K} K'_k\) and the latter is not necessarily isomorphic to a suborder of \(K'\). 
This observation allows us to show that the classes considered in Example~\ref{qo_no_trans} are not ccs, and hence there is no ccs class between \( \{ \1 \} \) and \( \Fin\).
	

\begin{exmp}\label{ex:fin_wo_scat}
Every class $\L_{\preceq \n}$ with $n >1$ is not ccs. Indeed, it is enough to consider $K= \mathbf{2}$ and $K'= \n$ and define $K'_0=\n$ and $K'_1=\{n-1\}$ to obtain that $\sum_{k \in K} K'_k = K'_0+K'_1 \cong \mathbf{n+1}$ does not belong to $\L_{\preceq \n}$. 
\end{exmp}

We now show that when \( \L \subseteq \Lin \), the ccs property is not only sufficient to obtain the transitivity of \( \trianglelefteq^\L \), but it is also necessary, and thus characterizes those \( \L \subseteq \Lin\) for which \( \trianglelefteq^\L \) is a quasi-order.
	
\begin{thm}\label{thm:ccs_trans}
Let $\L \subseteq \Lin$ be nonempty and downward \( \preceq \)-closed. Then the following are equivalent:
\begin{enumerate-(i)}
    \item\label{thm:ccs_trans-1} $\L$ is ccs;
    \item\label{thm:ccs_trans-2} $\trianglelefteq^{\L}$ is transitive;
    \item\label{thm:ccs_trans-3} $\qo$ is transitive.
\end{enumerate-(i)}
\end{thm}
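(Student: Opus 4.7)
The plan is to establish the cyclic chain of implications (i) $\Rightarrow$ (ii) $\Rightarrow$ (iii) $\Rightarrow$ (i). The middle implication (ii) $\Rightarrow$ (iii) is immediate, since $\qo$ is by definition the restriction of $\trianglelefteq^\L$ to $\LO$.

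For (i) $\Rightarrow$ (ii), I would simply verify that the explicit construction already described in the two paragraphs preceding Definition \ref{def:appropriate} goes through under the ccs hypothesis. Given $L \trianglelefteq^\L L' \trianglelefteq^\L L''$ with respective witnesses $(K,(L_k)_{k\in K},f)$ and $(K',(L'_{k'})_{k'\in K'},f')$, the sets $K'_k = \{k' \in K' \mid f(L_k) \cap L'_{k'} \neq \emptyset\}$ are nonempty convex subsets of $K'$ satisfying the order hypothesis of Definition \ref{def:appropriate}; the ccs property then delivers $M = \sum_{k \in K} K'_k \in \L$, and $M$ together with the partition $(L_{(k',k)})_{(k',k) \in M}$ and the composition $f' \circ f$ witnesses $L \trianglelefteq^\L L''$.

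The substantive work is in (iii) $\Rightarrow$ (i). Arguing contrapositively, assume $\L$ is not ccs and pick $K, K' \in \L$ together with nonempty convex subsets $(K'_k)_{k \in K}$ of $K'$, ordered as in Definition \ref{def:appropriate}, so that $M := \sum_{k \in K} K'_k \notin \L$. First I would observe that $\L \subseteq \Scat$: otherwise some member of $\L$ would contain $\eta$ as a suborder, so $\eta \in \L$ by downward $\preceq$-closure, which would force $\L = \Lin$ by the universality of $\eta$ for countable linear orders; but $\Lin$ is trivially ccs, a contradiction. In particular $K, K', M$ are scattered. Next, generalizing Example \ref{qo_no_trans}, I would construct $L, L', L'' \in \LO$ witnessing the failure of transitivity of $\qo$. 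The scheme is to use $\z$-blocks as ``atoms'' (ensuring the orders land in $\LO$ even when $K, K'$ are finite) together with $\1$-markers to enforce rigidity: set $L = \sum_{m \in M} \z$ and $L'' = \sum_{k' \in K'} (\z + \1)$, so that the $\z$-block structure of $L$ faithfully encodes $M$ while the markers in $L''$ prevent any convex subset of $L''$ from containing a suborder of type $\z + \z$. An intermediate $L'$ is then tailored so that $L \qo L'$ via a $K$-partition grouping the $\z$-blocks of $L$ by their $K$-coordinate and $L' \qo L''$ via a $K'$-partition that exploits the marker-free stretches of $L'$.

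The main obstacle is showing that no $Q$-partition with $Q \in \L$ can witness $L \qo L''$. The idea is that each piece of such a partition, being convex in $L$ and mapping convexly into $L''$, cannot contain a suborder of type $\z + \z$ by the marker property of $L''$; hence it can wholly contain at most one $\z$-block of $L$ (plus possibly a tail of the previous block and an initial segment of the next). A careful accounting of blocks versus pieces then forces the index $Q$ to contain a suborder order-isomorphic to $M$, whence $M \in \L$ by the downward $\preceq$-closure of $\L$, contradicting the choice of $(K'_k)_{k \in K}$. A subtlety to be managed is that adjacent $K'_k$'s can share a boundary element of $K'$, producing duplicate entries in the formal sum $M$; this is accommodated precisely by the redundant coding of $L$ via the formal $M$-indexed sum.
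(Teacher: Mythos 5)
Your implications (i)$\Rightarrow$(ii) and (ii)$\Rightarrow$(iii) coincide with the paper's, and your observation that a non-ccs $\L$ must satisfy $\L \subseteq \Scat$ is correct. For (iii)$\Rightarrow$(i), however, you take a genuinely different route from the paper --- a direct construction of a transitivity failure generalizing Example \ref{qo_no_trans} --- and as specified it breaks down. The target $L'' = \sum_{k' \in K'} (\z + \1)$ is too small: already for the basic non-ccs witness $K = \mathbf{2}$, $K' = \mathbf{2}$, $K'_0 = K'$, $K'_1 = \{\max K'\}$ (so $M \cong \mathbf{3} \notin \L_{\preceq \mathbf{2}}$), your recipe gives $L = \z \mathbf{3}$ and $L'' = (\z+\1)\mathbf{2}$, and $\z\mathbf{3}$ does not even embed into $(\z+\1)\mathbf{2}$: a nonempty subset of $(\z+\1)\mathbf{2}$ with no maximum must be cofinal in one of its two $\z$-blocks, so the three disjoint, consecutively ordered copies of $\z$ cannot all be accommodated. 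Hence no intermediate $L'$ with $L \trianglelefteq^{\L} L' \trianglelefteq^{\L} L''$ can exist. (Note that the paper's own Example \ref{qo_no_trans} uses $(\z+\1)(\mathbf{2n-1})$, i.e.\ one $(\z+\1)$-group per element of $M$, not per element of $K'$; taking $L'' = \sum_{m \in M}(\z+\1)$ repairs this particular point.) Even after that correction, the two steps you leave unspecified are exactly where the difficulty lies: the intermediate $L'$ must simultaneously admit a $K$-partition receiving $L$ and a $K'$-partition landing convexly in $L''$ while absorbing the overlaps of the $K'_k$; and the ``careful accounting'' needs, at least, that $M$ is scattered (so that any two of its elements have an adjacent pair between them, which is what produces a forbidden convex copy of $\z+\z$ inside a piece containing two full blocks) and that $\sum(\z+\1)$ has no convex subset of type $\z+\z$, $\z+\boldsymbol{\o}^*$, or $\boldsymbol{\o}+\z$, before one can extract an embedding $M \preceq Q$ from a putative witness of $L \qo L''$.

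For contrast, the paper sidesteps the construction of a scattered target entirely: after disposing of the classes $\L_{\preceq \n}$ via Example \ref{qo_no_trans} and reducing to $\Fin \subseteq \L \subseteq \Scat$, it takes $L = \sum_{k \in K} K'_k$ itself, forms $L'$ by inserting a copy of $\eta$ after each $K'_k$ that overlaps a later one, checks directly that $L \trianglelefteq^{\L} L' \trianglelefteq^{\L} \eta$, and then concludes $L \in \L$ from transitivity together with Proposition \ref{prop:crucial} (for scattered $L$, $L \trianglelefteq^{\L} \eta$ if and only if $L \in \L$). Your strategy is viable in principle and would avoid the separate treatment of $\L_{\preceq \n}$, but as written the proof has a genuine gap: the stated $L''$ is wrong, $L'$ is never produced, and the counting argument is not carried out.
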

	
\begin{proof}
We already showed that \ref{thm:ccs_trans-1} \( \Rightarrow \) \ref{thm:ccs_trans-2} in the discussion preceding Definition~\ref{def:appropriate}, while \ref{thm:ccs_trans-2} \( \Rightarrow \) \ref{thm:ccs_trans-3} is obvious, so let us prove \ref{thm:ccs_trans-3} $\Rightarrow$ \ref{thm:ccs_trans-1}.    
If \( \L = \Lin \) or \( \L = \{ \1 \} \) then \( \L \) is trivially ccs, while if \( \L \neq \{ \1 \} \) but  $\L$ does not contain all finite linear orders, then $\L = \L_{\preceq \n}$ for some $n > 1$, and hence \( \qo \) is not transitive by Example~\ref{qo_no_trans}. 
We can thus assume without loss of generality that $\L$ is such that \(\Fin \subseteq \L \subseteq \Scat\). 

Suppose that \( \qo \) is transitive:
given \(K,K' \in \L\) and \((K'_k)_{k \in K}\) such that \( \emptyset \neq K'_k \csube K'\) and \(\forall k_0,k_1 \in K\ (k_0 <_K k_1 \Rightarrow K'_{k_0} \leq_{K'} K'_{k_1})\), we want to show that $L = \sum_{k \in K} K'_k \in \L$.
If $L$ is finite then it belongs to $\L$ because \( \Fin \subseteq \L \), hence we can further assume that $L$ is infinite, i.e.\  \( L \in \LO \). 
Let \( L' = \sum_{k \in K} (K'_k+Q_k) \), where
\[  
Q_k = 
\begin{cases}
\emptyset & \text{if } K'_k \cap K'_j = \emptyset \text{ for all } j >_K k; \\
\eta & \text{otherwise}.
\end{cases}
\]
Then \( L' \in \LO \) as well, and we claim that \( L' \qo \eta \). 
To see this, let \( K'' = \bigcup_{k \in K} K'_k \subseteq K' \) (notice that in general this is \textbf{not} a disjoint union), so that \( K'' \in \L \) because \( \L \) is downward \( \preceq \)-closed. 
For each \( k' \in K'' \) let \( A_{k'} = \{ k \in K \mid k' \in K'_k \} \) and let \( L'_{k'} \) be the smallest \( L' \)-convex subset containing \( \{ (k',k) \mid k \in A_{k'} \} \), that is, the set of all \( \ell \in L' \) such that \( a_0 \leq_{L'} \ell \leq_{L'} a_1 \) for some (possibly equal) \( a_0,a_1 \in \{ (k',k) \mid k \in A_{k'} \} \). 
Then \( L'_{k'} \csube L' \) is isomorphic to \( \1 \) (if \( A_{k'} \) is a singleton), or to one of \( \eta \), \( \1 + \eta \), \( \eta+\1\), \( \1+\eta+\1 \) (if \( A_{k'} \) is not a singleton, the four cases depending on whether \( A_{k'} \) has a minimum or a maximum). 
It is easy to verify that \( (L'_{k'})_{k' \in K''} \) is a \( K'' \)-convex partition of \( L' \), and since \( L'_{k'} \trianglelefteq \eta \) because of its order type (Remark~\ref{rem:cvx_Q}), it is easy to recursively construct an embedding \( f \colon L' \to \eta \) which, together with \( K'' \in \L \) and \( (L'_{k'})_{k' \in K''}\), witnesses \( L' \qo \eta \).

Clearly \( L \qo L' \), as witnessed by \( K \in \L \) and \( (K'_k)_{k \in K} \) themselves, hence by transitivity of \( \qo \) we get \( L \qo \eta \). 
But \( L \in \Scat \) because it is a scattered sum of scattered linear orders (see \cite[Proposition 2.17]{Ros82}), thus \( L \in \L \) by Proposition~\ref{prop:crucial}.
\end{proof}

We will need the following property of ccs classes \( \L \subseteq \Lin \)
to obtain some combinatorial properties of \(\trianglelefteq^\L_{\LO}\) (Section~\ref{sec:comb}) and some lower bounds for its Borel complexity (Section~\ref{sec:borel_compl}).

\begin{thm}\label{thm:ccs_implies_star}
Let $\L \subseteq \Lin$ be ccs. If \( K \in \L \cap \LO \) and \( L_k \in \Fin \) for all \( k \in K \),
then \( \sum_{k \in K} L_k \in \L \).
\end{thm}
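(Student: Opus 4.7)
The case $\L = \Lin$ is immediate, because $\sum_{k \in K} L_k$ is then a countable linear order and hence in $\Lin = \L$. So I assume $\L \subsetneq \Lin$, which by downward closure forces $\L \subseteq \Scat$; in particular, $K$ is a countable scattered infinite linear order, and $\Fin \subseteq \L$ because every finite order embeds into $K$.

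The core plan is to witness $\sum_{k \in K} L_k$ as a ccs-sum, that is, to exhibit a $K' \in \L$ together with a family $(K'_k)_{k \in K}$ of nonempty convex subsets of $K'$ with $K'_k \cong L_k$ for each $k$ and $K'_{k_0} \leq_{K'} K'_{k_1}$ whenever $k_0 <_K k_1$. Given such data, applying ccs with first parameter $K$ and second parameter $K'$ immediately yields $\sum_{k \in K} L_k \cong \sum_{k \in K} K'_k \in \L$, as desired.

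The construction of the pair $(K', (K'_k)_{k \in K})$ is the substantive step. The first attempt, $K' = K$ with each $K'_k$ a finite convex subset of $K$ of size $|L_k|$ placed at a suitable position, succeeds whenever $K$ has enough successor-type structure (for instance, when $K$ is an ordinal, $\z$, or built from such) but fails at limit-type positions of $K$, where $K$ itself contains no finite convex subset of size $\geq 2$ near the limit. To overcome this obstacle I plan to enlarge $K$ to a richer $K' \in \L$ via iterated ccs ``blow-ups''. The basic building block is the single-element blow-up of $k^* \in K$ by a finite order $M \in \Fin$: a direct ccs-application with first parameter $K$ and second parameter $M$, using the pieces $\{\min M\}$ for $k <_K k^*$, the whole $M$ at $k = k^*$, and $\{\max M\}$ for $k >_K k^*$, produces an order in $\L$ isomorphic to $K$ with $k^*$ replaced by $M$. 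The combinatorial result of \cite{Orr95} on countable scattered linear orders then lets one coordinate infinitely many such blow-ups into a single final ccs-application delivering the required $K' \in \L$ simultaneously hosting all the $L_k$'s as convex subsets in the order prescribed by $K$.

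I expect the main obstacle to be precisely this simultaneity: packing the infinitely many pieces $(L_k)_{k \in K}$ as convex subsets of one common $K' \in \L$ in the correct order requires exploiting the endpoint-sharing flexibility of the ccs definition, so that the disjoint sum $\sum_{k \in K} K'_k$ still reproduces $\sum_{k \in K} L_k$ even when consecutive pieces $K'_k, K'_{k+1}$ share an endpoint inside $K'$.
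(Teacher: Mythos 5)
Your overall plan---reduce to producing a single $K' \in \L$ containing convex pieces $K'_k \cong L_k$ arranged as prescribed by $K$, then apply ccs once---is sound, and your single-element blow-up is a correct ccs application; it is essentially the same endpoint-repetition trick the paper uses for its finite correction step. The gap is exactly where the work should happen: the sentence claiming that the result of \cite{Orr95} ``lets one coordinate infinitely many such blow-ups into a single final ccs-application'' is not an argument, and read literally it cannot be made into one. Each blow-up is a separate application of ccs producing a new member of $\L$; ccs only ever combines one index order with one target order at a time, so there is no mechanism for iterating or ``coordinating'' infinitely many blow-ups, and this is precisely the simultaneity problem you yourself flag as the main obstacle without resolving it.

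The correct role of Orr's result is to make infinitely many blow-ups unnecessary rather than to organize them: \cite[Proposition~2]{Orr95} yields a decomposition $K \iso \sum_{k\in K} L'_k$ into nonempty convex pieces with $|L'_k| \geq |L_k|$ for all but finitely many $k$. Hence $\sum_{k\in K} L''_k$, where $L''_k = L_k$ except at the finitely many exceptional positions where one puts $L''_k = \1$, embeds into $K$ and so lies in $\L$ by downward $\preceq$-closure alone---no ccs is needed for the infinite part. Only the finitely many exceptional singletons must then be blown up, which your building block (or, as in the paper, a single ccs application with a finite index order that repeats each exceptional singleton the required number of times) accomplishes. With this correction your argument becomes essentially the paper's proof; without it, the key infinite step is missing. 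A minor further point: the relevant hypothesis for Orr's proposition is that $K$ is countably infinite, not that it is scattered, so your restriction to $\L \subseteq \Scat$ is harmless but is not what makes the step go through.
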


\begin{proof}
The statement is vacuously true if \( \L = \{ \1 \} \) or \( \L = \Fin \), hence we can assume that \( \Fin \subsetneq \L\).
Let \(K \in \L \cap \LO\) and \((L_k)_{k \in K}\) be as in the statement.
%
Applying~\cite[Proposition 2]{Orr95}, we have that \(K \iso \sum_{k \in K} L'_k\), where \(L'_k\) is nonempty for all \(k \in K\), and \(|L'_k|\geq |L_k|\) for all but finitely many \(k\). 
By the downward \( \preceq \)-closure of \(\L\) there exists \(A \subseteq K\) finite such that \(\sum_{k \in K} L''_k \in \L\), where \(L''_k=\1\) if \(k \in A\) and \(L''_k=L_k\) otherwise.
If $A = \emptyset$ then \(\sum_{k \in K} L_k = \sum_{k \in K} L''_k \in \L\) and we are done.

If $A \neq \emptyset$, let \(a_0\), \dots, \(a_n\) be the elements of \(A\) listed increasingly with respect to \( \leq_K \), and let $b_i = |L_{a_i}| \geq 1$.
Let \(K'=\big( \sum_{i=0}^n b_i \big) + \1\) so that \(K' \in \Fin \subseteq \L\). 
Consider the following convex subsets of \(\sum_{k \in K} L''_k\), indexed by \(K'\):
\begin{align*}
N_{(0,0)} & = \sum_{k \leq_K a_0} L''_k\\ 
N_{(0,i+1)} & = \sum_{a_{i}<_K k \leq_K a_{i+1}} L''_k \quad  \text{ for all } i <n,\\
N_{(j,i)} & = L''_{a_i} = \1 \qquad \text{ for all } i \leq n \text{ and } 0<j< b_i,\\
N_{\max K'} & = \sum_{a_n <_K k} L''_k.
\end{align*}
The set \( N_{\max K'}\) might be empty, so consider \( K'' = \{ k' \in K' \mid N_{k'} \neq \emptyset \} \in \L \).
Applying the ccs property to \(K''\) and the subsets \((N_{k''})_{k'' \in K''}\) of \(\sum_{k \in K} L''_k\), we obtain that \(\sum_{k'' \in K''} N_{k''} \in \L\).
Since \(\sum_{k'' \in K''} N_{k''} \iso \sum_{k \in K} L_k \), we again have \(\sum_{k \in K} L_k \in \L\).
\end{proof}

\section{Some technical lemmas} \label{sec:technical}

In this section, we prove some technical results that will be useful later on. 
Although we will mostly apply them when \( \L \) is ccs, we prove them in full generality.
A subset \( A \subseteq M \) of a linear order \( M \) is \textbf{inherently cofinal} if for every embedding \( f \colon A \to M \)  the image of \( f(A) \) is cofinal in \( M \). 
Notice that if \( M \) is either \( \z \) or a well-order of length an infinite cardinal \( \kappa \), then every tail \( [m_0,+\infty)_M\) of \( M \) is inherently cofinal. 
The following proposition was already noticed in~\cite{IMMRW22} for the special case \( M = \z \) and \( \L = \{ \1 \} \).

\begin{prop}\label{prop:fin_zeta_l}
Suppose that the linear order \( M \) has an inherently cofinal tail \( [m_0,+\infty)_M \). Then for every downward \( \preceq \)-closed \( \L \subseteq \ALin \) and all linear orders \( L\) and \( L' \) we have $M L \trianglelefteq^\L M L'$ if and only if $L \trianglelefteq^\L L'$.
\end{prop}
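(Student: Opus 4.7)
The plan is to prove the two directions of the biconditional separately.

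The direction $(\Leftarrow)$ follows from a direct product construction: given a witness consisting of $K \in \L$, a $K$-convex partition $(L_k)_{k \in K}$ of $L$, and an embedding $f \colon L \to L'$ with each $f(L_k) \csube L'$, set $g(m,\ell) := (m, f(\ell))$ and $N_k := \{(m,\ell) \in ML : \ell \in L_k\}$. Then $g \colon ML \to ML'$ is an embedding, $(N_k)_{k \in K}$ is a $K$-convex partition of $ML$, and each $g(N_k) = \{(m,\ell') \in ML' : \ell' \in f(L_k)\}$ is convex in $ML'$ precisely because $f(L_k)$ is convex in $L'$.

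For $(\Rightarrow)$, given $K \in \L$, a $K$-convex partition $(N_k)_{k \in K}$ of $ML$, and an embedding $g \colon ML \to ML'$ with each $g(N_k) \csube ML'$, write $M_\ell$ and $M'_{\ell'}$ for the $\ell$-th and $\ell'$-th copies of $M$ inside $ML$ and $ML'$ respectively. Define $f \colon L \to L'$ by letting $f(\ell)$ be the unique $\ell' \in L'$ with $g(m_0, \ell) \in M'_{\ell'}$; monotonicity of $g$ makes $f$ weakly order-preserving. The inherent cofinality of the tail $T := [m_0, +\infty)_M$ upgrades this to strict order-preservation: if $\ell_1 <_L \ell_2$ and $f(\ell_1) = f(\ell_2) = \ell^*$, then the image under $g$ of the tail $T_{\ell_1} := \{(m,\ell_1) : m \geq_M m_0\}$ sits weakly above $g(m_0,\ell_1) \in M'_{\ell^*}$ and strictly below $g(m_0,\ell_2) \in M'_{\ell^*}$, hence entirely inside $M'_{\ell^*}$; inherent cofinality then forces $g(T_{\ell_1})$ to be cofinal in $M'_{\ell^*}$, contradicting the strict upper bound $g(m_0,\ell_2)$. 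I then set $L_k := \{\ell \in L : (m_0,\ell) \in N_k\}$ and $K_0 := \{k \in K : L_k \neq \emptyset\}$; downward $\preceq$-closure of $\L$ gives $K_0 \in \L$, and $(L_k)_{k \in K_0}$ is a $K_0$-convex partition of $L$ since the $N_k$'s partition $ML$ and respect the ordering.

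The main obstacle is to show that each $f(L_k)$ is convex in $L'$. Given $\ell_1, \ell_2 \in L_k$ and $\ell' \in L'$ with $f(\ell_1) <_{L'} \ell' <_{L'} f(\ell_2)$, one must produce $\ell \in L_k$ with $f(\ell) = \ell'$. The convexity of $g(N_k)$ in $ML'$, together with $g(m_0,\ell_i) \in M'_{f(\ell_i)}$, forces $M'_{\ell'} \subseteq g(N_k)$, so that $g^{-1}(M'_{\ell'})$ is a nonempty convex subset of $N_k$. The task is then to locate inside $g^{-1}(M'_{\ell'})$ a point of the form $(m_0, \ell)$; following the strategy employed for the case $M = \z$ and $\L = \{\1\}$ in \cite{IMMRW22}, one analyses how $g$ distributes the copies $M_\ell$ meeting $N_k$ across the copies $M'_{\ell''}$ they intersect, and uses inherent cofinality of the tail together with the convexity of $g(N_k)$ in $ML'$ to rule out any configuration in which $M'_{\ell'}$ is covered only by pieces $g(M_\ell \cap N_k)$ with $f(\ell) \neq \ell'$. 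This careful bookkeeping is where the inherent cofinality hypothesis does most of its work.
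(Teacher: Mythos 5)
Everything you actually write out agrees with the paper's proof: the product construction for the backward direction, the definition of $f$ by looking at where $g(m_0,\ell)$ lands, the injectivity argument via inherent cofinality of the tail, and the induced partition $(L_k)_{k\in K_0}$ with $K_0\in\L$ by downward $\preceq$-closure are all exactly the steps the paper takes. The problem is that the step you yourself identify as ``the main obstacle'' --- convexity of $f(L_k)$ in $L'$ --- is never proved. After correctly observing that $M'_{\ell'}\subseteq g(N_k)$, you describe a strategy (``one analyses how $g$ distributes the copies \dots and uses inherent cofinality \dots to rule out any configuration \dots'') without giving the argument. Since this is where almost all of the content of the proposition lies, the proposal as written has a genuine gap.

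For the record, the paper closes it with the same two-sided cofinality trick you already used for injectivity, applied to one specific $\ell$. Given $\ell_1,\ell_2\in L_k$ and $\ell'$ with $f(\ell_1)<_{L'}\ell'<_{L'}f(\ell_2)$, the point $(m_0,\ell')$ lies strictly between $g(m_0,\ell_1)$ and $g(m_0,\ell_2)$, both in the convex set $g(N_k)$, so $g^{-1}(m_0,\ell')$ exists, lies in $N_k$, and belongs to $M_\ell$ for some $\ell\in[\ell_1,\ell_2]_L\subseteq L_k$. One then shows $f(\ell)=\ell'$. If $\ell'<_{L'}f(\ell)$, then every $(m,\ell')$ with $m\geq_M m_0$ lies between $g(m_0,\ell_1)$ and $g(m_0,\ell)$, hence is in $g(N_k)$, and its $g$-preimage lies between $g^{-1}(m_0,\ell')\in M_\ell$ and $(m_0,\ell)\in M_\ell$; thus $g^{-1}$ restricted to the tail $[m_0,+\infty)_M\times\{\ell'\}$ is an embedding into $M_\ell$ whose range is bounded strictly above by $(m_0,\ell)$, contradicting inherent cofinality. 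The case $f(\ell)<_{L'}\ell'$ is symmetric, with $g^{-1}(m_0,\ell')$ now witnessing that the image of the relevant tail of $M'_{f(\ell)}$ is not cofinal in $M_\ell$. So the missing ``bookkeeping'' is short and uses exactly the tools you named, but it is the heart of the proof and must be carried out; in particular, the choice of $\ell$ as the second coordinate of $g^{-1}(m_0,\ell')$ is the idea your sketch does not supply.
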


\begin{proof}
For the nontrivial direction, suppose that $M L \trianglelefteq^\L M L'$ as witnessed by $K \in \L$, the $K$-convex partition $(L_k)_{k \in K}$ of $M L$ and $f\colon M L \to M L'$. 
For every $k \in K$, let $\Tilde{L}_k = \{\ell \in L \mid (m_0,\ell) \in L_k\}$. Let also $\Tilde{K} = \{k \in K \mid \Tilde{L}_k \neq \emptyset\}$, so that $\Tilde{K} \in \L$ because the latter is downward \( \preceq \)-closed. 
Define the map $g\colon L \to L'$ by setting $g(\ell)=\ell'$ if and only if $\ell' \in L'$ is such that  $f(m_0,\ell) \in M \times \{\ell'\}$. 
We claim that $\Tilde{K}$, $(\Tilde{L}_k)_{k \in \Tilde{K}}$ and $g \colon L \to L'$ witness $L \trianglelefteq^\L L'$.

It is easy to see that \( (\Tilde{L}_k)_{k \in \Tilde{K}} \) is a \( \Tilde{K}\)-convex partition of \( L \), and that \( g \) is order-preserving since \( f \) was. 
To see that \( g \) is also injective, consider any $\ell_0,\ell_1 \in L$ with $\ell_0<_L \ell_1$. If \( g(\ell_0) = g(\ell_1)  \), then \( f([m_0,+\infty)_M \times \{ \ell_0 \}) \) would be a non-cofinal subset of \( M \times \{ g(\ell_0) \} \) (as witnessed by \( f(m_0,\ell_1) \)), contradicting the fact that \( [m_0,+\infty)_M \) was inherently cofinal in \( M \). 
This shows that \( g \) is an embedding. It remains to show that $g(\Tilde{L}_k) \csube L'$ for all \( k \in \Tilde{K} \). Fix $\ell_0,\ell_1 \in \Tilde{L}_k$ and $\ell' \in L'$ such that $g(\ell_0) <_{L'} \ell' <_{L'} g(\ell_1)$: our goal is to show that \( \ell' = g(\ell) \) for some \( \ell \in \Tilde{L}_k\).
Since $f(L_k) \csube M L'$, there is \( \ell \in [ \ell_0 , \ell_1]_L \csube \Tilde{L}_k \) such that \( f^{-1}(m_0,\ell') \in M \times \{ \ell \}\): we claim that \( g(\ell) = \ell' \). 
Suppose towards a contradiction that \( \ell' <_{L'} g(\ell) \), which together with \( \ell_0 \leq_L \ell \leq_L \ell_1 \) implies \( (m_0,\ell_0) \leq_{ML} f^{-1}(m_0,\ell') <_{ML} (m_0, \ell) \leq_{ML} (m_0,\ell_1) \). Since \( [ (m_0,\ell_0) , (m_0,\ell_1)]_L \csube L_k \) and \( f(L_k) \csube M L' \), we get that \( f^{-1} \restriction ([m_0,+\infty)_M \times \{ \ell' \})\) is a well-defined embedding of \( [m_0,+\infty)_M \times \{ \ell' \} \) into \( M \times \{ \ell \} \) with a non-cofinal range (as witnessed by \( (m_0,\ell) \)), against the fact that \( [m_0,+\infty)_M \) was inherently cofinal.
The case \( g(\ell) <_{L'} \ell' \) is symmetric: in this case the range of the embedding obtained by restricting \( f^{-1} \) to \( [m_0,+\infty)_M \times \{ g(\ell) \}\) would not be cofinal in \( M \times \{ \ell \} \) (as witnessed by \( f^{-1}(m_0,\ell') \)), a contradiction. 
Therefore we must conclude that \( g(\ell) = \ell' \), as desired.
\end{proof}

The next result plays a crucial role in transferring some of the properties of $\cvx$ uncovered in~\cite{IMMRW22} to the more general context of an arbitrary $\qo$.

\begin{prop}\label{prop:nqo}
Let $\L \subseteq \ALin$ be downward \( \preceq \)-closed, and let \(L \), \( L' \) and \( M \) be linear orders with $M \notin \L$. If \(LM \trianglelefteq^\L L'\) then \(L \trianglelefteq L'\).
\end{prop}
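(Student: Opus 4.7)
The plan is to unpack a witness of $LM \trianglelefteq^\L L'$---a $K \in \L$, a $K$-convex partition $(N_k)_{k\in K}$ of $LM$, and an embedding $f\colon LM \to L'$ with $f(N_k) \csube L'$---and to locate some copy $L \times \{m^*\}$ of $L$ inside $LM$ whose image $f(L \times \{m^*\})$ is convex in $L'$: the restriction of $f$ to such a copy will then witness $L \trianglelefteq L'$. The key auxiliary object is, for each $m \in M$, the set $K'_m = \{k \in K \mid N_k \cap (L \times \{m\}) \neq \emptyset\}$, which is a nonempty convex subset of $K$. Since the $N_k$'s are ordered as in $K$, one has $K'_m \leq_K K'_{m'}$ whenever $m <_M m'$; and any element belonging to both $K'_m$ and $K'_{m'}$ must then simultaneously equal $\max K'_m$ and $\min K'_{m'}$, so $|K'_m \cap K'_{m'}| \leq 1$.

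The argument then splits on whether some $m^* \in M$ satisfies $|K'_{m^*}| = 1$. If so, writing $K'_{m^*} = \{k^*\}$ yields $L \times \{m^*\} \subseteq N_{k^*}$, and a direct check combining the convexity of $f(N_{k^*})$ in $L'$, the convexity of $L \times \{m^*\}$ in $LM$, and the fact that $f$ is an embedding shows that $f(L \times \{m^*\})$ is itself convex in $L'$, yielding $L \trianglelefteq L'$.

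The real work, and the only part that uses $M \notin \L$, is the complementary case where $|K'_m| \geq 2$ for every $m \in M$. The strategy is to construct a strict order-embedding $\sigma \colon M \to K$, so that $M \preceq K \in \L$ combined with the downward $\preceq$-closure of $\L$ forces $M \in \L$, contradicting the hypothesis. I would define $\sigma(m)$ to be any element of $K'_m$ distinct from $\max K'_m$ when that maximum exists, and an arbitrary element of $K'_m$ otherwise; the assumption $|K'_m| \geq 2$ guarantees such a choice is always available. Strictness of $\sigma$ splits into two subcases: if $K'_m$ admits a maximum $k^*$, then $\sigma(m) <_K k^* \leq_K \sigma(m')$, using that $k^* \leq_K$ every element of $K'_{m'}$; if instead $K'_m$ has no maximum, then $K'_m \cap K'_{m'} = \emptyset$ (any shared element would have to be $\max K'_m$), and $\sigma(m) <_K \sigma(m')$ follows directly from $K'_m \leq_K K'_{m'}$ together with $\sigma(m) \neq \sigma(m')$. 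The principal subtlety is precisely this simultaneous handling of the two subcases: ensuring that the possible absence of a maximum in $K'_m$ does not spoil the injectivity of $\sigma$.
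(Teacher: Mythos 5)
Your proposal is correct and follows essentially the same route as the paper's proof: the same auxiliary sets $K'_m$, the same dichotomy on whether some $K'_m$ is a singleton, and the same construction of an embedding $M \to K$ (avoiding $\max K'_m$) to contradict $M \notin \L$ via downward $\preceq$-closure. You merely spell out the injectivity check that the paper leaves as ``easy to see''.
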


 \begin{proof}
Suppose that \( K \in \L \), \( (L_k)_{k \in K} \) and \( f \colon LM \to L' \) witness \(LM \trianglelefteq^\L L'\). 
For each \( m \in M \) set \( K_m = \{ k \in K \mid L_k \cap (L \times \{ m \}) \neq \emptyset \} \). If one of the sets \( K_m \) is a singleton \( \{ k \} \), then \( L \times \{ m \} \subseteq L_k \), hence $L \cong L \times \{m\} \csube L_k \trianglelefteq L'$ and we are done.
Otherwise, each \( K_m \) has at least two elements. 
In particular, this entails that $K_{m_0} \leq_{K} K_{m_1}$ if and only if $m_0 <_M m_1$. 
Now define $g \colon M \to K$ by letting $g(m)$ be an element of $K_m$ distinct from its maximum (if the latter exists). 
It is easy to see that $g$ is an embedding, which is against the hypothesis \( M \notin \L \) because \( K \in \L \).
\end{proof}

\begin{cor}\label{cor:eta}
Let $\L \subseteq \AScat$ be downward \( \preceq \)-closed. For all linear orders \(L \) and \( L' \), we have that \(L \eta \trianglelefteq^\L L'\) if and only if \(L \eta \trianglelefteq L'\).
\end{cor}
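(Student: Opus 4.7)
The forward direction of the biconditional is essentially free, since $\{\1\} \subseteq \L$ implies that $\trianglelefteq$ refines $\trianglelefteq^\L$; so I would immediately reduce to proving that $L \eta \trianglelefteq^\L L'$ implies $L \eta \trianglelefteq L'$.

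The key observation I would exploit is the self-product property $\eta \cong \eta \eta$: combined with associativity of the product of linear orders (which holds since $A(BC) = \sum_{(b,c) \in BC} A = \sum_{c \in C} \sum_{b \in B} A = \sum_{c \in C} AB = (AB)C$), this yields the isomorphism
\[
L \eta \;\cong\; L(\eta \eta) \;\cong\; (L \eta)\eta.
\]
From here, the hypothesis $L\eta \trianglelefteq^\L L'$ can be rewritten as $(L\eta)\eta \trianglelefteq^\L L'$.

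At this point the plan is simply to invoke Proposition~\ref{prop:nqo} with the roles of $L$ and $M$ in that statement played by $L\eta$ and $\eta$, respectively. The hypothesis of that proposition that $M \notin \L$ is satisfied because we have assumed $\L \subseteq \AScat$ and $\eta$ is not scattered, so $\eta \notin \L$. The conclusion of Proposition~\ref{prop:nqo} then delivers exactly $L\eta \trianglelefteq L'$, as desired.

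There is no genuine obstacle: the entire content is the bookkeeping move of absorbing an extra copy of $\eta$ into the first factor so that Proposition~\ref{prop:nqo} can be applied with $M = \eta$. The only thing one needs to be careful about is making sure associativity of the product is invoked correctly and that $\eta \eta \cong \eta$ is legitimately used — both are standard facts about $\eta$ and the antilexicographic product defined in Section~2.
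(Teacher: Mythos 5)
Your proof is correct and essentially identical to the paper's: both absorb an extra copy of \( \eta \) via \( \eta\eta \cong \eta \) to get \( L\eta \cong (L\eta)\eta \), and then apply Proposition~\ref{prop:nqo} with \( M = \eta \notin \L \) (which holds since \( \L \subseteq \AScat \)). One labelling quibble: the implication that is free because \( \trianglelefteq \) refines \( \trianglelefteq^\L \) is \( L\eta \trianglelefteq L' \Rightarrow L\eta \trianglelefteq^\L L' \), i.e.\ the right-to-left direction of the biconditional rather than the ``forward'' one, but you correctly identify the remaining implication to prove, so this does not affect the argument.
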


\begin{proof}
Since $\eta \eta \cong \eta$, if \(L \eta \trianglelefteq^\L L'\) then also $(L \eta) \eta \trianglelefteq^\L L'$, hence \(L \eta \trianglelefteq L'\) by Proposition~\ref{prop:nqo}. The other direction is trivial.
\end{proof}
 
The following proposition will be used to generalize part~\ref{thm:complexity_cvxeq-a} of  Theorem~\ref{thm:complexity_cvxeq}.

\begin{prop}\label{thm:cong_qo}
    Let $\L \subsetneq \ALin$ be downward $\preceq$-closed, and let $M \notin \L$. 
    The map $\varphi$ sending each linear order $L$ to $\varphi(L)=(\1+\z L + \1) M$ is such that ${L \cong L'} \iff {\varphi(L) \trianglelefteq^\L \varphi(L')} \iff {\varphi(L) \mathrel{\underline{\bowtie}}^{\L}  \varphi(L')}$.
\end{prop}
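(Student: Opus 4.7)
The plan is to prove the cyclic chain
\[
L \cong L' \Rightarrow \varphi(L) \mathrel{\underline{\bowtie}}^{\L} \varphi(L') \Rightarrow \varphi(L) \trianglelefteq^{\L} \varphi(L') \Rightarrow L \cong L'.
\]
The first arrow is immediate since isomorphic linear orders are $\trianglelefteq^{\L}$-equivalent (take $K = \1$), and the second is trivial by definition of $\mathrel{\underline{\bowtie}}^{\L}$. The content lies in the third. So suppose $\varphi(L) \trianglelefteq^{\L} \varphi(L')$. Applying Proposition~\ref{prop:nqo} with $\1 + \z L + \1$ in the role of $L$ and our fixed $M \notin \L$ in the role of $M$, we obtain an ordinary convex embedding $g \colon A \to A' M$, where we abbreviate $A = \1 + \z L + \1$ and $A' = \1 + \z L' + \1$.

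The technical core is to show that $g$ induces an isomorphism $A \cong A'$. The essential tool is that a convex embedding preserves immediate successors in the following strong form: if $x$ has an immediate successor $x^+$ in $A$, then $g(x^+)$ is the immediate successor of $g(x)$ in all of $A'M$, not just within $g(A)$, because any strictly intermediate element of $A'M$ would by convexity lie in $g(A)$ and produce a preimage strictly between $x$ and $x^+$; symmetrically for predecessors. In $A$, the only elements lacking an immediate successor or predecessor are $\min A$ and $\max A$. In $A'M$, the pair $(a,m)$ lacks an immediate successor precisely when either $a = \min A'$, or $a = \max A'$ and $m$ has no $M$-immediate successor (symmetrically for predecessors). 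Combining these, $g(\min A) = (a_0, m_0)$ with $a_0 \in \{\min A', \max A'\}$; the case $a_0 = \max A'$ (which forces $m_0$ to have no $M$-successor) is ruled out by a case split on $g(\max A) = (a_1, m_1)$: if $m_0 = m_1$ the image is a singleton, while if $m_0 < m_1$ then some $m \in (m_0, m_1)_M$ exists (using that $m_0$ has no $M$-successor) and the preimage of $(\min A', m) \in g(A)$ must be $\min A$ or $\max A$, both incompatible with the assumed form of $g(\min A)$ and $g(\max A)$. Hence $g(\min A) = (\min A', m_0)$, and symmetrically $g(\max A) = (\max A', m_1)$. If further $m_0 < m_1$, then $(\max A', m_0) \in g(A)$ but its $A'M$-predecessor (when it exists) lies strictly below $g(\min A) = (\min A', m_0)$, so by the contrapositive of predecessor preservation its preimage must again be $\min A$ or $\max A$, both contradicting $m_0 < m_1$. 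Therefore $m_0 = m_1$, the image fills the single copy $A' \times \{m_0\}$ since it contains both $\min A'$ and $\max A'$, and $g$ is an isomorphism $A \cong A'$.

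Stripping the unique extrema from $A \cong A'$ yields $\z L \cong \z L'$. In any order of the form $\z N$, the relation ``$x$ and $y$ have finitely many elements between them'' is an isomorphism-invariant equivalence relation whose classes are exactly the $\z$-summands, so the induced quotient is order-isomorphic to $N$; hence $\z L \cong \z L'$ gives $L \cong L'$, closing the cycle. The main obstacle is the case analysis locating $g(\min A)$ and $g(\max A)$: one must track all four combinations of $\min A'$ and $\max A'$ against the second coordinates, paying attention to whether the relevant points of $M$ admit $M$-immediate successors and predecessors. The reasoning is elementary, via the contrapositive of successor-predecessor preservation, but the bookkeeping is delicate.
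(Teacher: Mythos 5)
Your proof is correct and follows essentially the same route as the paper: reduce to a convex embedding $g \colon \1+\z L+\1 \trianglelefteq \varphi(L')$ via Proposition~\ref{prop:nqo}, pin down the images of the two endpoints through preservation of immediate successors/predecessors under convex embeddings, conclude $\1+\z L+\1 \cong \1+\z L'+\1$, and cancel the $\z$'s (the paper cites \cite[Lemma 2.14]{IMMRW22} for this last step, which you prove inline; you also spell out the endpoint case analysis that the paper compresses into one sentence). The only local flaw is the justification in the $m_0<m_1$ case: the immediate predecessor of $(\max A', m_0)$ in $A'M$ does not ``lie strictly below $g(\min A)$'' --- it simply never exists, because $\1+\z L'$ has no maximum --- but that is exactly what your own characterization of predecessor-free points of $A'M$ gives, so the contradiction you derive is valid and the argument stands.
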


\begin{proof} 
Obviously, if $L \cong L'$ then $\varphi(L) \cong \varphi(L')$ and, a fortiori, $\varphi(L) \mathrel{\underline{\bowtie}^{\mathcal{L}}} \varphi(L')$, which in turn implies \( \varphi(L) \trianglelefteq^\L \varphi(L') \). 
So it remains to show that if \( \varphi(L) \trianglelefteq^\L \varphi(L') \), then \( L \cong L'\). Since $M \notin \L$, by Proposition~\ref{prop:nqo} we obtain from \( \varphi(L) \trianglelefteq^\L \varphi(L') \) that $\1+\z L + \1 \trianglelefteq \varphi(L')$. 
Let \( g \) witness this. 
Since the $\mathbf{1}$'s are the only elements that do not have immediate predecessor and successor both in $\mathbf{1}+\z L+\mathbf{1}$ and in $\varphi(L')$, we have that the two $\mathbf{1}$'s in $\mathbf{1}+\z L+\mathbf{1}$ are mapped by \( g \) into the two \( \1 \)'s of \( (\1 + \z L' + \1) \times \{ m \} \) for some \( m \in M\), hence \( g( \1 + \z L + \1) = (\1 + \z L' + \1) \times \{ m \} \) and $\mathbf{1}+\z L+\mathbf{1} \cong \mathbf{1}+\z L'+\mathbf{1}$. But then $\z L \cong \z L'$, and so $L \cong L'$ by \cite[Lemma 2.14]{IMMRW22}.
\end{proof} 

Let \( D \in \ADLO \) and \( f \colon D \to \ALin \).
As in~\cite{IMMRW22}, the following standard construction of linear orders \( D^f \) in which one replaces each $d \in D$ with the linear order $f(d)$ plays a central role, especially when \( D = \eta \).
Notice that each $D^f$ is not scattered, and if \( D = \eta \) then \( \eta^f \) contains a copy of $\Q$ which is both coinitial and cofinal in it. 
Actually, it follows from a classic result of Hausdorff (see e.g.\ \cite[Theorem 4.9]{Ros82}) that every \emph{countable} linear order which has no scattered initial and final segments is of the form $\eta^f$ for some $f \colon \Q \to \Scat$.


\begin{defn}\label{eta_f}
Let \( D \in \ADLO \).
Given a map \(f \colon D \rightarrow \ALin\), let $D^f$ be the linear order $\sum_{d \in D} f(d)$, i.e.\ the set \(\{(\ell,d) \mid d \in D \text{ and } \ell \in f(d)\}\) ordered antilexicographically. 
When $A \csube D$ we let $D^f_A  = \sum_{d \in A} f(d)$ be the restriction of $D^f$ to \(\{(\ell,d) \in D^f \mid d \in A \}\).
\end{defn}



To simplify the notation, when \( A = (d_0,d_1)_D \) we write \( D^f_{(d_0,d_1)} \) instead of \( D^f_{(d_0,d_1)_D} \). Also, when \( D = \eta \) and $I \csube \R$, with a minor abuse of notation we write $\eta^f_I$ in place of $\eta^f_{I \cap \Q}$.
A crucial property of linear orders of the form \( D^f_A \) is that if \( L \csube D^f_A \) is such that its projection on the second coordinate has more than one element, then \( L \) is not scattered.

\begin{lem}\label{lem:loc_not_scat}
Let \( D, D_0, D_1 \in \ADLO \).

\begin{enumerate}[label={\upshape (\alph*)}, leftmargin=2pc]
\item \label{lem:loc_not_scat-a}
Let \( f \colon D \to \ALin \), \(K \in \AScat\), and let \((L_k)_{k \in K}\) be a $K$-convex partition of \(D^f\). Then there exist \(k \in K\) and \( d_0,d_1 \in D \) with \(d_0 <_D d_1\) such that $D^f_{(d_0,d_1)} \csube L_k$. 

\item \label{lem:loc_not_scat-b}
For \( i \in \{ 0,1 \} \),
let \( f_i \colon D_i \to \AScat \). Let \( h \colon D_0^{f_0} \to D_1^{f_1} \) witness \( D_0^{f_0} \trianglelefteq D_1^{f_1} \). Then there is \( A \csube D_1 \) and an order-preserving bijection \( g \colon D_0 \to A \) such that \( h(f_0(d) \times \{ d\} ) = f_1(g(d)) \times \{ g(d) \} \) for all \( d \in D_0 \). In particular, for every \( d \in D_0 \) there is \( d' \in A \subseteq D_1 \) such that \( f_0(d) \cong f_1(d') \).

\item \label{lem:loc_not_scat-c} 
Let \( \L \subseteq \AScat \), and for \( i \in \{ 0,1 \} \) let \( f_i \colon D_i \to \AScat \) be such that \( f_i^{-1}(L)\) is dense in \( D_i \) for every \( L \in f(D_i) \). 
If \( {D_0^{f_0} \trianglelefteq^\L D_1^{f_1}} \), then \( f_0(D_0) \) and \( f_1(D_1)\) contain the same linear orders up to isomorphism.
\end{enumerate}
In the special case \( D = D_0 = D_1 =  \eta \) we also have:
\begin{enumerate}[label={\upshape (\alph*)}, leftmargin=2pc, resume]
\item \label{lem:loc_not_scat-d} 
Let \( f \colon \Q \to \ALin \) be such that \( f^{-1}(L)\) is dense in \( \Q \) for every \( L \in f(\Q) \). Then \( \eta^f_{(r_0,r_1)} \cong \eta^f\) for every \( r_0,r_1 \in \R \) with \( r_0 < r_1\).
Moreover, for every \( \L \subseteq \AScat \) and every \( L \in \ALin \), we have \( \eta^f \trianglelefteq^\L L \) if and only if \( \eta^f \trianglelefteq L\).

\item \label{lem:loc_not_scat-e} 
Let \( \L \subseteq \AScat \) and \( f_0,f_1 \colon \Q \to \AScat \) be as in part~\ref{lem:loc_not_scat-c}. Then \( {\eta^{f_0} \trianglelefteq^{\L} \eta^{f_1}} \iff {\eta^{f_0} \trianglelefteq \eta^{f_1}} \iff {\eta^{f_0} \cong \eta^{f_1}} \iff f_0(\Q) \) and \( f_1(\Q)\) contain the same linear orders up to isomorphism.

\end{enumerate}
\end{lem}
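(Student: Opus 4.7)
The plan is to prove the five parts in order, since each later part builds on the previous ones; (a) and (b) carry out the combinatorial heavy lifting, while (c)--(e) combine them with the density hypothesis. For (a), I would work with the projection $\pi \colon D^f \to D$ onto the index coordinate. If some $\pi(L_k)$ contains two points $d_0 <_D d_1$, then for every $d \in (d_0,d_1)_D$ and $\ell \in f(d)$ the element $(\ell,d)$ is sandwiched between witnesses for $d_0, d_1 \in \pi(L_k)$, so by convexity of $L_k$ the entire slab $D^f_{(d_0,d_1)}$ lies in $L_k$. Otherwise every $\pi(L_k)$ is a singleton or empty, and picking for each $d \in D$ some $k$ with $d \in \pi(L_k)$ yields an order-preserving injection $D \to K$; restricting to an embedded copy of $\eta \hookrightarrow D$ contradicts $K \in \AScat$.

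For (b), the crucial step is that $h$ must send each fiber $f_0(d) \times \{d\}$ into a single fiber of $D_1^{f_1}$. Otherwise the image would meet two fibers over points $d'_0 <_{D_1} d'_1$, and convexity of $h(D_0^{f_0})$ together with density of $D_1$ would force every fiber over $(d'_0,d'_1)_{D_1}$ to sit inside $h(f_0(d) \times \{d\})$, embedding $\eta$ into the scattered order $f_0(d)$. Granted this, define $g(d)$ as the unique fiber index of $h(f_0(d) \times \{d\})$; then $g$ is an order-preserving injection $D_0 \to D_1$, and $A := g(D_0)$ is convex in $D_1$ by a similar convex-hull argument. The inclusion $h(f_0(d) \times \{d\}) \subseteq f_1(g(d)) \times \{g(d)\}$ is then promoted to equality: any missed point $x'$ in the target fiber is sandwiched between image elements using that $D_0$ has no endpoints, whence convexity of the image gives $x' = h(z)$ for some $z$, and injectivity of $g$ forces $z \in f_0(d) \times \{d\}$.

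For (c), apply (a) to the witnessing $K$-convex partition (legitimate since $K \in \L \subseteq \AScat$) to obtain a slab $D_0^{f_0}_{(d_0,d_1)} \csube L_k$; viewing $(d_0,d_1)_{D_0}$ as a DLO $D'_0 \in \ADLO$ with labelling $f_0 \restriction D'_0$, part (b) applied to the restricted $h$ produces an order-isomorphism $g \colon D'_0 \to A \csube D_1$ with $f_0(d) \cong f_1(g(d))$ for all $d \in D'_0$. Density of each $f_0^{-1}(L)$ in $D_0$ yields $f_0(D'_0) = f_0(D_0)$ up to isomorphism, giving $f_0(D_0) \subseteq f_1(D_1)$; the reverse inclusion follows because $A$, being convex in $D_1$ and infinite, contains an open interval of $D_1$ on which the density hypothesis for $f_1$ realizes every label. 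For (d), the first statement is a standard back-and-forth between the labeled countable DLOs underlying $\eta^f$ and $\eta^f_{(r_0,r_1)}$, both having each label from $f(\Q)$ appearing densely; the ``moreover'' clause uses (a) to extract a slab from the witnessing partition, which by the first part is itself isomorphic to $\eta^f$, reducing $\trianglelefteq^\L$ to $\trianglelefteq$. Finally, (e) chains the previous parts: (d) gives the first equivalence, (c) gives ``same labels'' from $\trianglelefteq$, a label-preserving back-and-forth produces $\eta^{f_0} \cong \eta^{f_1}$ from ``same labels'', and $\cong \Rightarrow \trianglelefteq$ is trivial.

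The main obstacle I anticipate is the fiber-rigidity argument in (b): first ruling out that $h$ could scatter a source fiber across multiple target fibers (which exploits the clash between scatteredness of $f_0(d)$ and density of $D_1$), and then promoting the inclusion into a single target fiber to an equality via the injectivity of the induced map $g$. Once (a) and (b) are in hand, parts (c)--(e) are essentially applications combined with standard Fra\"\i ss\'e-type back-and-forth on labeled DLOs.
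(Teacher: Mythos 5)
Your proposal is correct and follows essentially the same route as the paper's proof: the projection-onto-$D$ argument with the dense-versus-scattered clash for part~\ref{lem:loc_not_scat-a}, the fiber-rigidity argument (each source fiber lands in a single target fiber, the induced map $g$ is an order-preserving injection, and the inclusion is promoted to equality using injectivity and the absence of endpoints) for part~\ref{lem:loc_not_scat-b}, and the combination of these with the density hypotheses and back-and-forth arguments for parts~\ref{lem:loc_not_scat-c}--\ref{lem:loc_not_scat-e}. The only point left implicit is the injectivity of $g$ in part~\ref{lem:loc_not_scat-b}, but it follows by the same scattered-versus-dense clash you already invoke, so this is a presentational omission rather than a gap.
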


\begin{proof}
\ref{lem:loc_not_scat-a}
Fix \( m_d \in f(d) \) for each \( d \in D \), and for every \( k \in K \) let \( L'_k \) be the projection of \( L_k \) on its second coordinate. 
Then each \( L'_k \) is convex (in \( D \)) because \( L_k \csube D^f \). 
If every \( L'_k \) were a singleton, then \( D \preceq K \) via the map sending \( d \in D \) to the unique \( k \in K \) such that \( (m_d,d) \in L_k \). 
This is impossible because \( D \) is dense and so \( \Q \preceq D \), while \( K \) is scattered. Hence by convexity there are \( k \in K\) and \( d_0 , d_1 \in D \) such that \( d_0 <_D d_1 \) and \( [d_0,d_1]_D \csube L'_k \). 
Thus \( D^f_{(d_0,d_1)} \csube L_k \), as required. 

\ref{lem:loc_not_scat-b}
Since \( h(D_0^{f_0}) \csube D_1^{f_1} \), its projection \( A \) on the second coordinate is \( D_1 \)-convex. Fix any \( d \in D_0 \). If the projection on the second coordinate of \( h(f_0(d) \times \{ d \} )\) was not a singleton, then \( h(f_0(d) \times \{ d \}) \) would be non-scattered, which is impossible because \( h(f_0(d) \times \{ d \}) \cong f_0(d) \times \{ d \} \cong f_0(d) \in \AScat \). 
Therefore the map \( g \colon D_0 \to A \) sending \( d \in D_0 \) to the unique \( d' \in A \) such that \( h(f_0(d) \times \{ d \}) \subseteq f_1(d') \times \{ d'\} \) is a well-defined surjection, and it is order-preserving since \( h \) was. 
Moreover, \( g \) is also injective: if \( d_0 <_{D_0} d_1 \) were such that \( g(d_0) = g(d_1) \), then \( h \restriction {D_0^{f_0}}_{(d_0,d_1)}\) would be an embedding sending the non-scattered linear order \( {D_0^{f_0}}_{(d_0,d_1)} \) into \( f_1(g(d_0)) \times \{ g(d_0) \} \in \AScat \), a contradiction. 
Thus \( g \) is an order-preserving bijection such that \( h(f_0(d) \times \{ d \} ) \subseteq f_1(g(d)) \times \{ g(d) \} \) for every \( d \in D_0 \), so we only need to show that \( h(f_0(d) \times \{ d \} ) = f_1(g(d)) \times \{ g(d) \} \) for every such \( d \). 
By injectivity of \( g \) and the fact that \( d \) is neither the minimum nor the maximum of \( D_0 \) (because \( D_0 \in \ADLO \) has neither a minimum nor a maximum), we have \( f_1(g(d)) \times \{ g(d) \} \csube h(D_0^{f_0}) \). Thus, if \( h(f_0(d) \times \{ d \} ) \subsetneq f_1(g(d)) \times \{ g(d) \} \), then there would be \( d' \neq d \) such that \( h(f_0(d') \times \{ d' \} ) \cap (f_1(g(d)) \times \{ g(d) \}) \neq \emptyset \), hence \( h(f_0(d') \times \{ d' \} ) \subseteq f_1(g(d)) \times \{ g(d) \}\) and \( g(d') = g(d) \) by definition of \( g \), against the fact that \( g \) is injective.

\ref{lem:loc_not_scat-c}
Assume that \( D_0^{f_0} \trianglelefteq^\L D_1^{f_1} \): we want to show that for every \( d \in D_0 \) there is \( d' \in D_1 \) such that \( f_0(d) \cong f_1(d') \), and vice versa. 
Fix \( K \in \L \), a \( K \)-convex partition \( (L_k)_{k \in K} \) of \( D_0^{f_0} \) and an embedding \( h \colon D_0^{f_0} \to D_1^{f_1} \) witnessing \( D_0^{f_0} \trianglelefteq^\L D_1^{f_1} \). 
By part~\ref{lem:loc_not_scat-a} there are \( k \in K \) and \( d_0<_{D_0} d_1 \) such that \( {D_0^{f_0}}_{(d_0,d_1)} \csube L_k \). 
Notice that \( D_2 = (d_0,d_1)_{D_0} \in \ADLO \), and that \( {D_0^{f_0}}_{(d_0,d_1)} = D_2^{f_2} \) where \( f_2 = f_0 \restriction D_2 \). Thus \( h \restriction D_2 \) witnesses \( D_2^{f_2} \trianglelefteq D_1^{f_1} \), and so we can find 
\( A \csube D_1\) and \( g \colon D_2 \to A \) as in part~\ref{lem:loc_not_scat-b}. Notice that, being isomorphic to \( D_2 \in \ADLO \), the linear order \( A \) is infinite. By the hypothesis on \( f_0 \), given any \( d \in D_0\) there is \(  d'' \in D_2 \) such that \( f_0(d) \cong f_0(d'') \), and hence \( f_0(d) \cong f_1(d') \) for \( d' = g(d'') \). Conversely, given \( d' \in D_1 \) there is \( d'' \in A \) such that \( f_1(d') \cong f_1(d'') \) by the hypothesis on \( f_1 \) and the fact that, being a convex subset of \( D_1 \) with at least two elements, \( A \) contains a nonempty interval of \( D_1 \). Therefore \( f_1(d') \cong f_0(d) \) for \( d = g^{-1}(d'')\) and we are done.

\ref{lem:loc_not_scat-d}
Use a back-and-forth argument to find an order-preserving bijection \( g \colon (r_0,r_1) \cap \Q \to \Q \) such that \( f(q) = f(g(q)) \) for all \( q \in (r_0,r_1) \cap \Q \) --- this can be done by the hypothesis on \( f \). Then the map sending \( (\ell,q) \) to \( (\ell,g(q)) \) is the desired isomorphism. 
For the nontrivial implication of the additional part, assume that \( \eta^f \trianglelefteq^\L L \) as witnessed by \( K \in \L \) and the \( K \)-convex partition \( (L_k)_{k \in K} \) of \( \eta^f\). 
By part~\ref{lem:loc_not_scat-a} there are \( k \in K \) and \( q_0 < q_1 \) such that \( \eta^f_{(q_0,q_1)} \csube L_k \), hence \( \eta^f \cong \eta^f_{(q_0,q_1)} \trianglelefteq L\) and we are done.

\ref{lem:loc_not_scat-e}
If \( f_0(\Q) \) and \( f_1(\Q) \) contain the same linear orders up to isomorphism, then using a back-and-forth argument as in part~\ref{lem:loc_not_scat-d} one can easily show that \( \eta^{f_0} \cong \eta^{f_1} \); this implies \( \eta^{f_0} \trianglelefteq \eta^{f_1} \), which in turn implies \( \eta^{f_0} \trianglelefteq^\L \eta^{f_1} \).
Finally, if \( \eta^{f_0} \trianglelefteq^\L \eta^{f_1} \), then \( f_0(\Q) \) and \( f_1(\Q)\) contain the same linear orders up to isomorphism by part~\ref{lem:loc_not_scat-c}. 
\end{proof}

\begin{remark} \label{rmk:loc_not_scat}
Let \( D \in \ADLO\) and consider any \( \emptyset \neq A \csube D\) without minimum and maximum. 
Then \( A \in \ADLO \), and hence Lemma~\ref{lem:loc_not_scat} can be applied to \( D^f_A\) because \( D^f_A = A^{f \restriction A} \). 
This observation applies e.g.\ to linear orders of the form \( \eta^f_{(r_0,r_1)} \) with \( r_0,r_1 \in \R \) such that \( r_0 < r_1 \).
\end{remark}

\section{Combinatorial properties of \(\qo\)}\label{sec:comb}
	
In this section, we explore the combinatorial properties of \(\L\)-convex embeddability on countable linear orders, for a given \( \L \subseteq \Lin \). 
We always assume that $\L$ is downward \( \preceq \)-closed and ccs.
Actually, the ccs hypothesis is not used in some of our proofs but, since we employ the usual terminology for the combinatorial properties of quasi-orders, it is natural to assume that $(\LO, {\qo})$ is indeed a quasi-order.%
\footnote{If $\L$ is not ccs, we could still view $(\LO, {\qo})$ as an (oriented) graph and state e.g.\ Theorem~\ref{thm:chains_qo} (whose proof does not use the ccs hypothesis) in terms of cliques and independent sets instead of chains and antichains.}

We exclude from our analysis the case $\L = \Lin$ because \(\trianglelefteq^{\Lin}_{\LO}\) coincides with embeddability on \(\LO\), whose combinatorial properties are well known.
We thus usually assume \(\eta \notin \L\), that is, \(\L\subseteq \Scat\).
	
The following lemma generalizes Proposition \ref{prop:comb_prop_cvx}\ref{open_int_cvx}.

\begin{lem}\label{lem:open_int_qo}
For every ccs $\L \subseteq \Scat$, there is an embedding of \((\Int(\R),\subseteq)\) into \((\LO,\qo)\).
\end{lem}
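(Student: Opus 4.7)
Following the approach of the $\L = \{\1\}$ case in Proposition~\ref{prop:comb_prop_cvx}\ref{open_int_cvx}, my plan is to fix a family $\{S_r : r \in \R\}$ of pairwise $\cvx$-incomparable countable scattered linear orders---which exists by Proposition~\ref{prop:comb_prop_cvx}\ref{prop:basisforcvx} (with a minor variant of the original construction ensuring that the chosen representatives are scattered)---and for each open interval $I \subseteq \R$ to set
\[
L_I := \sum_{q \in I \cap \Q} S_q = \eta^{f_I}, \qquad f_I(q) = S_q.
\]
The forward direction $I \subseteq J \Rightarrow L_I \qo L_J$ is immediate: when $I \subseteq J$ the set $I \cap \Q$ is convex in $J \cap \Q$, so $L_I$ is isomorphic to a convex substructure of $L_J$, giving $L_I \cvx L_J$ and hence $L_I \qo L_J$ (since $\{\1\} \subseteq \L$ makes $\cvx$ refine $\qo$).

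For the converse, assume $L_I \qo L_J$ is witnessed by $K \in \L \subseteq \Scat$, a $K$-convex partition $(L_k)_{k \in K}$ of $L_I$, and an embedding $h \colon L_I \to L_J$ with each $h(L_k) \csube L_J$. For every $q \in I \cap \Q$, I will consider the scattered convex column $C_q := S_q \times \{q\} \csube L_I$ and the convex set $K_q := \{k \in K : L_k \cap C_q \neq \emptyset\} \subseteq K$. When $|K_q| = 1$ the whole column $C_q$ lies inside a single $L_{k_0}$, and $h(C_q)$ is a scattered convex subset of $L_J = \eta^{f_J}$; such a subset must project to a single rational $q' \in J \cap \Q$, since otherwise convexity would force it to contain a non-scattered sub-order $\eta^{f_J}_{(q'_0, q'_1)}$. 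This yields $S_q \cong C_q \cvx S_{q'}$, and the $\cvx$-antichain property of the $S_r$'s forces $q = q' \in J$.

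The main obstacle is the "bad set" $B := \{q \in I \cap \Q : |K_q| \geq 2\}$, where the column analysis above does not directly deliver $q \in J$. My plan to handle it is a cardinality estimate: each $q \in B$ corresponds to at least one cut of the $K$-convex partition that is internal to $C_q$ (any two consecutive elements of the convex set $K_q$ give such a cut), and different $q \in B$ yield different internal cuts because the columns are pairwise disjoint; moreover, the $K$-convex partition of the countable $L_I$ produces at most $2|K| \leq \aleph_0$ cuts, so $|B| \leq \aleph_0$. Hence $(I \cap \Q) \setminus B \subseteq J$ is still dense in $I$, giving $I \subseteq \bar J$; and since $I$ and $J$ are open intervals in $\R$, this upgrades to $I \subseteq J$, completing the proof.
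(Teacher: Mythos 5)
Your overall strategy (an $\eta^f$-type sum of distinguishable scattered labels over $I\cap\Q$) is the same as the paper's, and both the forward direction and your analysis of a column $C_q$ contained in a single piece are correct. However, the treatment of the bad set $B=\{q\in I\cap\Q : |K_q|\ge 2\}$ contains a genuine gap. Your cardinality estimate $|B|\le 2|K|\le\aleph_0$ is vacuous: $B$ is a set of rationals, so it is automatically countable, and countability of $B$ does \emph{not} imply that $(I\cap\Q)\setminus B$ is dense in $I$ --- a priori $B$ could be all of $I\cap\Q$, or dense in some subinterval, in which case your argument produces no rationals of that subinterval inside $J$ and the conclusion $I\subseteq\overline{J}$ fails. (The bound is only useful when $K$ is finite, but $\L\subseteq\Scat$ allows infinite $K$, e.g.\ $\L=\WO$ or $\L=\Scat$.) What is actually true, and what you need, is that $B$ is co-dense: every subinterval of $I$ contains a further subinterval $(q_0,q_1)$ all of whose columns lie in a single piece $L_k$. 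This is exactly Lemma~\ref{lem:loc_not_scat}\ref{lem:loc_not_scat-a} (some piece of a $K$-convex partition with $K$ scattered must absorb a whole block $\eta^f_{(q_0,q_1)}$), applied to the restriction of the partition to each subinterval; it is not a counting fact, and your "internal cuts" bookkeeping does not substitute for it.

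For comparison, the paper's proof sidesteps the bad set entirely: arguing contrapositively, it first restricts to a subinterval $(r_0,r_1)\subseteq I\setminus J$, applies Lemma~\ref{lem:loc_not_scat}\ref{lem:loc_not_scat-a} to get a single block $\eta^f_{(q_0,q_1)}$ inside one piece, and then uses Lemma~\ref{lem:loc_not_scat}\ref{lem:loc_not_scat-b} to transport a label into $J\cap\Q$, contradicting the injectivity of the labelling $f\colon\Q\to\{\n\mid n\in\N\setminus\{0\}\}$ (note the paper needs only non-isomorphic labels, since part~\ref{lem:loc_not_scat-b} yields isomorphism of labels, whereas your direct column argument needs the stronger pairwise $\cvx$-incomparability of the $S_q$, which is fine but requires your "minor variant" of Proposition~\ref{prop:comb_prop_cvx}\ref{prop:basisforcvx} to produce scattered representatives). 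Your proof becomes correct if you replace the cardinality estimate by an appeal to Lemma~\ref{lem:loc_not_scat}\ref{lem:loc_not_scat-a} on subintervals to establish density of $(I\cap\Q)\setminus B$, or if you restructure the converse direction as the paper does.
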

	
\begin{proof}
Let $f \colon \Q \to \{\n \mid n \in \N \setminus \{0\}\}$ be injective: we claim that the map which sends the interval $(x,y) \in \Int(\R)$ to the linear order $\eta^f_{(x,y)} \in \LO$ is the desired embedding.
	
If \((x,y) \subseteq(x',y')\), then \(\eta^f_{(x,y)} \csube \eta^f_{(x',y')}\), and thus \(\eta^f_{(x,y)} \qo \eta^f_{(x',y')}\).
Vice versa, let \((x,y),(x',y') \in \Int(\R)\) be such that \((x,y) \nsubseteq (x',y')\). 
Towards a contradiction, suppose that \(\eta^f_{(x,y)} \qo \eta^f_{(x',y')}\). 
Consider the restriction \(\eta^f_{(r_0,r_1)}\) of \(\eta^f_{(x,y)}\), where \((r_0,r_1)\) is a nonempty open interval contained in \((x,y) \setminus (x',y')\), so that \(\eta^f_{(r_0,r_1)} \qo \eta^f_{(x',y')}\) because \( \eta^f_{(r_0,r_1)} \cvx \eta^f_{(x,y)}\). 
Fix a \( K \)-convex partition \((L_k)_{k \in K}\) of \( \eta^f_{(r_0,r_1)}\) witnessing \(\eta^f_{(r_0,r_1)} \qo \eta^f_{(x',y')}\), for some \( K \in \L \subseteq \Scat \).
By Lemma~\ref{lem:loc_not_scat}\ref{lem:loc_not_scat-a} (see Remark~\ref{rmk:loc_not_scat}) 
there exist \(k \in K\) and \(q_0,q_1 \in \Q \) with $r_0\leq q_0<q_1 \leq r_1$ such that $\eta^f_{(q_0,q_1)} \csube L_{k}$. Hence $\eta^f_{(q_0,q_1)} \cvx \eta^f_{(x',y')}$, and 
hence by Lemma~\ref{lem:loc_not_scat}\ref{lem:loc_not_scat-b} for any \( q_0 < q < q_1\) there is  \( x' < q' < y' \) such that \( f(q) \cong f(q') \). But this contradicts the injectivity of \( f \), as \( q \neq  q' \) because \( (q_0,q_1) \cap (x',y') = \emptyset \).
\end{proof}

\begin{thm}\label{thm:chains_qo}
For every ccs $\L \subseteq \Scat$, there are chains of order type \( (\R, {<} ) \) and antichains of size $2^{\aleph_0}$ in \( \qo \).
\end{thm}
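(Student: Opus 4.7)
My plan is to derive Theorem~\ref{thm:chains_qo} as an almost immediate consequence of Lemma~\ref{lem:open_int_qo}, which provides an order embedding $\Phi \colon (\Int(\R), \subseteq) \hookrightarrow (\LO, \qo)$, i.e.\ a map satisfying $I \subseteq J \iff \Phi(I) \qo \Phi(J)$. Because $\Phi$ both preserves and reflects the orderings, strict inclusions translate to strict $\qo$-comparisons (if $I \subsetneq J$, then $\Phi(I) \qo \Phi(J)$ while $\Phi(J) \nqo \Phi(I)$), and $\subseteq$-incomparable pairs go to $\qo$-incomparable pairs. Consequently, $\Phi$ sends any chain in $(\Int(\R), \subseteq)$ to a chain of the same order type in $\qo$, and any antichain to a $\qo$-antichain of the same cardinality. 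It therefore suffices to exhibit in $(\Int(\R), \subseteq)$ a chain of order type $(\R, <)$ and an antichain of cardinality $2^{\aleph_0}$.

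For the chain I will use $\{(0, r) \mid r \in \R,\ r>0\}$: the map $r \mapsto (0, r)$ is an order isomorphism from $(\R_{>0}, <)$ onto this family ordered by strict inclusion, and $(\R_{>0}, <) \cong (\R, <)$. For the antichain I will use $\{(r, r+1) \mid r \in \R\}$, a family of cardinality $2^{\aleph_0}$. Indeed, for $r, r' \in \R$ the inclusion $(r, r+1) \subseteq (r', r'+1)$ is equivalent to $r' \leq r$ and $r+1 \leq r'+1$, hence to $r = r'$, so any two distinct members of the family are $\subseteq$-incomparable.

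Pushing these two subfamilies forward through $\Phi$ then yields the required chain and antichain in $(\LO, \qo)$. There is no essential obstacle here: the combinatorial content has already been absorbed into Lemma~\ref{lem:open_int_qo}, and the present result is merely a structural consequence of the richness of $(\Int(\R), \subseteq)$ together with the fact that $\Phi$ is an order embedding rather than just an order-preserving injection.
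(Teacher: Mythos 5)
Your proposal is correct and is essentially identical to the paper's own proof: both apply Lemma~\ref{lem:open_int_qo} and push forward the chain $\{(0,r) \mid r>0\}$ and the antichain $\{(r,r+1) \mid r \in \R\}$ from $(\Int(\R),\subseteq)$. The paper additionally sketches an alternative antichain built from constant functions $f_\alpha \colon \Q \to \Scat$ with pairwise non-isomorphic values, but this is not needed for the statement.
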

	
\begin{proof}
By Lemma~\ref{lem:open_int_qo} the family \(\Big\{\eta^f_{(0,x)} \mid x>0\Big\}\) is a chain of order type \( (\R,{<}) \), 
while \(\Big\{\eta^f_{(x,x+1)} \mid x \in \R \Big\}\) is an antichain of size the continuum. 
Alternatively, to build a large antichain we can fix a family \( (L_\alpha)_{\alpha < 2^{\aleph_0}} \) of pairwise non-isomorphic scattered linear orders and notice that if \( f_\alpha \colon \Q \to \Scat \) is the constant function with value \( L_\alpha \), then by Lemma~\ref{lem:loc_not_scat}\ref{lem:loc_not_scat-c} the family \( \mathcal{A} = \{ \eta^{f_\alpha} \mid \alpha < 2^{\aleph_0} \} \) is a \( \qo \)-antichain.
\end{proof}

We now show that the dominating number \( \mathfrak{d}(\qo) \) of \( \qo \) (Definition \ref{def:combinatorics}) is as large as possible.

\begin{thm}\label{no_max}
For every ccs $\L \subseteq \Scat$, the quasi-order \(\qo\) does not have maximal elements, and every dominating family with respect to \(\qo\) has size \(2^{\aleph_0}\). Thus \(\mathfrak{d}(\qo)=2^{\aleph_0}\).
\end{thm}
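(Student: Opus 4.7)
The plan is to exhibit a family $(L_x)_{x \in 2^\omega} \subseteq \LO$ of pairwise $\qo$-incomparable linear orders such that, for every $L' \in \LO$, only countably many indices $x$ satisfy $L_x \qo L'$. Both conclusions then follow easily. For the dominating number: any dominating family $\mathcal{F}$ must cover each $L_x$, while each of its members covers at most countably many, forcing $|\mathcal{F}| \geq 2^{\aleph_0}$; the upper bound $|\LO| = 2^{\aleph_0}$ is trivial. For the non-existence of maximal elements: given $L \in \LO$, only countably many $x$ satisfy $L_x \qo L$, so one can pick $x_0$ outside this set and take $L' = L + L_{x_0}$; then $L \cvx L'$ yields $L \qo L'$, whereas $L' \qo L$ would give $L_{x_0} \qo L$ by restricting the $K$-convex partition to the convex summand $L_{x_0} \csube L'$, a contradiction.

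For the construction, apply Proposition~\ref{prop:comb_prop_cvx}\ref{prop:basisforcvx} to obtain $2^{\aleph_0}$-many pairwise $\cvx$-incomparable elements of $\LO$, and observe that at most one of them is non-scattered: any non-scattered $\cvx$-minimal element must lie in the $\cvxeq$-class of $\eta$, since every non-scattered $L \in \LO$ has $\eta$ as a convex subset (by Hausdorff's theorem) and minimality forces $L \cvxeq \eta$. Discarding this one representative, we obtain a family $(M_x)_{x \in 2^\omega}$ of pairwise $\cvx$-incomparable scattered countable linear orders, and we set $L_x = M_x \eta$. Since $\eta \notin \L$ as $\L \subseteq \Scat$, Proposition~\ref{prop:nqo} applies and yields the pivotal implication $L_x \qo L' \Rightarrow M_x \cvx L'$ for every linear order $L'$. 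Taking $L' = L_y$, any convex copy of the scattered order $M_x$ inside $L_y = \sum_{q \in \Q} M_y$ must sit within a single $M_y$-fibre (otherwise its projection to $\Q$ would contain an interval, producing a non-scattered piece), so $L_x \qo L_y$ actually forces $M_x \cvx M_y$, and pairwise $\cvx$-incomparability of the $M_x$'s yields pairwise $\qo$-incomparability of the $L_x$'s.

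The key technical step is verifying the countability of $\{x : L_x \qo L'\}$. By the same reduction $L_x \qo L' \Rightarrow M_x \cvx L'$ and pairwise non-isomorphism of the $M_x$'s (which is a consequence of $\cvx$-incomparability), this boils down to showing that every countable $L' \in \LO$ admits only countably many isomorphism types of \emph{scattered} convex subsets. This is also the main obstacle, since a countable linear order can have $2^{\aleph_0}$ convex subsets in general, so scatteredness must be essentially exploited. The plan is to apply Hausdorff's structure theorem to write $L' \cong D^g$ for some $D$ which is either a singleton or a countable element of $\ADLO$ and some $g \colon D \to \Scat$. When $D$ is a singleton, $L'$ is itself scattered and a transfinite induction on Hausdorff rank yields countably many isomorphism types of convex subsets. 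When $D$ is dense, any scattered convex $C \csube D^g$ must have its projection on $D$ be a singleton, for otherwise the projection would contain an open interval of $D$ and by convexity $C$ would contain the non-scattered order $\sum_{d \in (d_1,d_2)_D} g(d)$; hence $C$ is a convex subset of a single $g(d)$, and summing countably many iso types over countably many $d$'s gives the required bound.
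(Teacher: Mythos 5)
Your argument has a genuine gap at the point where you produce the family \((M_x)_{x\in 2^\omega}\) of pairwise \(\cvx\)-incomparable \emph{scattered} orders; everything downstream rests on it. You claim that in the antichain of \(\cvx\)-minimal elements given by Proposition~\ref{prop:comb_prop_cvx}\ref{prop:basisforcvx} at most one member is non-scattered, on the grounds that every non-scattered \(L\in\LO\) contains a convex copy of \(\eta\), so that every non-scattered minimal element satisfies \(L\cvxeq\eta\). That premise is false: \(\mathbf{2}\eta=\sum_{q\in\Q}\mathbf{2}\) is non-scattered but has no convex subset isomorphic to \(\eta\) (a convex subset whose projection to \(\Q\) is not a singleton contains a pair of consecutive points, and the remaining convex subsets are finite). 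Worse, the antichain actually witnessing Proposition~\ref{prop:comb_prop_cvx}\ref{prop:basisforcvx} is \(\{\eta^{f_S}\mid S\subseteq\N\setminus\{0\}\text{ infinite}\}\), \emph{all} of whose members are non-scattered and none of which contains a convex copy of \(\eta\); so ``discarding one representative'' leaves you with nothing. A \(\cvx\)-antichain of size \(2^{\aleph_0}\) consisting of scattered countable orders does exist (cf.\ the orders \(L_f=\z\boldsymbol{\o}^*+\sum_{n}(\z+f(n))\) in the remark following Corollary~\ref{cor:scatantichains}), but it has to be constructed and its incomparability proved separately; it does not follow from Proposition~\ref{prop:comb_prop_cvx}\ref{prop:basisforcvx}. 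The remainder of your outline --- the reduction \(L_x\qo L'\Rightarrow M_x\cvx L'\) via Proposition~\ref{prop:nqo}, the fibre argument, and the count of isomorphism types of scattered convex subsets --- is essentially sound (modulo handling the endpoint cases of the Hausdorff condensation, where the quotient need not lie in \(\ADLO\)), but without the family \((M_x)\) the proof does not go through.

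For comparison, the paper's proof avoids any new construction and is much shorter: given \(L\), Proposition~\ref{prop:comb_prop_cvx}\ref{no_max_cvx} yields \(L'\) with \(L\triangleleft_\LO L'\), and then \(L\triangleleft^\L_\LO L'\eta\), since \(L'\eta\qo L\) would give \(L'\cvx L\) by Proposition~\ref{prop:nqo}; and any \(\qo\)-dominating family \(\mathcal{F}\) is already \(\cvx\)-dominating, because \(L\eta\qo L'\) implies \(L\cvx L'\) by the same proposition, whence \(|\mathcal{F}|=2^{\aleph_0}\) by Proposition~\ref{prop:comb_prop_cvx}\ref{prop:dom_fam_cvx}. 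In other words, the multiplication-by-\(\eta\) trick you already use to transfer incomparability also does all the work for non-maximality and for the dominating number, with no need for a fresh antichain.
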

	
\begin{proof}
Let \( L \in \LO \).
By Proposition \ref{prop:comb_prop_cvx}\ref{no_max_cvx} there exists $L'$ such that $L \triangleleft_{\LO} L'$. 
Thus, using Proposition~\ref{prop:nqo} to show that $L' \eta \not \qo L$, we have \( L \triangleleft^\L_\LO L' \eta\), which shows that \( L \) is not \( \qo \)-maximal. 

Let now \(\mathcal{F}\) be a dominating family with respect to \(\qo\): we claim that $\mathcal{F}$ is also a dominating family with respect to $\cvx$, so that $|\mathcal{F}| = 2^{\aleph_0}$ by Proposition \ref{prop:comb_prop_cvx}\ref{prop:dom_fam_cvx}. 
Fix an arbitrary \( L \in \LO \). Since \( \mathcal{F} \) is \( \qo \)-dominating, there is $L' \in \mathcal{F}$ such that $L\eta \qo L'$. But then \( L \cvx L' \) by Proposition~\ref{prop:nqo}, hence we are done. 
\end{proof}

We now look at bases and minimal elements in $\LO$ with respect to $\qo$. Recall that by Proposition~\ref{prop:comb_prop_cvx}\ref{prop:basisforcvx}, if $\L = \{\1 \}$ then there are $2^{\aleph_0}$-many \( \qo \)-incomparable $\qo$-minimal elements. In contrast, the following result extends to most ccs classes \( \L \) a basic fact about $\emb$.

\begin{thm} \label{thm:twoelementsbasis}
For every ccs $\L \subseteq \Lin$, if either \(\boldsymbol{\o}^* \in \L\) or $\boldsymbol{\o} \in \L$ then $\{\boldsymbol{\o}, \boldsymbol{\o}^*\}$ is a basis for $\qo$.
\end{thm}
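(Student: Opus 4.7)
The plan is to show that for every \( L \in \LO \), at least one of \( \boldsymbol{\omega} \) and \( \boldsymbol{\omega}^* \) is \( \qo \)-below \( L \), via a short case split on a classical dichotomy together with Fact~\ref{fct:basic}. Assume first that \( \boldsymbol{\omega} \in \L \); the case \( \boldsymbol{\omega}^* \in \L \) is entirely analogous, with the roles of \( \boldsymbol{\omega} \) and \( \boldsymbol{\omega}^* \) (and of ``ascending'' and ``descending'') swapped throughout. Fix \( L \in \LO \), so \( L \) is countably infinite.

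First I would handle the case \( \boldsymbol{\omega} \preceq L \): since \( \boldsymbol{\omega} \in \L \), Fact~\ref{fct:basic} gives \( \boldsymbol{\omega} \qo L \) at once. Otherwise, \( L \) has no strictly ascending \( \omega \)-sequence, so every nonempty subset of \( L \) has a maximum; equivalently, \( L^* \) is a well-order. Being countably infinite, \( L \cong \alpha^* \) for some countable ordinal \( \alpha \geq \omega \). Since \( \alpha \) has \( \omega \) as an initial segment, \( L \) has a final segment isomorphic to \( \boldsymbol{\omega}^* \), which is in particular a convex subset of \( L \). Thus \( \boldsymbol{\omega}^* \trianglelefteq L \), and since convex embeddability refines \( \qo \) (trivial partition indexed by \( \mathbf{1} \in \L \), as already observed in Section~\ref{sec:ccs_property}), we conclude \( \boldsymbol{\omega}^* \qo L \). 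Either way some element of \( \{\boldsymbol{\omega}, \boldsymbol{\omega}^*\} \) is \( \qo \)-below \( L \), so \( \{\boldsymbol{\omega}, \boldsymbol{\omega}^*\} \) is a basis.

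There is no real obstacle here; the whole argument is a routine combination of the \( \boldsymbol{\omega}/\boldsymbol{\omega}^* \)-dichotomy for countable linear orders with Fact~\ref{fct:basic}. The only conceptual point worth flagging is that the hypothesis ``\( \boldsymbol{\omega} \in \L \) or \( \boldsymbol{\omega}^* \in \L \)'' is genuinely necessary in just one subcase --- when \( \boldsymbol{\omega} \preceq L \) but \( \boldsymbol{\omega} \not\cvx L \), e.g.\ \( L = \eta \), where \( \boldsymbol{\omega} \) embeds in \( L \) but not as a convex subset --- while the reverse well-ordered subcase is always handled by convex embeddability alone and so requires no assumption on \( \L \) beyond the standing ones. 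This is exactly the asymmetry that separates Theorem~\ref{thm:twoelementsbasis} from Theorem~\ref{thm:basis2}: when \( \L \subseteq \Fin \) no single countable order like \( \boldsymbol{\omega} \) lies in \( \L \), so Fact~\ref{fct:basic} cannot be invoked and one genuinely needs a larger basis.
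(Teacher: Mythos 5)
Your proof is correct and is essentially the paper's own argument: the paper treats the case \( \boldsymbol{\omega}^* \in \L \) explicitly (splitting on whether \( L \) is a well-order, using Fact~\ref{fct:basic} for ill-founded \( L \) and the convexity of the initial \( \boldsymbol{\omega} \)-segment otherwise) and declares the other case symmetric, while you have simply written out that symmetric case. The dichotomy, the appeal to Fact~\ref{fct:basic}, and the use of a convex copy of the ``other'' order in the remaining case all match.
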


\begin{proof}
Assume that $\boldsymbol{\o}^* \in \L$.
By Fact~\ref{fct:basic} we have that \( \boldsymbol{\o}^* \qo L\) for every ill-founded \( L \in \LO \). On the other hand, if $L \in \WO \cap \LO$ then trivially $\boldsymbol{\o} \cvx L$, and hence $\boldsymbol{\o} \qo L$. The case when \( \boldsymbol{\o} \in \L \) is symmetric.
\end{proof}

Since \( \L \) is downward \( \preceq \)-closed, if \( \L \) contains at least an infinite linear order then Theorem~\ref{thm:twoelementsbasis} applies and \( \qo \) has a basis of size \( 2 \).
It thus remains to consider families \( \L \) such that \(\L \subseteq \Fin\), which by the ccs property amounts to \( \L = \{\1\} \) or \( \L = \Fin\). 
In this case, we can reproduce the result obtained for \( \cvx \) in~\cite{IMMRW22} and show that there are $2^{\aleph_0}$-many \( \qo \)-incomparable $\qo$-minimal elements. 
To motivate the next technical result, notice that by Fact~\ref{fct:basic} the relation $\qo$ coincides with embeddability on $\L$, so that all \( \qo \)-antichains have finite intersection with $\L$. 
Therefore, in order to find infinite antichains (of minimal elements) we have to search in $\LO \setminus \L$. 

For every infinite \( S \subseteq \N \setminus \{ 0 \} \), fix a surjective map \(f_S\colon \Q \rightarrow \{ \n \mid n \in  S \}\) such that $f_S^{-1}(\n)$ is dense for every $n \in S$.
 
\begin{prop}\label{prop:min_el_qo}
Let \( S,S' \subseteq \N \setminus \{ 0 \}  \) be infinite, and consider any ccs \( \L \subseteq \Scat \).
\begin{enumerate-(a)}
\item \label{prop:min_el_qo-a}
If \( S \neq S' \),  then \( \eta^{f_S} \not\qo \eta^{f_{S'}} \).
\item \label{prop:min_el_qo-b}
\( \eta^{f_S} \) is \( \qo \)-minimal in \( \LO \setminus \L \). 
\item \label{prop:min_el_qo-c}
Every basis \( \mathcal{B} \) for the restriction of \( \qo \) to \( \LO \setminus \L \) contains infinite \( \qo \)-decreasing chains.
\end{enumerate-(a)}
\end{prop}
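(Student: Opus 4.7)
The plan is to handle (a), (b), (c) in sequence, using mainly Lemma~\ref{lem:loc_not_scat} and Theorem~\ref{thm:ccs_implies_star}; part (c) will also crucially invoke Lemma~\ref{lem:open_int_qo}. Throughout, note that each \( f_S \) has dense preimages by construction, so Lemma~\ref{lem:loc_not_scat}\ref{lem:loc_not_scat-c} and \ref{lem:loc_not_scat-e} apply. Part (a) is then immediate from Lemma~\ref{lem:loc_not_scat}\ref{lem:loc_not_scat-e}: \( \eta^{f_S} \qo \eta^{f_{S'}} \) iff \( f_S(\Q) = \{ \mathbf{n} : n \in S \} \) and \( f_{S'}(\Q) = \{ \mathbf{n} : n \in S' \} \) contain the same linear orders up to isomorphism, iff \( S = S' \).

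For part (b), I would take \( L \in \LO \setminus \L \) with \( L \qo \eta^{f_S} \), witnessed by some \( K \in \L \), a \( K \)-convex partition \( (L_k)_{k \in K} \) of \( L \), and an embedding \( g \colon L \to \eta^{f_S} \) with \( g(L_k) \csube \eta^{f_S} \), and show \( \eta^{f_S} \qo L \) by a dichotomy on the projection \( \pi \colon \eta^{f_S} \to \Q \). If \( \pi(g(L_k)) \) is a singleton for every \( k \), then each \( L_k \cong g(L_k) \) sits inside a single level \( f_S(q_k) \times \{q_k\} \) and hence is finite; since \( L \) is infinite, \( K \) must be infinite, so Theorem~\ref{thm:ccs_implies_star} forces \( L \in \L \), a contradiction. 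Otherwise, convexity of some multi-level \( g(L_k) \) forces it to contain \( \eta^{f_S}_{(q_0,q_1)} \) for some rationals \( q_0 < q_1 \); by Lemma~\ref{lem:loc_not_scat}\ref{lem:loc_not_scat-d} this is isomorphic to \( \eta^{f_S} \), so \( \eta^{f_S} \cvx L_k \csube L \) and \( \eta^{f_S} \qo L \), as required.

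The core of part (c), which I expect to be the main obstacle, is to exhibit an \( L^* \in \LO \setminus \L \) with \emph{no} \( \qo \)-minimal element of \( \LO \setminus \L \) strictly \( \qo \)-below it. I would take \( L^* = \eta^f_{(0,1)} \) with the injection \( f \colon \Q \to \{ \mathbf{n} : n \geq 1 \} \) from Lemma~\ref{lem:open_int_qo}, and suppose for contradiction that some \( \qo \)-minimal \( M \in \LO \setminus \L \) satisfies \( M \qo L^* \). Rerunning the dichotomy of (b) rules out the all-single-level case (which would place \( M \) in \( \L \) by Theorem~\ref{thm:ccs_implies_star}), so some piece of the witnessing partition is multi-level and hence isomorphic to \( \mathbf{m} + \eta^f_{(q_0,q_1)} + \mathbf{m'} \) for some reals \( 0 \leq q_0 < q_1 \leq 1 \). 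The convex suborder \( \eta^f_{(q_0,q_1)} \) lies in \( \LO \setminus \L \) and is \( \qo \)-below \( M \), so by minimality \( \eta^f_{(q_0,q_1)} \eq M \). Applying the same argument to any \( (q'_0,q'_1) \subsetneq (q_0,q_1) \) gives \( \eta^f_{(q'_0,q'_1)} \eq M \eq \eta^f_{(q_0,q_1)} \), contradicting the strict inequality \( \eta^f_{(q'_0,q'_1)} \triangleleft^\L \eta^f_{(q_0,q_1)} \) provided by Lemma~\ref{lem:open_int_qo}.

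Once \( L^* \) is available, the infinite \( \qo \)-decreasing chain in the basis \( \mathcal{B} \) is a routine inductive extraction. Starting from any \( B_0 \in \mathcal{B} \) with \( B_0 \qo L^* \) --- which must be non-minimal in \( \LO \setminus \L \) by the choice of \( L^* \) --- I would iteratively pick \( L' \in \LO \setminus \L \) with \( L' \triangleleft^\L B_n \) strictly and then \( B_{n+1} \in \mathcal{B} \) with \( B_{n+1} \qo L' \). The strict inequality \( B_{n+1} \triangleleft^\L B_n \) follows because \( B_{n+1} \eq B_n \) would yield \( B_n \qo L' \), contradicting \( L' \triangleleft^\L B_n \); and \( B_{n+1} \qo L^* \) is preserved by transitivity, so \( B_{n+1} \) is again non-minimal and the induction continues indefinitely.
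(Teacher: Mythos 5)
Your proposal is correct and follows essentially the same route as the paper's: part (a) via Lemma~\ref{lem:loc_not_scat}, and part (b) via the dichotomy between all pieces being finite (excluded by Theorem~\ref{thm:ccs_implies_star} since \( L \notin \L \)) and some piece containing a convex copy of \( \eta^{f_S}_{(q_0,q_1)} \cong \eta^{f_S} \). For (c) you interpose the claim that no \( \qo \)-minimal element of \( \LO \setminus \L \) lies \( \qo \)-below \( \eta^f_{(0,1)} \) before extracting the chain from \( \mathcal{B} \), whereas the paper shows directly that every \( L \in \mathcal{B} \) with \( L \qo \eta^f \) admits some \( L' \in \mathcal{B} \) with \( L' \triangleleft^\L_\LO L \); but the ingredients (Theorem~\ref{thm:ccs_implies_star}, the self-similarity \( \eta^f_{(q_0,q_1)} \cong \eta^f \), and the strictly decreasing intervals from Lemma~\ref{lem:open_int_qo}) are identical, so this is only a difference of bookkeeping.
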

	
\begin{proof}
%
\ref{prop:min_el_qo-a} 
This is just an immediate application of Lemma~\ref{lem:loc_not_scat}\ref{lem:loc_not_scat-c}.

\ref{prop:min_el_qo-b}
Assume that \( L \in \LO \setminus \L \) is such that \( L \qo \eta^{f_S} \), as witnessed by \( K \in \L \), \( (L_k)_{k \in K} \) and \( h \colon L \to \eta^{f_S} \).
Notice that there is \( k \in K \) such that \( L_k \) is infinite: this is trivial if \( K \) is finite, while if \( K \) is infinite we can apply Theorem~\ref{thm:ccs_implies_star} and use the fact \( L \notin \L \). Thus
\( h(L_k) \) is an infinite convex subset of \( \eta^{f_S} \), which means that \( \eta^{f_S}_{(q_0,q_1)} \csube h(L_k) \) for some \( q_0 < q_1 \), and hence \( \eta^{f_S}_{(q_0,q_1)} \cvx L \) via \( h^{-1} \). 
Since \( \eta^{f_S}_{(q_0,q_1)} \cong \eta^{f_S} \) by Lemma~\ref{lem:loc_not_scat}\ref{lem:loc_not_scat-d}, it follows that \( \eta^{f_S} \cvx L \), and thus also \( \eta^{f_S} \qo L \). 
This proves that there is no \( L \in \LO \setminus \L \) such that \( L \triangleleft^\L_\LO \eta^{f_S} \), as desired.

\ref{prop:min_el_qo-c} As in the proof of Lemma~\ref{lem:open_int_qo}, let \( f \colon \Q \to \{ \n \mid n \in \N \setminus \{ 0 \} \} \) be injective and notice that \( \eta^f \in \LO \setminus \L \) because it is not scattered. 
It is enough to prove that for every \( L \in \mathcal{B} \) with \( L \qo \eta^f \) there is \( L' \in \mathcal{B} \) such that \( L' \triangleleft^\L_\LO L \), as using this fact one can then recursively construct an infinite \( \qo \)-descending chain in \( \mathcal{B} \). 
Suppose that \( K \in \L \) and the \( K \)-convex partition \( (L_k)_{k \in K} \) of \( L \) (together with an appropriate embedding) witness \( L \qo \eta^f \). 
As before, there is \( k \in K \) such that \( L_k \) is infinite
(use Theorem~\ref{thm:ccs_implies_star} together with \( L \in \mathcal{B} \subseteq \LO \setminus \L \) when \( K \) is infinite), which in turn implies that \( \eta^f_{(q_0,q_1)} \cvx L \) for some \( q_0 < q_1 \).
Pick \( q'_0, q'_1 \in \Q \) such that \( q_0 < q'_0 < q'_1 < q_1 \), so that \( \eta^f_{(q'_0,q'_1)} \triangleleft^\L_\LO \eta^f_{(q_0,q_1)} \) by (the proof of) Lemma~\ref{lem:open_int_qo}. 
Since \( \eta^f_{(q'_0,q'_1)} \) is not scattered, it cannot belong to \( \L \). 
Thus there is \( L' \in \mathcal{B} \) such that \( L' \qo \eta^f_{(q'_0,q'_1)} \), and hence \( L' \triangleleft^\L_\LO L \) by the choice of \( q'_0 \) and \( q'_1 \).
\end{proof}



\begin{thm} \label{thm:basis2}
For any ccs \( \L \subseteq \Fin \) there are \( 2^{\aleph_0} \)-many \( \qo \)-incomparable \( \qo \)-minimal elements in \( \LO \). Thus every basis \( \mathcal{B} \) for \( \qo \) has cardinality \( 2^{\aleph_0} \). Moreover, \( \mathcal{B} \) is ill-founded with respect to \( \qo \), and thus it cannot be an antichain.
\end{thm}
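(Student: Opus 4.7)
The plan is to apply Proposition~\ref{prop:min_el_qo} to the family
\[
\mathcal{A} = \{\eta^{f_S} \mid S \subseteq \N \setminus \{0\} \text{ infinite}\},
\]
which has cardinality $2^{\aleph_0}$. The first thing to observe is that since $\L \subseteq \Fin$ while elements of $\LO$ code linear orders on all of $\N$ (hence infinite), we have $\L \cap \LO = \emptyset$; consequently $\LO \setminus \L = \LO$, so that $\qo$-minimality in $\LO \setminus \L$ coincides with $\qo$-minimality in $\LO$. Proposition~\ref{prop:min_el_qo}\ref{prop:min_el_qo-a} then gives pairwise $\qo$-incomparability of the elements of $\mathcal{A}$, while Proposition~\ref{prop:min_el_qo}\ref{prop:min_el_qo-b} yields their $\qo$-minimality in $\LO$. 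This proves the first assertion of the theorem.

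For the cardinality of any basis $\mathcal{B}$, I would fix for each infinite $S$ some $L_S \in \mathcal{B}$ with $L_S \qo \eta^{f_S}$. By the $\qo$-minimality of $\eta^{f_S}$ already established, this forces $\eta^{f_S} \qo L_S$, so that $L_S \eq \eta^{f_S}$. Combined with the pairwise $\qo$-incomparability of distinct elements of $\mathcal{A}$, this makes the assignment $S \mapsto L_S$ injective and gives $|\mathcal{B}| \geq 2^{\aleph_0}$; the reverse inequality is immediate from $|\LO| = 2^{\aleph_0}$.

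For the ill-foundedness of $\mathcal{B}$, note that since $\mathcal{B} \subseteq \LO = \LO \setminus \L$, $\mathcal{B}$ is in particular a basis for the restriction of $\qo$ to $\LO \setminus \L$. Then Proposition~\ref{prop:min_el_qo}\ref{prop:min_el_qo-c} directly produces an infinite $\qo$-descending chain inside $\mathcal{B}$; since any two consecutive elements of such a chain are $\qo$-comparable, $\mathcal{B}$ cannot be an antichain. There is no genuine obstacle in this argument: everything is essentially packaged inside Proposition~\ref{prop:min_el_qo}, and the only subtlety worth flagging is the harmless but crucial observation $\LO \setminus \L = \LO$ that lets the minimality and basis results of that proposition apply verbatim to the whole of $\LO$.
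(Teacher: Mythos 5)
Your proposal is correct and follows essentially the same route as the paper: both rest on the observation that \( \LO \setminus \L = \LO \) (since \( \L \subseteq \Fin \)) and then invoke Proposition~\ref{prop:min_el_qo}\ref{prop:min_el_qo-a}--\ref{prop:min_el_qo-b} for the antichain of minimal elements and Proposition~\ref{prop:min_el_qo}\ref{prop:min_el_qo-c} for the ill-foundedness of any basis. The only difference is that you spell out the standard deduction that \( 2^{\aleph_0} \)-many incomparable minimal elements force \( |\mathcal{B}| = 2^{\aleph_0} \), which the paper leaves implicit.
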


\begin{proof} 
Since \( \LO \cap \Fin = \emptyset \), 
we have \( \LO \setminus \L = \LO \). Therefore, by Proposition~\ref{prop:min_el_qo}\ref{prop:min_el_qo-a}--\ref{prop:min_el_qo-b} the family 
\[ 
\mathcal{C}=  \Big\{ \eta^{f_S} \mid S \subseteq \N \setminus \{ 0 \}  \text{ is infinite} \Big\} 
\] 
is a \( \qo \)-antichain of \( \qo \)-minimal elements. The fact that every basis for \( \qo \) is ill-founded follows instead from Proposition~\ref{prop:min_el_qo}\ref{prop:min_el_qo-c}.
\end{proof}

\begin{prop} \label{prop:scatantichains}
Let $\L \subseteq \Scat$ be ccs. Then any \( \qo \)-antichain of size less than $2^{\aleph_0}$ contained in \( \LO \setminus \L \) can be extended to a  \( \qo \)-antichain of size $2^{\aleph_0}$ also contained in \( \LO \setminus \L \). In particular for every \( L \in \LO \setminus \L \) there is \( M \in \LO \setminus \L \) which is \( \qo \)-incomparable with \( L \), and indeed \( L \) belongs to a \( \qo \)-antichain of size \( 2^{\aleph_0} \).
\end{prop}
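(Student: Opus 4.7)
\emph{Proof plan.}
The natural candidates for extending $\mathcal{A}$ are the linear orders $\eta^{f_S}$ for infinite $S \subseteq \N \setminus \{0\}$: by Proposition~\ref{prop:min_el_qo}\ref{prop:min_el_qo-a}--\ref{prop:min_el_qo-b} there are $2^{\aleph_0}$ of them, they are pairwise $\qo$-incomparable, and each is a $\qo$-minimal element of $\LO \setminus \L$ (it lies in $\LO \setminus \L$ because it is non-scattered and $\L \subseteq \Scat$). Since every $L \in \mathcal{A}$ also belongs to $\LO \setminus \L$, the minimality of $\eta^{f_S}$ implies that $L \qo \eta^{f_S}$ forces $L \eq \eta^{f_S}$, and in particular $\eta^{f_S} \qo L$. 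Hence it suffices to select $2^{\aleph_0}$-many $S$ such that $\eta^{f_S} \not\qo L$ for every $L \in \mathcal{A}$; by the additional part of Lemma~\ref{lem:loc_not_scat}\ref{lem:loc_not_scat-d}, this is equivalent to $\eta^{f_S} \not\cvx L$.

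The technical heart of the proof is the claim that, for each countable $L$, the set
\[
\mathcal{S}(L) = \{S \subseteq \N \setminus \{0\} \text{ infinite} \mid \eta^{f_S} \cvx L\}
\]
is at most countable. For each $S \in \mathcal{S}(L)$ fix a witnessing convex subset $C_S \csube L$ with $C_S \cong \eta^{f_S}$. The key subclaim is that $C_S \cap C_{S'} = \emptyset$ whenever $S \neq S'$. First, the intersection cannot be nonempty and scattered: if it were, then as a convex subset of $C_S \cong \eta^{f_S}$ it would sit inside a single finite $\n$-copy and admit a minimum $y$; picking $y' \in C_S$ and $y'' \in C_{S'}$ with $y', y'' <_L y$ (possible because $C_S, C_{S'}$ have no endpoints) and using convexity of $C_S, C_{S'}$ in $L$ forces both $y' <_L y''$ and $y'' <_L y'$, a contradiction. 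Hence $C_S \cap C_{S'}$ is non-scattered: its projection onto the $\Q$-axis of $C_S$ contains rationals $q_0 < q_1$, and convexity in $L$ forces the full sub-copy $\eta^{f_S}_{(q_0, q_1)} \cong \eta^{f_S}$ (Lemma~\ref{lem:loc_not_scat}\ref{lem:loc_not_scat-d}) to lie convexly inside $C_{S'} \cong \eta^{f_{S'}}$. Lemma~\ref{lem:loc_not_scat}\ref{lem:loc_not_scat-e} then forces $\eta^{f_S} \cong \eta^{f_{S'}}$, hence $S = S'$. Thus $\{C_S\}_{S \in \mathcal{S}(L)}$ is a pairwise disjoint family of nonempty subsets of the countable set $L$, so $|\mathcal{S}(L)| \leq \aleph_0$.

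The conclusion is then cardinal arithmetic: $|\bigcup_{L \in \mathcal{A}} \mathcal{S}(L)| \leq |\mathcal{A}| \cdot \aleph_0 < 2^{\aleph_0}$, so there is a set $T$ of $2^{\aleph_0}$-many infinite $S \subseteq \N \setminus \{0\}$ disjoint from this union, and $\mathcal{A} \cup \{\eta^{f_S} \mid S \in T\}$ is the desired $\qo$-antichain of size $2^{\aleph_0}$ inside $\LO \setminus \L$. The final assertion of the proposition follows by applying the first part to the singleton antichain $\mathcal{A} = \{L\}$. The main difficulty is the disjointness subclaim $C_S \cap C_{S'} = \emptyset$; everything else is routine.
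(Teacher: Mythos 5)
Your proof is correct and follows essentially the same route as the paper's: extend $\mathcal{A}$ by the antichain $\{\eta^{f_S}\}$, use Lemma~\ref{lem:loc_not_scat}\ref{lem:loc_not_scat-d} to reduce $\eta^{f_S} \qo L$ to $\eta^{f_S} \cvx L$, discard the (at most countably many) bad $S$ for each element of $\mathcal{A}$, and invoke $\qo$-minimality of the $\eta^{f_S}$ over $\LO \setminus \L$ for the reverse direction. The only difference is that where the paper cites \cite[Claim 3.10.1]{IMMRW22} for the countability of $\{S \mid \eta^{f_S} \cvx L\}$, you prove it from scratch via the pairwise disjointness of the witnessing convex copies $C_S$, and that argument is sound.
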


\begin{proof}
Let $\mathcal{A}=\{ L_\a \mid \a < \kappa\}$ be a \( \qo \)-antichain of size $\kappa < 2^{\aleph_0}$ with \( L_\alpha \notin \L \) for all \( \alpha < \kappa \),  and let  $\mathcal{C}$ be the \( \qo \)-antichain of size \( 2^{\aleph_0} \) from the proof of Theorem~\ref{thm:basis2}. 
Given \( \alpha < \kappa \), consider the set $\mathcal{C}_\a=\{\eta^{f_S} \in \mathcal{C} \mid \eta^{f_S} \qo L_\a\}$. 
By Lemma~\ref{lem:loc_not_scat}\ref{lem:loc_not_scat-d}, if \( \eta^{f_S} \qo L_\alpha \) then also \( \eta^{f_S} \cvx L_\alpha \),
thus \( \mathcal{C}_\alpha = \{ L \in \mathcal{C} \mid L \cvx L_\alpha \} \) and so \( \mathcal{C}_\alpha \) is countable by~\cite[Claim 3.10.1]{IMMRW22}.
Therefore $\bigcup_{\a<\kappa} \mathcal{C}_\a$ has size at most $\max \{ \kappa,\aleph_0 \}$. 
From this and $\qo$-minimality over \( \LO \setminus \L \) of the linear orders $\eta^{f_S}$ (Proposition~\ref{prop:min_el_qo}\ref{prop:min_el_qo-b}), it then follows that $\mathcal{A} \cup (\mathcal{C}\setminus \bigcup_{\a<\kappa} \mathcal{C}_\a)$ is the desired \( \qo \)-antichain in \( \LO \setminus \L \) of size \( 2^{\aleph_0} \) extending \( \mathcal{A} \).
%
\end{proof}

Using again the fact that \( \LO \setminus \Fin = \LO \),
we obtain:

\begin{cor}
For every ccs \( \L \subseteq \Fin \) there are no maximal \( \qo \)-antichains of size smaller than \( 2^{\aleph_0} \).
\end{cor}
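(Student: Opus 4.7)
The plan is to derive the corollary as a direct application of Proposition~\ref{prop:scatantichains}, exploiting the key observation hinted at just before the statement, namely that $\LO \setminus \Fin = \LO$ (since every element of $\LO$ is, by definition, a linear order on the infinite set $\N$, and therefore not finite).

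More precisely, suppose $\L \subseteq \Fin$ is ccs. First I would note that $\L \subseteq \Fin \subseteq \Scat$, so Proposition~\ref{prop:scatantichains} is applicable. Now let $\mathcal{A} \subseteq \LO$ be any $\qo$-antichain with $|\mathcal{A}| < 2^{\aleph_0}$. Since every element of $\LO$ is infinite and $\L$ contains only finite linear orders, we automatically have $\mathcal{A} \subseteq \LO \setminus \L$. Proposition~\ref{prop:scatantichains} then produces a $\qo$-antichain $\mathcal{A}' \supseteq \mathcal{A}$ of size $2^{\aleph_0}$. In particular, $\mathcal{A} \subsetneq \mathcal{A}'$, so $\mathcal{A}$ is not maximal.

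There is essentially no obstacle here: the whole point of the corollary is to package together the facts that (i) for $\L \subseteq \Fin$ the set $\LO \setminus \L$ is all of $\LO$, and (ii) antichains in $\LO \setminus \L$ of size $< 2^{\aleph_0}$ can always be enlarged inside $\LO \setminus \L$. The only thing worth being explicit about is the containment $\L \subseteq \Scat$ needed to invoke Proposition~\ref{prop:scatantichains}, which is immediate because every finite linear order is scattered.
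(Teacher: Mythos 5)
Your proof is correct and is exactly the argument the paper intends: the corollary is stated as an immediate consequence of Proposition~\ref{prop:scatantichains} together with the observation (made explicitly in the paper right before the statement) that $\LO \setminus \L = \LO$ when $\L \subseteq \Fin$. Your extra remark that $\Fin \subseteq \Scat$ is needed to invoke the proposition is a reasonable bit of explicitness, but nothing is different in substance.
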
	

In contrast, as observed in the proof of Theorem~\ref{thm:twoelementsbasis}, if \( \Fin \subsetneq \L \), so that either \( \boldsymbol{\omega} \in \L \) or \( \boldsymbol{\omega}^* \in \L \), then \( \{ \boldsymbol{\omega}, \boldsymbol{\omega}^* \}\) is a maximal \( \qo \)-antichain of size \( 2 \). It is thus interesting to understand for which \( \L \) there are arbitrarily large finite maximal \( \qo \)-antichains, or even countably infinite maximal \( \qo \)-antichains.

\begin{cor} \label{cor:scatantichains}
All maximal \( \trianglelefteq^\Scat_\LO\)-antichains \( \mathcal{A} \) are either finite or of size \( 2^{\aleph_0} \). More precisely:
\begin{enumerate-(a)}
\item \label{cor:scatantichains-a}
If \( \mathcal{A} \cap \Scat \neq \emptyset \), then \( \mathcal{A} \subseteq \Scat \) and \( \mathcal{A} \) is also an antichain with respect to \( \emb \), hence it is finite.
\item \label{cor:scatantichains-b}
If \( \mathcal{A} \cap \Scat = \emptyset \), then \( |\mathcal{A}| = 2^{\aleph_0} \).
\end{enumerate-(a)}
Thus there is no countably infinite maximal \( \trianglelefteq^\Scat_\LO\)-antichain.
\end{cor}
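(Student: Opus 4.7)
The plan is to handle the two cases separately, using \ref{fct:basic} to reduce $\qo$ to $\preceq$ on scattered elements, and Proposition~\ref{prop:scatantichains} to handle the non-scattered case.

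For part~\ref{cor:scatantichains-a}, pick some $L \in \mathcal{A} \cap \Scat$. Since $\L = \Scat$ and $L$ is countable and scattered, we have $L \in \L$, so by Fact~\ref{fct:basic} the relation $L \trianglelefteq^\Scat_\LO L'$ is equivalent to $L \emb L'$ for every $L' \in \LO$. I would first argue that $\mathcal{A} \subseteq \Scat$: if some $L' \in \mathcal{A}$ were non-scattered, then $\eta \emb L'$, hence every countable linear order (in particular $L$) would embed into $L'$, giving $L \qo L'$ and contradicting the antichain property. Once $\mathcal{A} \subseteq \Scat$, another application of Fact~\ref{fct:basic} shows that on $\mathcal{A}$ the relation $\qo$ coincides with $\emb$, so $\mathcal{A}$ is an $\emb$-antichain in $\Scat \cap \LO$. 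By Laver's theorem, $(\Scat \cap \LO, \emb)$ is a wqo, so $\mathcal{A}$ must be finite.

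For part~\ref{cor:scatantichains-b}, the situation is immediate from Proposition~\ref{prop:scatantichains}: since $\mathcal{A} \cap \Scat = \emptyset$ means $\mathcal{A} \subseteq \LO \setminus \L$, if $|\mathcal{A}| < 2^{\aleph_0}$ then $\mathcal{A}$ could be properly extended to a $\qo$-antichain of size $2^{\aleph_0}$ still contained in $\LO \setminus \L$, contradicting maximality. Since trivially $|\mathcal{A}| \leq |\LO| = 2^{\aleph_0}$, we conclude $|\mathcal{A}| = 2^{\aleph_0}$.

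The final clause about non-existence of countably infinite maximal antichains follows by combining the two cases: a countably infinite maximal antichain $\mathcal{A}$ would either meet $\Scat$, forcing $\mathcal{A}$ to be finite by \ref{cor:scatantichains-a}, or be disjoint from $\Scat$, forcing $|\mathcal{A}| = 2^{\aleph_0}$ by \ref{cor:scatantichains-b}, both contradicting $|\mathcal{A}| = \aleph_0$. No serious obstacle arises: the potential subtlety is only in part~\ref{cor:scatantichains-a}, where one must remember that the relevant wqo statement (Laver's theorem, cited in the preliminaries before Proposition~\ref{prop:comb_prop_cvx}) is what upgrades finiteness from the antichain being embeddability-based to being genuinely finite.
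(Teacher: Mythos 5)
Your proof is correct and follows essentially the same route as the paper: Fact~\ref{fct:basic} forces any scattered member of the antichain to lie $\trianglelefteq^\Scat_\LO$-below every non-scattered order (via $L \emb \eta \emb L'$), so $\mathcal{A} \subseteq \Scat$, where $\trianglelefteq^\Scat_\LO$ coincides with $\emb$ and Laver's wqo theorem gives finiteness; part (b) is an immediate application of Proposition~\ref{prop:scatantichains}. No discrepancies to report.
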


\begin{proof}
\ref{cor:scatantichains-a}
Let \( L \in \mathcal{A} \cap \Scat \). 
If \( L' \notin \Scat \), then $L \emb \eta \emb L'$ and therefore \( L \trianglelefteq^\Scat_\LO L' \) by Fact~\ref{fct:basic}. Hence \( \mathcal{A} \subseteq \Scat \). 
Moreover, on \( \Scat \) the relations \( \trianglelefteq^\Scat_\LO \) and \( \emb \) coincide again by Fact~\ref{fct:basic}, hence we are done.

\ref{cor:scatantichains-b}
Apply Proposition~\ref{prop:scatantichains}.
\end{proof}

\begin{remark}
For an arbitrary $\L$, if an antichain $\mathcal{A}$ intersects $\L$ then it is included in $\Scat$ because $L \emb L'$ whenever $L \in \LO$ and $L' \notin \Scat$. 
However, in contrast with Corollary~\ref{cor:scatantichains}\ref{cor:scatantichains-a}, this does not rule out the existence of large \( \qo \)-antichains of scattered linear orders when \( \L \subsetneq \Scat \).
For example consider for every $f \in \N^\N$ the linear order $L_f = \z \boldsymbol{\o}^* + \sum_{n \in \N} (\z + f(n))$; then $L_f \trianglelefteq_\LO^{\Fin} L_{f'}$ if and only if $L_f \underline{\bowtie}_\LO^{\Fin} L_{f'}$ if and only if $\exists n,n'\, \forall i\, f(n+i)=f'(n'+i)$; we thus have a $\trianglelefteq_\LO^{\Fin}$-antichain of size $2^{\aleph_0}$ contained in $\Scat$.

Other configurations of maximal antichains are possible as well. For example, \( \L_{\preceq \boldsymbol{\omega}} \) is ccs by Proposition~\ref{prop:classes_gamma}, and it is easy to check using Proposition~\ref{prop:scatantichains} that every maximal \( \trianglelefteq_\LO^{\L_{\preceq \boldsymbol{\omega}}} \)-antichain either is  of the form \( \{ \boldsymbol{\omega}, \boldsymbol{\alpha}^* \} \) for some infinite \( \alpha < \omega_1 \), or else has size \( 2^{\aleph_0} \).
\end{remark}

Motivated by Proposition~\ref{prop:comb_prop_cvx}\ref{prop:WO_unbounded_cvx},
we now analyse the (un)boundedness of \(\WO \cap \LO \) in \(\LO\) with respect to \(\qo\). We have to distinguish two cases.
	
\begin{prop}\label{prop:wo_boundedness}
Consider any ccs $\L \subseteq \Lin$.
\begin{enumerate-(a)}
\item\label{wo_in_L} 
If \(\WO \subseteq \L\), then \(\WO \cap \LO\) is bounded with respect to \(\qo\).
\item\label{wo_not_in_L} 
If \(\WO \nsubseteq \L\), then \(\WO \cap \LO \) is unbounded with respect to \(\qo\).
\end{enumerate-(a)}
\end{prop}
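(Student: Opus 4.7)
The plan for part~\ref{wo_in_L} is to exhibit $\eta$ as an explicit upper bound. Since $\WO \subseteq \L$ gives in particular $\WO \cap \LO \subseteq \L$, Fact~\ref{fct:basic} yields $L \qo L' \iff L \preceq L'$ for every $L \in \WO \cap \LO$ and every $L' \in \LO$. Since every countable linear order embeds into $\eta$ by Cantor's theorem, we conclude $L \qo \eta$ for all $L \in \WO \cap \LO$.

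For part~\ref{wo_not_in_L}, the plan is to show that for any candidate upper bound $L' \in \LO$ there is a countable ordinal $\delta$ such that every $L \in \WO \cap \LO$ with $L \qo L'$ has order type strictly less than $\delta$; then any $L \in \WO \cap \LO$ of order type at least $\delta$ defeats $L'$. The ordinal $\delta$ will take the form $\tau \cdot \alpha$, where $\alpha$ bounds the index class and $\tau$ bounds the individual pieces.

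For the first ingredient, let $\alpha$ be the least ordinal with $\boldsymbol{\alpha} \notin \L$; this exists and is countable because $\WO \nsubseteq \L$, and by downward $\preceq$-closure every well-order in $\L$ has order type $< \alpha$. For the second, I would show that the order types of well-ordered convex subsets of $L'$ are bounded by some countable ordinal $\tau$. For each $m \in L'$ define
\[
W_m = \{x \in L' : x \geq_{L'} m \text{ and } [m,x]_{L'} \text{ is well-ordered}\}.
\]
Then $W_m$ is itself a well-ordered convex subset of $L'$, and any well-ordered convex subset of $L'$ with minimum $m$ is, by convexity, an initial segment of $W_m$, hence of order type at most $\ot(W_m)$. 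Setting $\tau = \sup_{m \in L'}(\ot(W_m) + 1)$ yields a countable ordinal (a countable supremum of countable ordinals) with the desired bounding property.

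Now suppose $L \in \WO \cap \LO$ satisfies $L \qo L'$ via some $K \in \L$ and a $K$-convex partition $(L_k)_{k \in K}$ into nonempty pieces. Since $L$ is well-ordered and each $L_k$ is nonempty, any descending chain in $K$ lifts to one in $L$; hence $K$ is well-ordered and $\ot(K) < \alpha$. Each $L_k$ is well-ordered, being a convex subset of the well-order $L$, and isomorphic to a convex subset of $L'$, so $\ot(L_k) < \tau$. Ordinal arithmetic then gives
\[
\ot(L) = \sum_{k \in K} \ot(L_k) \leq \tau \cdot \ot(K) < \tau \cdot \alpha,
\]
so taking $L \in \WO \cap \LO$ of order type at least the countable ordinal $\tau \cdot \alpha$ suffices. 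The main technical point I foresee is establishing the countability of $\tau$; the rest is routine ordinal arithmetic and bookkeeping.
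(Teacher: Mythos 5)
Your proof is correct. Part~\ref{wo_in_L} is essentially the paper's argument: by Fact~\ref{fct:basic} any $\preceq_{\LO}$-upper bound for $\WO\cap\LO$ (such as $\eta$, or any non-scattered order) is a $\qo$-upper bound. For part~\ref{wo_not_in_L} you take a genuinely different and more self-contained route. The paper fixes $\bb\notin\L$, invokes the already-known unboundedness of $\WO\cap\LO$ under $\cvx$ (Proposition~\ref{prop:comb_prop_cvx}\ref{prop:WO_unbounded_cvx}) to get $\aa\ncvx L'$, and then applies the product trick of Proposition~\ref{prop:nqo} to conclude $\aa\bb\nqo L'$; the witness defeating $L'$ is thus a product $\aa\bb$ of two well-orders. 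You instead bound directly the order type of any well-order $\qo$-below a fixed $L'$ by $\tau\cdot\alpha$, where $\alpha$ bounds the order types of well-orders in $\L$ (using downward $\preceq$-closure) and $\tau$ bounds the order types of well-ordered convex subsets of $L'$ via the sets $W_m$. Your construction of $\tau$ essentially reproves the $\cvx$-unboundedness that the paper imports as a black box (and which it in fact redoes in the same style in the proof of Proposition~\ref{prop:WO_kappa}), and your observation that $K$ must be well-ordered with $\ot(K)<\alpha$ replaces the appeal to Proposition~\ref{prop:nqo}. The paper's route is shorter given its toolbox and reuses lemmas needed elsewhere; yours is elementary, avoids both external ingredients, and yields an explicit countable threshold $\tau\cdot\alpha$ beyond which every well-order defeats $L'$. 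All steps check out: $W_m$ is a well-ordered convex set containing every well-ordered convex subset with minimum $m$ as an initial segment, $\tau$ is a countable supremum of countable ordinals, and the monotonicity of ordinal sums and products gives $\ot(L)\le\tau\cdot\ot(K)<\tau\cdot\alpha$.
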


\begin{proof}
\ref{wo_in_L}
By Fact~\ref{fct:basic}, any upper \( \emb \)-bound for \( \WO \cap \LO  \) is also an upper bound with respect to \( \qo \). Thus every non-scattered linear order \( \qo \)-bounds \( \WO \cap \LO \) from above.

\ref{wo_not_in_L}
Let \( \beta < \omega_1 \) be such that  $\bb \notin \L$, and consider any \(L \in \LO\). By Proposition~\ref{prop:comb_prop_cvx}\ref{prop:WO_unbounded_cvx} there is \( \omega \leq \alpha < \omega_1 \) such that $\aa \ncvx L$, hence
 $\aa \bb \nqo L$ by Proposition~\ref{prop:nqo}. Since \( \aa \bb \in \WO \cap \LO\) and \( L \) was arbitrary, this shows that \( \WO \cap \LO \) is \( \qo \)-unbounded.
\end{proof}

Using infinite (countable) sums of linear orders, it is immediate to prove that \( \mathfrak{b}(\qo) > \aleph_0 \). Taking this into account, we show that \( \mathfrak{b}(\qo)\) is as small as possible.

\begin{thm}\label{thm:unbounding_number}
For every ccs $\L \subseteq \Scat$ there exists a family $\mathcal{F}$ of size $\aleph_1$ which is unbounded with respect to $\qo$. Thus, \( \mathfrak{b}(\qo)=\aleph_1 \).
\end{thm}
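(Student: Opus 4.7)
The plan is to exhibit a $\qo$-unbounded family $\mathcal{F} \subseteq \LO$ of cardinality $\aleph_1$; together with the observation preceding the statement that $\mathfrak{b}(\qo) > \aleph_0$ (any countable family is bounded above in $\cvx$, and hence in $\qo$, by the $\omega$-sum of its members), this yields $\mathfrak{b}(\qo) = \aleph_1$.

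The key tool is Proposition~\ref{prop:nqo}, applied with the choice $M = \eta$: this choice is legitimate precisely because the assumption $\L \subseteq \Scat$ forces $\eta \notin \L$. It gives the contrapositive implication that for any linear orders $L_0$ and $L$, $L_0 \ncvx L \Rightarrow L_0 \eta \nqo L$. Thus, to produce a $\qo$-unbounded family of the form $\{L_\alpha \eta\}$, it suffices to produce a $\cvx$-unbounded family $\{L_\alpha\}$ of size $\aleph_1$; and such a family is already at hand, namely $\WO \cap \LO$, by Proposition~\ref{prop:comb_prop_cvx}\ref{prop:WO_unbounded_cvx}.

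Concretely, I would set
\[
\mathcal{F} = \{\aa \eta \mid \omega \leq \alpha < \omega_1\},
\]
which has cardinality $\aleph_1$. Given an arbitrary $L \in \LO$, Proposition~\ref{prop:comb_prop_cvx}\ref{prop:WO_unbounded_cvx} supplies some infinite countable $\alpha$ with $\aa \ncvx L$, and then Proposition~\ref{prop:nqo} (applied with $L_0 = \aa$, $M = \eta$) forces $\aa \eta \nqo L$; so no $L \in \LO$ bounds $\mathcal{F}$ from above. There is no serious obstacle here: the proof is a direct assembly of the $\cvx$-unboundedness of $\WO \cap \LO$ inherited from~\cite{IMMRW22} with Proposition~\ref{prop:nqo}, the hypothesis $\L \subseteq \Scat$ entering only to ensure that $\eta$ is a legitimate choice of $M \notin \L$.
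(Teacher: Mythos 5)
Your proof is correct and is essentially the paper's own argument: the paper uses the same family \( \{ \aa \eta \mid \alpha < \omega_1 \} \) and derives its unboundedness from exactly the same combination of Proposition~\ref{prop:nqo} (with \( M = \eta \), legitimate since \( \L \subseteq \Scat \)) and Proposition~\ref{prop:comb_prop_cvx}\ref{prop:WO_unbounded_cvx}. The only difference is cosmetic: the paper additionally observes, via Lemma~\ref{lem:loc_not_scat}\ref{lem:loc_not_scat-c}, that this family is a \( \qo \)-antichain, which is not needed for the unboundedness claim itself.
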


\begin{proof}
Let $\mathcal{F}=\{\aa \eta \mid \alpha < \omega_1 \}$. Since \( \aa \eta = \eta^{f_\alpha} \) where \( f_\alpha \colon \Q \to \Scat \) is the constant function with value \( \aa \), by Lemma~\ref{lem:loc_not_scat}\ref{lem:loc_not_scat-c} the family  \( \mathcal{F} \) is a \( \qo \)-antichain of size \( \aleph_1 \): we claim that it is \( \qo \)-unbounded in \( \LO \).
%
Indeed, suppose towards a contradiction that $\mathcal{F}$ is \( \qo \)-bounded from above by some $L \in \LO$. Then $\aa\eta \qo L$ for every $\alpha < \omega_1$, hence by Proposition~\ref{prop:nqo} we would have $\aa \cvx L$ for every $\alpha < \omega_1$, against  Proposition~\ref{prop:comb_prop_cvx}\ref{prop:WO_unbounded_cvx}.
\end{proof}
	

The next result shows that \( (\LO, {\qo} ) \) exhibits a high degree of self-similarity when $\L \neq \Lin$ (the statement obviously fails for $\emb$). Given \( L_0 \in \LO \), we let \({L_0}{\uparrow^{\L}}\ = \{L \in \LO\ \mid L_0 \qo L\}\) be the \( \qo \)-upper cone above \( L_0 \).	

\begin{thm}\label{thm:fractal}
For every ccs $\L \subseteq \Scat$, the partial order \( (\LO, {\qo} ) \) has the fractal property with respect to its upper cones, that is, 
\((\LO,\qo)\) embeds into \(({L_0}{\uparrow^{\L}},\qo)\) for every
\(L_0 \in \LO\).
\end{thm}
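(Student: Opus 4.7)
The plan is to build an order embedding $\Phi\colon(\LO,\qo)\to(L_0\uparrow^\L,\qo)$ of the simple form $\Phi(L)=ML$ for a fixed $M\in\LO$ depending only on $L_0$. I want two things from $M$: first, $L_0\cvx M$, which guarantees $L_0\cvx ML=\Phi(L)$ (hence $L_0\qo\Phi(L)$, so $\Phi(L)\in L_0\uparrow^\L$); and second, that $M$ has an \emph{inherently cofinal tail}, because then Proposition~\ref{prop:fin_zeta_l} yields $\Phi(L)\qo\Phi(L')\iff L\qo L'$, making $\Phi$ the required $\qo$-embedding.

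For scattered $L_0$ I would take $M:=L_0+\omega^{\beta}$, where $\beta<\omega_1$ is any countable ordinal with $\omega^\beta\not\preceq L_0$. Such a $\beta$ exists because the supremum of ordinals embedding into the countable scattered $L_0$ is countable. Clearly $L_0$ is an initial segment of $M$, so $L_0\cvx M$. To show the final segment $[m_0,+\infty)_M\cong\omega^\beta$ (with $m_0$ the least element of the $\omega^\beta$ summand) is inherently cofinal, assume an embedding $g\colon\omega^\beta\to M$ has bounded image, say $g(\omega^\beta)\subseteq L_0+\alpha$ for some $\alpha<\omega^\beta$. Splitting $\omega^\beta$ into the preimage of $L_0$ (an initial segment, of ordinal length $\gamma\preceq L_0$, so $\gamma<\omega^\beta$) and its complement (of length $\delta\leq\alpha<\omega^\beta$), we get $\gamma+\delta=\omega^\beta$; additive indecomposability of $\omega^\beta$ then forces $\delta=\omega^\beta>\alpha$, a contradiction. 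Proposition~\ref{prop:fin_zeta_l} then finishes this case.

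The main obstacle is handling the case $L_0\notin\Scat$. Since then $\eta\preceq L_0$, every countable linear order---in particular every $\omega^\beta$ with $\beta<\omega_1$---embeds into $L_0$, so any countable $M$ with $L_0\cvx M$ admits the countable tail embedding boundedly into the $L_0$-part, ruling out any inherently cofinal tail. Proposition~\ref{prop:fin_zeta_l} is therefore unavailable. My plan for this case is to exploit Corollary~\ref{cor:eta} and its extensions: if $L_0$ is non-scattered then, morally, $L_0\qo L\iff L_0\cvx L$, so $L_0\uparrow^\L=L_0\uparrow^\cvx$, and one is reduced to the $\cvx$ fractal property inside the cone of orders containing a convex copy of $L_0$. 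Combined with the Hausdorff representation $L_0\cong\eta^f$ from a map $f\colon\Q\to\Scat$, this should allow a tailored construction---for instance, modifying $\Phi(L)=ML$ by inserting $L_0$ or an appropriate $\eta^f$ at a boundary and using the rigidity results of Lemma~\ref{lem:loc_not_scat} to recover $L\qo L'$ from $\Phi(L)\qo\Phi(L')$. The delicate point will be preserving the forward direction $L\qo L'\Rightarrow\Phi(L)\qo\Phi(L')$ with a partition indexed by an element of $\L$, which the ccs hypothesis (allowing one to prepend or append a singleton piece to any $K\in\L$) should accommodate cleanly once the correct template is identified.
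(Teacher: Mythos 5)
Your treatment of the scattered case is correct and is genuinely different from (and simpler than) the paper's argument: choosing $M=L_0+\boldsymbol{\omega^\beta}$ with $\omega^\beta$ additively indecomposable and $\boldsymbol{\omega^\beta}\not\preceq L_0$, verifying that the tail $\boldsymbol{\omega^\beta}$ is inherently cofinal in $M$, and then invoking Proposition~\ref{prop:fin_zeta_l} does give an embedding $L\mapsto ML$ of $(\LO,\qo)$ into $(L_0\uparrow^{\L},\qo)$ when $L_0\in\Scat$. That part stands.

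The non-scattered case, however, is a genuine gap, and it is the case where all the real work lies. Your fallback rests on the claim that for non-scattered $L_0$ one has $L_0\qo L\iff L_0\trianglelefteq L$, so that $L_0\uparrow^{\L}$ is a $\trianglelefteq$-cone; this is false. For instance, with $\L=\Fin$ and $L_0=\eta+\mathbf{2}+\eta$ one has $L_0\trianglelefteq^{\Fin}_{\LO}\eta$ (split $L_0$ as $(\eta+\1)+(\1+\eta)$ and send the pieces to $(-\infty,0]_\eta$ and $[1,+\infty)_\eta$), yet $L_0\ntrianglelefteq\eta$; so the two cones differ. Corollary~\ref{cor:eta} only covers orders of the form $L\eta$, and the Hausdorff representation $L_0\cong\eta^f$ requires $L_0$ to have no scattered initial or final segment, so neither applies to an arbitrary non-scattered $L_0$. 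Moreover, as you yourself observe, no countable $M$ convexly containing a non-scattered $L_0$ can have an inherently cofinal tail, so Proposition~\ref{prop:fin_zeta_l} is structurally unavailable and the "tailored construction" you allude to is never produced. The paper avoids the case split entirely: by Proposition~\ref{prop:comb_prop_cvx}\ref{prop:WO_unbounded_cvx} there is, for \emph{every} $L_0\in\LO$, an $\alpha<\omega_1$ with $\aa\ncvx L_0$ (note this uses convex embeddability, not embeddability, so it works for non-scattered $L_0$ too), and one takes $\varphi(L)=(\aa\eta+\eta+L_0+\eta)L$; the blocks $\aa\eta$ act as rigid markers that Lemma~\ref{lem:loc_not_scat} locates inside any witnessing partition, from which the convex partition of $L$ and the embedding $L\to L'$ are reconstructed. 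Some argument of this marker type (or another idea entirely) is needed to close your second case.
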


\begin{proof} 
Fix $L_0 \in \LO$ and, using Proposition \ref{prop:comb_prop_cvx}\ref{prop:WO_unbounded_cvx}, fix $\alpha < \omega_1$ such that $\aa \ncvx L_0$ (in particular, \( \alpha \geq \omega \)).
Consider the map \(\varphi \colon \LO \to L_0\uparrow^{\L}\) defined by 
\[
\varphi(L) = ( \aa \eta_0 + \eta_1 + L_0 + \eta_2)L,
\]
where to help the reader we denote by $\eta_j$ distinct copies of $\eta$: we show that $\varphi$ is an embedding from \((\LO,{\qo})\) to \((L_0\uparrow^{\L},{\qo})\). 

Clearly, if \(L \qo L'\) via \(K \in \L\), some $K$-convex partition \((L_k)_{k \in K}\) of \(L\) and an embedding \( g \colon L \to L' \), then \(K\) itself, the $K$-convex partition \( (L'_k)_{k \in K}\) of \(\varphi(L)\) given by \( L'_k = (\aa \eta_0 + \eta_1 + L_0 + \eta_2)L_k \) and the embedding \( h \colon \varphi(L) \to \varphi(L') \) defined by \( h(x,\ell)= (x,g(\ell)) \) witness that \(\varphi(L) \qo \varphi(L')\).

For the other direction, suppose that $\varphi(L) \qo \varphi(L')$ as witnessed by $K \in \L$,  the $K$-convex partition $(M_k)_{k \in K}$ of $\varphi(L)$ and the embedding $h \colon \varphi(L) \to \varphi(L')$. 
For each $\ell \in L$, consider the partition of $\aa \eta_0 \times \{\ell\} \csube \varphi(L)$ given by the nonempty sets of the form \( M_k \cap (\aa \eta_0 \times \{\ell\})\), which is a \( K' \)-convex partition for some \( K' \subseteq K \in \L \subseteq \Scat\): 
since $\aa \eta_0 \cong \eta^f$ where $f \colon \Q \to \Lin$ is the constant function with value $\aa$,
by Lemma~\ref{lem:loc_not_scat}\ref{lem:loc_not_scat-a} we can choose%
\footnote{In general, the choice of \( N_\ell \) and \( k_\ell \) is not unique. The fact that \( \eta^f_{(q_0^{(\ell)},q_1^{(\ell)})} \times \{ \ell \} \cong \aa \eta_0 \) follows from Lemma~\ref{lem:loc_not_scat}\ref{lem:loc_not_scat-d}.} 
$N_\ell = \eta^f_{(q_0^{(\ell)},q_1^{(\ell)})} \times \{ \ell \} \cong \aa \eta_0$ and $k_\ell \in K$ such that $N_\ell \csube (\aa \eta_0 \times \{\ell\}) \cap M_{k_\ell}$, so that \( h \restriction N_\ell \) witnesses \( N_\ell \cvx \varphi(L') \).
If $h(N_\ell) \cap (\eta_j \times \{\ell'\}) \neq \emptyset$ for some $j \in \{ 1,2 \}$ and $\ell' \in L'$, then \( N_\ell \) would contain a convex subset isomorphic to \( \eta \), which is not the case because \( \alpha > 1 \). 
Therefore either $h(N_\ell) \csube \aa \eta_0 \times \{\ell'\}$ or $h(N_\ell) \csube L_0 \times \{\ell'\}$ for some (necessarily unique) $\ell' \in L'$. But $\aa \cvx N_\ell$ and $\aa \ncvx L_0$, hence the second possibility cannot hold.
This shows that there is a well-defined map \( g \colon L \to L' \) such that $h(N_\ell) \csube \aa \eta_0 \times \{g(\ell)\}$ for all \( \ell \in L \): we claim that \( g \) is an embedding. 
Indeed, for every \( \ell_0,\ell_1 \in L \) we have 
 \[ 
 \ell_0 <_L \ell_1 \iff N_{\ell_0} <_{\varphi(L)} N_{\ell_1} \iff h(N_{\ell_0}) <_{\varphi(L')} h(N_{\ell_1}) 
 \] 
because \( h \) is an embedding. 
If there were \( \ell_0 <_L \ell_1\) such that \( g(\ell_0) = g(\ell_1) \), then \( h(\eta_1 \times \{ \ell_0 \} ) \subseteq \aa \eta_0 \times \{ g(\ell_0) \}\) because \( N_{\ell_0} <_{\varphi(L)} \eta_1 \times \{ \ell_0 \} <_{\varphi(L)} N_{\ell_1} \).
Let \( k \in K \) be such that \( M_k \cap (\eta_1 \times \{ \ell_0 \})\) contains an interval \( (q_0,q_1) \times \{ \ell_0 \} \) of \( \eta_1 \times \{ \ell_0 \} \), for some \( q_0 < q_1 \).
(Such a \( k \) exists by Lemma~\ref{lem:loc_not_scat}\ref{lem:loc_not_scat-a} applied to \( \eta_1 \times \{ \ell_0 \} \), which is isomorphic to \( \eta^f \) where \( f \) the constant function with value \( \1 \).)
Then \( h((q_0,q_1) \times \{ \ell_0 \})\) would be a convex subset of \( \aa \eta_0 \times \{ g(\ell_0) \} \) homeomorphic to \( \eta \), which is clearly impossible because \( \alpha > 1 \).
Thus \( g \) is injective, and hence for all \( \ell_0, \ell_1 \in L\)
\begin{multline*}
\ell_0 <_L \ell_1 \iff h(N_{\ell_0}) <_{\varphi(L')} h(N_{\ell_1}) \iff \\ 
\aa \eta_0 \times \{ g(\ell_0) \} <_{\varphi(L')} \aa \eta_0 \times \{ g(\ell_1) \} \iff g(\ell_0) <_{L'} g(\ell_1).
\end{multline*}

Now set $L_k = \{\ell \in L \mid k_\ell = k\}$ for each $k \in K$, 
%
and let $K'=\{k \in K \mid L_k \neq \emptyset\} \subseteq K$, so that \( K' \in \L \) by downward \( \preceq \)-closure of \( \L \). 
Clearly, $\bigcup_{k \in K'}L_k=L$. Moreover, for every \( k,k' \in K' \) we have 
\begin{multline*}
k <_{K'} k'  \iff M_k <_{\varphi(L)} M_{k'}  \iff \\
\forall \ell_0 \in L_k \forall \ell_1 \in L_{k'} \, ( N_{\ell_0} <_{\varphi(L)} N_{\ell_1}) \iff L_k <_L L_{k'},
\end{multline*}
and thus $(L_k)_{k \in K'}$ is a $K'$-convex partition of $L$. In particular, every \( L_k \) is \( L \)-convex.

We also claim that $g(L_k)\csube L'$ for all $k \in K'$. 
Pick arbitrary $\ell_0, \ell_1 \in L_k$ such that $g(\ell_0) <_{L'} g(\ell_1)$, and consider any $m' \in L'$ such that $g(\ell_0) <_{L'} m' <_{L'} g(\ell_1)$, so that in particular \( h(N_{\ell_0}) <_{\varphi(L')} \aa \eta_0 \times \{ m' \} <_{\varphi(L')} h(N_{\ell_1}) \) and \( \aa \eta_0 \times \{ m' \} \csube h(M_k) \). 
Since \( h \restriction M_k\) is an isomorphism between \( M_k \) and the \( \varphi(L') \)-convex set \( h(M_k) \), and since \( \aa \eta_0 \times \{ m' \} \) does not contain any \( \varphi(L') \)-convex subset isomorphic to \( \eta \) because \( \alpha > 1 \), then \( h^{-1}(\aa \eta_0 \times \{ m' \}) \cap (\eta_j \times \{ m \}) = \emptyset \) for every \( j \in \{ 1,2 \} \) and \( m \in L \). 
Since \( h^{-1}(\aa \eta_0 \times \{ m' \} \subseteq L_0 \times \{ m \} \) is impossible by choice of \( \alpha \), we conclude that there is \( m \in L \) such that $h^{-1}(\aa \eta_0 \times \{m'\}) \csube (\aa \eta_0 \times \{m\})$.
Notice that \( \ell_0 \leq_L m \leq_L \ell_1 \) because \( N_{\ell_0} <_{\varphi(L)} h^{-1}(\aa \eta_0 \times \{m'\}) <_{\varphi(L)} N_{\ell_1} \), hence \( m \in L_k \) because the latter is \( L \)-convex, and so \( k_m = k\). 
Suppose towards a contradiction that \( m = \ell_0 \). 
Then the \( h \)-preimage of \( \eta_1 \times \{ g(\ell_0) \} \), which is a \( \varphi(L') \)-convex subset of \( \varphi(L') \) between \( h(N_{\ell_0}) \) and \( \aa \eta_0 \times \{ m' \} \), would be a \( \varphi(L) \)-convex subset of \( \aa \eta_0 \times \{ \ell_0 \} \), which is impossible. 
A similar argument excludes \( m = \ell_1 \): hence \( \ell_0 <_L m <_L \ell_1 \) and \( \aa \eta_0 \times \{ m \} \csube M_k \).
By the usual argument, this entails that \( h(\aa \eta_0 \times \{ m \} ) \subseteq \aa \eta_0 \times \{ \ell' \} \) for some \( \ell' \in L' \), and necessarily \( \ell' = m' \) by choice of \( m \). Thus \( m' = g(m) \), so \( m' \in g(L_k) \). 
Since \( m' \) was arbitrary, \( g(L_k) \) is \( L' \)-convex.

This concludes the proof because we have shown that \( K' \in \L \), the \( K' \)-convex partition \( (L_k)_{k \in K} \) and \( g \) witness \( L \qo L' \), as desired.
\end{proof}	

In contrast to Theorem~\ref{thm:fractal},  it is often not possible to embed \((\LO,{\qo})\) into a lower cone \(({L_0}{\downarrow^{\L}},{\qo})\), where \({L_0}{\downarrow^{\L}} = \{L \in \LO \mid L \qo L_0\}\). 
This is trivial if we consider a \( \qo \)-minimal element in $\LO$, such as  \( \boldsymbol{\o} \) or \( \boldsymbol{\o}^* \) if \( \L \nsubseteq \Fin \), 
or the non-scattered minimal elements from Theorem~\ref{thm:basis2} if \( \L \subseteq \Fin \).  

Besides the ones determined by minimal elements, there are many other lower cones in which \( (\LO,{\qo}) \) cannot be embedded. For example, if $\L \subseteq \Fin$ and  \(L_0 \in \Scat\), then \({L_0}{\downarrow^{\L}}\) contains countably many equivalence classes under \(\eq\) (this follows from the fact that a countable scattered linear order has countably many convex subsets, \cite{Bon75}), and thus by Theorem~\ref{thm:chains_qo} there is again no embedding from \((\LO,\qo)\) into \(({L_0} {\downarrow^{\L}},\qo)\). If instead $\Fin \subsetneq \L \subseteq \Scat$, we can notice that if $L_0 \in \LO \cap \L$ then $(\LO,\qo)$ is not embeddable in $({L_0}{\downarrow^{\L}},{\qo})$ because the latter coincides with $({L_0}{\downarrow^{\L}},{\emb})$ by Fact~\ref{fct:basic}, and hence it is a wqo.
In fact, we have no examples of an \(L_0 \in \LO\) and a ccs $\L \subseteq \Scat$ such that \((\LO,{\qo})\) embeds into \(({L_0}{\downarrow^{\L}},\qo)\). If instead $\L = \Lin$ the situation is clearer: since $\trianglelefteq^\Lin_\LO$ is $\emb$, then%
\footnote{For the nontrivial direction, notice that if there were an embedding \( f \) of \((\LO,{\trianglelefteq^\Lin_\LO })\) into \(({L_0}{\downarrow^{\L}},{\trianglelefteq^\Lin_\LO})\) then \( f(L_0) \prec f(\eta) \preceq L_0 \) (recall that \( \trianglelefteq^\Lin_\LO \) is just \( \emb \)). Thus also \( f^{(2)}(L_0) = (f \circ f)(L_0) \prec f(L_0) \), and iterating the process \( f^{(n+1)}(L_0) \prec f^{(n)}(L_0) \) for every \( n \in \omega \). But then \( (f^{(n)}(L_0) )_{n \in \N} \) would be an infinite descending chain, contradicting the fact that \( \emb \) is wqo.}
\((\LO,{\qo})\) embeds into \(({L_0}{\downarrow^{\L}},{\qo})\) if and only if $L_0$ is not scattered (in which case ${L_0}{\downarrow^{\L}} = \LO$).

\section{Borel complexity of \( \qo \) and \(\eq\)}\label{sec:borel_compl}
	
In this section we analyze the descriptive set-theoretic complexity of the quasi-order $\qo$ and of its associated equivalence relation \( \eq \).
We again mostly work with ccs families \( \L \subsetneq \Lin \), as \( \underline{\bowtie}^{\Lin}_\LO \) is just the well-studied relation \( \equiv_\LO \) of biembeddability (also called equimorphism) on \( \LO \).

We first determine bounds on the complexity of \( \qo \) and \( \eq \) as subsets of \( \LO \times \LO \), in some cases using Wadge reducibility $\leq_W$ (see \cite[Definition 21.13]{Kec95}). 
Since their definition includes an existential quantification over \( \L \), it is not surprising that their complexity depends on that of \( \L \). 

\begin{prop} \label{prop:complexityinthesquare}
Let \( \L \subseteq \Lin \) be downward \( \preceq \)-closed.
\begin{enumerate-(a)}
\item \label{prop:complexityinthesquare-a}
\( \L \) is a coanalytic subset of \( \Lin \), and thus it cannot be proper analytic.
\item \label{prop:complexityinthesquare-b}
The relations \( \qo \) and \( \eq \) are both \( \boldsymbol{\Sigma}^1_2 \).
\item \label{prop:complexityinthesquare-c}
If \( \L \) is Borel, then \( \qo \) and \( \eq \) are analytic.
\item \label{prop:complexityinthesquare-e}
If $\L$ is ccs, 
then \( \L \leq_W {\qo} \) and \( \L \leq_W {\eq} \). Thus if \( \L \) is also proper coanalytic (which in particular implies \( \L \neq \Lin \) and hence \( \L \subseteq \Scat \)) then  \( \qo \) and \( \eq \) are not analytic, while if \( \L \) is \( \boldsymbol{\Pi}^1_1 \)-complete then \( \qo \) and \( \eq \) are \( \boldsymbol{\Pi}^1_1 \)-hard.\end{enumerate-(a)}
\end{prop}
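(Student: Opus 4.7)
My plan is to address the four claims in order, leveraging the \( \preceq \)-downward closure of \( \L \) together with Laver's well-quasi-ordering theorem.

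For part~(a), since \( \L \) is downward \( \preceq \)-closed, the complement \( \Lin \setminus \L \) is upward \( \preceq \)-closed. By Laver's theorem, \( (\LO, \preceq) \) is a wqo, so the \( \preceq \)-minimal elements of \( (\Lin \setminus \L) \cap \LO \) form a finite antichain \( G \); consequently \( (\Lin \setminus \L) \cap \LO = \bigcup_{g \in G}\{ L \in \LO : g \preceq L \} \) is a finite union of analytic upper cones, hence analytic. Similarly \( (\Lin \setminus \L) \cap \Fin \) is Borel, since \( \preceq \) on \( \Fin \) is just the cardinality ordering. Together \( \Lin \setminus \L \) is analytic, so \( \L \) is coanalytic in \( \Lin \); the "not proper analytic" clause is then immediate from Suslin's theorem, since any analytic coanalytic set is Borel.

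For parts~(b) and~(c), I would directly unfold the definition: \( L \qo L' \) holds iff there exist \( K \in \Lin \) with \( K \in \L \), a \( K \)-convex partition \( (L_k)_{k \in K} \) of \( L \), and an embedding \( f\colon L \to L' \) sending each \( L_k \) to a convex subset of \( L' \). The clause \( K \in \L \) is coanalytic by~(a), while the remaining conditions on \( (L_k) \) and \( f \) are Borel once \( K \) is fixed, and all existentials range over standard Borel or Polish spaces. Hence the formula is \( \exists \)(coanalytic \( \wedge \) Borel), i.e.\ \( \boldsymbol{\Sigma}^1_2 \); the relation \( \eq \) inherits this as an intersection of two \( \boldsymbol{\Sigma}^1_2 \) sets. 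For~(c), a Borel hypothesis on \( \L \) collapses the coanalytic clause to Borel, making the whole formula \( \boldsymbol{\Sigma}^1_1 \).

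For part~(e), the engine is Proposition~\ref{prop:crucial}, which gives \( L \qo \eta \iff L \in \L \) for \( L \in \AScat \). I would aim for a continuous reduction \( \varphi(K) = (\psi(K), \eta) \) with \( \psi\colon \Lin \to \LO \cap \Scat \) continuous and such that \( \psi(K) \in \L \iff K \in \L \); Proposition~\ref{prop:crucial} then immediately yields \( \varphi(K) \in \qo \iff K \in \L \), proving \( \L \leq_W \qo \). The main obstacle is the construction of \( \psi \): it must always land in \( \Scat \), be continuous, and satisfy the biconditional. When \( \L \) is Borel the reduction is trivial. Otherwise ccs forces \( \Fin \subsetneq \L \), and since every infinite linear order in \( \L \) must (by downward closure) contain a copy of \( \boldsymbol{\omega} \) or \( \boldsymbol{\omega}^* \), we have \( \boldsymbol{\omega} \in \L \) or \( \boldsymbol{\omega}^* \in \L \), WLOG the former. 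I would then build \( \psi(K) \) as a scattered encoding of \( K \) combining \( K \) itself with \( \boldsymbol{\omega} \)-indexed sums of finite pieces derived from \( K \), using Theorem~\ref{thm:ccs_implies_star} to stay inside \( \L \) whenever \( K \in \L \) and arranging the construction so that \( K \preceq \psi(K) \), which via downward closure supplies the converse direction. The reduction \( \L \leq_W \eq \) should then follow either from the same \( \varphi \) (exploiting that \( \eta \notin \L \) when \( \L \subsetneq \Lin \), so \( \L \subseteq \Scat \), to control the reverse \( \qo \) direction) or with a minor modification of \( \psi \). The stated consequences are then standard pull-back arguments: if \( \L \) is proper coanalytic, \( \L \leq_W \qo \) forbids \( \qo \) (and \( \eq \)) from being analytic, for otherwise \( \L \) would be both analytic and coanalytic hence Borel by Suslin, a contradiction; and if \( \L \) is \( \boldsymbol{\Pi}^1_1 \)-complete, the Wadge reduction transfers \( \boldsymbol{\Pi}^1_1 \)-hardness to \( \qo \) and \( \eq \).
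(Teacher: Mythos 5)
Parts~\ref{prop:complexityinthesquare-a}--\ref{prop:complexityinthesquare-c} of your argument are correct and essentially identical to the paper's proof (wqo of \( \emb \) makes the complement a finite union of analytic upper cones; then unfold Definition~\ref{def:QO}). The problem is part~\ref{prop:complexityinthesquare-e}, where the central construction is both left unfinished and built on an inconsistent specification. You ask for a continuous \( \psi \colon \Lin \to \LO \cap \Scat \) with \( K \preceq \psi(K) \) (to get \( \psi(K) \in \L \Rightarrow K \in \L \) by downward closure) so that Proposition~\ref{prop:crucial} can be applied to \( \psi(K) \). But these requirements are incompatible: for non-scattered \( K \) we have \( \eta \preceq K \preceq \psi(K) \), so \( \psi(K) \) cannot be scattered, and Proposition~\ref{prop:crucial} simply does not apply to it; on such inputs your scheme gives no way to rule out \( \psi(K) \trianglelefteq^\L \eta \) (non-scattered orders \emph{can} be \( \trianglelefteq^\L \)-below \( \eta \), e.g.\ \( \eta \) itself). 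Worse, if \( \psi(K) \) really were always scattered then \( \eta \not\trianglelefteq^\L \psi(K) \) for every \( K \), so the pair \( (\psi(K),\eta) \) is \emph{never} in \( \eq \) and the same map cannot witness \( \L \leq_W {\eq} \); the "minor modification" you defer is exactly the crux. Finally, the aside that "when \( \L \) is Borel the reduction is trivial" is unjustified: Wadge reducibility among Borel sets is a fine hierarchy, and \( \L \leq_W {\qo} \) still requires an argument there.

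For comparison, the paper's proof avoids scatteredness entirely: it uses the single continuous map \( \varphi(L) = (\eta + \boldsymbol{2})L \) and shows \( L \in \L \iff \varphi(L) \qo \eta \iff \varphi(L) \eq \eta \). The leading copy of \( \eta \) in each block guarantees \( \eta \cvx \varphi(L) \), which collapses the \( \qo \) and \( \eq \) statements into one; the adjacent pairs coming from \( \boldsymbol{2} \) act as markers that no convex subset of \( \eta \) can absorb, so any witnessing \( K \)-convex partition must separate the points \( (0,\ell) \), yielding an order-embedding \( L \preceq K \in \L \) and hence \( L \in \L \); and Theorem~\ref{thm:ccs_implies_star} gives \( \boldsymbol{2}L \in \L \) for the converse. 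Your instinct to use Theorem~\ref{thm:ccs_implies_star} plus downward closure is the right one, but the reduction target must be chosen so that the "forbidden pattern" argument replaces the appeal to Proposition~\ref{prop:crucial}; as written, your proof of the key claim \( \L \leq_W {\qo} \) and \( \L \leq_W {\eq} \) is not complete.
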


Part~\ref{prop:complexityinthesquare-c} applies e.g.\ to the families \( \{ \1 \} \), \( \Fin \), \( \Lin \), and all the ccs classes considered in Section~\ref{sec:examples}; instead, 
part~\ref{prop:complexityinthesquare-e} 
applies e.g.\ to \( \WO \) and \( \Scat \).

\begin{proof}
\ref{prop:complexityinthesquare-a}
Since $\emb$ is a wqo, $\Lin \setminus \L$ is a finite union of upward \( \preceq \)-closed cones, each of which is analytic because $\emb$ is an analytic relation. Then $\Lin \setminus \L$ is analytic and  $\L$ is coanalytic.

\ref{prop:complexityinthesquare-b} 
Immediate from Definition~\ref{def:QO}, taking into account part~\ref{prop:complexityinthesquare-a}.

\ref{prop:complexityinthesquare-c}
Similar to~\ref{prop:complexityinthesquare-b}.


\ref{prop:complexityinthesquare-e}
Consider the continuous map \( \varphi \colon \LO \to \LO \) defined by \( \varphi(L) = (\eta + \boldsymbol{2}) L\). 
We claim that \( L \in \L \iff \varphi(L) \qo \eta \iff \varphi(L) \eq \eta \), which amounts to just showing the first equivalence because \( \eta \cvx \varphi(L) \) for every \( L \in \LO \). 
By Theorem~\ref{thm:ccs_implies_star}, if \( L \in \L \) then \( K = \boldsymbol{2} L \in \L \), and the \( K \)-convex partition \( (L_k)_{k \in K} \) of \( \varphi(L) \) whose first element of each pair is $\eta + \boldsymbol{1}$ and the second element is $\boldsymbol{1}$ can be used to witness \( \varphi(L) \qo \eta \) in the obvious way. 
Conversely, assume that \( \varphi(L) \qo \eta \) via some \( K \in \L \) and some \( K \)-convex partition \( (L_k)_{k \in K} \) of \( \varphi(L) \).
Notice that whenever $\ell, \ell' \in L$ are distinct then no convex subset of $\varphi(L)$ isomorphic to a convex subset of \( \eta \) contains both $(0,\ell)$ and $(0,\ell')$. Therefore the map associating to each $\ell \in L$ the unique $k \in K$ such that $(0,\ell) \in L_k$ is injective and order-preserving, so that $L \emb K$ and $L \in \L$.
\end{proof}

We now move to the classification of \( \eq \) with respect to Borel reducibility. 
Noticing that if \( M \in \Lin \) the restriction to \( \LO \) of the map \( \varphi \) from Proposition~\ref{thm:cong_qo} is Borel, we get the following generalization of part~\ref{thm:complexity_cvxeq-a} of Theorem~\ref{thm:complexity_cvxeq}, which corresponds to the case \( \L = \{ \1 \} \).

\begin{thm}\label{thm:iso_breduces_eq}
For every ccs $\L\subseteq \Scat$, we have ${\iso} \mathrel{\leq_B} {\eq}$.
\end{thm}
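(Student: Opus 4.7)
The plan is to apply Proposition~\ref{thm:cong_qo} after making a judicious choice of the parameter $M$, and then verify the descriptive-set-theoretic content needed to upgrade the conclusion to a Borel reduction.

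First I would select $M \in \Lin \setminus \L$. Since $\L \subseteq \Scat$, the rational order $\eta$ is not in $\L$, so $M = \eta$ qualifies (and $\L \subsetneq \ALin$, as required by Proposition~\ref{thm:cong_qo}). Define
\[
\varphi \colon \LO \to \LO, \qquad \varphi(L) = (\1 + \z L + \1)\,\eta.
\]
I need to check two descriptive-set-theoretic points. First, $\varphi(L)$ is a countable infinite linear order, hence (up to the fixed coding) an element of $\LO$: this is clear from the shape of the formula. Second, $\varphi$ is Borel; indeed, as noted in the preliminaries, each of the basic operations on linear orders---product on the left by a fixed order, finite sum with $\1$, and product on the right by $\eta$---restricts to a continuous map on $\LO$ with range in $\LO$, and $\varphi$ is just the composition $L \mapsto \z L \mapsto \1 + \z L + \1 \mapsto (\1 + \z L + \1)\eta$, so it is continuous and a fortiori Borel.

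Next I would invoke Proposition~\ref{thm:cong_qo} with the pair $(\L, \eta)$, which gives
\[
L \cong L' \iff \varphi(L) \trianglelefteq^{\L} \varphi(L') \iff \varphi(L) \mathrel{\underline{\bowtie}}^{\L} \varphi(L').
\]
Restricting to $L, L' \in \LO$, the outer equivalence reads exactly $L \iso L' \iff \varphi(L) \eq \varphi(L')$. Therefore $\varphi$ Borel-reduces $\iso$ to $\eq$, as required.

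I do not expect any real obstacle: Proposition~\ref{thm:cong_qo} already packages the combinatorial content, and the only subtle point is ensuring that the chosen $M$ lives in $\Lin$ (so that $\varphi$ lands in $\LO$ and is Borel on $\LO$) while still avoiding $\L$. The hypothesis $\L \subseteq \Scat$ makes this automatic via $M = \eta$, and this is precisely why the theorem is stated under that restriction rather than for an arbitrary ccs $\L \subsetneq \Lin$.
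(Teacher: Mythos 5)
Your proposal is correct and matches the paper's own argument: the paper likewise applies Proposition~\ref{thm:cong_qo} with $M=\eta$ (noting $\eta\notin\L$ since $\L\subseteq\Scat$) and observes that the restriction of $\varphi$ to $\LO$ is Borel. Your extra remarks on continuity of the basic operations just make explicit what the paper leaves implicit.
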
 

\begin{proof}
Apply Proposition~\ref{thm:cong_qo} with \( M = \eta \).
\end{proof}

Theorem~\ref{thm:iso_breduces_eq} also provides lower bounds for the complexity of \( \qo \) and \( \eq \) as subsets of \( \LO \times \LO \).

\begin{thm} \label{cor:complexityinthesquare}
For every downward \( \preceq \)-closed \( \L \subseteq \Lin \), the relations \( \qo \) and \( \eq \) are \( \boldsymbol{\Sigma}^1_1 \)-hard. 
Therefore if \( \L \) is Borel then \( \qo \) and \( \eq \) are complete analytic (as subsets of \( \LO \times \LO \)); if instead \( \L \) is proper coanalytic and ccs, then they are neither analytic nor coanalytic, hence they are at least \( \boldsymbol{\Delta}^1_2 \).
\end{thm}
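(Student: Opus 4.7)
The plan is to first establish the \(\boldsymbol{\Sigma}^1_1\)-hardness of \(\qo\) and \(\eq\) as subsets of \(\LO \times \LO\), and then derive the two displayed corollaries from Proposition~\ref{prop:complexityinthesquare}.

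For the hardness when \(\L \subsetneq \Lin\), my idea is to reduce the isomorphism relation \(\iso\) on \(\LO\) by applying Proposition~\ref{thm:cong_qo}. That proposition requires a linear order \(M \notin \L\), and for the map \(\varphi(L) = (\1 + \z L + \1) M\) to send \(\LO\) into \(\LO\) one needs \(M\) to be countable. The first step is to observe that a countable \(M \in \Lin \setminus \L\) always exists: either some finite order \(\n\) fails to be in \(\L\), in which case I set \(M = \n\); or \(\Fin \subseteq \L \subsetneq \Lin\), in which case I pick any \(M \in \LO \setminus \L\). Once \(M\) is chosen, \(\varphi\) is continuous from \(\LO\) to \(\LO\) and, by Proposition~\ref{thm:cong_qo}, satisfies \(L \iso L' \iff \varphi(L) \qo \varphi(L') \iff \varphi(L) \eq \varphi(L')\). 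Since \(\iso\) on \(\LO\) is \(\boldsymbol{\Sigma}^1_1\)-hard as a subset of \(\LO \times \LO\) (standard, via the coding of analytic sets that underpins the \(S_\infty\)-completeness of \(\iso\) on linear orders), the \(\boldsymbol{\Sigma}^1_1\)-hardness of \(\qo\) and \(\eq\) follows.

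The case \(\L = \Lin\) has to be handled separately, since no countable \(M\) outside \(\L\) is available to feed Proposition~\ref{thm:cong_qo}; this obstruction is already reflected in the earlier observation that \(\iso \not\leq_B \equiv_{\LO}\) as equivalence relations. Here, however, \(\qo\) coincides with \(\emb\) and \(\eq\) with \(\equiv_{\LO}\), and hardness is classical: the \(\boldsymbol{\omega}^*\)-section \(\{L \mid \boldsymbol{\omega}^* \emb L\}\) of \(\emb\) is the \(\boldsymbol{\Sigma}^1_1\)-complete set of ill-founded countable orders (via the Kleene--Brouwer coding), while the \(\equiv_{\LO}\)-class of \(\eta\) coincides with the \(\boldsymbol{\Sigma}^1_1\)-complete set \(\LO \setminus \Scat\). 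Fixing the second coordinate to \(\boldsymbol{\omega}^*\) or \(\eta\) respectively then gives a Borel reduction of a complete analytic set into each relation.

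With the hardness in hand, the two displayed consequences are routine. If \(\L\) is Borel, Proposition~\ref{prop:complexityinthesquare}\ref{prop:complexityinthesquare-c} yields analyticity of \(\qo\) and \(\eq\), which combined with \(\boldsymbol{\Sigma}^1_1\)-hardness gives completeness in the analytic pointclass. If \(\L\) is proper coanalytic and ccs, Proposition~\ref{prop:complexityinthesquare}\ref{prop:complexityinthesquare-e} provides \(\L \leq_W \qo\) and \(\L \leq_W \eq\), ruling out analyticity; moreover, a \(\boldsymbol{\Sigma}^1_1\)-hard set cannot be coanalytic, for otherwise any analytic set in the source space would pull back to a \(\boldsymbol{\Pi}^1_1\) and hence Borel set, contradicting the existence of proper analytic sets. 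Thus \(\qo\) and \(\eq\) lie outside \(\boldsymbol{\Sigma}^1_1 \cup \boldsymbol{\Pi}^1_1\), i.e.\ at the \(\boldsymbol{\Delta}^1_2\) level or above. The main delicate point in this plan is securing a \emph{countable} witness \(M\) outside \(\L\) when \(\L \subsetneq \Lin\), and separately dispatching the case \(\L = \Lin\) where no such witness exists and Proposition~\ref{thm:cong_qo} cannot be invoked within \(\LO\).
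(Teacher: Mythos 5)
Your proof is correct and follows essentially the same route as the paper: for \( \L \subsetneq \Lin \) the paper likewise reduces \( \iso \) via Proposition~\ref{thm:cong_qo} (packaged as Theorem~\ref{thm:iso_breduces_eq}, taking \( M = \eta \), which automatically lies outside any proper downward \( \preceq \)-closed \( \L \), so your case split on whether a finite order is missing is harmless but unnecessary), and for \( \L = \Lin \) it uses the single map \( L \mapsto (\eta, L) \) to reduce the \( \boldsymbol{\Sigma}^1_1 \)-complete set \( \LO \setminus \Scat \) to both \( \emb \) and \( \equiv_\LO \). The only slip is the phrase ``fixing the second coordinate'': to realize the section \( \{ L \mid \boldsymbol{\omega}^* \emb L \} \) you must fix the \emph{first} coordinate to \( \boldsymbol{\omega}^* \) (the map \( L \mapsto (L, \boldsymbol{\omega}^*) \) would instead compute the Borel set of orders embeddable into \( \omega^* \)), but the intended reduction is evident from the section you display and is correct.
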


\begin{proof}
If \( \L = \Lin \), then the map \( L \mapsto (\eta,L) \) simultaneously reduces the \( \boldsymbol{\Sigma}^1_1 \)-complete set \( \Lin \setminus \Scat \) to \( \trianglelefteq^\Lin_\LO \) and \( \underline{\bowtie}^\Lin_\LO \) because they coincide with \( \emb \) and \( \equiv_\LO \), respectively. 
If instead \( \L \subseteq \Scat \), use Theorem~\ref{thm:iso_breduces_eq} and the well-known fact that \( \iso \) is a \( \boldsymbol{\Sigma}^1_1\)-complete subset of \( \LO \times \LO \).
\end{proof}

Part~\ref{thm:complexity_cvxeq-b} of Theorem \ref{thm:complexity_cvxeq}, namely the fact that ${E_1} \mathrel{\nleq_{\text{\scriptsize \textit{Baire}}}} {\cvxeq}$, does not instead generalize to an arbitrary $\eq$, and actually we have the opposite situation for every ccs \( \L \) different from \( \{ \1 \} \) and \( \Lin \).
(Recall that \( {E_1} \mathrel{\nleq_{\text{\scriptsize \textit{Baire}}}} {\underline{\bowtie}^\Lin_\LO}  \) because \( \emb \) is a wqo and \( \underline{\bowtie}^\Lin_\LO \) is just \( \equiv_\LO \), hence the hypotheses of Theorem~\ref{thm:red_E1_eq} are optimal.)

\begin{thm}\label{thm:red_E1_eq}
For every ccs $\L$ such that \( \Fin \subseteq \L \subseteq \Scat \) we have \({E}_1 \leq_B {\eq}\).
\end{thm}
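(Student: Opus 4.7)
The proof constructs a Borel reduction $\varphi\colon \R^\N \to \LO$ witnessing $E_1 \leq_B \eq$, exploiting the specific interplay of the hypotheses $\Fin \subseteq \L$ (which provides the flexibility of finite convex partitions needed to absorb a finite initial change in the encoding) and $\L \subseteq \Scat$ (which ensures the rigidity that pins down the tail).

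The plan is as follows. Fix a Borel injection $c \colon \R \to 2^\N$. For each real $r$ associate a countable scattered linear order $N_r$ that Borel-faithfully encodes $c(r)$, for example an $\omega^*$-indexed sum of ordinals whose exponents are modulated by $c(r)$ so as to avoid Cantor-normal-form absorption and make the bits of $c(r)$ readable block-by-block from the right. Fix also a sequence of rigid \emph{separators} $M_n$ whose ordinal complexity grows strictly: say, $M_n = \boldsymbol{\omega}^{\omega^{n+2}}$, each indecomposable and absorbing everything of strictly smaller ordinal rank from the left. Set
\[
\varphi((x_n)) = \sum_{n \in \omega}(N_{x_n} + M_n).
\]
The map $\varphi$ is Borel, and the separators are intrinsically identifiable inside $\varphi(x)$ by their distinctive indecomposable ordinal structure, so that the isomorphism type of $\varphi(x)$ records the sequence $(N_{x_n})_n$, and hence the sequence $(x_n)$ itself via injectivity of $c$.

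For the forward direction, if $x \mathrel{E_1} y$ with threshold $N$, then $\varphi(x) = A_x + T$ and $\varphi(y) = A_y + T$ share the common tail $T = \sum_{n \geq N}(N_{x_n}+M_n)$ and differ only in the finite initial segments $A_x, A_y$. Using the partition of $\varphi(x)$ into the two convex pieces $A_x$ and $T$ (legitimate since $\boldsymbol{2} \in \Fin \subseteq \L$), one exploits the absorbing ordinal structure of the separator $M_N$ appearing at the start of the tail to convexly fit $A_x$ into an initial portion of $\varphi(y)$ ending inside $M_N$, while $T$ maps onto the complementary shifted copy of the common tail (the right half of $M_N$ being isomorphic to $M_N$ itself by indecomposability). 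Symmetrically $\varphi(y) \qo \varphi(x)$, so $\varphi(x) \eq \varphi(y)$.

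For the backward direction, the strictly growing ordinal ranks of the separators $M_n$, combined with the scatteredness of any $K \in \L \subseteq \Scat$ indexing an $\L$-convex partition, force any $\L$-convex biembedding to map separators to separators of matching index (up to a finite shift) for all sufficiently large $n$, and hence to match the blocks $N_{x_n}$ and $N_{y_n}$ for cofinitely many $n$; injectivity of $c$ then yields $x_n = y_n$ eventually, i.e.\ $x \mathrel{E_1} y$. The main technical hurdle is the precise calibration of the encodings $N_r$ and the separators $M_n$ so that the forward direction's absorption (which, since $A_x$ is not itself well-ordered due to the $\omega^*$-structure of the $N_{x_n}$'s, requires a more subtle convex-embedding argument than pure ordinal absorption) and the backward direction's rigidity coexist; the strict growth of $\omega^{\omega^{n+2}}$ and the constraint that partition indices lie in $\Scat$ are the key ingredients making both sides go through.
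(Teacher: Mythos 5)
Your construction has a fatal structural flaw: every $\varphi((x_n))$ you build is \emph{scattered} (it is a scattered-indexed sum of scattered orders, hence scattered by \cite[Proposition 2.17]{Ros82}). The theorem's hypotheses allow $\L = \Scat$, and for that choice Fact~\ref{fct:basic} gives $L \qo L' \iff L \emb L'$ whenever $L \in \L$; so on the range of your map $\eq$ collapses to biembeddability $\equiv_{\LO}$, which is induced by a wqo and does not Borel-reduce even $\id(\R)$, let alone $E_1$. More generally, when $\L$ contains infinite scattered orders the indexing order $K$ can shatter your blocks $N_{x_n}$ into singletons, and the rigidity your backward direction relies on evaporates. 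The paper avoids exactly this by making every block \emph{non-scattered} (restrictions of a single $\eta^f$ with $f$ injective), so that Lemma~\ref{lem:loc_not_scat}\ref{lem:loc_not_scat-a} forces any $K$-convex partition with $K \in \Scat$ to leave a whole subinterval $\eta^f_{(q_0,q_1)}$ inside one piece, and rigidity survives arbitrary scattered partitions.

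The forward direction also fails even in the most favourable case $\L = \Fin$. With a finite convex partition of $\varphi(x) = A_x + T$, the first piece is a nonempty initial segment of $\varphi(x)$ and hence contains a coinitial segment $P$ of $N_{x_0}$; since $f(A_x)$ is convex and $P$ is an initial segment of $A_x$, the set $f(P)$ must be convex in $\varphi(y)$. Now $P$ is not well-ordered and (being coinitial in an $\omega^*$-sum of infinite ordinals) has no nonempty well-ordered initial segment and contains no convex copy of any $M_j$, so any convex copy of $P$ in $\varphi(y)$ must sit inside a single block $N_{y_j}$ up to a small well-ordered tail. Matching the $\omega^*$-patterns then forces a tail of the code of $x_0$ to agree with a tail of the code of $y_j$, which is false for a generic $x_0$ (and if you weaken the encoding to make it true, you destroy the injectivity needed for the backward direction). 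In short, your well-ordered separators $M_N$ cannot absorb the non-well-ordered initial blocks, and there is nowhere else for them to go. The paper's absorber is instead $\eta^f \boldsymbol{\omega}^*$, an $\omega^*$-sum of copies of the \emph{same} order $\eta^f$ from which all blocks $\eta^f_I$ are carved, so each of the finitely many displaced blocks embeds convexly into one copy of $\eta^f$ in the prefix while the prefix shifts into itself; this ``build the absorber out of the same universal material as the blocks'' idea is the missing ingredient, and your approach cannot be repaired without it.
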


\begin{proof}
Let \( (\R^+)^\N \) be the set of sequences of positive real numbers, whose elements will be denoted by \( (x_n)_{n \in \N} \) or, for the sake of brevity, by \( \vec{x} \). 
Consider the restriction \( E_1 \restriction (\R^+)^\N \) of $E_1$ to \( (\R^+)^\N \): applying the exponential function pointwise, one immediately sees that $E_1 \restriction (\R^+)^\N \sim_B E_1$.
Fix an injective $f \colon \Q \to \{\n \mid n \in \N \setminus \{0\}\}$, and consider once again the linear order \(\eta^f\). To simplify the notation, given any \( r \in \R \) we write \(\eta^f_r\) in place of \(\eta^f_{(r,r+1)}\). Let \(\varphi \colon (\R^+)^\N \rightarrow \LO\) be the Borel map given by 
\[
\varphi(\vec{x})= \eta^f \boldsymbol{\omega}^* + \sum_{n \in \N } (\eta^f_{-(n+1)} + \eta^f_{x_n}).
\] 
We claim that $\varphi$ reduces \( E_1 \restriction (\R^+)^\N \) to \(\eq\). 

Suppose that \( \vec{x},\vec{y} \in (\R^+)^\N \) are such that \(\vec{x} \mathrel{E}_1 \vec{y}\), and let \(n_0 \in \N\) be such that \(x_n=y_n\) for all \(n\geq n_0\). Let \( m = 2n_0+2 \), so that in particular \( \mathbf{m} \in \Fin \subseteq\L\). 
Consider the \( \mathbf{m}\)-convex partition \( (L_k)_{k < m}\) of \( \varphi(\vec{x}) \) given by
\[
L_k = 
\begin{cases}
\eta^f \boldsymbol{\omega}^* & \text{if } k = 0 \\
\eta^f_{-(i+1)} & \text{if } k = 2i+1 \text{ for some } i < n_0 \\
\eta^f_{x_i} & \text{if } k = 2i+2 \text{ for some } i < n_0 \\
\sum_{ n \geq n_0} (\eta^f_{-(n+1)} + \eta^f_{x_n}) & \text{if } k = 2n_0 + 1 .
\end{cases}
\]
We now define an embedding \( g \colon \varphi(\vec{x}) \to \varphi(\vec{y}) \) as follows. 
First send \( L_0 \) into the \( \varphi(L') \)-convex set \( \{ (\ell,j) \in \eta^f \boldsymbol{\omega}^* \mid j \leq_{\boldsymbol{\omega^*}} 2n_0 \} \csube \eta^f \boldsymbol{\omega}^*  \) of \( \varphi(\vec{y}) \) by traslating each summand of \( L_0 \) to the left by \( 2n_0\)-many places. 
Then send each \( L_k \) with \( 0 < k \leq 2n_0 \) into the summand \( \eta^f \times \{ 2n_0 - k \} \csube \eta^f \boldsymbol{\omega}^* \csube \varphi(\vec{y}) \) in the obvious way, using the fact that \( L_k \) is the restriction of \( \eta^f \) to an open interval.
Finally,  map \( L_{2n_0+1} \) identically to itself (viewed as a tail of \( \varphi(\vec{y})\)), which is possible because \( \eta^f_{x_n}= \eta^f_{y_n} \) for all \( n \geq n_0 \) by choice of \( n_0 \). 
Then \( g(L_k) \csube \varphi(\vec{y}) \) for every \( k < m \), hence \( \varphi(\vec{x}) \trianglelefteq^{\L}_\LO \varphi(\vec{y}) \) as witnessed by \( \mathbf{m} \), \( (L_k)_{k < m } \) and \( g \).

Conversely, suppose that \( \varphi(\vec{x}) \eq \varphi(\vec{y}) \), and fix some \( K \in \L \), a \( K \)-convex partition \( (L_k)_{k \in K} \) of \( \varphi(\vec{x} )\), and an embedding \( g \colon \varphi(\vec{x}) \to \varphi(\vec{y})\) witnessing \( \varphi(\vec{x}) \qo \varphi(\vec{y} )\). 
By Lemma~\ref{lem:loc_not_scat}\ref{lem:loc_not_scat-a} for each \( n \in \N \) there are \( -(n+1) \leq q_0^{(n)} < q_1^{(n)} \leq -n \) such that \( M_n =  \eta^f_{(q_0^{(n)},q_1^{(n)})} \csube \eta^f_{-(n+1)} \cap L_k \) for some \( k \in K \), so that \( g \) itself witnesses \( M_n \cvx \varphi(\vec{y}) \). 
By Lemma~\ref{lem:loc_not_scat}\ref{lem:loc_not_scat-b} and injectivity of \( f \) (and using also \( \vec{x},\vec{y} \in (\R^+)^\N\)), it easily follows that either \( g(M_n) = M_n \times \{ j_n \} \csube \eta^f \boldsymbol{\omega}^* \csube \varphi(\vec{y}) \) for some \( j_n \in \boldsymbol{\omega}^*\), or else \( g(M_n) = M_n \csube \eta^f_{-(n+1)} \csube \varphi(\vec{y}) \). 
But since \( M_n <_{\varphi(\vec{x})} M_m \) for all \( n,m \in \N \) such that \( n < m \), if the first case occur  for both \( M_n \) and \( M_m\) then \( j_n <_{\boldsymbol{\omega}^*} j_m \) (equivalently: \( j_n > j_m \)) because otherwise \( g(M_m) =  M_m \times \{ j_m \} <_{\varphi(\vec{y})} M_n \times \{ j_n\} = g(M_n) \). 
(Here we use that if \( m > n \) then \( (-(m+1),-m) <_\R (-(n+1),-n) \).) 
This means that the first case can occur only for finitely many \( n \in \N \).
On the other hand, if the second case occurs for some \( M_n \), then it also occurs for all \( M_m \) with \( m \geq n \) because \( g \) is order-preserving.
Combining these two facts, we obtain that there is \( n_0 \in \N \) such that for all \( n \geq n_0 \) the second case, namely \( g(M_n) = M_n \csube \eta^f_{-(n+1)} \csube \varphi(\vec{y}) \),  occurs: we claim that \( x_n = y_n \) for every \( n \geq n_0 \), so that \( \vec{x} \mathrel{E}_1\ \vec{y} \). 
Suppose towards a contradiction that \( x_n \neq y_n \) for some \( n \geq n_0 \). Since \( M_n <_{\varphi(\vec{x})} \eta^f_{x_n} <_{\varphi(\vec{x})} M_{n+1} \), by choice of \( n_0 \) we have that \( \eta^f_{x_n} \qo \eta^f_{-(n+1)}+ \eta^f_{y_n} + \eta^f_{-(n+2)} \csube \varphi(\vec{y}) \). 
Fix \( r_0 < r_1 \) such that \( (r_0,r_1) \subseteq (x_n,x_n+1) \setminus (y_n, y_n+1) \), so that also \( \eta^f_{(r_0,r_1)} \qo \eta^f_{-(n+1)}+ \eta^f_{y_n} + \eta^f_{-(n+2)} \). 
By Lemma~\ref{lem:loc_not_scat}\ref{lem:loc_not_scat-a} again there are \( r_0 \leq q_0 < q_1 \leq r_1 \) such that \( \eta^f_{(q_0,q_1)} \cvx \eta^f_{-(n+1)}+ \eta^f_{y_n} + \eta^f_{-(n+2)} \). 
Applying Lemma~\ref{lem:loc_not_scat}\ref{lem:loc_not_scat-b} we get the desired contradiction because \( f \) is injective and \( (q_0,q_1) \cap [(-(n+1),-n) \cup (y_n,y_n+1) \cup (-(n+2), -(n+1))] = \emptyset \).
\end{proof}

\begin{cor}\label{cor:bor_compl_eq}
Let $\L \subseteq \Scat$ be ccs and different from $\{ \1 \}$. Then ${\iso} <_B {\eq}$, and moreover ${\eq} \nleq_{\text{\scriptsize \textit{Baire}}} {\cvxeq}$ and ${\eq} \nleq_{\text{\scriptsize \textit{Baire}}} {E}$ for every orbit equivalence relation $E$.
\end{cor}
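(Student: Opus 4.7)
The plan is to derive this corollary as a direct consequence of several previously established results, in particular Theorem~\ref{thm:iso_breduces_eq}, Theorem~\ref{thm:red_E1_eq}, Theorem~\ref{E1_orbit}, and Theorem~\ref{thm:complexity_cvxeq}\ref{thm:complexity_cvxeq-b}. The strategy is to combine a ``positive'' reduction ${E_1} \leq_B {\eq}$ with various ``negative'' non-reducibility results for $E_1$, using the fact that Borel (respectively, Baire) reducibility composes.

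First, I would verify that the hypothesis of Theorem~\ref{thm:red_E1_eq} is satisfied, i.e.\ that $\Fin \subseteq \L$. Since $\L$ is downward $\preceq$-closed and ccs with $\L \subseteq \Scat$, the discussion following Example~\ref{ex:fin_wo_scat} shows that the only ccs subfamilies of $\Fin$ are $\{\1\}$ and $\Fin$ itself. Because $\L \neq \{\1\}$ by hypothesis, either $\L = \Fin$, or $\L$ contains an infinite linear order; in the latter case, downward $\preceq$-closure of $\L$ ensures $\Fin \subseteq \L$. In all cases $\Fin \subseteq \L \subseteq \Scat$, so Theorem~\ref{thm:red_E1_eq} applies and yields ${E_1} \leq_B {\eq}$.

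Next, for the main claim ${\iso} <_B {\eq}$: the reduction ${\iso} \leq_B {\eq}$ is exactly Theorem~\ref{thm:iso_breduces_eq}. For strictness, I would argue by contradiction: if ${\eq} \leq_B {\iso}$, then composing with the reduction from the previous paragraph gives ${E_1} \leq_B {\iso}$; but $\iso$ is the orbit equivalence relation induced by the logic action of $S_\infty$ on $\LO$, so this contradicts Theorem~\ref{E1_orbit}. The very same composition argument, carried out with Baire reducibility in place of Borel reducibility, immediately yields the stronger assertion ${\eq} \nleq_{\text{\scriptsize \textit{Baire}}} E$ for every orbit equivalence relation $E$. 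Finally, for ${\eq} \nleq_{\text{\scriptsize \textit{Baire}}} {\cvxeq}$, the contrapositive would give ${E_1} \leq_{\text{\scriptsize \textit{Baire}}} {\cvxeq}$ via the same composition trick, contradicting Theorem~\ref{thm:complexity_cvxeq}\ref{thm:complexity_cvxeq-b}.

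I do not anticipate any genuine obstacle: all the substantive work has been done in the theorems cited above, and the corollary is essentially a bookkeeping step that packages them together. The only point requiring mild attention is the initial verification that the standing hypotheses force $\Fin \subseteq \L$, without which Theorem~\ref{thm:red_E1_eq} would not be available.
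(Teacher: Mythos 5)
Your proof is correct and follows essentially the same route as the paper's: Theorem~\ref{thm:iso_breduces_eq} gives ${\iso}\leq_B{\eq}$, and the non-reducibility claims follow by composing ${E_1}\leq_B{\eq}$ (Theorem~\ref{thm:red_E1_eq}) with Theorems~\ref{E1_orbit} and~\ref{thm:complexity_cvxeq}\ref{thm:complexity_cvxeq-b}. Your preliminary check that the hypotheses force $\Fin\subseteq\L$ is a worthwhile detail that the paper leaves implicit, and it is argued correctly.
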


\begin{proof}
Theorem~\ref{thm:iso_breduces_eq} gives \( {\iso} \leq_B {\eq}\), while the non-reducibility results follow at once from Theorem~\ref{thm:red_E1_eq} combined with Theorems~\ref{E1_orbit} and \ref{thm:complexity_cvxeq}\ref{thm:complexity_cvxeq-b}.
\end{proof}

Along the same lines, we have:

\begin{prop}
For every ccs $\L \subseteq \Scat$ we have ${\eq} \mathrel{\nleq_B} {\equiv_{\LO}}$, and thus ${\qo} \mathrel{\nleq_B} {\emb}$.
\end{prop}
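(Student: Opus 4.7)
The plan is to derive $\eq \nleq_B \equiv_{\LO}$ by chaining reductions, exploiting the fact that $\equiv_{\LO}$ has only $\aleph_1$-many equivalence classes (by Laver's theorem, as recalled in the introduction) while $\eq$ is already at least as complex as the $S_\infty$-complete relation $\iso$.

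First, I would note that ${\iso} \leq_B {\eq}$ holds for every ccs $\L \subseteq \Scat$: when $\{\1\} \neq \L$ this is Theorem~\ref{thm:iso_breduces_eq}, and when $\L = \{\1\}$ it is exactly Theorem~\ref{thm:complexity_cvxeq}\ref{thm:complexity_cvxeq-a}, since in that case $\eq$ coincides with $\cvxeq$. Next, I would exhibit a standard Borel reduction $\psi$ from $\id(2^\N)$ to $\iso$ on $\LO$, for instance by setting
\[
\psi(x) = \sum_{n \in \N} \bigl(\boldsymbol{\omega} + \mathbf{x(n)+2}\bigr).
\]
The map is clearly Borel, and from the isomorphism type of $\psi(x)$ one recovers each $x(n)$ as the size of the $n$-th finite gap between consecutive copies of $\omega$, so $\psi$ reduces $\id(2^\N)$ to $\iso$. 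Composing $\psi$ with the reduction from $\iso$ to $\eq$ yields ${\id(2^\N)} \leq_B {\eq}$.

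On the other hand, as already recalled in the introduction (see \cite[Lemma 3.17]{CamMarMot2018}), since $\emb$ is a wqo the quotient $\LO/{\equiv_{\LO}}$ has size $\aleph_1$, and this forces ${\id(X)} \nleq_B {\equiv_{\LO}}$ for every uncountable Polish space $X$, in particular for $X = 2^\N$. Combined with ${\id(2^\N)} \leq_B {\eq}$, this immediately gives ${\eq} \nleq_B {\equiv_{\LO}}$, which is the first assertion. The second assertion ${\qo} \nleq_B {\emb}$ follows at once: any Borel reduction $\varphi$ of $\qo$ to $\emb$ would also reduce the induced equivalence relations, namely $\eq$ to $\equiv_{\LO}$, contradicting what we have just proved.

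There is no serious obstacle here beyond making sure the base case $\L = \{\1\}$ is covered (which is why I split on whether $\L = \{\1\}$ when invoking ${\iso} \leq_B {\eq}$); everything else is an assembly of results already established in the paper together with the classical reduction $\id(2^\N) \leq_B \iso$.
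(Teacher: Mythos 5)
Your overall architecture is exactly the paper's: the proof in the text consists of the single observation that, by Theorem~\ref{thm:iso_breduces_eq}, the identity on an uncountable Polish space Borel reduces to \( \eq \) (through \( \iso \)), while by Laver's theorem it does not Borel reduce to \( \equiv_{\LO} \) (the quotient having only \( \aleph_1 \) classes). Your case split on \( \L = \{\1\} \) is unnecessary --- Theorem~\ref{thm:iso_breduces_eq} is stated for \emph{every} ccs \( \L \subseteq \Scat \), since its proof applies Proposition~\ref{thm:cong_qo} with \( M = \eta \notin \L \), which covers \( \L = \{\1\} \) --- but it is harmless, and the final deduction of \( {\qo} \nleq_B {\emb} \) from \( {\eq} \nleq_B {\equiv_\LO} \) is correct.

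The one concrete flaw is your explicit witness for \( {\id(2^\N)} \leq_B {\iso} \): the map \( \psi(x) = \sum_{n \in \N} (\boldsymbol{\omega} + \mathbf{x(n)+2}) \) is \emph{not} a reduction. Since \( \mathbf{k} + \boldsymbol{\omega} \cong \boldsymbol{\omega} \) for every finite \( k \), each finite block is absorbed by the copy of \( \boldsymbol{\omega} \) that follows it, so \( \psi(x) \cong \boldsymbol{\omega}^2 \) for every \( x \in 2^\N \); there are no ``finite gaps'' left to read off. This is easily repaired --- for instance \( x \mapsto \sum_{n \in \N} (\boldsymbol{\omega} + \mathbf{x(n)+2} + \boldsymbol{\omega}^*) \) works, as the finite condensation of the resulting order recovers the sequence of sizes \( x(n)+2 \) --- and in any case the fact that \( \id(2^\N) \) (equivalently, the identity on \( \R \)) Borel reduces to \( \iso \) is classical and is exactly what the paper invokes without proof. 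So your argument is right in strategy and wrong only in this dispensable explicit construction, which should either be corrected or replaced by a citation of the classical fact.
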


\begin{proof}
By Theorem~\ref{thm:iso_breduces_eq} the identity on $\R$ Borel reduces to $\eq$, while by Laver's classic result it does not Borel reduce to $\equiv_{\LO}$.
\end{proof}

We do not know whether $\eq$ is complete for analytic equivalence relations when $\{ \1 \} \subsetneq \L \subsetneq \Lin$ is Borel. It remains also open whether $\eq$ is proper $\mathbf{\Sigma}^1_2$ in the case of a $\mathbf{\Pi}^1_1$-complete class $\L$.\smallskip

We also notice that Lemma~\ref{lem:open_int_qo} can be turned into a Borel reducibility result concerning the quasi-order $\qo$. 
Recall that \(\Int(\R)\) is the set of all open intervals of \(\R\), which can be naturally equipped with a Polish topology: indeed, if we extend the usual order on $\R$ to $\R \cup \{\pm \infty\}$ in the obvious way, then $\Int(\R)$ can be identified with the open subset $\{(x,y) \mid x < y\}$ of the Polish space $(\R\cup \{\pm \infty\})^2$.  
The inclusion relation on $\Int(\R)$ is then closed, and since the embedding from \((\Int(\R), \subseteq)\) to \((\LO,\qo)\) defined in the proof of Lemma \ref{lem:open_int_qo} is actually a Borel reduction we get:
	
\begin{prop}\label{prop:red_reals_lo}
  For every ccs class $\L \subseteq \Scat$, \((\Int(\R),\subseteq) \leq_B (\LO,\qo)\).
\end{prop}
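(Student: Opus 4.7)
The plan is to verify that the order-embedding \(\varphi\colon\Int(\R)\to\LO\) defined by \(\varphi(x,y)=\eta^{f}_{(x,y)}\), built in the proof of Lemma~\ref{lem:open_int_qo} from the fixed injection \(f\colon\Q\to\{\n\mid n\in\N\setminus\{0\}\}\), can be realized as a Borel map once \(\Int(\R)\) is identified with the open subset \(\{(x,y)\in(\R\cup\{\pm\infty\})^2\mid x<y\}\) of the Polish space \((\R\cup\{\pm\infty\})^2\). Since Lemma~\ref{lem:open_int_qo} already delivers the biconditional \((x,y)\subseteq(x',y')\iff\varphi(x,y)\qo\varphi(x',y')\) and the inclusion relation on \(\Int(\R)\) is closed in the product topology, nothing else is needed beyond Borel measurability of \(\varphi\).

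To turn \(\varphi\) into an explicit map into \(2^{\N\times\N}\), I would fix an enumeration \((q_i)_{i\in\N}\) of \(\Q\) without repetition and, writing \(f(q_i)=\n_{n_i}\) with \(n_i\geq 1\), identify the abstract domain of \(\eta^{f}_{(x,y)}\) with \(D_{(x,y)}=\{(i,j)\mid q_i\in(x,y),\ 0\le j<n_i\}\) ordered antilexicographically. Listing \(A_{(x,y)}=\{i\in\N\mid q_i\in(x,y)\}\) in the natural order as \(a_0<a_1<\cdots\) and concatenating the blocks \((a_0,0),\ldots,(a_0,n_{a_0}-1),(a_1,0),\ldots\) defines a canonical bijection \(h_{(x,y)}\colon\N\to D_{(x,y)}\); I would then set \(\varphi(x,y)(m,m')=1\) iff \(h_{(x,y)}(m)\) precedes \(h_{(x,y)}(m')\) antilexicographically.

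Borel measurability of \(\varphi\) will then follow from the fact that each basic condition ``\(q_i\in(x,y)\)'' is open in \(\Int(\R)\). For any fixed \(m\), the value \(h_{(x,y)}(m)=(a_k,j)\) is determined by some \(k\le m\) together with the pattern of which \(q_i\) with \(i\le a_k\) lie in \((x,y)\); partitioning over the countably many possible finite patterns, the preimage of any single value of \((i,j)\) in \(\N\times\N\) is a Boolean combination of open sets, and hence Borel. Consequently the preimage under \(\varphi\) of any basic clopen subset of \(2^{\N\times\N}\) is Borel, so \(\varphi\) is a Borel reduction of \((\Int(\R),\subseteq)\) to \((\LO,\qo)\). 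No real obstacle is anticipated: the only subtle point is that \(\varphi\) fails to be continuous at those \((x,y)\) with \(x\in\Q\) or \(y\in\Q\), which is precisely why the statement is phrased with Borel, rather than continuous, reducibility.
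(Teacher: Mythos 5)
Your proposal is correct and follows the same route as the paper: the paper's proof simply observes that the map $(x,y)\mapsto\eta^f_{(x,y)}$ from Lemma~\ref{lem:open_int_qo} is a Borel reduction once $\Int(\R)$ is given its Polish topology, and your argument just makes the Borel measurability explicit via a canonical coding of $\eta^f_{(x,y)}$ as an element of $2^{\N\times\N}$. The verification is sound (each condition ``$q_i\in(x,y)$'' is open, so the preimage of each subbasic clopen set is a countable union of finite Boolean combinations of open sets); only take care to use the reflexive order $\preceq$ rather than strict precedence when defining $\varphi(x,y)(m,m')$, since $\LO$ consists of codes for reflexive linear orders.
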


We conclude this section by studying what happens if we move to \emph{coloured} linear orders. Consider the Polish space \(\LO_{\N} = \LO \times \N^{\N}\).
Each element \( (L,c) \in \LO_\N \) can be interpreted as the linear order \( L \) on \( \N \) where each of its elements \( \ell \in L \) is coloured with \( c(\ell) \).

\begin{defn}\label{def:qo_for_coloured}
Let $\L \subseteq \Lin$ and \((L,c), (L',c') \in \LO_{\N}\). We say that \((L,c)\) is \textbf{$\L$-convex 
embeddable} in \((L',c')\), in symbols \((L,c) \qon (L',c')\), if and only if for some embedding \(f \colon L\to L'\) witnessing $L \qo L'$ we have \(c'(f(n))=c(n)\) for every \(n \in \N\). When \( \L = \{ \1 \} \) we just write \( \trianglelefteq_{\LO_\N}\) instead of \( \trianglelefteq^{\{ \1 \}}_{\LO_\N}\), while if \( \L = \Lin \) we write \( \preceq_{\LO_\N} \) instead of \(\trianglelefteq^{\Lin}_{\LO_\N}\).
\end{defn} 

Notice that \( \qon \) is always reflexive, and it is transitive (i.e.\ a quasi-order) if and only is so is \( \qo \), i.e.\ if and only if \( \L \) is ccs.

Marcone and Rosendal~\cite{MR04} showed that the quasi-order \(\preceq_{\LO_\N}\) of embeddability between coloured linear orders is complete for analytic quasi-orders, and thus \( {\trianglelefteq_\LO^{\Lin}} = {\emb} <_B {\preceq_{\LO_\N}} =  {\trianglelefteq^{\Lin}_{\LO_\N}} \). In contrast, when considering ccs families $\L \neq \Lin$, we have the opposite situation.

\begin{thm} \label{thm:coloured}
If $\L \subseteq \Scat$ is ccs, then \({\qon} \sim_B {\qo}\).
\end{thm}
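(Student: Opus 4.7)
The easy direction ${\qo} \leq_B {\qon}$ is immediate via the continuous map $L \mapsto (L, c_0)$ with $c_0 \equiv 0$ the constant colouring, since any order embedding automatically preserves a constant colouring, so both implications of $L \qo L' \iff (L, c_0) \qon (L, c_0)$ are clear. For the nontrivial direction ${\qon} \leq_B {\qo}$, the strategy is to encode each element's colour via a distinguishing ``gadget'' in the order structure, separated by plain $\eta$-pieces that prevent collapsing and make the encoding recoverable under piecewise-convex embeddings.

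Concretely, I would fix an injection $f \colon \Q \to \{\boldsymbol{n} \mid n \geq 2\}$ (so that its range avoids $\boldsymbol{1}$) and recall the ``labelled dense'' gadgets $\eta^f_r := \eta^f_{(r,r+1)}$ from the proof of Theorem~\ref{thm:red_E1_eq}. Define the continuous (hence Borel) map
\[
\varphi(L, c) = \eta + \sum_{\ell \in L}\bigl(\eta^f_{c(\ell)} + \eta\bigr).
\]
Two key features will drive the proof: the $\eta$-separators have all elements ``singleton-labelled'' (isomorphic to $\boldsymbol{1}$), while the gadgets $\eta^f_{c(\ell)}$ have all elements labelled by some $\boldsymbol{n}$ with $n \geq 2$; and for $n \neq m$, distinct gadgets $\eta^f_n, \eta^f_m$ are pairwise non-convex-embeddable by Lemma~\ref{lem:loc_not_scat}\ref{lem:loc_not_scat-b} together with the injectivity of $f$ (they live over disjoint rational intervals). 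The forward implication $(L,c) \qon (L',c') \Rightarrow \varphi(L,c) \qo \varphi(L',c')$ is a routine lift: given witnesses $K \in \L$, $(L_k)_{k \in K}$, and colour-preserving embedding $h \colon L \to L'$, form the $K$-convex partition $(\tilde L_k)_{k \in K}$ of $\varphi(L, c)$ that gathers the gadgets and $\eta$-separators for each $L_k$ (with the initial $\eta$ assigned to the first piece), and lift $h$ to $\tilde{h}$ that is an isomorphism on each $\eta^f_{c(\ell)} \to \eta^f_{c'(h(\ell))} = \eta^f_{c(\ell)}$ (possible since $c(\ell) = c'(h(\ell))$) and order-preserving on $\eta$-separators.

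The main obstacle lies in the backward implication. Given witnesses $K \in \L$, $(M_k)_{k \in K}$, and embedding $g$ for $\varphi(L, c) \qo \varphi(L', c')$, I extract a colour-preserving embedding as follows. For each $\ell \in L$, the restriction of $(M_k)_{k \in K}$ to $\eta^f_{c(\ell)}$ is a $K_\ell$-convex partition with $K_\ell \preceq K$, hence $K_\ell \in \L \cap \Scat$; applying Lemma~\ref{lem:loc_not_scat}\ref{lem:loc_not_scat-a} yields a piece $M_{k_\ell}$ containing a full subinterval $\eta^f_{(q_0, q_1)} \csube \eta^f_{c(\ell)}$. Since $g|_{M_{k_\ell}}$ is convex, $g(\eta^f_{(q_0, q_1)})$ is a convex subset of $\varphi(L', c')$; viewing the latter as a $D'^{F'}$ with $D' \in \DLO$ and $F'$ recording the labels, Lemma~\ref{lem:loc_not_scat}\ref{lem:loc_not_scat-b} forces each label of the subinterval to map to an isomorphic label, and since $f$ avoids $\boldsymbol{1}$ the image cannot meet $\eta$-separator positions; convexity of the image then pins it inside a single gadget $\eta^f_{c'(\ell')}$, and the injectivity of $f$ further forces $(q_0, q_1) \subseteq (c'(\ell'), c'(\ell')+1)$, whence $c(\ell) = c'(\ell')$. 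Setting $\hat{h}(\ell) := \ell'$ (with a canonical choice, e.g.\ the leftmost destination, if multiple pieces intersect $\eta^f_{c(\ell)}$ nontrivially) yields a colour-preserving map $\hat{h} \colon L \to L'$, whose order preservation follows from the monotonicity of $g$ combined with the $\eta$-separators between consecutive gadgets. The delicate final step is to check that the $K$-partition $(M_k)$ descends, via the assignment $\ell \mapsto k_\ell$, to a $K'$-convex partition of $L$ for which $\hat{h}$ is piecewise convex, with $K' \preceq K$ and hence $K' \in \L$ by downward closure; this uses once more that $\eta$-separator labels of $\varphi(L', c')$ cannot appear in the image of any convex subset of a single gadget, preventing mismatches between the partition of $L$ and the action of $\hat h$.
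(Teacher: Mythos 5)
Your overall strategy is sound and close in spirit to the paper's own proof: there too each colour is encoded by a non-scattered gadget of the form $\eta^{f}$ and decoded via Lemma~\ref{lem:loc_not_scat}\ref{lem:loc_not_scat-a}--\ref{lem:loc_not_scat-b} (the paper uses $\eta^{f_\ell}$ with $f_\ell$ constantly $\boldsymbol{c(\ell)}$, after first reducing to colourings that are positive and non-constant on every nontrivial closed interval via $(L,c)\mapsto(\mathbf{2}L,c')$; your $\eta$-separators play exactly the role of those interleaved colour-$1$ points). However, as written your map $\varphi$ is \emph{not} a reduction: the leading $\eta$ breaks the forward implication. Take $\L=\{\1\}$, $L=\z$ with $c\equiv 5$, and $L'=\z+\z$ with $c'\equiv 7$ on the first copy and $c'\equiv 5$ on the second. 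Then $(L,c)\trianglelefteq_{\LO_\N}(L',c')$ via the isomorphism onto the second copy, whose image is convex. But $\varphi(L,c)=\eta+\sum_{z\in\z}\bigl(\eta^f_{(5,6)}+\eta\bigr)\ntrianglelefteq\varphi(L',c')$: by Lemma~\ref{lem:loc_not_scat}\ref{lem:loc_not_scat-b} and injectivity of $f$ (whose range avoids $\1$), every gadget $\eta^f_{(5,6)}$ must be sent \emph{onto} a full gadget $\eta^f_{(5,6)}$ of the second copy, and the set of indices of these image gadgets, being order-isomorphic to $\z$, has no minimum and is therefore coinitial in the second copy. Convexity of the (single-piece) image then forces the image of the leading $\eta$ to contain a final segment of $\eta+\sum_{m\in\z}\bigl(\eta^f_{(7,8)}+\eta\bigr)$, which contains infinitely many full gadgets $\eta^f_{(7,8)}$ and hence pairs of consecutive elements --- impossible for an embedded copy of $\eta$. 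The same obstruction appears for any ccs $\L$ whenever $h(L_{k_0})$ has no minimum while the part of $L'$ below it has no maximum, so assigning the leading $\eta$ to the first piece, as you propose, fails in general.

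The repair is small: drop the leading $\eta$ and set $\varphi(L,c)=\sum_{\ell\in L}\bigl(\eta^f_{(c(\ell),c(\ell)+1)}+\eta\bigr)$. In the forward direction $\tilde L_k=\sum_{\ell\in L_k}\bigl(\eta^f_{(c(\ell),c(\ell)+1)}+\eta\bigr)$ then maps isomorphically onto $\sum_{m\in h(L_k)}\bigl(\eta^f_{(c'(m),c'(m)+1)}+\eta\bigr)$, which is convex in $\varphi(L',c')$ precisely because $h(L_k)\csube L'$; and your backward argument, which only uses the separators \emph{between} consecutive gadgets, goes through unchanged (its remaining details --- well-definedness and injectivity of $\hat h$, and the descent of the partition --- are exactly the arguments carried out in the paper's proofs of Theorems~\ref{thm:fractal} and~\ref{thm:coloured}, so they do work, though you only sketch them). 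With that correction your proof is a legitimate alternative to the paper's: you avoid the preprocessing reduction to $\LO'_\N$ at the cost of the slightly more elaborate gadgets $\eta^f_{(n,n+1)}$ for a single global injection $f$.
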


\begin{proof} 
Clearly, \({\qo} \leq_B {\qon}\) via the reduction \( L \mapsto (L,c) \) with \( c \) the constant map with value \( 0 \).
For the converse, let \( \LO'_\N \) be the collection of those linear orders \( (L,c) \) such that \( c(\ell) > 0 \) for all \( \ell \in L \) and \( c \) is not constant on any closed interval \( [\ell_0,\ell_1]_L \) with \( \ell_0 <_L \ell_1 \). Notice that the Borel map \( (L,c) \mapsto (\mathbf{2}L,c') \) with \( c'(0,\ell) = c(\ell) + 2 \) and \( c'(1,\ell) = 1 \) for all \( \ell \in L \) reduces \( \qon \) to \( \qon \restriction \LO'_\N \), so it is enough to show that \( {\qon \restriction \LO'_\N} \leq_B {\qo} \).

Consider the Borel map \( \varphi \colon \LO'_\N \to \LO \) defined by \( \varphi(L,c) = \sum_{\ell \in L} \eta^{f_\ell} \), where \( f_\ell \) is the constant map with value \( c(\ell) \) (viewed as a finite linear order). We claim that \( \varphi \) reduces \( \qon \restriction \LO'_\N \) to \( \qo \).

One direction is obvious, so let us assume that \( \varphi(L,c) \qo \varphi(L',c') \), as witnessed by \( K \in \L \), the \( K \)-convex partition \( (M_k)_{k \in K} \) of \( \varphi(L,c) \) and the embedding \( g \colon \varphi(L) \to \varphi(L') \). 
We follow the strategy used in the proof of Theorem~\ref{thm:fractal}, although in a simplified situation. 
By Lemma~\ref{lem:loc_not_scat}\ref{lem:loc_not_scat-a}, for every \( \ell \in L \) we can fix \( k_\ell \in K \) and \( N_\ell = \eta^{f_\ell}_{(q_0^{(n)},q_1^{(n)})} \csube \eta^{f_\ell} \cap M_{k_\ell} \), so that \( N_\ell \iso \eta^{f_\ell}\) and \( N_\ell \cvx \varphi(L',c') \) as witnessed by \( g \) itself. 
Since \( (L',c') \in \LO'_\N \) is not constant on any closed interval,
by Lemma~\ref{lem:loc_not_scat}\ref{lem:loc_not_scat-b} there is a (necessarily unique) \( \ell' \in L' \) such that \( g(N_\ell) \subseteq \eta^{f_{\ell'}}\csube \varphi(L',c')\) and \( f_{\ell'} \) has the same value of \( f_\ell \): we claim that the map \( h \colon L \to L' \) defined by setting \( h(\ell) = \ell' \) is a colour-preserving embedding. 
It is clearly order-preserving because so is \( g \). 
If there were \( \ell_0, \ell_1 \in L \) with \(\ell_0 <_L \ell_1 \) and \( h(\ell_0) = h(\ell_1) \), then \( f_\ell \) would have the same value as \( f_{h(\ell_0)}\) for all \( \ell \in [\ell_0,\ell_1]_L \), contradicting \( (L,c) \in \LO'_\ell \). 
Therefore \( h \) is also injective, and it is colour-preserving because \( f_\ell \) and \( f_{h(\ell)}\) have the same value for all \( \ell \in L \).

For each \( k \in K \) set \( L_k = \{ \ell \in L \mid k_\ell = k \} \) and \( K' = \{ k \in K \mid L_k \neq \emptyset \} \in \L \). 
Observe that \( (L_k)_{k \in K'} \) is a \( K' \)-convex partition of \( L \): indeed, since \( \ell_0 <_L \ell_1 \iff N_{\ell_0} <_{\varphi(L,c)} N_{\ell_1} \), for all \( k,k' \in K' \) we have
\begin{multline*}
k <_{K'} k' \iff M_k <_{\varphi(L,c)} M_{k'} \iff \\
\forall \ell_0 \in L_k \forall \ell_1 \in L_{k'}\,(N_{\ell_0} <_{\varphi(L)} N_{\ell_1}) \iff L_k <_L L_{k'}.
\end{multline*}

We also claim that each \( h(L_k) \) is \( L' \)-convex. 
Fix \( \ell_0,\ell_1 \in L_k \) such that \( h(\ell_0) <_{L'} h(\ell_1) \). Since \( g \restriction M_k \) is an isomorphism between \( M_k\) and \( g(M_k)\), then the corresponding restriction of \( g^{-1} \) witnesses \( \sum_{h(\ell_0) <_{L'} \ell' <_{L'} h(\ell_1)} \eta^{f_{\ell'}} \cvx \sum_{\ell_0 \leq_L \ell \leq_L \ell_1} \eta^{f_\ell} \). 
By Lemma~\ref{lem:loc_not_scat}\ref{lem:loc_not_scat-b}, for each \( \ell' \in (h(\ell_0),h(\ell_1))_{L'}\) there is \( \ell \in [\ell_0,\ell_1]_L \) such that \( g^{-1}(\eta^{f_{\ell'}}) \subseteq \eta^{f_\ell} \).
Since \( (L',c') \in \LO'_\N \), we cannot have \( \ell = \ell_0 \) because otherwise \( c' \) would be constant on \( [h(\ell_0), \ell']_{L'} \), and \( \ell \neq \ell_1 \) as well because otherwise \( c' \) would be constant on \( [\ell',h(\ell_1)]_{L'} \). 
Hence \( \ell_0 <_L \ell <_L \ell_1 \), which implies \( \eta^{f_\ell} \csube M_k \), so that necessarily \( \ell \in L_k \) and \( \eta^{f_\ell} \cvx \varphi(L',c')\) via \( g \restriction \eta^{f_\ell} \). 
By Lemma~\ref{lem:loc_not_scat}\ref{lem:loc_not_scat-b} again, we have that \( g(\eta^{f_\ell}) \csube \eta^{f_{\ell''}} \) for some unique \( \ell'' \in L' \), so that in particular \( h(\ell) = \ell'' \). 
Since  \( g^{-1}(\eta^{f_{\ell'}}) \subseteq \eta^{f_\ell}\) we must have \( \ell'' = \ell' \), so that \( \ell' \in h(L_k) \), as desired.

We have shown that \( K' \), the \( K' \)-convex partition \( (L_k)_{k \in K'} \) of \( L \), and the embedding \( h \) witness \( (L,c) \qon (L',c') \), hence we are done.
\end{proof}

Recalling that $\cvx$ is not complete for analytic quasi-orders (\cite[Corollary 3.28]{IMMRW22}), we obtain the following result, which is in contrast with the situation for \( \preceq_{\LO_\N} \) (\cite{MR04}).

\begin{cor}
    The relation $\trianglelefteq_{\LO_\N}$ of convex embeddability between coloured linear orders is not complete for analytic quasi-orders.
\end{cor}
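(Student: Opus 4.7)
The plan is to derive the corollary as an immediate consequence of Theorem~\ref{thm:coloured} specialized to $\L = \{\1\}$, together with the known fact that $\cvx$ is not complete for analytic quasi-orders.

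First I would observe that $\L = \{\1\}$ is trivially downward $\preceq$-closed and ccs (it is even closed under products in a degenerate sense), and obviously $\{\1\} \subseteq \Scat$, so Theorem~\ref{thm:coloured} applies. This yields ${\trianglelefteq_{\LO_\N}} = {\trianglelefteq^{\{\1\}}_{\LO_\N}} \sim_B {\trianglelefteq^{\{\1\}}_{\LO}} = {\cvx}$.

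Next I would invoke~\cite[Corollary 3.28]{IMMRW22}, which states that $\cvx$ is not complete for analytic quasi-orders. Since Borel bi-reducibility preserves (non-)completeness for the class of analytic quasi-orders --- a complete analytic quasi-order would Borel-reduce to $\cvx$ via any witness of ${\trianglelefteq_{\LO_\N}} \sim_B {\cvx}$ --- it follows immediately that $\trianglelefteq_{\LO_\N}$ cannot be complete either.

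There is no real obstacle here: the whole content is already packaged in Theorem~\ref{thm:coloured}, and the corollary simply records the striking contrast with the Marcone--Rosendal result that $\preceq_{\LO_\N}$ \emph{is} complete for analytic quasi-orders. The one small thing to double-check is that the reduction provided in the proof of Theorem~\ref{thm:coloured} genuinely goes through when $\L = \{\1\}$: but the argument (using $\varphi(L,c) = \sum_{\ell \in L} \eta^{f_\ell}$ and Lemma~\ref{lem:loc_not_scat}) makes no use of any nondegenerate element of $\L$ beyond $\1$ itself, and the ccs hypothesis is only used through Lemma~\ref{lem:loc_not_scat}, so this is automatic.
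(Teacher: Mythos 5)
Your proposal is correct and matches the paper's own derivation: the corollary is obtained exactly by specializing Theorem~\ref{thm:coloured} to the ccs class $\L = \{\1\} \subseteq \Scat$, so that ${\trianglelefteq_{\LO_\N}} \sim_B {\cvx}$, and then invoking \cite[Corollary 3.28]{IMMRW22} together with the fact that Borel bi-reducibility transfers non-completeness for analytic quasi-orders. Your extra check that the proof of Theorem~\ref{thm:coloured} goes through for $\L = \{\1\}$ is a reasonable precaution but, as you note, automatic.
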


\section{Examples of ccs families of countable linear orders} \label{sec:examples}

Recall from Theorem~\ref{thm:ccs_trans} that if \( \L \subseteq \Lin \) is downward \( \preceq \)-closed, then \( \qo \) is a quasi-order precisely when \( \L \) is ccs.
In this section we provide some natural examples of ccs classes $\L \subseteq \Lin$, and compare the resulting $\L$-convex embeddability relations \( \qo \) using Borel reducibility.

We first notice that the collection of ccs classes is not closed under unions. Indeed, let $\L=\WO \cup \WO^*$. It is easy to check that $\WO$ and $\WO^*$ are ccs classes. Let now \(K=\boldsymbol{\omega}\), \(K'=\boldsymbol{\omega}^*\), and set \(K'_0 = \boldsymbol{\omega}^*\) and \(K'_k=\{\max \boldsymbol{\omega}^*\}\) for every \(k > 0\). Then \(\sum_{k \in K} K'_k \cong \boldsymbol{\omega}^* + \boldsymbol{\omega} \cong \z \notin \L\), and hence $\L$ is not ccs. 

On the other hand, it is immediate to see that the collection of ccs classes is closed under unions of chains (with respect to inclusion) and under arbitrary intersections.

\begin{remark}\label{rem:non_ccs}
In the previous discussion and in Example~\ref{ex:fin_wo_scat}, to show that certain classes are not ccs we used the following fact: If $\L$ is ccs and $L+\1, \1+L' \in \L$, then $L + \1 + L'$ belongs to $\L$ as well. 
However the latter condition is not equivalent to being ccs, as witnessed by the following example.
Let $\L=\{L \in \LO \mid \z \boldsymbol{\o} \npreceq L \land \z\boldsymbol{\o}^* \npreceq L\}$. It is immediate that if $L+\1$ and $\1+L'$ are elements of $\L$, then $L+\1+L'$ is in $\L$ as well. On the other hand $\boldsymbol{\o}^2$ and $\boldsymbol{\o}^*\boldsymbol{\o}$ belong to $\L$, but there is a convex sum using $K=\boldsymbol{\o}^2$ and $K'= \boldsymbol{\o}^*\boldsymbol{\o}$ which is isomorphic to $\z\boldsymbol{\o} \notin \L$.
\end{remark}

We already observed that \( \{ \1 \}  \subseteq  \Fin  \subseteq \WO \subseteq \Scat \subseteq \Lin \) are all ccs classes. On the other hand, there is no intermediate class between \( \Scat \) and \( \Lin \), while there are no ccs classes between \( \{ \1 \} \) and \( \Fin \) by Example~\ref{ex:fin_wo_scat}. So if we are looking for new ccs classes, it is natural to consider the well-known stratifications of \( \WO \) and \( \Scat \), and determine which of their levels are ccs. We start from the latter, exploiting an equivalent definition of Hausdorff rank which involves linear orders of the form \( \Z^\gamma\), called powers of \( \Z \).

When $\g$ is an ordinal, we can define $\Z^\g$ in two equivalent ways: by induction on $\g$ (\cite[Definition 5.34]{Ros82}), or by explicitly defining a linear order on a certain set (\cite[Definition 5.35]{Ros82}) --- the latter can actually be used to define \(\Z^L\) for any linear order \(L\). Here are the two definitions. 
	
\begin{defn} \label{def:Z^alpha}
  \begin{enumerate-(a)}
  \item \(\Z^0 = \boldsymbol{1}\),
  \item \(\Z^{\g+1}=(\Z^\g \boldsymbol{\o})^* + \Z^\g + \Z^\g \boldsymbol{\o}\),
  \item
    \(\Z^\g = \big(\sum_{\beta < \g} \Z^\beta \boldsymbol{\o}\big)^* +
    \boldsymbol{1} + \sum_{\beta < \g} \Z^\beta \boldsymbol{\o}\) if $\g$ is limit.
  \end{enumerate-(a)}
\end{defn}

\begin{defn}\label{pow_z_def_theor}
  Let \(L\) be a linear order. For any map \(f \colon L \to \Z\), we
  define the support of \(f\) as the set
  \(\Supp(f)=\{n \in L \mid f(n) \neq 0\}\) . The \(L\)-power of
  \(\Z\), denoted by \(\Z^L\), is the linear order on
  \(\{f\colon L \to \Z \mid \Supp(f) \text{ is finite}\}\) defined by \(f \leq_{\Z^L}g\) if and only if \(f=g\) or
  \(f(n_0) <_{\Z} g(n_0)\), where
  \(n_0 = \max \{n \in \Supp(f) \cup \Supp(g) \mid f(n) \neq g(n)\}\).
\end{defn}
	
We need the following properties of $\Z^L$ (see~\cite[Section~3.2]{CCM19}).
	
\begin{prop}\label{power_Z}
For all ordinals \(\a<\g\), we have
  \[
    \Z^\g \cong \bigg(\sum_{\b < \g} \Z^\b \boldsymbol{\o}\bigg)^* + \1 + \sum_{\b < \g} \Z^\b \boldsymbol{\o} \cong \bigg(\sum_{\a \leq \b < \g} \Z^\b
    \boldsymbol{\o}\bigg)^* + \bigg(\sum_{\a \leq \b < \g} \Z^\b
    \boldsymbol{\o}\bigg).
  \]
Moreover, if \(L\) is countable and not a well-order, then \(\Z^L\cong \Z^\g \eta \) for some countable ordinal \(\g\).
\end{prop}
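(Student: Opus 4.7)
For the first isomorphism I would proceed by transfinite induction on $\gamma$. The limit case is exactly Definition~\ref{def:Z^alpha}(c). For a successor $\gamma = \delta+1$, decompose $\sum_{\beta<\delta+1} \Z^\beta \boldsymbol{\omega} \cong \sum_{\beta<\delta} \Z^\beta\boldsymbol{\omega} + \Z^\delta\boldsymbol{\omega}$ and plug this into the target expression; the central block $(\sum_{\beta<\delta}\Z^\beta\boldsymbol{\omega})^* + \1 + \sum_{\beta<\delta}\Z^\beta\boldsymbol{\omega}$ is, by the inductive hypothesis applied to $\delta$, isomorphic to $\Z^\delta$, so the whole expression collapses to $(\Z^\delta\boldsymbol{\omega})^* + \Z^\delta + \Z^\delta\boldsymbol{\omega}$, which is $\Z^{\delta+1}$ by Definition~\ref{def:Z^alpha}(b).

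For the second isomorphism I would split both sums in the first isomorphism at $\alpha < \gamma$, producing a central block $(\sum_{\beta<\alpha}\Z^\beta\boldsymbol{\omega})^* + \1 + \sum_{\beta<\alpha}\Z^\beta\boldsymbol{\omega}$ which, by the first isomorphism applied to $\alpha$, is isomorphic to $\Z^\alpha$. This gives $\Z^\gamma \cong (\sum_{\alpha\leq\beta<\gamma}\Z^\beta\boldsymbol{\omega})^* + \Z^\alpha + \sum_{\alpha\leq\beta<\gamma}\Z^\beta\boldsymbol{\omega}$, and the extra $\Z^\alpha$ is then absorbed into the right-hand sum by using $\1 + \boldsymbol{\omega} \cong \boldsymbol{\omega}$: right-distributivity yields $\Z^\alpha + \Z^\alpha\boldsymbol{\omega} \cong \Z^\alpha(\1+\boldsymbol{\omega}) \cong \Z^\alpha\boldsymbol{\omega}$, which is precisely the leading summand of $\sum_{\alpha\leq\beta<\gamma}\Z^\beta\boldsymbol{\omega}$.

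The final claim is the main obstacle. My plan is to first observe, directly from Definition~\ref{pow_z_def_theor}, that each translation $f \mapsto f+g$ with $g$ finitely supported is an order-automorphism of $\Z^L$ and that the group of such translations acts transitively, so $\Z^L$ is a \emph{homogeneous} linear order. Since $L$ is not a well-order, fix an embedding $\boldsymbol{\omega}^* \preceq L$ and notice that $\Z^{\boldsymbol{\omega}^*}$ is dense in itself (comparison at the maximal differing position in $\boldsymbol{\omega}^*$ is lexicographic comparison from the most significant digit of finitely-supported integer sequences), so the induced inclusion $\Z^{\boldsymbol{\omega}^*} \hookrightarrow \Z^L$ shows that $\Z^L$ is non-scattered. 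Combined with countability and homogeneity, one rules out scattered initial and final segments, so by Hausdorff's classification of countable linear orders (cited just before Definition~\ref{eta_f}) we get $\Z^L \cong \eta^h$ for some $h \colon \Q \to \Scat$. Transitivity of the translation action then forces all values of $h$ to be mutually isomorphic to a single scattered order $S$, so $\Z^L \cong S\eta$. The delicate final step is identifying $S$ with $\Z^\gamma$ for the correct countable $\gamma$, via the canonical decomposition of $L$ into its longest well-ordered initial segment and the rest; this is where I would invoke the detailed calculations of \cite[Section~3.2]{CCM19} rather than reprove them from scratch.
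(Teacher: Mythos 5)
The paper does not actually prove this proposition: it is quoted from \cite[Section~3.2]{CCM19} without proof, so there is no internal argument to compare against. Judged on its own terms, your treatment of the two displayed isomorphisms is correct and complete: the transfinite induction (limit case being Definition~\ref{def:Z^alpha}, successor case collapsing the middle block via the inductive hypothesis) and the splitting-at-$\alpha$ argument with the absorption $\Z^\a + \Z^\a\boldsymbol{\o} \cong \Z^\a(\1+\boldsymbol{\o}) \cong \Z^\a\boldsymbol{\o}$ (which is the right computation given the paper's antilexicographic convention $LK=\sum_{k\in K}L$) are both fine. Your structural analysis of $\Z^L$ is also sound: translations are order-automorphisms acting transitively; an embedding $\boldsymbol{\o}^* \preceq L$ pushes forward to an embedding $\Z^{\boldsymbol{\o}^*}\preceq \Z^L$ with dense image, so $\Z^L$ is non-scattered; and homogeneity forces the finite-condensation classes (the classes of $x\sim y \iff [x,y]$ scattered) to form a dense quotient without endpoints and to be pairwise isomorphic, giving $\Z^L\cong S\eta$ for a single countable scattered $S$.

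The genuine gap is exactly where you say it is: identifying $S$ with a power $\Z^\g$. This is not a routine bookkeeping step — it is the entire content of the ``moreover'' clause beyond the (comparatively soft) fact that $\Z^L$ is a scattered order times $\eta$. One must show that the $\sim$-class of $0$ in $\Z^L$ is itself isomorphic to $\Z^\g$ for some ordinal $\g$ (countability of $\g$ then being automatic), and this requires analysing how the well-ordered ``blocks'' of $L$ sit below its ill-founded part; there is no canonical maximal well-ordered initial segment in general (e.g.\ $L=\boldsymbol{\o}^*$, where $S=\1$ and $\g=0$), so the decomposition has to be argued with some care. Deferring this to \cite{CCM19} means your proposal reduces the statement to the very computation the paper is citing, rather than proving it; as a self-contained proof it is therefore incomplete, though everything you do prove is correct.
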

 
Given \(L \in \LO\), we now consider the classes $\L_{\prec \Z^L}$ and $\L_{\preceq \Z^L}$. First of all, if \(L\) is not a well-order then \(\Z^L\) is not scattered  by Proposition \ref{power_Z},  and so we obtain the ccs classes $\L_{\prec \Z^L} = \Scat$ and $\L_{\preceq \Z^L}= \Lin$.
We can thus restrict our attention to the case \(L \in \WO\). Let $\g$ be the order type of $L$. 
	    
Using Proposition~\ref{power_Z}, if we let $L = \sum_{\b<\g} \Z^\b \boldsymbol{\o}$, we have $\Z^\g \cong L^* + {\1} + L$.
Then Remark~\ref{rem:non_ccs} implies that $\L_{\prec \Z^\g}$ is not ccs, because $L^* + \1$ and ${\1} + L$ belong to $\L_{\prec \Z^\g}$, yet $L^* + {\1} + L \notin \L_{\prec \Z^\g}$. To complete our analysis, it remains to prove that $\L_{\preceq \Z^\g}$ is ccs for every countable ordinal $\g$, so that $\L_{\preceq \Z^L}$ is ccs for every countable linear order $L$.
First we need a technical lemma. 

\begin{lem}\label{lem:sum_bounded_sets_z}
Let $\g>0$ be a countable ordinal. 

\begin{enumerate-(a)}
\item \label{lem:sum_bounded_sets_z-a}
If $A \subseteq \Z^\g$ is bounded from below, then $A \preceq \sum_{\b < \g} \Z^\b \boldsymbol{\o}$. Symmetrically, if $A$ is bounded from above, then $A \preceq \left(\sum_{\b < \g} \Z^\b \boldsymbol{\o}\right)^*$.
\item \label{lem:sum_bounded_sets_z-b}
Assume that \( A \subseteq \Z^\g \) is bounded (from both sides). If \( \gamma = \beta+1 \) is successor, then \( A \preceq \Z^\beta \times n \) for some \( n \in \N\setminus \{ 0 \} \); if instead \( \gamma \) is limit, then \( A \preceq \Z^\alpha \) for some \( \alpha < \gamma \).
\end{enumerate-(a)}
\end{lem}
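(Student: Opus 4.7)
The plan is to exploit the translation-invariance of $\Z^\g$. I would first show that for every fixed $h \in \Z^\g$, the pointwise-sum map $g \mapsto g + h$ is an order-automorphism: $g + h = g' + h$ iff $g = g'$, and the maximal index where $g + h$ and $g' + h$ disagree coincides with that where $g$ and $g'$ do, with the sign of the difference of values preserved. This would imply that any interval $[f_0, f_1]$ in $\Z^\g$ is isomorphic to $[\mathbf{0}, f_1 - f_0]$, and any upper or lower set is isomorphic to its translate based at $\mathbf{0}$.

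For part~(a), given a lower bound $f_0$ for $A$, translation yields $A \preceq \{g \in \Z^\g \mid g \geq \mathbf{0}\}$, and the next step is to identify the nonnegative cone as $\1 + \sum_{\b<\g}\Z^\b \boldsymbol{\o}$: a strictly positive $f$ is characterised by the choice of $n_0 = \max \Supp f < \g$, a positive value at $n_0$ (ranging over $\Z_{>0} \cong \boldsymbol{\o}$), and an arbitrary element of $\Z^{n_0}$ for $f \restriction n_0$, so the positive cone sums to $\sum_{\b<\g} \Z^\b \boldsymbol{\o}$, recovering the right-hand side of Proposition~\ref{power_Z}. Since $\g > 0$ the first summand is $\Z^0 \boldsymbol{\o} = \boldsymbol{\o}$ and $\1 + \boldsymbol{\o} \cong \boldsymbol{\o}$ absorbs the leading $\1$, giving $A \preceq \sum_{\b<\g}\Z^\b \boldsymbol{\o}$. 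The bounded-above claim follows by reversing the order.

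For part~(b), after translation we may assume $A \subseteq [\mathbf{0}, f_1]$ for some $f_1 \geq \mathbf{0}$. When $\g = \b+1$, I would identify $\Z^\g$ with $\Z^\b \cdot \Z$ via $f \mapsto (f \restriction \b, f(\b))$ (with the $\Z$-factor more significant) and write $f_1 = (f', k)$ with $k \geq 0$; a case analysis on $g(\b)$ then decomposes $[\mathbf{0}, f_1]$ as the nonnegative cone of $\Z^\b$ (at $\Z$-coordinate $0$), followed by $k-1$ full copies of $\Z^\b$ (at $\Z$-coordinates $1,\dots,k-1$), followed by $\{g \in \Z^\b \mid g \leq f'\}$ (at $\Z$-coordinate $k$). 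Each piece embeds into $\Z^\b$, giving $A \preceq \Z^\b \cdot (k+1)$ (and $A \preceq \Z^\b$ if $k = 0$). When $\g$ is limit, $\Supp(f_1)$ is finite, so some $\a < \g$ satisfies $\Supp(f_1) \subseteq \a$, i.e.\ $f_1 \in \Z^\a$; for any $g \in [\mathbf{0}, f_1]$, if $g(\d) \neq 0$ for some $\d \geq \a$ then taking the maximal such $\d_0$ would force either $g > f_1$ (if $g(\d_0) > 0$) or $g < \mathbf{0}$ (if $g(\d_0) < 0$), a contradiction, so $[\mathbf{0}, f_1] \subseteq \Z^\a$ and $A \preceq \Z^\a$. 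The main subtlety lies in the opening translation-invariance reduction together with the identification of the nonnegative cone; the remainder is routine interval analysis exploiting the recursive structure of $\Z^\g$ from Proposition~\ref{power_Z}.
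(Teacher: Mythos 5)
Your proof is correct, but it takes a genuinely different route from the paper's. The paper handles the successor case in one line via $\Z^{\b+1} \cong \Z^\b \z$, and for limit $\g$ it works directly with the abstract decomposition $\Z^\g \cong \big(\sum_{\b<\g}\Z^\b\boldsymbol{\o}\big)^* + \1 + \sum_{\b<\g}\Z^\b\boldsymbol{\o}$ of Proposition~\ref{power_Z}: a lower bound for $A$ can only sit in the blocks indexed by $\b<\a$ of the reversed initial sum for some $\a<\g$, and the self-similarity statements of Proposition~\ref{power_Z} (absorbing the resulting $\Z^\a$ into $\sum_{\a\le\b<\g}\Z^\b\boldsymbol{\o}$) finish the computation; the bounded case is the same with both sums truncated at $\a$. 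You instead normalize first via the translation automorphism $g \mapsto g+h$ --- a clean observation the paper never invokes --- and then read everything off the concrete finitely-supported-function representation of Definition~\ref{pow_z_def_theor}: your stratification of the positive cone by $\max\Supp$ is essentially a re-derivation of one half of the decomposition in Proposition~\ref{power_Z}, and your limit-case argument that $[\mathbf{0},f_1]\subseteq\Z^\a$ once $\Supp(f_1)\subseteq\a$ replaces the paper's appeal to that proposition by a direct support computation. What your approach buys is uniformity in part~(a) (no case split on $\g$) and self-containedness at the level of the explicit coordinates; what the paper's buys is brevity, since it outsources all the structure theory to the cited Proposition~\ref{power_Z} rather than reproving part of it. All the steps you flag as the main subtleties (translation invariance, the identification of the nonnegative cone, the interval decomposition in the successor case, and the support argument in the limit case) check out.
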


\begin{proof}
If $\g = \b + 1$ we have $\Z^\g \cong \Z^\b \z$, and thus both part~\ref{lem:sum_bounded_sets_z-a} and part~\ref{lem:sum_bounded_sets_z-b} easily follow. 
        
Let now $\g$ be limit. If $A$ is bounded from below, then there exists $\a < \g$ such that
\begin{align*}
A \preceq \bigg(\sum_{\b < \a} \Z^\b \boldsymbol{\o}\bigg)^* + \1 + \sum_{\b < \g} \Z^\b \boldsymbol{\o} & = \bigg(\sum_{\b < \a} \Z^\b \boldsymbol{\o}\bigg)^* + \1 + \sum_{\b < \a} \Z^\b \boldsymbol{\o} + \sum_{\a \leq \b < \g} \Z^\b \boldsymbol{\o}\\
& \cong \Z^\a + \sum_{\a \leq \b < \g} \Z^\b \boldsymbol{\o} \cong \sum_{\a \leq \b < \g} \Z^\b \boldsymbol{\o} \preceq \sum_{\b < \g} \Z^\b \boldsymbol{\o},
\end{align*}
where we are using Proposition~\ref{power_Z} to justify the first and third step. The case where \( A \) is bounded from above is similar. Finally, if \( A \) is bounded from both sides, then there is \( \alpha < \gamma \) such that \( A \preceq \bigg(\sum_{\b < \a} \Z^\b \boldsymbol{\o}\bigg)^* + \1 + \sum_{\b < \a} \Z^\b \boldsymbol{\o} \cong \Z^\a \), where the last step is again justified by Proposition~\ref{power_Z}.
\end{proof}

\begin{prop}\label{prop:classes_zeta}
$\L_{\preceq \Z^\g}$ is ccs for every countable ordinal $\g$.
\end{prop}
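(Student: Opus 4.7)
The approach is transfinite induction on the countable ordinal $\g$. The base case $\g = 0$ is trivial, since $\Z^0 = \1$ and $\L_{\preceq \1}$ contains (up to isomorphism) only the empty and one-point linear orders, so the hypothesis forces $K = K' = \1$ and $\sum_{k \in K} K'_k = \1$.

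For the inductive step, suppose $\L_{\preceq \Z^\b}$ is ccs for every $\b < \g$. Fix $K, K' \preceq \Z^\g$ and a convex family $(K'_k)_{k \in K}$ of nonempty convex subsets of $K'$, and aim to embed $M := \sum_{k \in K} K'_k$ into $\Z^\g$. After realizing $K, K' \subseteq \Z^\g$, I would decompose $M = M^- + M^0 + M^+$, where $M^-$ is the summand coming from $\min K$ (if it exists), $M^+$ the summand from $\max K$ (if it exists), and $M^0$ is the sum over the \emph{interior} indices, i.e.\ those $k \in K$ that are neither the minimum nor the maximum. Lemma~\ref{lem:sum_bounded_sets_z}\ref{lem:sum_bounded_sets_z-a} bounds the boundary terms: $M^- \preceq \bigl(\sum_{\b<\g}\Z^\b\o\bigr)^*$ and $M^+ \preceq \sum_{\b<\g}\Z^\b\o$, which by Proposition~\ref{power_Z} are precisely the two halves of $\Z^\g$ and therefore fit into the corresponding portions of any target copy of $\Z^\g$.

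For each interior $k$, the convex set $K'_k$ is bounded on both sides in $K'$, hence in $\Z^\g$, so Lemma~\ref{lem:sum_bounded_sets_z}\ref{lem:sum_bounded_sets_z-b} produces a strictly smaller bound: $K'_k \preceq \Z^\b \cdot \n_k$ for some $n_k \in \N\setminus\{0\}$ when $\g = \b+1$ is a successor, and $K'_k \preceq \Z^{\a_k}$ for some $\a_k < \g$ when $\g$ is a limit. In each case the rank of $K'_k$ is strictly below $\g$, which is the leverage I need to invoke the inductive hypothesis on $\L_{\preceq \Z^\b}$ for $\b < \g$. In the successor case $\g = \b+1$ I would slice $K$ via its embedding $K \subseteq \Z^\b\cdot\z = \sum_{n \in \z}C_n$, writing $K = \sum_{n\in Z_K} K_n$ with $K_n \preceq \Z^\b$, so that $M^0$ inherits a compatible slicing $M^0 = \sum_n B_n$; I would then use the ccs property of $\L_{\preceq \Z^\b}$ (inductive hypothesis) together with Theorem~\ref{thm:ccs_implies_star} applied to $\L_{\preceq \Z^\b}$ to control each $B_n$. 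In the limit case I would stratify the interior $K'_k$ by their ordinals $\a_k < \g$ and exploit the fact that $\Z^\g$ contains a cofinal chain of convex suborders isomorphic to $\Z^\a$ for every $\a < \g$.

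To finally glue $M^-$, $M^0$, and $M^+$ into a single embedding into $\Z^\g$, I would rely on absorption identities of the form $\bigl(\sum_{\b<\g}\Z^\b\o\bigr)^* + \1 \cong \bigl(\sum_{\b<\g}\Z^\b\o\bigr)^*$ and $\1 + \sum_{\b<\g}\Z^\b\o \cong \sum_{\b<\g}\Z^\b\o$, which follow from $\o^* + \1 \cong \o^*$ and $\1 + \o \cong \o$ and let me absorb finite (or well-absorbed) extras into the halves of $\Z^\g$ without increasing the rank. The step I expect to be the main obstacle is controlling $M^0$ without such a blow-up: a naive bound like $M^0 \preceq \Z^\b \cdot K^+$ with $K^+ \preceq \Z^\g$ yields only $\Z^\b \cdot \Z^\g$, which is much larger than $\Z^\g$. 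Avoiding this requires fully exploiting the convex-family constraint --- the $K'_k$'s tile a single convex subset $U = \bigcup_k K'_k$ of $K'$ with at most countably many shared boundary points, so that $M^0$ is essentially $U$ with countably many duplicated boundary elements --- and combining this observation with Lemma~\ref{lem:sum_bounded_sets_z}\ref{lem:sum_bounded_sets_z-b} applied to $U$ itself (when $K$ has both a minimum and a maximum, and with the appropriate variants otherwise) to produce the tight bound needed to complete the induction.
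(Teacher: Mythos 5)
Your skeleton (induction on $\g$, Lemma~\ref{lem:sum_bounded_sets_z} as the engine, Proposition~\ref{power_Z} to reassemble the two halves of $\Z^\g$) matches the paper's, and you correctly locate the difficulty in the ``interior'' part of the sum; but there is a genuine gap exactly there, and you flag it yourself without closing it. Splitting off only the summands coming from $\min K$ and $\max K$ does not reduce the problem: $M^0$ is the whole sum minus at most two pieces, so bounding it is the original problem over again. The paper's key move, absent from your proposal, is to chop $K$ \emph{itself} into $\o$-many (or $\z$-many) blocks $B_i$ delimited by a sequence $(k_i)$ cofinal (and, when $K$ lacks extrema, also coinitial) in $K$. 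Because each $B_i$ is bounded in $K$ and the family $(K'_k)_{k\in K}$ is monotone, \emph{both} $B_i$ \emph{and} $\bigcup_{k\in B_i}K'_k$ are bounded in $\Z^\g$, so Lemma~\ref{lem:sum_bounded_sets_z}\ref{lem:sum_bounded_sets_z-b} drops the rank of the index order and of the ambient order of each block simultaneously; after a further finite grid decomposition (in the successor case, into cells each $\preceq\Z^\b$) the inductive hypothesis applies cellwise, each block contributes at most $\Z^{\b}n_i^2$ (resp.\ $\Z^{\a_i}$ with the $\a_i$ taken increasing in the limit case), and the $\o$-indexed sum of blocks lands inside $\sum_{\b<\g}\Z^\b\o$.

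The specific mechanisms you propose in place of this do not work. Theorem~\ref{thm:ccs_implies_star} concerns $K$-sums of \emph{finite} orders and says nothing about the pieces $K'_k$ here, which are arbitrary nonempty convex subsets of $K'$. In the limit case, stratifying the interior indices by the ordinals $\a_k$ destroys the linear order of $K$ (the $\a_k$ need not be monotone in $k$), so it does not yield an ordered decomposition of $M^0$. And applying Lemma~\ref{lem:sum_bounded_sets_z}\ref{lem:sum_bounded_sets_z-b} to $U=\bigcup_k K'_k$ only bounds $U$ itself; the whole ccs difficulty is that a shared boundary point $k'$ gets blown up into a copy of the convex set $\{k\in K\mid k'\in K'_k\}$, which can be infinite, so ``$U$ with countably many duplicated boundary elements'' is not controlled by a bound on $U$ alone. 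Without the blockwise boundedness argument the inductive hypothesis never gets a foothold, and the induction does not close.
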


\begin{proof}
We argue by induction on $\g$. If \(\g = 0\) we have \(\Z^\g = \1\) and thus $\L_{\preceq \Z^\g} =\{\1\}$ is ccs.
			
Now fix $\g \geq 1$ and assume that \(\L_{\preceq \Z^\b}\) is ccs for every $\b < \g$. Consider $K, K' \in \L_{\preceq \Z^\g}$ and nonempty convex subsets \((K'_k)_{k \in K}\) of \(K'\) such that \(\forall k,k' \in K\ (k <_K k' \rightarrow K'_k \leq_{K'} K'_{k'})\). We want to show that \(\sum_{k \in K} K'_{k} \preceq \Z^\g\). It is convenient to think of $K$ and $K'$ as subsets of $\Z^\g$.
        
We assume that $K$ has a minimum but no maximum: the other cases (no extrema, maximum only, and both extrema) can be treated similarly. Pick a sequence $\{k_i \mid i \in \N\}$ cofinal in $K$ with $k_0 = \min K$.
For every $i \in \N$ let $B_i = \{k \in K \mid k_i <_K k \leq_K k_{i+1}\}$. Then 
\[
\sum_{k \in K} K'_k = K'_{k_0} + \sum_{i \in \N} \bigg( \sum_{k \in B_{i}}  K'_{k}\bigg)
\]
Since $K'_{k_0}$ is bounded from above in $\Z^\g$, $K'_{k_0} \preceq \left(\sum_{\b < \g} \Z^\b \o\right)^*$  by Lemma~\ref{lem:sum_bounded_sets_z}\ref{lem:sum_bounded_sets_z-a}. 

Assume first that \( \g = \b+1\) is a successor ordinal. Since $B_i$ and $\bigcup_{k \in B_i} K'_k$ are bounded in $\Z^\g$, by Lemma~\ref{lem:sum_bounded_sets_z}\ref{lem:sum_bounded_sets_z-b} for each \( i \in \N \) we can find \( n_i \in \N \setminus \{ 0 \} \) such that both $B_i \preceq \Z^{\b} n_i$ and $\bigcup_{k \in B_i} K'_k \preceq \Z^{\b} n_i$. 
We decompose $B_i = \bigcup_{j<n_i} B_{i,j}$ so that $B_{i,j} <_K B_{i,j+1}$ and \(\emptyset \neq B_{i,j} \preceq \Z^{\b}\) for every $j<n_i$. 
For each such $j$, we can then write $\bigcup_{k \in B_{i,j}} K'_k = \bigcup_{h<\ell_{i,j}} K'_{i,j,h}$ for some $\ell_{i,j} \leq n_i$ so that $K'_{i,j,h} <_{K'} K'_{i,j,h+1}$ and $\emptyset \neq K'_{i,j,h} \preceq \Z^{\b}$ for every $h < \ell_{i,j}$. 
For every $j < n_i$ and $h<\ell_{i,j}$, let $K_{i,j,h} = \{k \in B_{i,j} \mid K'_{i,j,h} \cap K'_k \neq \emptyset\} \preceq \Z^\b$, and for each $k \in K_{i,j,h}$ set $K'_{i,j,h,k} = K'_k \cap K'_{i,j,h}$. 
It is clear that $K'_{i,j,h,k} \leq_{K'} K'_{i,j,h,k'}$ whenever $k,k' \in K_{i,j,h}$ are such that $k <_K k'$. 
We can therefore apply the induction hypothesis to $\b<\g$ and obtain
\[
\sum_{k \in K_{i,j,h}} K'_{i,j,h,k} \preceq \Z^{\b}.\tag{$\star$}
\]
Since if $k \in B_i$ every element of $K'_k$ belongs to some $K'_{i,j,h,k}$ with $k \in K_{i,j,h}$, we can write
\[
\sum_{k \in B_{i}} K'_{k} = \sum_{j < n_{i}} \bigg( \sum_{h<\ell_{i,j}} \bigg(\sum_{k \in K_{i,j,h}} K'_{i,j,h,k}\bigg) \bigg).
\]
Then by $(\star)$ and $\ell_{i,j} \leq n_i$ we have $\sum_{k \in B_{i}} K'_k \preceq \Z^{\b} n_{i}^2$, so that
 $\sum_{i \in \N} \left( \sum_{k \in B_{i}}  K'_{k}\right) \preceq  \sum_{\b < \g} \Z^\b \boldsymbol{\o}$. Therefore
\[
\sum_{k \in K} K'_k = K'_{k_0} + \sum_{i \in \N} \bigg( \sum_{k \in B_{i}}  K'_{k}\bigg) \preceq \bigg(\sum_{\b < \g} \Z^\b \o\bigg)^* + \sum_{\b < \g} \Z^\b \o \cong \Z^\g,
\]
where in the last step we are using Proposition~\ref{power_Z}.

If \( \gamma \) is limit, the proof becomes even easier. Indeed, since both $B_i$ and $\bigcup_{k \in B_i} K'_k$ are bounded in $\Z^\g$, by Lemma~\ref{lem:sum_bounded_sets_z}\ref{lem:sum_bounded_sets_z-b} for each \( i \in \N \) there is \( \a_i < \g \) such that \( B_i \preceq \Z^{\a_i} \) and \( \bigcup_{k \in B_i} K'_k \preceq \Z^{\a_i} \). By inductive hypothesis, we get \( \sum_{k \in B_i} K'_k  \preceq \Z^{\a_i} \). 
Since we can clearly assume that the ordinals \( \alpha_i \) are increasing, it follows that \( \sum_{i \in \N} \bigg( \sum_{k \in B_{i}}  K'_{k}\bigg) \preceq \sum_{\b < \g} \Z^\b \boldsymbol{\o}\) and thus by Proposition~\ref{power_Z} again
\[
\sum_{k \in K} K'_k = K'_{k_0} + \sum_{i \in \N} \bigg( \sum_{k \in B_{i}}  K'_{k}\bigg) \preceq \bigg(\sum_{\b < \g} \Z^\b \boldsymbol{\o}\bigg)^* + \sum_{\b < \g} \Z^\b \boldsymbol{\o} \cong \Z^\g. \qedhere
\]
\end{proof}

Let us now come back to the stratification of \( \Scat \) given by the Hausdorff rank.
By~\cite[Theorem 5.37]{Ros82}, 
\[
\L_{\preceq \Z^\g} = \{L \in \Lin \mid \rk(L) \leq \g\},
\]
where $\rk$ denotes the Hausdorff rank.
Using Proposition~\ref{prop:classes_zeta} and the closure of ccs classes under unions of chains, we thus obtain:

\begin{cor}
The class $\{L \in \Lin \mid \rk(L) < \g\}$ is ccs for every countable ordinal $\g>0$.
\end{cor}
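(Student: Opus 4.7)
The plan is to split on whether \( \g \) is a successor or a limit ordinal, and in each case rewrite \( \{L \in \Lin \mid \rk(L) < \g\} \) in terms of the classes \( \L_{\preceq \Z^\b} = \{L \in \Lin \mid \rk(L) \leq \b\} \) already known to be ccs by Proposition~\ref{prop:classes_zeta}.

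If \( \g = \b + 1 \) is a successor, then \( \rk(L) < \g \) is equivalent to \( \rk(L) \leq \b \). Hence
\[
\{L \in \Lin \mid \rk(L) < \g\} = \L_{\preceq \Z^\b},
\]
and this class is ccs directly by Proposition~\ref{prop:classes_zeta}. If instead \( \g \) is a limit ordinal, then \( \rk(L) < \g \) holds precisely when \( \rk(L) \leq \b \) for some \( \b < \g \). Thus
\[
\{L \in \Lin \mid \rk(L) < \g\} = \bigcup_{\b < \g} \L_{\preceq \Z^\b}.
\]
Since the right-hand side is a chain of ccs classes under inclusion (as \( \b \leq \b' \) implies \( \Z^\b \preceq \Z^{\b'} \), and hence \( \L_{\preceq \Z^\b} \subseteq \L_{\preceq \Z^{\b'}} \)), and the collection of ccs classes is closed under unions of chains (as observed at the beginning of Section~\ref{sec:examples}), we conclude that \( \{L \in \Lin \mid \rk(L) < \g\} \) is ccs in this case as well.

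There is no real obstacle here: the argument is a bookkeeping exercise combining the previously established facts, and the only thing to be careful about is the elementary case analysis on the ordinal \( \g \).
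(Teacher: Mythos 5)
Your proof is correct and uses exactly the same ingredients as the paper: the identification $\L_{\preceq \Z^\b} = \{L \in \Lin \mid \rk(L) \leq \b\}$, Proposition~\ref{prop:classes_zeta}, and closure of ccs classes under unions of chains. The paper handles both cases uniformly by writing the class as $\bigcup_{\b<\g}\L_{\preceq \Z^\b}$ without splitting on whether $\g$ is a successor, but your explicit case distinction is harmless and the argument is the same.
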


Moving to the stratification of \( \WO \), we now discuss which classes of well-orders are ccs. 
By downward $\preceq$-closure, these classes must be of the form \(\L_{\prec \gg}\) for some ordinal $\g$.
Recall that an \textbf{(additively) indecomposable ordinal} \(\g\) is any nonzero ordinal such that for any \(\a , \b < \g\), we have \(\a + \b < \g\). 
The indecomposable ordinals are precisely those of the form \(\omega^\delta\) for some ordinal \(\delta\), and thus they are limit ordinals if \( \delta > 0 \). 
From the normality of addition in its right argument, it follows that \(\g\) is indecomposable if and only if \(\alpha +\g =\g\) for every $\a<\g$. We use these properties in the proof of the following proposition.
	
\begin{prop}\label{prop:classes_gamma}
Let $\g$ be an infinite countable ordinal. Then \(\L_{\prec \gg}\) is ccs if and only if \(\g\) is either an indecomposable ordinal, or the successor of an indecomposable ordinal. 
\end{prop}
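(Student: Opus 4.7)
The plan is to prove each direction separately, handling necessity by a direct construction and sufficiency by transfinite induction.

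For the necessity direction ($\Rightarrow$), I argue by contrapositive. Assume $\gamma$ is neither indecomposable nor the successor of an indecomposable ordinal. Since indecomposability is equivalent to the absence of a nontrivial additive decomposition, either $\gamma$ is a limit with $\gamma = \alpha + \beta$ for some $0 < \alpha, \beta < \gamma$, or $\gamma = \gamma_0 + 1$ is a successor whose predecessor $\gamma_0$ is non-indecomposable and hence factors as $\gamma_0 = \alpha + \beta$ with $0 < \alpha, \beta < \gamma_0$. In both situations I exhibit the same witness to the failure of ccs: take $K$ of type $\alpha + 1$ and $K'$ of type $\beta + 1$ (both strictly below $\gamma$), set $K'_k = \{\min K'\}$ for each $k <_K \max K$, and $K'_{\max K} = K'$. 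A direct check shows this is a valid convex configuration (consecutive singletons overlap at $\min K'$, and $K' \geq_{K'} \{\min K'\}$), and the resulting sum has type $\alpha + \beta + 1$, which equals $\gamma + 1$ in the limit case and $\gamma$ in the successor case; either way it lies outside $\L_{\prec \gg}$.

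For the sufficiency direction ($\Leftarrow$), write $\gamma = \omega^\delta$ or $\gamma = \omega^\delta + 1$ with $\delta \geq 1$. I plan to prove by transfinite induction on the type of $K'$ that every valid convex configuration $(K'_k)_{k \in K}$ with $K, K' \in \L_{\prec \gg}$ yields a sum in $\L_{\prec \gg}$. The inductive step uses a divide-and-conquer argument: pick $x^* \in K'$, split $K' = K'^{<} + \{x^*\} + K'^{>}$, and partition $K = K^< + K^= + K^>$ into the convex parts $K^< = \{k : K'_k \subseteq K'^{<}\}$, $K^= = \{k : x^* \in K'_k\}$, and $K^> = \{k : K'_k \subseteq K'^{>}\}$. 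The set $K^=$ is convex in $K$, and the ordering and convexity constraints force every $k$ strictly between $\min K^=$ and $\max K^=$ to satisfy $K'_k = \{x^*\}$, while the two endpoints $k_-, k_+$ (when they exist) carry convex subsets whose max, respectively min, equals $x^*$. This yields the decomposition
\[
P = P^< + K'_{k_-} + M + K'_{k_+} + P^>,
\]
where $M$ is the ordinal sum of singletons $\{x^*\}$ over the middle of $K^=$; the outer pieces $P^<, P^>$ fall under the IH since $K'^{<}, K'^{>}$ have strictly smaller type than $K'$, while the middle three pieces have types bounded by $\gamma$. Indecomposability of $\omega^\delta$ (closure under finite addition) then forces $P \in \L_{\prec \gg}$.

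The main obstacle I foresee is the limit step of the induction: when $K'$ has limit type, the upper tail $K'^{>}$ retains the same type as $K'$, so the IH does not apply on the right. To handle this case I will run a secondary induction on the type of $K$, exploiting the structural observation that when $K$ has limit type (no maximum) every $K'_k$ must be bounded above in $K'$ by some $K'_{k'}$ with $k' >_K k$, hence $K'_k$ has a max and therefore successor type strictly below $\omega^\delta$. Decomposing $K$ along an $\omega$-cofinal sequence then produces partial sums each below $\omega^\delta$ by IH, and the constraint that only finitely many convex pieces of $K'$ can reach type $\omega^\epsilon$ for each $\epsilon < \delta$ bounds the total $\omega$-sum strictly below $\omega^\delta$. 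The successor case $\gamma = \omega^\delta + 1$ is treated similarly, invoking the absorption identity $\xi + \omega^\delta = \omega^\delta$ for $\xi < \omega^\delta$ to accommodate at most one piece of maximal type without exceeding $\omega^\delta$ overall.
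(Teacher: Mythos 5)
Your necessity direction is correct: the single configuration with $K$ of type $\alpha+1$, $K'$ of type $\beta+1$, the pieces below the top all equal to $\{\min K'\}$ and the top piece equal to all of $K'$ produces a sum of type $\alpha+\beta+1$, which escapes $\L_{\prec \gg}$ in both the limit and the successor case. This is a mild unification of the two separate witnesses used in the paper and is fine.

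The sufficiency direction, however, has genuine gaps, located exactly at the point you flag as the main obstacle. Two of the claims you rely on in the limit step are false. First, a convex subset of a well-order that is bounded above need not have a maximum (take $K'$ of type $\omega+1$ and $K'_k=[0,\omega)_{K'}$), so ``bounded above, hence has a max, hence of successor type'' does not go through; pieces indexed by a limit-type $K$ can perfectly well have limit order types. Second, ``only finitely many convex pieces of $K'$ can reach type $\omega^\epsilon$ for each $\epsilon<\delta$'' fails whenever $\omega^{\epsilon+1}\le\beta$: inside $\boldsymbol{\omega^2}$ there are infinitely many pairwise disjoint convex pieces of type $\omega$. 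Since these two facts are what is supposed to rescue the limit case of your induction on $\ot(K')$ (where the right tail $K'^{>}$ generically has the same order type as $K'$), the induction as designed does not close. The successor case $\gamma=\omega^\delta+1$ is also not settled by ``at most one piece of maximal type plus absorption'': with $\alpha=\beta=\omega^\delta$ one must rule out sums of type $\omega^\delta\cdot 2$, which absorption alone does not do. The paper avoids all of this with two ideas your proposal is missing: (i) for $\gamma=\omega^\delta$, delete $\min\boldsymbol{\beta}_k$ from each piece so that the truncated pieces become pairwise disjoint, hence their total order type $\beta'$ is at most $\beta$, and then bound $\sum_{k<\alpha}(1+\beta'_k)$ by the Hessenberg sum $\alpha\oplus\beta'$ (the maximal linearization of the disjoint union of $\alpha$ and $\beta'$), which stays below $\omega^\delta$ because additively indecomposable ordinals are closed under natural sums; (ii) for $\gamma=\omega^\delta+1$, observe that $\L_{\prec\gg}=\L_{\preceq\Z^\delta}\cap\WO$ and invoke the separately proved ccs property of $\L_{\preceq\Z^\delta}$ together with closure of the ccs classes under intersections. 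Without a substitute for these steps your argument is incomplete.
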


\begin{proof}
\(\Rightarrow)\) We show the contrapositive.
First we consider the case \(\g = \a + 1\) assuming that \(\a\) is not indecomposable. Fix \(\b<\a\) such that \(\a<\b + \a\). 
To show that \(\L_{\prec \gg}\) is not ccs we consider \(\boldsymbol{\b}+\1, \aa \in \L_{\prec \gg}\). 
For \( k \in \boldsymbol{\b}+1 \), define the following convex subsets of \(\aa\):
\[
\aa_{k}= \{\min \aa\} \text{ if } k<_{\boldsymbol{\b+1}} \max \{\boldsymbol{\b+1}\}, \qquad \text{and} \qquad \aa_{\max \{\boldsymbol{\b+1}\}}=\aa.
\]
Then \(\sum_{k \in \boldsymbol{\b+1}} \aa_k \cong \boldsymbol{\b} + \aa \notin \L_{\prec \gg}\).
		
It remains to prove that \(\L_{\prec \gg}\) is not ccs when $\g$ is limit but not indecomposable. Fix \(\alpha, \beta<\g\) such that \(\g =\alpha + \beta\). Since $\g$ is limit, then \(\beta\) is limit as well and in particular $\beta>1$, so that \(\alpha + 1 < \g\). Consider \(\bb, \aa+\1 \in \L_{\prec \gg}\), and for \( k \in \boldsymbol{\b} \) define the following convex subsets of \(\aa+\1\):
\[
\aa_{\min \bb}= \aa+\1, \qquad \text{and} \qquad \aa_{k}=\{\max\{\aa+\1\}\} \text{ if } k>_{\bb} \min \bb.
\]
Then \(\sum_{k \in \bb} \aa_k \cong \aa + \bb \cong \gg\) does not belong to \(\L_{\prec \gg}\).
		
\medskip
    
\(\Leftarrow)\)
Suppose first that \( \gamma = \omega^\delta+1\) for some \( \delta > 0 \). Since the largest ordinal that embeds into \( \Z^\d\) is precisely \( \o^\d\), it follows that \( \L_{\prec \gg} = \L_{\preceq \boldsymbol{\o^\d}} =  \L_{\preceq \Z^\d} \cap \WO \), and hence it is ccs by Proposition~\ref{prop:classes_zeta} and closure of the family of ccs classes under intersections. 

Now suppose that \( \g = \o^\d \) is indecomposable. 
Given \( \a,\b < \o^\d \) and a family \( (\boldsymbol{\b}_k)_{k \in \boldsymbol{\a}}\) of convex subsets of \( \boldsymbol{\b}\) such that \( \boldsymbol{\b}_{k_0} \leq_{\boldsymbol{\b}} \boldsymbol{\b}_{k_1} \) for all \( k_0,k_1 \in \boldsymbol{\a} \) with \( k_0 <_{\boldsymbol{\a}} k_1 \), we want to show that \( \sum_{k \in \boldsymbol{\a}} \boldsymbol{\b}_k \) has order type smaller than \( \o^\d \). 
For each \( k \in \boldsymbol{\a} \), let \( \boldsymbol{\b}'_k = \boldsymbol{\b}_k \setminus \{ \min \boldsymbol{\b}_k \} \).
(Some \( \boldsymbol{\b}'_k\) might be empty.) 
Then \( \boldsymbol{\b}'_{k_0} <_{\boldsymbol{\b}} \boldsymbol{\b}'_{k_1}\) for all \( k_0,k_1 \in \boldsymbol{\a}\) such that \( k_0 <_{\boldsymbol{\a}} k_1 \) and \( \boldsymbol{\b}'_{k_0}, \boldsymbol{\b}'_{k_1} \neq \emptyset \), and moreover \( \bigcup_{k \in \boldsymbol{\a}} \boldsymbol{\b}'_k \subseteq \bigcup_{k \in \boldsymbol{\a}} \boldsymbol{\b}_k \subseteq \boldsymbol{\b}\). 
From these two facts it follows that if \( \b'_k \) is the order type of \( \boldsymbol{\b}'_k\) and we set \( \b' =  \sum_{k < \a} \b'_k \), we have \( \b' \leq \b < \o^\d \).
Since \( \sum_{k \in \boldsymbol{\a}} \boldsymbol{\b}_k \preceq \sum_{k \in \boldsymbol{\a}} (\1 + \boldsymbol{\b}'_k)\), it is enough to show that \( \sum_{k < \a} (1 + \b'_k) < \o^\d \).
Consider the Hessenberg (or natural) sum \( \a \oplus \b' \): since it is equal to the length of the maximum linearization of the wqo given by the disjoint union of \( \a \) and \( \b' \), we have \( \sum_{k < \a} (1 + \b'_k) \leq \a \oplus \b' \). 
On the other hand, since both \( \a \) and \( \b' \) are smaller than \( \o^\d \), then also \( \a \oplus \b' < \o^\d \), as it can be easily checked looking at their Cantor normal forms. 
Therefore \( \sum_{k < \a} (1 + \b'_k) \leq \a \oplus \b' < \o^\d \), as desired.
\end{proof}

We now establish some connections in the context of Borel reducibility among the quasi-orders $\qo$ induced by some of the ccs classes $\L$ we discussed above.
Recall that by Proposition~\ref{prop:classes_gamma} if $\g$ is an indecomposable ordinal both $\L_{\prec \boldsymbol{\gamma}}$ and $\L_{\prec \boldsymbol{\gamma+1}}$ are ccs.

\begin{thm}\label{thm:red_qog_qolg}
Let \( \gamma \) be an infinite additively indecomposable countable ordinal. Then ${\qolg_\LO} \le_B {\qog_\LO}$.
\end{thm}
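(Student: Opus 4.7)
My plan is to construct a Borel reduction $\varphi \colon \LO \to \LO$ witnessing $\qolg_\LO \le_B \qog_\LO$. The natural candidate is the Borel (in fact continuous) map $\varphi(L) = \boldsymbol{\gamma} L$, i.e.\ the left product with a fixed well-order of type $\gamma$. I will verify the equivalence $L \qolg L' \iff \varphi(L) \qog \varphi(L')$.

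The forward direction is a routine lift. Given a witness $(K, (L_k)_{k \in K}, f)$ for $L \qolg L'$ with $K \in \L_{\prec \gg}$, the data $(K, (\boldsymbol{\gamma} \times L_k)_{k \in K}, \hat f)$, where $\hat f(x, \ell) = (x, f(\ell))$, is a $K$-convex partition of $\boldsymbol{\gamma} L$ together with an embedding sending each piece onto the convex set $\boldsymbol{\gamma} \times f(L_k)$ of $\boldsymbol{\gamma} L'$. Since $K \in \L_{\prec \gg} \subseteq \L_{\preceq \gg}$, this witnesses $\varphi(L) \qog \varphi(L')$.

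For the converse, assume $(K, (M_k)_{k \in K}, g)$ witnesses $\varphi(L) \qog \varphi(L')$ with $K \in \L_{\preceq \gg}$. Since $\boldsymbol{\gamma}$ is a well-order, every tail of $\boldsymbol{\gamma}$ is inherently cofinal, so the argument in the proof of Proposition~\ref{prop:fin_zeta_l} applied with $m_0 = 0$ produces an induced $\tilde K$-convex partition $(\tilde L_k)_{k \in \tilde K}$ of $L$ together with an embedding $\tilde g \colon L \to L'$ satisfying $\tilde g(\tilde L_k) \csube L'$ for all $k \in \tilde K$, where $\tilde L_k = \{\ell \in L \mid (0,\ell) \in M_k\}$ and $\tilde K = \{k \in K \mid \tilde L_k \ne \emptyset\}$. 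This already establishes $L \trianglelefteq^{\L_{\preceq \gg}} L'$.

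The main obstacle, and the crux of the argument, is to upgrade $\tilde K \in \L_{\preceq \gg}$ to $\tilde K \in \L_{\prec \gg}$, i.e.\ to show that $\tilde K$ has order type strictly less than $\gamma$. Since $L$ is countable we have $|\tilde K| \le \aleph_0$, and the equivalence classes of $\ell \sim \ell' \iff \text{``(0,$\ell$) and (0,$\ell'$) lie in the same $M_k$''}$ are precisely the convex subsets $\tilde L_k$, with $L/{\sim}$ order-isomorphic to $\tilde K$ via $[\ell] \mapsto k_\ell$. Here the indecomposability of $\gamma = \omega^\delta$ is essential: because each $M_k$ is convex in $\boldsymbol{\gamma} L$ and contains the full block $\boldsymbol{\gamma} \times \tilde L_k$, a combinatorial argument analogous to the ones in Section~\ref{sec:ccs_property} (exploiting that ordinal sums of fewer than $\gamma$ summands of type $<\gamma$ remain of type $<\gamma$ when $\gamma$ is indecomposable) lets us coarsen the partition $(M_k)_{k \in K}$ of $\boldsymbol{\gamma} L$ into one indexed by an ordinal $< \gamma$ without losing the witnessing property, and the induced partition of $L$ is then indexed by an element of $\L_{\prec \gg}$. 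In the borderline case $\gamma = \omega$, where the naive bound $|\tilde K| \le \omega = \gamma$ is not sharp enough, the construction must be enriched, e.g.\ by taking $\varphi(L) = (\boldsymbol{\gamma} + \1) L$ so that the $\1$-markers appearing after each $\boldsymbol{\gamma}$-block admit an intrinsic characterisation in $\varphi(L)$ which forces any witnessing partition to respect the $L$-block boundaries, thereby collapsing the $L$-partition to a finite one in $\L_{\prec \gg} = \Fin$.
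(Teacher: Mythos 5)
Your forward direction is fine, but the step you yourself identify as ``the crux'' --- upgrading the induced index order from $\L_{\preceq \gg}$ to $\L_{\prec \gg}$ --- is not only left unproven, it cannot be carried out: the map $\varphi(L)=\boldsymbol{\gamma}L$ is provably not a reduction. Take $L=\z\boldsymbol{\gamma}$ and $L'=(\z+\1)\boldsymbol{\gamma}$. Partitioning $L$ into its $\boldsymbol{\gamma}$-many $\z$-blocks and sending the $\beta$-th block onto the $\z$-part of the $\beta$-th block of $L'$ witnesses $L\trianglelefteq^{\L_{\preceq\gg}}_\LO L'$, hence $\varphi(L)\qog_\LO\varphi(L')$ (lift the witness blockwise, or invoke Proposition~\ref{prop:fin_zeta_l}). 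However $L\nqo\!\!\!^{{}^{\L_{\prec\gg}}} L'$: if $(L_k)_{k\in\aa}$ with $\alpha<\gamma$ were a witnessing partition, no $L_k$ could contain both $(0,\beta)$ and $(0,\beta+3)$, since by convexity it would then contain two full consecutive $\z$-blocks, i.e.\ a convex copy of $\z+\z$, whose image would be a convex copy of $\z+\z$ inside $(\z+\1)\boldsymbol{\gamma}$ --- impossible, because such a copy can meet no $\1$-point (those lack immediate neighbours) and $\z+\z$ is not isomorphic to any convex subset of $\z$. Thus the convex sets $B_k=\{\beta<\gamma\mid(0,\beta)\in L_k\}$ partition $\gamma$ into pieces of size at most $3$ indexed by an order of type at most $\alpha$, so $\boldsymbol{\gamma}$ embeds into $\mathbf{3}\boldsymbol{\alpha}$, whose order type is $<\gamma$ by indecomposability --- a contradiction. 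The same pair defeats your enriched map $(\boldsymbol{\gamma}+\1)L$ proposed for $\gamma=\omega$, since the blockwise witness lifts verbatim. The underlying point is that a witnessing partition of order type exactly $\gamma$ can be genuinely necessary and admits no coarsening; indecomposability does not help here, and in fact $\trianglelefteq^{\L_{\preceq\gg}}_\LO$ and $\qolg_\LO$ genuinely differ, which is exactly what your reduction would have to detect.

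The paper's proof sidesteps all of this with a much lighter device: $\varphi(L)=L+\1$. If $L+\1\qog_\LO L'+\1$ via some $\aa$ with $\alpha\leq\gamma$, the piece containing $\max(L+\1)$ must be indexed by $\max\aa$, so $\alpha$ is a successor ordinal; since an infinite indecomposable $\gamma$ is a limit, $\alpha<\gamma$, and the very same witness, restricted to $L$, gives $L\qolg_\LO L'$. The only structural feature exploited is that the admissible index orders are well-orders, so forcing the partition to have a last piece forces its index to be a successor ordinal below $\gamma$.
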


\begin{proof}
We prove that the Borel map $\varphi \colon \LO \to \LO$ defined by $\varphi(L)=L+\1$ is a reduction from $\qolg_\LO$ to $\qog_\LO$.
For the nontrivial direction, suppose that $L+\1 \qog_\LO L'+\1$ with witness $\aa \in \L_{\prec \gg+\1}$, the $\aa$-convex partition $(L_k)_{k \in \aa}$ of $L+\1$, and the embedding \( h \colon L+\1 \to L' + \1 \). Let $k_0 \in \aa$ be such that $\max (L+\1) \in L_{k_0}$. Necessarily, $k_0 = \max \aa$, hence \( \alpha \) is a successor ordinal and $\a < \g$ because $\g$ is limit. Therefore the same \( \aa \in \L_{\prec \boldsymbol{\g}} \), \( (L_k)_{k \in \boldsymbol{\a}} \) and \( h \) witness $L+\1 \qolg_\LO L'+\1$, and thus \( \aa \in \L_{\prec \boldsymbol{\g}} \), \( (L_k \cap L)_{k \in \aa} \), and \( h \restriction L \) witness $L \qolg_\LO L'$.   
\end{proof}

Attempting to compare $\trianglelefteq^{\L_{\prec \bb}}$ and $\qolg$ for $\b$ and $\g$ which are far apart seems more difficult. 
We are able to show the existence of a Borel reduction only in certain cases. Recall that an ordinal $\a > 1$ is \textbf{multiplicatively indecomposable} if $\b \g < \a$ for every $\b,\g < \a$. 
It is well known that the infinite multiplicatively indecomposable ordinals are exactly those of the form $\o^{\o^\xi}$ for some ordinal $\xi$. 
Since the multiplicatively indecomposable ordinals are closed under taking suprema, for every infinite ordinal \( \gamma  \) there is a largest multiplicatively indecomposable ordinal $\b \leq \g$, that we call the \textbf{threshold} of $\g$.

\begin{remark}\label{ind_ord}
The above terminology is justified because if $\b=\o^{\o^\xi}$ is the threshold of an (additively) indecomposable ordinal \( \g \geq \b\), then $\g=\o^{\o^\xi+ \theta}$ for some ordinal $\theta$. It is then easy to check that:
\begin{enumerate-(a)}
\item\label{ind_ord_a} $\a \g = \g$, for every $0<\a < \b$;
\item\label{ind_ord_b} $\a \g > \g$, for every $\a \geq \b$.
\end{enumerate-(a)}
\end{remark}

\begin{thm}\label{thm:red_threshold}
Let $\g$ be an infinite (additively) indecomposable ordinal and let $\b$ be its threshold. 
Then ${\trianglelefteq^{\L_{\prec \bb}}_\LO} \le_B {\qog_\LO}$.
\end{thm}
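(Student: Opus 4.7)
The plan is to define the Borel reduction $\varphi \colon \LO \to \LO$ by $\varphi(L) = L\gg$. For the forward direction, suppose $L \trianglelefteq^{\L_{\prec \bb}}_\LO L'$ is witnessed by some $\aa \in \L_{\prec \bb}$ (so $\a < \b$), an $\aa$-convex partition $(L_k)_{k \in \aa}$ of $L$, and an embedding $h \colon L \to L'$. I would take as partition of $L\gg$ the pieces $(L_k \times \{g\})_{(k,g) \in \aa\gg}$, and as embedding the map $h \times \id_\gg \colon L\gg \to L'\gg$. The index $\aa\gg$ is the ordinal $\a \cdot \gg$, which equals $\gg$ by Remark~\ref{ind_ord}\ref{ind_ord_a} (since $\a < \b$), hence belongs to $\L_{\prec \gg+\1}$; each piece $L_k \times \{g\}$ is sent to the convex subset $h(L_k) \times \{g\}$ of $L'\gg$, witnessing $\varphi(L) \qog_\LO \varphi(L')$.

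For the reverse direction, suppose $\varphi(L) \qog_\LO \varphi(L')$ via some $\aa \leq \gg$, partition $(M_k)_{k \in \aa}$, and embedding $h$. I would first run a counting argument to find a ``thin'' block. For each $g \in \gg$ set $K_g = \{k \in \aa : M_k \cap (L \times \{g\}) \neq \emptyset\}$; these are convex subsets of $\aa$, weakly increasing in $g$ and sharing at most one boundary element between consecutive values, with $\bigcup_g K_g = \aa$. If $|K_g| \geq \b$ for every $g$, then disjointifying the $K_g$'s (which does not decrease order type since $\b$ is infinite) realizes a suborder of $\aa$ of order type $\sum_{g \in \gg} \b = \b \cdot \gg$, which strictly exceeds $\gg$ by Remark~\ref{ind_ord}\ref{ind_ord_b}, contradicting $\a \leq \gg$. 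Hence some $g_0 \in \gg$ satisfies $|K_{g_0}| < \b$.

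To extract an $\L_{\prec \bb}$-convex embedding $L \to L'$, I would analyse how $h$ sends the block $L \times \{g_0\} \cong L$ into the block structure of $L'\gg$. The ideal situation is $h(L \times \{g_0\}) \subseteq L' \times \{g'_0\}$ for a single $g'_0$: then the projection to $L'$ gives an embedding $L \to L'$, and the pieces $(M_k \cap (L \times \{g_0\}))_{k \in K_{g_0}}$ map convexly into $L'$, already giving the desired witness since $|K_{g_0}| < \b$. When this fails, refinement is needed: for each pair $(k, g')$, the set $A_{k, g'} = \{\ell \in L : (\ell, g_0) \in M_k,\, \pi_2(h(\ell, g_0)) = g'\}$ is convex in $L$ and maps under the projection of $h$ to a convex subset of $L' \times \{g'\} \cong L'$. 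Since consecutive $h(M_k)$'s overlap on at most one $L'\gg$-block, the refined index has order type at most $|K_{g_0}| + |\psi_{g_0}(L)|$, where $\psi_{g_0}(L) = \{\pi_2 h(\ell, g_0) : \ell \in L\}$. A dual counting argument applied to $H_{g'} = \{k : h(M_k) \cap (L' \times \{g'\}) \neq \emptyset\}$, combined with a case analysis on whether $P_{g'_0} = h^{-1}(L' \times \{g'_0\})$ contains a full block $L \times \{g\}$, converts this into the desired bound; the additive (and multiplicative) indecomposability of $\b = \omega^{\omega^\xi}$ then places the refined index in $\L_{\prec \bb}$.

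The hard part will be reconciling these two countings to produce a witness for $L \trianglelefteq^{\L_{\prec \bb}}_\LO L'$: either selecting a single $g_0$ making both $|K_{g_0}|$ and $|\psi_{g_0}(L)|$ fall below $\b$, or else showing that the alternative---every block of $L\gg$ is split by $h$ across multiple $L'\gg$-blocks, so each $P_{g'}$ straddles at most two blocks of $L\gg$---forces a structural embedding via the dual side by exhibiting a full block $L \times \{g\}$ inside some $P_{g'_0}$ with $|H_{g'_0}| < \b$. This is the technical core of the argument where the precise definition of $\b$ as the threshold of $\gg$ enters essentially, via the interplay between $\aa \gg = \gg$ for $\a < \b$ and $\b \gg > \gg$.
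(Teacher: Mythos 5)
Your reduction map $\varphi(L)=L\gg$ does not work: the reverse direction is not merely ``the hard part'', it is false. Take $L=\z+\z=\z\mathbf{2}$ and $L'=\z$. Since $\gg$ is an infinite additively indecomposable (hence limit) ordinal, $\mathbf{2}\gg\cong\gg$ (as $2\cdot\g=\g$ in ordinal arithmetic for limit $\g$), so by associativity of the product $\varphi(L)=(\z\mathbf{2})\gg\cong\z(\mathbf{2}\gg)\cong\z\gg=\varphi(L')$; in particular $\varphi(L)\qog_\LO\varphi(L')$ holds trivially. On the other hand $\z+\z\not\preceq\z$ (a suborder of $\z$ of order type $\z$ is unbounded on both sides, so one such suborder cannot lie entirely below another), hence a fortiori $\z\mathbf{2}\ntrianglelefteq^{\L_{\prec\bb}}_\LO\z$. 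The underlying problem is exactly the one you flagged at the end of your sketch: with $\varphi(L)=L\gg$ nothing forces a witnessing embedding to respect the block structure of $L\gg$ and $L'\gg$ --- whole blocks can be absorbed, shifted, or spread across the target --- and the refined index you build from the pairs $(k,g')$ cannot be bounded below $\bb$, since a single $h(M_k\cap(L\times\{g_0\}))$ may meet up to $\gg$-many blocks of $L'\gg$.

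The paper's proof addresses precisely this by taking $\varphi(L)=\sum_{\a<\g}(\boldsymbol{\a}\eta+\z L)$. The marker blocks $\boldsymbol{\a}\eta$ have pairwise distinguishable local structure, so Lemma~\ref{lem:loc_not_scat} forces any witnessing embedding to send (a convex copy of) the $\a$-th marker of $\varphi(L)$ into the $\a$-th marker of $\varphi(L')$, which rigidifies the block structure; replacing $L$ by $\z L$ then allows one to invoke Proposition~\ref{prop:fin_zeta_l} to convert a convex embedding of $\z L$ into $\z L'$ back into a witness for $L$ and $L'$. Only after this rigidification does a counting argument in the spirit of yours (on the indices $k_\a$ of the first piece meeting the $\a$-th marker, using Remark~\ref{ind_ord}) go through. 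Your forward direction, by contrast, is correct as far as it goes, but it is attached to a map that fails the other implication.
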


\begin{proof}
Define the Borel map $\varphi \colon \LO \to \LO$ by 
\begin{equation*}
   \varphi (L) = \sum_{\a < \g} (\boldsymbol{\a} \eta + \z L).
\end{equation*}
We claim that $\varphi$ is a reduction from $\trianglelefteq^{\L_{\prec \bb}}_\LO$ to $\qog_\LO$.

Suppose that $L\trianglelefteq^{\L_{\prec \bb}}_\LO L'$ with witnesses $\boldsymbol{\xi} \in \L_{\prec \bb}$, a $\boldsymbol{\xi}$-convex partition $(L_\b)_{\b \in \boldsymbol{\xi}}$ of $L$, and an embedding $g \colon L \to L'$. 
The linear order \( K = (\1 + \boldsymbol{\xi}) \boldsymbol{\g} \) belongs to \( \L_{\prec \boldsymbol{\g}+\1}\) by Remark~\ref{ind_ord}\ref{ind_ord_a}. Let \( (M_k)_{k \in K} \) be the \( K \)-convex partition of \( \varphi(L) \) defined as follows:
\[
M_k = 
\begin{cases}
\boldsymbol{\a } \eta \times \{ \a \} & \text{if } k = ( \min (\1 + \boldsymbol{\xi}), \a) \text{ for some } \a < \g  \\
\zeta L_\b \times \{ \a \} & \text{if } k = (\b,\a) \text{ for some } \b \in \boldsymbol{\xi} \text{ and } \a < \g .
\end{cases}
\]
Finally, let \( h \colon \varphi(L) \to \varphi(L')\) be the embedding defined using \( g \) in the obvious way, that is, for every \( \alpha < \g\) and \( x \in \boldsymbol{\a} \eta + \z L \) set
\[
h(x,\a) =
\begin{cases}
(x,\a) & \text{if } x \in \boldsymbol{\a} \eta \\
((z,g(\ell)),\a) & \text{if } x = (z,\ell) \text{ for some } z \in \z \text{ and } \ell \in L.
\end{cases}
\]
Then \( K \in \L_{\prec \boldsymbol{\g}+\1} \), \((M_k)_{k \in K} \), and \( h \) witness \( \varphi(L) \qog_\LO \varphi(L') \).

Vice versa, suppose that \( \varphi(L) \qog_\LO \varphi(L') \) as witnessed by \( K \in \L_{\prec \gg+\1}\), the \( K \)-convex partition \( (M_k)_{k \in K} \) of \( \varphi(L) \), and the embedding \( h \colon\varphi(L) \to \varphi(L') \). Since \( K \) is well-ordered, for each \( \a < \g \) we can let \( k_\a \) be the \( \leq_K \)-smallest \( k \in K \) such that \( M_k \cap (\boldsymbol{\a}\eta \times \{ \a \}) \neq \emptyset \).
By choice of \( k_\a \), there is \( q \in \eta \) such that \( L_\a = \boldsymbol{\a}(-\infty,q)_\eta \times \{ \a \} \csube M_{k_\a} \). Notice that \( L_\a \cong \eta^{f_\a}\) where \( f_\a \) is constant with value \( \boldsymbol{\a} \). 

\begin{claim}\label{claim:borredord}
\( h(L_\a) \subseteq \boldsymbol{\a}\eta \times \{ \a \} \).
\end{claim}

\begin{proof}[Proof of the claim]
Suppose towards a contradiction that \( h(x) \in \z L' \times \{ \a' \} \) for some \( x \in L_\a \) and \( \a' < \g \). 
Let \( z \in \Z \) and \( \ell' \in L' \) be such that \( h(x) = ((z,\ell'),\a') \). 
Pick any \( y \in L_\a\) such that  \( y <_{L_\a} x\) and \( (y,x)_{L_\a} \) is infinite. 
Then \( h(y) \leq_{\varphi(L')} ((z',\ell'),\a') \) for every \( z' \leq_\z z \), so that \( \{ ((z',\ell'),\a') \mid z' \leq_\z z \} \subseteq h (L_\a) \) because the latter is convex in \( \varphi(L') \). 
It follows that \( x \) would be the maximum of a convex subset of \( L_\a \cong \eta^{f_\a} \) isomorphic to \( \boldsymbol{\omega}^* \), which is impossible.
Thus for every \( x \in L_\a \) there is \( \a' < \g \) such that \( h(x) \in \boldsymbol{\a}' \eta \times \{ \a' \} \), which easily implies that there is a single \( \a' < \g  \) such that \( h(L_\a) \subseteq \boldsymbol{\a}' \eta \times \{ \a' \} \cong \eta^{f_{\a'}} \). 
Hence \( \eta^{f_\a} \cong L_\a \cvx \eta^{f_{\a'}}\), which by Proposition~\ref{lem:loc_not_scat}\ref{lem:loc_not_scat-b} implies \( \a' = \a \).
\end{proof}

By construction, if \( \a < \a' \) then \( k_\a \leq_K k_{\a'} \). We distinguish two cases. If the map \( \a \mapsto k_\a \) is not injective, then there is \( \a < \g \) such that \( k_\a = k_{\a+1} \). 
It follows that \( \z L \times \{ \a \} \csube M_{k_\a} \), and thus using Claim~\ref{claim:borredord} we can obtain from \( h \) a witness \( h'\) of \( \z L \cvx \boldsymbol{\a} \eta + \z L' + (\boldsymbol{\a}+\1)\eta\). 
Each point \( x \in \z L \) is the maximum of a convex subset of \( \z L \) isomorphic to \( \boldsymbol{\o}^*\), which easily imply \( h'(x) \in \z L' \). 
Thus \( h' \) actually witnesses \( \z L \cvx \z L'\), hence \( L \cvx L' \) by Proposition~\ref{prop:fin_zeta_l}, and so \( L \trianglelefteq^{\L_{\prec \bb}}_\LO L' \).

Assume now that that the map \( \a \mapsto k_\a \) is injective, and hence strictly increasing. 
If all intervals \( [k_\a,k_{\a+1})_K\) had order type \( \geq \b \), then the order type of \( K \) would be at least \( \b \g \), and this would contradict \( K \in \L_{\prec \boldsymbol{\g}+\1}\) by Remark~\ref{ind_ord}\ref{ind_ord_b}. 
Let thus \( \a < \g \) be such that \( [k_\a,k_{\a+1})_K \) has order type \( \xi < \beta \), so that \(  [k_\a,k_{\a+1}]_K \in \L_{\prec \boldsymbol{\b}} \) because \( \b \) is limit. 
Then \( \z L \times \{ \a \} \csube \bigcup_{k \in [k_\a,k_{\a+1}]_K} M_k \), and thus by Claim~\ref{claim:borredord} we obtain \( \z L \trianglelefteq^{\L_{\prec \bb}}_\LO \boldsymbol{\a} \eta + \z L' + (\boldsymbol{\a}+\1)\eta \) as witnessed by some \( K' \preceq [k_\a,k_{\a+1}]_K \), \( (M'_{k'})_{k' \in K'} \), and \( h' \colon \z L \to \boldsymbol{\a} \eta + \z L' + (\boldsymbol{\a}+\1)\eta \). 
For each \( \ell \in L \), let \( k'_\ell \) be the smallest \( k' \in K' \) such that \( M'_{k'} \cap (\z \times \{ \ell \}) \neq \emptyset \), and set \( L'_\ell =  M'_{k'_\ell} \cap (\z \times \{ \ell \}) \). Since \( h' \restriction L'_\ell \) witnesses \( L'_\ell \cvx \boldsymbol{\a} \eta + \z L' + (\boldsymbol{\a}+\1)\eta \), and since \( L'_\ell \) is isomorphic to either \( \z \) or \( \boldsymbol{\o}^*\) by the choice of \( k'_\ell \), it follows that \( h'(L'_\ell) \subseteq \z L'\) because there is no convex subset of \( \boldsymbol{\a} \eta \) and \( (\boldsymbol{\a}+\1)\eta\) isomorphic to \( \boldsymbol{\o}^* \). Exploiting the order type of \( L'_\ell \) once again, it is easy to see that actually there is an embedding \( g \colon L \to L' \) such that \( h'(L'_\ell) \subseteq \z \times \{ g(\ell) \} \). 
For each \( k' \in K' \), let \( A_{k'} = \{ \ell \in L \mid k'_\ell = k' \} \) and \( K'' = \{ k' \in K' \mid A_{k'} \neq \emptyset \} \preceq K' \). It is not hard to see that \( g(A_{k'}) \csube L' \) for every \( k' \in K'' \), and thus \( (A_{k'})_{k' \in K''} \) is a \( K'' \)-convex partition of \( L \) which, together with \( K'' \in \L_{\prec \bb }\) and \( g \colon L \to L' \), witnesses \( L \trianglelefteq^{\L_{\prec \bb}}_\LO L' \).
\end{proof}

Using similar ideas, one can obtain other reducibility results.
To put the next result in proper perspective, notice that \( \trianglelefteq^{\Fin}_\LO \) and \( \trianglelefteq^{\L_{\preceq \z}}_\LO \) coincide with \( \L _{\prec \boldsymbol{\o}}\) and \( \L_{\preceq \Z^1 }\), respectively.

\begin{thm}\label{thm:red_fin_zeta}
\( {\trianglelefteq^{\Fin}_\LO} \leq_B {\trianglelefteq^{\L_{\preceq \z}}_\LO} \).
\end{thm}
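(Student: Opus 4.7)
The plan is to define the Borel map $\varphi \colon \LO \to \LO$ by
\[
\varphi(L) = \sum_{n \in \N}(\boldsymbol{n+1}\eta + \z L)
\]
and show it is a Borel reduction from $\trianglelefteq^\Fin_\LO$ to $\trianglelefteq^{\L_{\preceq \z}}_\LO$. For the forward direction, starting from witnesses $\boldsymbol{m}$, $(L_i)_{i<m}$, and $g \colon L \to L'$ of $L \trianglelefteq^\Fin L'$, I would use $K = \o \in \L_{\preceq \z}$ with the natural $\o$-convex partition of $\varphi(L)$ that splits each block into the anchor $\boldsymbol{n+1}\eta$ followed by $\z L_0, \dots, \z L_{m-1}$, and take the embedding to be the identity on anchors and $(z,\ell,n) \mapsto (z, g(\ell), n)$ on the $\z L$-parts; each image piece is convex.

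For the converse, assume $\varphi(L) \trianglelefteq^{\L_{\preceq \z}} \varphi(L')$ via $K \in \L_{\preceq \z}$, $(M_k)_{k \in K}$, and $h \colon \varphi(L) \to \varphi(L')$. For each $n \in \N$, Lemma~\ref{lem:loc_not_scat}\ref{lem:loc_not_scat-a} applied to $\boldsymbol{n+1}\eta \times \{n\} \cong \eta^{f_n}$ (with $f_n$ constant $\boldsymbol{n+1}$), together with part~\ref{lem:loc_not_scat-d}, produces $k_n \in K$ and a convex $N_n \cong \boldsymbol{n+1}\eta$ contained in $(\boldsymbol{n+1}\eta \times \{n\}) \cap M_{k_n}$. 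The decisive claim is that $h(N_n) \subseteq \boldsymbol{n+1}\eta \times \{n\} \subseteq \varphi(L')$. I would prove it by first observing that $\boldsymbol{n+1}\eta$ does not convex-embed into any linear order of the form $\z L''$: every element of $\z L''$ has both immediate predecessor and successor, whereas $\boldsymbol{n+1}\eta$ contains densely many clump-bottoms and clump-tops lacking one or the other, and such missing-neighbour elements cannot sit internally in a convex copy inside $\z L''$. Consequently $h(N_n)$ cannot lie inside any $\z L' \times \{m\}$; nor can it span several blocks of $\varphi(L')$, as it would then contain a whole $\z L'$-block as a convex subset, while $\boldsymbol{n+1}\eta$ admits no infinite convex subset whose elements all have both immediate neighbours. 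Hence $h(N_n)$ lives inside a single anchor, and Lemma~\ref{lem:loc_not_scat}\ref{lem:loc_not_scat-b} forces the depth match $m=n$.

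The map $n \mapsto k_n$ is then weakly increasing from $\N$ into $K \preceq \z$. Since no subset of $\z$ contains $\o + \o$ as a suborder, we split into two cases. If $(k_n)$ is eventually constant with value $k^*$ for $n \geq n_0$, convexity of $M_{k^*}$ forces $\z L \times \{n_0\} \csube M_{k^*}$, so $h$ maps it convexly into $\varphi(L')$; the dual immediate-predecessor/successor analysis applied to $\z L$ confines the image inside $\z L' \times \{n_0\}$, giving $\z L \cvx \z L'$, whence $L \cvx L'$ by Proposition~\ref{prop:fin_zeta_l}, and a fortiori $L \trianglelefteq^\Fin L'$. Otherwise $(k_n)$ admits a strictly increasing cofinal subsequence in $K$; between any two consecutive jumps $k_{n_j} < k_{n_j+1}$ the interval $[k_{n_j}, k_{n_j+1}]_K$ is finite, and the pieces meeting $\z L \times \{n_j\}$ thus form a finite convex partition whose images (by the same analysis) sit inside $\z L' \times \{n_j\}$, witnessing $\z L \trianglelefteq^\Fin \z L'$; Proposition~\ref{prop:fin_zeta_l} then yields $L \trianglelefteq^\Fin L'$.

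The hard part is the structural localisation step: proving that convex copies of the anchors $\boldsymbol{n+1}\eta$ must land inside anchors of matching depth in $\varphi(L')$, and dually that convex pieces of $\z L$ must land inside the correct $\z L'$-block. Both facts rest on the immediate-predecessor/successor dichotomy between $\boldsymbol{n+1}\eta$-type and $\z L'$-type structures, a robust local criterion that nonetheless has to be applied carefully when $L'$ is non-scattered and $\z L'$ contains dense sub-orders.
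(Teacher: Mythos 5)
Your proof is correct and takes essentially the same route as the paper, which defines $\varphi(L)=\sum_{z\in\z}(h(z)\eta+\z L)$ for a bijection $h\colon\z\to\{\n\mid n\geq 1\}$ and explicitly defers the details to the template of Theorem~\ref{thm:red_threshold}; your $\o$-indexed sum with anchors $\boldsymbol{n+1}\eta$ is the same construction up to reindexing, since all that matters is that the anchors have pairwise non-isomorphic scattered depths so that Lemma~\ref{lem:loc_not_scat}\ref{lem:loc_not_scat-a}--\ref{lem:loc_not_scat-b} forces position-matching. Your localisation argument via iterated immediate predecessors/successors is the same idea as the paper's ``\(x\) would be the maximum of a convex copy of \(\boldsymbol{\o}^*\)'' claim, and the finiteness of bounded intervals in suborders of $\z$ correctly replaces the indecomposability counting used in Theorem~\ref{thm:red_threshold}.
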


\begin{proof}
Let \( h \colon \z \to \{ \n \mid n \in \N \setminus \{ 0 \} \} \) be any bijection. 
Imitating the proof of Theorem~\ref{thm:red_threshold}, we claim that the Borel function 
\[
\varphi (L) = \sum_{z \in \z} (h(z) \eta + \z L ).
\]
is a reduction from $\trianglelefteq^{\Fin}_\LO$ to $\trianglelefteq^{\L_{\preceq \z}}_\LO$. 

The forward direction is easy, so suppose that $\varphi(L)\trianglelefteq^{\L_{\preceq \z}}_\LO \varphi(L')$ as witnessed by $K \preceq \z$, the $K$-convex partition $(M_k)_{k \in K}$ of $\varphi(L)$, and the embedding $h \colon \varphi(L) \to \varphi(L')$.
Since each \( h(z) \eta \times \{ z \} \) is bounded from below in \( \varphi(L) \), there is a \( \leq_K \)-smallest \( k_z \in K \) such that \( M_k \cap (h(z) \eta \times \{ z \}) \neq \emptyset \). 
The rest of the proof follows closely that of Theorem~\ref{thm:red_threshold}. 
\end{proof}

\section{Uncountable linear orders}\label{sec:uncountable}

Roughly speaking, generalized descriptive set theory is obtained by replacing \( \omega \) with an uncountable cardinal \( \kappa \) such that $2^{<\kappa} = \kappa$ in all basic definitions and notions from classical descriptive set theory. 
In this setup, one considers the \textbf{generalized Cantor space} \( 2^\kappa \) of all binary \( \kappa \)-sequences equipped with the topology generated by the sets of the form \( \{ x \in 2^\kappa \mid s \subseteq x \} \), where \( s \) varies among all binary sequences of length \( < \kappa \). 
Borel sets are then replaced by \textbf{\( \kappa^+ \)-Borel sets}, i.e.\ sets in the  \( \kappa^+ \)-algebra generated by the open sets of the given topological space. 
The notions of \textbf{\( \kappa^+ \)-Borel function} and \textbf{\( \kappa^+ \)-Borel reducibility \( \leq^\kappa_B \)} are defined accordingly.
See~\cite{AndMot} for a quite comprehensive introduction to the subject.

The usefulness of this approach is that it allows us to tackle classification problems for uncountable structures with tools which resemble, to some extent, those used in the classical setting---one can look at \cite{FriHytKul,ManMot,HytKulMor} for some of the most significant results in this direction connecting classification/complexity in terms of generalized descriptive set theory with Shelah's stability theory.

In the present setting, the space \( \LO_\kappa \) 
of (codes for) linear orders on \( \kappa \) 
can be endowed with the relative topology inherited from \( 2^{\kappa \times \kappa} \), once the latter is identified in the obvious way with the generalized Cantor space \( 2^\kappa \). This is the same as the topology generated by the neighborhood base of \( L \in \LO_\kappa \) determined by the sets \( \{ L' \in \LO_\kappa \mid L' \restriction \alpha = L \restriction \alpha \} \), for \( \alpha < \kappa \).


Given \( \L \subseteq \Lin_\kappa\),
we denote by $\qok$ the restriction of $\trianglelefteq^{\L}$ to $\LO_{\kappa}$. 
As usual, when \( \L = \{ \1 \} \) we simplify the notation and just write \( \trianglelefteq_\kappa \), while when $\L = \Lin_\kappa$ we have embeddability on $\LO_\kappa$ denoted by $\preceq_\kappa$.
As discussed before Definition~\ref{def:appropriate}, if \( \L \) is ccs then \( \qok \) is transitive: in Theorem \ref{thm:ccs_trans_kappa} we will show that the converse holds under the extra hypothesis $\kappa^{<\kappa} = \kappa$, which is equivalent to the fact that \emph{\( \kappa \) is regular} and \( 2^{< \kappa} = \kappa \). 
Since we do not want to restrict ourselves to regular cardinals, and since for singular cardinals we do not have an analog of Theorem \ref{thm:ccs_trans_kappa}, some of our results will be stated under the (possibly weaker) assumption that \( \qok \) is transitive.

Denote by $\eqk$ the equivalence relation induced by $\qok$ (when the latter is transitive). 
Notice that in this generalized context every class $\L \subseteq \Lin$ is $\kappa^+$-Borel, and hence $\eqk$ is $\kappa$-analytic for any such $\L$. 
Similarly, if \( 2^{< \kappa} = \kappa \) and \( \L \subseteq \LO_{< \kappa} \), then \( \eqk \) is again \( \kappa \)-analytic. However, for the general case \( \L \subseteq \Lin_\kappa \) the relation \( \eqk \) might be more complicated.

It is easy to check that the map \( \varphi \) from the proof of Proposition~\ref{thm:cong_qo} is a \( \kappa^+ \)-Borel map from \( \LO_\kappa \) to itself that witnesses the following theorem, once we let \( M \) be any element of \( \Lin_\kappa \setminus \L \).

\begin{thm} \label{thm:reductionstoLO_kappa}
Let \( \kappa \) be an infinite cardinal such that%
\footnote{This cardinal assumption is needed only to ensure that the family of \( \kappa^+ \)-Borel sets is well-behaved, so that one can meaningfully speak of \( \kappa^+ \)-Borel reducibility. However, no cardinal assumption is actually involved in our proof. A similar comment applies to Theorem~\ref{thm:completenessforuncountableLO}, where the apparently missing condition \( 2^{< \kappa} = \kappa \) is implied by the stronger assumption \( \mathsf{V} = \mathsf{L} \), and to Theorem~\ref{thm:coloured_kappa}.}
$2^{<\kappa} = \kappa$, and let \( \L \subsetneq \Lin_\kappa\) be such that \( \qok \) is transitive.
Then the isomorphism relation \( \cong_\kappa \) on \( \LO_\kappa \) is \( \kappa^+ \)-Borel reducible to \( \eqk \).
\end{thm}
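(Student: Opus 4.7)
The plan is to directly transplant the construction of Proposition~\ref{thm:cong_qo} to the uncountable setting, using the hint provided in the statement. Since \( \L \subsetneq \Lin_\kappa \) there exists some \( M \in \Lin_\kappa \setminus \L \); fix such an \( M \). Note that \( M \) is nonempty (as \( \L \) is nonempty and downward \( \preceq \)-closed, hence \( \emptyset \notin \L \) is not a concern; at worst \( M \) has at least one element). I will define the reduction by the same formula as in Proposition~\ref{thm:cong_qo}, namely
\[
  \varphi(L) = (\1 + \z L + \1) M \qquad \text{for } L \in \LO_\kappa.
\]

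First I need to verify that \( \varphi \) takes values in \( \LO_\kappa \). Since \( L \) has domain \( \kappa \), the linear order \( \1 + \z L + \1 \) has cardinality \( \kappa \), and hence so does \( (\1 + \z L + \1) M \) because \( 1 \leq |M| \leq \kappa \) and \( \kappa \) is infinite. Using a fixed \( \kappa^+ \)-Borel bijection between the underlying set of \( \varphi(L) \) (which is built in a uniform way from \( L \) and the fixed \( M \)) and \( \kappa \), one sees that \( \varphi \) is \( \kappa^+ \)-Borel: the operations \( L \mapsto \z L \), \( N \mapsto \1 + N + \1 \) and \( N \mapsto N M \) are continuous on the relevant generalized Cantor spaces once suitable coding conventions are fixed, just as their countable counterparts are continuous in Section~\ref{sec:comb}.

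Next, I need to check the reduction property \( L \cong_\kappa L' \iff \varphi(L) \eqk \varphi(L') \). Proposition~\ref{thm:cong_qo} is stated for \( \L \subsetneq \ALin \) downward \( \preceq \)-closed and for any \( M \notin \L \), and its proof uses only that \( M \notin \L \) (to invoke Proposition~\ref{prop:nqo}), the cancellation lemma \cite[Lemma 2.14]{IMMRW22}, and an inspection of the convex embedding witnessing \( \varphi(L) \trianglelefteq^\L \varphi(L') \). None of these ingredients is affected by the cardinality of the linear orders involved, so the equivalences
\[
  L \cong L' \iff \varphi(L) \trianglelefteq^\L \varphi(L') \iff \varphi(L) \mathrel{\underline{\bowtie}}^{\L} \varphi(L')
\]
hold verbatim in our setting. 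Restricting to \( L, L' \in \LO_\kappa \), the middle and right-hand conditions are exactly \( \varphi(L) \qok \varphi(L') \) and \( \varphi(L) \eqk \varphi(L') \), so the claim follows.

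I do not expect any real obstacle: the only mild point is to make sure that the formula defining \( \varphi(L) \) is realized as an honest element of \( \LO_\kappa \) in a \( \kappa^+ \)-Borel way, which is routine (one just fixes once and for all a \( \kappa^+ \)-Borel identification of the disjoint union \( \{*_0\} \sqcup (\Z \times \kappa) \sqcup \{*_1\} \) times \( |M| \) with \( \kappa \)). The transitivity hypothesis on \( \qok \) is not used in the construction itself but is needed to give meaning to \( \eqk \) as an equivalence relation. This completes the proof sketch.
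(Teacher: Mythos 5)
Your proposal is correct and follows exactly the paper's route: the paper also proves this theorem by taking the map $\varphi(L) = (\1 + \z L + \1)M$ from Proposition~\ref{thm:cong_qo} with $M$ any element of $\Lin_\kappa \setminus \L$, and observing that it is a $\kappa^+$-Borel map from $\LO_\kappa$ to itself whose reduction property is already guaranteed by that proposition (which was stated for arbitrary linear orders in $\ALin$). Your additional remarks on realizing $\varphi(L)$ as an element of $\LO_\kappa$ via a fixed identification of the underlying set with $\kappa$ just flesh out what the paper leaves as ``easy to check.''
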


Combining this with~\cite[Theorem 1.13]{HitKulMLQ}, we immediately get the following completeness result, which in the case of $\L= \{\1\}$ is in stark contrast with the countable setting (\cite[Corollaries~3.25 and~3.27]{IMMRW22}).

\begin{thm} \label{thm:completenessforuncountableLO}
Assume \( \mathsf{V} = \mathsf{L} \), let \( \kappa = \lambda^+ \) with \( \lambda \) regular, and let
\( \L \subsetneq \Lin_\kappa\) be such that \( \qok \) is a \( \kappa \)-analytic quasi-order.
Then the relation \( \eqk \) is complete for \( \kappa \)-analytic equivalence relations (with respect to \( \kappa^+ \)-Borel reducibility).
\end{thm}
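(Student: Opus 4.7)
The plan is essentially to chain together two ingredients: the $\kappa^+$-Borel reduction from $\cong_\kappa$ to $\eqk$ provided by Theorem \ref{thm:reductionstoLO_kappa}, and the completeness of $\cong_\kappa$ among $\kappa$-analytic equivalence relations that is known to hold in the relevant set-theoretic setup.

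First I would check that the hypotheses of Theorem \ref{thm:reductionstoLO_kappa} are available. Under $\mathsf{V} = \mathsf{L}$ we have GCH, so $2^{<\kappa} = 2^\lambda = \lambda^+ = \kappa$; moreover the assumption that $\qok$ is a $\kappa$-analytic \emph{quasi-order} entails in particular that $\qok$ is transitive. Since by hypothesis $\L \subsetneq \Lin_\kappa$, Theorem \ref{thm:reductionstoLO_kappa} applies and yields
\[
{\cong_\kappa} \leq^\kappa_B {\eqk}.
\]

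Next I would appeal to \cite[Theorem 1.13]{HitKulMLQ}, which under exactly the set-theoretic assumption $\mathsf{V} = \mathsf{L}$ together with $\kappa = \lambda^+$ for $\lambda$ regular, asserts that the isomorphism relation $\cong_\kappa$ on $\LO_\kappa$ is complete for $\kappa$-analytic equivalence relations (with respect to $\kappa^+$-Borel reducibility). Combining this with the reduction above and the transitivity of $\leq^\kappa_B$, one obtains that for every $\kappa$-analytic equivalence relation $E$,
\[
E \leq^\kappa_B {\cong_\kappa} \leq^\kappa_B {\eqk}.
\]
Since the hypothesis also guarantees that $\eqk$ is itself $\kappa$-analytic, this is exactly the completeness of $\eqk$ for $\kappa$-analytic equivalence relations.

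There is no real obstacle here: the work has all been done in Theorem \ref{thm:reductionstoLO_kappa} (which in turn uses the construction $\varphi(L) = (\1 + \z L + \1)M$ with $M \in \Lin_\kappa \setminus \L$ from Proposition \ref{thm:cong_qo}) and in the cited completeness result of Hyttinen--Kulikov. The only point one should be careful about is to ensure that ``$\kappa$-analytic quasi-order'' delivers both the ambient complexity bound needed for upward closure of completeness and the transitivity needed to invoke Theorem \ref{thm:reductionstoLO_kappa}; both are immediate from the definition.
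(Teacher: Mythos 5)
Your proposal is correct and follows essentially the same route as the paper: the authors also obtain the result by combining the reduction \( {\cong_\kappa} \leq^\kappa_B {\eqk} \) from Theorem \ref{thm:reductionstoLO_kappa} (whose hypotheses \( 2^{<\kappa}=\kappa \) and transitivity of \( \qok \) follow, as you note, from \( \mathsf{V}=\mathsf{L} \) and the quasi-order assumption) with the Hyttinen--Kulikov completeness of \( \cong_\kappa \) under \( \mathsf{V}=\mathsf{L} \) for \( \kappa \) the successor of a regular cardinal. Nothing is missing.
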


Let now \((\LO_\kappa)_\kappa = \LO_\kappa \times \kappa^{\kappa}\) be the space of linear orders on $\kappa$ coloured with $\kappa$-many colours, where \( \kappa^\kappa \) is equipped with the product topology and \( (\LO_\kappa)_{\kappa} \) is the product of the topological spaces \( \LO_\kappa \) and \( \kappa^\kappa \). 
Consider the natural generalization of $\L$-convex embeddability to $(\LO_\kappa)_\kappa$ in which the embeddings are required to preserve colours (cfr.\ Definition \ref{def:qo_for_coloured}). 
Obviously, this relation is transitive if and only if so is $\qok$, and using the same argument of the proof of Theorem~\ref{thm:coloured} one obtains the following result.

\begin{thm}\label{thm:coloured_kappa}
Let \( \kappa \) be an infinite cardinal such that $2^{<\kappa} = \kappa$, and let \( \L \subseteq \Scat_\kappa\) be such that \( \qok \) is transitive.
Then the colour-preserving $\L$-convex embeddability relation on \((\LO_\kappa)_\kappa\) is \( \kappa^+ \)-Borel bi-reducible with \( \qok \).
\end{thm}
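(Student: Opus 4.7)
My plan is to adapt the proof of Theorem \ref{thm:coloured} essentially verbatim, since all the main tools used there, most notably Lemma \ref{lem:loc_not_scat}, are already formulated for arbitrary cardinalities via the classes \( \ADLO \) and \( \AScat \).

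The easy direction will be witnessed by the continuous map \( L \mapsto (L, c_0) \), where \( c_0 \colon \kappa \to \kappa \) is the constant \( 0 \) colouring. For the reverse direction, I will first reduce to a well-behaved subspace \( (\LO_\kappa)'_\kappa \) consisting of those \( (L,c) \) such that \( c(\ell) > 0 \) for all \( \ell \in L \) and \( c \) is not constant on any \( L \)-closed interval \( [\ell_0, \ell_1]_L \) with \( \ell_0 <_L \ell_1 \); the reduction is the \( \kappa^+ \)-Borel map \( (L,c) \mapsto (\mathbf{2}L, c') \) with \( c'(0,\ell) = c(\ell) + 2 \) and \( c'(1,\ell) = 1 \).

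Then I will define \( \varphi \colon (\LO_\kappa)'_\kappa \to \LO_\kappa \) by \( \varphi(L,c) = \sum_{\ell \in L} \eta^{f_\ell} \), where \( f_\ell \colon \Q \to \Scat_\kappa \) is the constant map whose value is a fixed nonempty well-order of order type \( c(\ell) \). Since \( c(\ell) < \kappa \) for each \( \ell \in L \), the linear order \( \eta^{f_\ell} \) has cardinality less than \( \kappa \), hence \( \varphi(L,c) \in \LO_\kappa \); moreover, \( \varphi \) is clearly \( \kappa^+ \)-Borel. The argument that \( \varphi \) reduces the colour-preserving \( \L \)-convex embeddability on \( (\LO_\kappa)'_\kappa \) to \( \qok \) will then follow the countable case step by step: given witnesses \( K \in \L \), \( (M_k)_{k \in K} \), and \( g \colon \varphi(L,c) \to \varphi(L',c') \) of \( \varphi(L,c) \qok \varphi(L',c') \), Lemma~\ref{lem:loc_not_scat}\ref{lem:loc_not_scat-a} provides for each \( \ell \in L \) a large interval \( N_\ell \cong \eta^{f_\ell} \) inside some \( M_{k_\ell} \); Lemma~\ref{lem:loc_not_scat}\ref{lem:loc_not_scat-b} then pins down a unique \( \ell' \in L' \) with \( g(N_\ell) \subseteq \eta^{f_{\ell'}} \), forcing \( c(\ell) = c(\ell') \); and the non-local-constancy of \( c \) on \( L \) ensures the induced map \( h(\ell) = \ell' \) is injective, hence a colour-preserving embedding. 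Checking that the sets \( L_k = \{ \ell \in L \mid k_\ell = k \} \) form a \( K' \)-convex partition of \( L \) for some \( K' \preceq K \) in \( \L \) with each \( h(L_k) \csube L' \) is a verbatim transcription of the corresponding step in Theorem \ref{thm:coloured}.

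I expect no substantive obstacle: Lemma \ref{lem:loc_not_scat} is stated with sufficient generality, and the only conceptual novelty, namely that colours now range over \( \kappa \) rather than \( \N \), is handled by using well-orders of arbitrary order type below \( \kappa \) as the values of the \( f_\ell \) in place of finite ones. The only minor technical points requiring attention are the verifications that \( \varphi(L,c) \in \LO_\kappa \) and that \( \varphi \) is \( \kappa^+ \)-Borel, both of which are routine.
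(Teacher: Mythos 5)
Your proposal is correct and follows exactly the route the paper intends: the paper gives no separate proof of Theorem~\ref{thm:coloured_kappa}, stating only that it follows ``using the same argument of the proof of Theorem~\ref{thm:coloured}.'' Your one substantive adaptation --- replacing the finite linear orders \( \n \) used as colour-markers by nonempty well-orders of order type \( c(\ell) < \kappa \), which are pairwise non-isomorphic scattered orders of size \( < \kappa \), so that Lemma~\ref{lem:loc_not_scat}\ref{lem:loc_not_scat-b} still recovers the colour --- is precisely what is needed, and the remaining verifications transfer as you describe.
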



We now move to the combinatorial properties of \( \qok \).
To carry out this analysis we do not need to assume $2^{<\kappa} = \kappa$.
Proposition~\ref{prop:min_el_qo} can be used to uncover a significant difference between embeddability \( \preceq \) and $\L$-convex embeddability \( \trianglelefteq^\L \) among uncountable linear orders. 
By the celebrated five-element basis theorem of J.\ Moore~\cite{Moo06}, assuming \( \mathsf{PFA} \) there is a finite basis (of size \( 5 \)) for the embeddability relation on uncountable linear orders. If we move to $\L$-convex embeddability, working in \( \mathsf{ZFC} \) alone we instead obtain the following result which, when $\kappa=\aleph_1$ and \( \L = \{ \1 \} \), implies that there is no finite or even countable basis for the convex embeddability relation \( \trianglelefteq \) on the class of uncountable linear orders. 

\begin{thm} \label{thm:basisforuncountable}   
For every uncountable cardinal $\kappa$ and every $\L \subseteq \LO_{< \mathrm{cof}(\kappa)}$ such that \( \qok \) is transitive, there are at least \( 2^{\aleph_0} \)-many \( \qok \)-incomparable \( \qok \)-minimal elements in $\LO_{\kappa}$.
\end{thm}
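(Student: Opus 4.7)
The plan is to adapt the construction of Proposition~\ref{prop:min_el_qo} to the uncountable setting by stacking $\kappa$ copies of the countable minimal orders $\eta^{f_S}$. For each infinite $S \subseteq \N \setminus \{0\}$, let $f_S \colon \Q \to \{\n \mid n \in S\}$ be the surjection with each fiber dense fixed in Section~\ref{sec:comb}, and set
\[
L_S := \eta^{f_S} \cdot \kappa = \sum_{\alpha < \kappa} E^{(S)}_\alpha, \qquad E^{(S)}_\alpha \cong \eta^{f_S}.
\]
Since $|\eta^{f_S}| = \aleph_0$, we have $|L_S| = \kappa$, so each $L_S$ is (up to a bijection) an element of $\LO_\kappa$. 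Varying $S$ gives $2^{\aleph_0}$-many such orders; the claim is that they are pairwise $\qok$-incomparable and each is $\qok$-minimal.

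The key structural fact to prove first is that every convex subset $C$ of $L_S$ with $|C| = \kappa$ contains a convex copy of $L_S$. Let $\pi \colon L_S \to \kappa$ be the projection; then $\pi(C)$ is convex in $\kappa$, and if $\sup \pi(C) < \kappa$ then $|C| \leq \aleph_0 \cdot |\sup \pi(C)| < \kappa$, a contradiction. Hence $\pi(C) = [\alpha_0, \kappa)$ for some $\alpha_0 < \kappa$, so $C \supseteq \sum_{\alpha_0 < \alpha < \kappa} E^{(S)}_\alpha$; the latter is an antilex sum indexed by an ordinal of order type $\kappa$ (using additive indecomposability of cardinals) and is therefore isomorphic to $L_S$.

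Using this, $\qok$-minimality of $L_S$ is quick. Given $L \qok L_S$ with $L \in \LO_\kappa$, witnessed by $K \in \L$, a $K$-convex partition $(L_k)_{k \in K}$, and an embedding $h \colon L \to L_S$, the hypothesis $|K| < \mathrm{cof}(\kappa)$ together with $|L| = \kappa$ forces some piece $L_{k^*}$ to have cardinality $\kappa$. Then $h(L_{k^*}) \csube L_S$ has size $\kappa$, hence contains a convex copy of $L_S$ by the structural fact; pulling back along $(h\restriction L_{k^*})^{-1}$ yields a convex copy of $L_S$ inside $L_{k^*} \csube L$, so $L_S \trianglelefteq L$ and a fortiori $L_S \qok L$.

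The main obstacle is incomparability for $S \neq S'$. Assuming $L_S \qok L_{S'}$, the same cofinality argument produces a convex copy of $L_{S'}$ inside $L_S$, so $L_{S'} \trianglelefteq L_S$; composing with the convex inclusion of the first column $E^{(S')}_0 \cong \eta^{f_{S'}}$ gives $\eta^{f_{S'}} \trianglelefteq L_S$. I would then analyze the convex image of this embedding by how many columns $E^{(S)}_\alpha$ it meets. If it lies in a single column, then $\eta^{f_{S'}} \trianglelefteq \eta^{f_S}$ and Lemma~\ref{lem:loc_not_scat}\ref{lem:loc_not_scat-c} gives $\{\n \mid n \in S\} = \{\n \mid n \in S'\}$, hence $S = S'$. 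If it meets columns $E^{(S)}_{\alpha_0}$ and $E^{(S)}_{\alpha_2}$ with $\alpha_2 \geq \alpha_0 + 2$, convexity places a full intermediate column inside the image, giving $\eta^{f_S} \trianglelefteq \eta^{f_{S'}}$ and $S = S'$ by the same lemma. The delicate sub-case is when the image meets exactly two adjacent columns: the image then has the form $Y + Z$ with $Y$ a nonempty final segment of $E^{(S)}_{\alpha_0}$ and $Z$ a nonempty initial segment of $E^{(S)}_{\alpha_0 + 1}$, and since $\eta^{f_{S'}}$ has neither minimum nor maximum, $Y$ must have no minimum and $Z$ no maximum. Inspecting the segment structure of $\eta^{f_S}$ and invoking Lemma~\ref{lem:loc_not_scat}\ref{lem:loc_not_scat-d} shows $Y \cong Z \cong \eta^{f_S}$, so $\eta^{f_{S'}} \cong \eta^{f_S} + \eta^{f_S} \cong \eta^{f_S}$ by Lemma~\ref{lem:loc_not_scat}\ref{lem:loc_not_scat-e}, and that lemma again forces $S = S'$. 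In every case $S \neq S'$ is contradicted, completing the proof.
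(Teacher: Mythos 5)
Your construction and overall strategy coincide with the paper's: the same family $\{\eta^{f_S}\kappa \mid S \subseteq \N\setminus\{0\} \text{ infinite}\}$, the same $\kappa$-likeness/cofinality argument showing that some piece of the partition has size $\kappa$ and hence absorbs a convex copy of the whole order, and the same reduction of incomparability to the countable case via Proposition~\ref{prop:min_el_qo} and Lemma~\ref{lem:loc_not_scat}. The only divergence is in the incomparability step, where the paper pushes forward an unsplit column of the domain and observes that its countable convex image lies inside some $\eta^{f_{S'}}\boldsymbol{\alpha}\cong\eta^{f_{S'}}$, whereas you pull back to get $\eta^{f_{S'}}\trianglelefteq L_S$ and run a three-case analysis on how many columns the image meets; both routes are correct.
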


The assumption $\L \subseteq \LO_{< \mathrm{cof}(\kappa)}$ is quite strong, but it includes many important cases, most notably \( \L = \{ \1 \} \), \( \L = \Fin \), and, if \( \kappa \) has uncountable cofinality, any ccs \( \L \subseteq \Lin \). 
Notice also that this result generalizes Theorem~\ref{thm:basis2}, as \( \LO_{< \mathrm{cof}(\aleph_0)} = \Fin \).

\begin{proof}
First observe that for every \( L \in \LO \), the linear order \( L \kappa = \sum_{\alpha < \kappa} L \) is \( \qok \)-minimal in $\LO_{\kappa}$ because \( L \kappa \) is \( \kappa \)-like, i.e.\ for every \( n \in L \kappa \) the initial segment \( (-\infty, n]_{L \kappa} \) has size $<\kappa$ while any convex subset of \( L \kappa \) of size \( \kappa \) is a tail. 
Indeed, if $K \in \L$, the $K$-convex partition $(L'_k)_{k \in K}$ of \( L' \) and \( f \colon L' \to L \kappa \) witness $L' \qok L \kappa$ for some \( L' \in \LO_{\kappa} \), then there is $k \in K$ such that $L'_k$ has size $\kappa$ 
(here we are using that \( \kappa \) has cofinality greater than \( |K| \)),
and hence \( f(L'_k) \) is a tail of \( L\kappa \).
Let \( \beta < \kappa \) be such that \( \sum_{\beta \leq \alpha < \kappa} L \csube f(L'_k)\). 
Since clearly \( L \kappa \cong \sum_{\beta \leq \alpha < \kappa} L \), we then get that \( L \kappa \qok L'_k \csube L' \) and thus \( L' \mathrel{\eqk} L \kappa \).

Consider now the countable linear orders $\eta^{f_S}$ from Proposition~\ref{prop:min_el_qo}, and let 
\[
\mathcal{A}=\{\eta^{f_S}\kappa \mid S \subseteq \N \setminus \{ 0 \} \text{ is infinite}\}.
\]
Then by the previous paragraph each linear order $\eta^{f_S}\kappa$ is $\qok$-minimal. 
We claim that the elements of $\mathcal{A}$ are also pairwise $\qok$-incomparable. 
Suppose that $\eta^{f_S}\kappa \qok \eta^{f_{S'}}\kappa$ with witnesses $K \in \L$, $(L_k)_{k \in K}$ and $g\colon \eta^{f_S}\kappa \to \eta^{f_{S'}}\kappa$. 
As in the previous paragraph, since $|K| < \mathrm{cof}(\kappa)$ there is $k \in K$ such that $\eta^{f_S} \csube L_k$. 
Thus $g(\eta^{f_S}) \csube \eta^{f_{S'}}\kappa$, and since $g(\eta^{f_S})$ is countable there is $\alpha<\aleph_1$ such that $\eta^{f_S} \cong g(\eta^{f_S}) \trianglelefteq \eta^{f_{S'}}\aa \cong \eta^{f_{S'}}$. 
(In the last step we are using Lemma~\ref{lem:loc_not_scat}\ref{lem:loc_not_scat-e} together with the fact that \( \eta^{f_{S'}} \aa \) can be rewritten as \( \eta^f \) for a suitable surjection \( f \colon \Q \to S' \) satisfying the relevant hypotheses.)
Since \( \{ \1 \} \subseteq \Scat \) is ccs, then \( S = S' \) by Proposition~\ref{prop:min_el_qo}\ref{prop:min_el_qo-a} applied to \( \{ \1 \} \). 
By the fact that $|\mathcal{A}|=2^{\aleph_0}$ we finally get the desired result.
\end{proof}

\begin{remark}
Theorem~\ref{thm:basisforuncountable} is optimal in the context of Moore's theorem because under \( \mathsf{PFA} \) we have \( 2^{\aleph_0} = 2^{\aleph_1} \), and thus the family of \( \qoa \)-minimal elements that we constructed in its proof is as large as possible. 
In particular, in that setup all bases for \( \qoa \) on $\LO_{\aleph_1}$ have maximal size when \( \L \subseteq \Lin \) is ccs.
\end{remark}


Using the technical facts from Section \ref{sec:technical}, we can also extend to the generalized setup the analysis of the other combinatorial properties of $\L$-convex embeddability, obtaining results which are new also in the special case of the convex embeddability relation \( \trianglelefteq_\kappa \).

\begin{lem} \label{lem:open_int_qo_unctbl}
Let \( \kappa \) be an infinite cardinal,
and let \( \L \subseteq \Scat_\kappa \) be such that \( \qok \) is transitive.
Let \( L \) be any linear order with an infinite dense subset \( L' \subseteq L\) of size at most \( \kappa \), and let \( \Int(L) \) be the collection of open intervals of \( L \). Then there is an embedding of \( (\Int(L), {\subseteq}) \) into \( (\LO_\kappa, {\qok}) \).
\end{lem}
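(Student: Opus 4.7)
The plan is to imitate the proof of Lemma~\ref{lem:open_int_qo}, using $L'$ in place of $\Q$ and replacing the finite ``tags'' $\mathbf{n}$ with larger scattered orders so that the resulting linear orders land in $\LO_\kappa$ rather than in $\LO$.

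Concretely, I would fix an injection $f \colon L' \to \AScat$ whose values are pairwise non-isomorphic scattered linear orders of size exactly $\kappa$; for instance, $|L'|$-many distinct well-orders $\boldsymbol{\alpha}$ with $\kappa \leq \alpha < \kappa^+$, which is possible because $|L'| \leq \kappa < \kappa^+$. Define $\varphi \colon \Int(L) \to \LO_\kappa$ by $\varphi((x,y)) = \sum_{d \in L' \cap (x,y)_L} f(d)$ (identified up to isomorphism with a code in $\LO_\kappa$). Density of $L'$ in $L$ forces $L' \cap (x,y)_L$ to be infinite for every nonempty open interval $(x,y)_L$, so a single summand already gives cardinality $\kappa$ while the whole sum has size at most $|L'| \cdot \kappa = \kappa$; hence $\varphi((x,y))$ indeed belongs to $\LO_\kappa$.

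Monotonicity is immediate: $(x,y) \subseteq (x',y')$ yields $\varphi((x,y)) \csube \varphi((x',y'))$, and hence $\varphi((x,y)) \qok \varphi((x',y'))$. For the converse, assume $(x,y) \not\subseteq (x',y')$ and use density of $L'$ to pick $r_0 <_L r_1$ in $L$ with $(r_0, r_1)_L \subseteq (x,y)_L \setminus (x',y')_L$. If $\varphi((x,y)) \qok \varphi((x',y'))$ held, then transitivity of $\qok$ together with $\varphi((r_0, r_1)) \csube \varphi((x,y))$ would give $\varphi((r_0, r_1)) \qok \varphi((x',y'))$. Setting $D = L' \cap (r_0, r_1)_L$ and $E = L' \cap (x', y')_L$, both of which lie in $\ADLO$ (density of $L'$ in $L$ rules out extrema inside any open interval), I can write $\varphi((r_0, r_1)) = D^{f \rest D}$ and $\varphi((x', y')) = E^{f \rest E}$. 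Applying Lemma~\ref{lem:loc_not_scat}\ref{lem:loc_not_scat-a} (via Remark~\ref{rmk:loc_not_scat}) to a witness of $\varphi((r_0, r_1)) \qok \varphi((x', y'))$ whose partition is indexed by some $K \in \L \subseteq \AScat$, I extract $d_0 <_D d_1$ in $D$ and a convex embedding of $D^{f \rest D}_{(d_0, d_1)}$ into $E^{f \rest E}$. Lemma~\ref{lem:loc_not_scat}\ref{lem:loc_not_scat-b} then supplies, for every $d \in (d_0, d_1)_D$, some $d' \in E$ with $f(d) \cong f(d')$, forcing $d = d'$ by injectivity of $f$; this contradicts $(r_0, r_1)_L \cap (x', y')_L = \emptyset$.

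The only essentially new point compared to Lemma~\ref{lem:open_int_qo} is the cardinality bookkeeping: we have to choose $f$ with $|L'|$-many distinct values, each of size exactly $\kappa$, to ensure that $\varphi((x,y))$ lands in $\LO_\kappa$ and not merely in $\LO_{<\kappa}$. Everything else --- including the identification of the relevant convex subsets of $\varphi((r_0, r_1))$ with constructions of the form $A^{f \rest A}$ for $A \in \ADLO$ --- is a direct translation of the countable argument, handled uniformly via Remark~\ref{rmk:loc_not_scat}.
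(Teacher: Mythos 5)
Your overall strategy is the paper's, but there is one genuine gap: you index the sums by $L'\cap(x,y)_L$ itself and assert that the resulting index orders $D=L'\cap(r_0,r_1)_L$ and $E=L'\cap(x',y')_L$ ``lie in $\ADLO$'' because density of $L'$ rules out extrema. That is only true under the strong reading of ``dense'' in which between any two elements of $L$ there is an element of $L'$. The lemma is meant to cover --- and is later applied to, in Theorem~\ref{thm:chains_qo_unctbl} --- arbitrary $L\in\Lin_\kappa$ with $L'=L$, including well-orders $\boldsymbol{\alpha}$ and their reverses; for such $L$ the set $L'\cap(x,y)_L$ can be a well-order, a finite set, or a singleton, hence nowhere near an element of $\ADLO$, and your appeal to Lemma~\ref{lem:loc_not_scat} (via Remark~\ref{rmk:loc_not_scat}) is unavailable, so the non-embeddability direction collapses. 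The same issue already undermines your cardinality bookkeeping: the claim that $L'\cap(x,y)_L$ is infinite for every nonempty open interval fails, e.g., for consecutive elements of a well-order. As written, your argument proves the lemma only for $L$ that are themselves densely ordered, which is strictly weaker than what the paper needs.

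The paper's proof avoids exactly this by inserting a factor of $\eta$: it sets $D=\eta L'\in\ADLO$, fixes an injective $f\colon D\to\Scat_\kappa\cap\LO_\kappa$ with pairwise non-isomorphic values, and sends $(x,y)$ to $D^f_{A_{(x,y)}}$ where $A_{(x,y)}=\{(q,\ell)\in D\mid \ell\in(x,y)_L\cap L'\}$. Since $A_{(x,y)}$ is a sum of copies of $\eta$, it is automatically dense without endpoints whenever it is nonempty, so Remark~\ref{rmk:loc_not_scat} applies and the rest of your argument --- extracting a convex piece via Lemma~\ref{lem:loc_not_scat}\ref{lem:loc_not_scat-a} and then contradicting injectivity of $f$ via Lemma~\ref{lem:loc_not_scat}\ref{lem:loc_not_scat-b} --- goes through essentially verbatim. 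The repair is therefore local (replace your index sets by their products with $\eta$), but it is a necessary step, not an optional simplification.
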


\begin{proof}
Let \( D = \eta L' \in \ADLO \). Fix a function \( f \colon D \to \Scat_\kappa \cap \LO_\kappa \) such that $f(d)$ and $f(d')$ are non-isomorphic for all distinct $d, d' \in D$, and consider the linear order \( D^f \) as in Definition~\ref{eta_f}. 
Given an interval \( (x,y) \in \Int(L) \), let \( A_{(x,y)} = \{ (q,\ell) \in D \mid \ell \in (x,y) \cap L' \}  \csube D\). 
Notice that \( A_{(x,y)} \) has neither a minimum nor a maximum, hence Lemma~\ref{lem:loc_not_scat} can be applied to \( D^f_{A_{(x,y)}} \) by Remark~\ref{rmk:loc_not_scat}. 

Now consider the map \( (x,y) \mapsto D^f_{A_{(x,y)}} \). Arguing as in the proof of Lemma~\ref{lem:open_int_qo}, one can easily show that this map is an embedding from \((\Int(L), {\subseteq}) \) into \( (\LO_\kappa, {\qok}) \). 
\end{proof}

Recall that the set of non-negative reals  is order isomorphic to \( ({}^\omega \omega, {\leq_{\mathrm{lex}}}) \), where \( \leq_{\mathrm{lex}}\) is the lexicographic order. 
Given an infinite cardinal \( \kappa \), let \( \mathbb{L}_k = ({}^\kappa \kappa, {\leq_{\mathrm{lex}}}) \) and \( \R_\kappa = \mathbb{L}_\kappa^* + (\mathbb{L}_\kappa \setminus \{ \min \mathbb{L}_\kappa \} )\). 
Then \( \R_{\aleph_0} \cong \R \) and,  \( \R_\kappa \) has a dense subset of size \( \kappa \) if \( \kappa^{< \kappa} = \kappa \). Thus Lemma~\ref{lem:open_int_qo_unctbl} includes as a special case the natural generalization of Lemma~\ref{lem:open_int_qo} where \( \R \) is replaced by \( \R_\kappa \) (assuming \( \kappa^{< \kappa} = \kappa \)).
The following result, instead, generalizes Theorem~\ref{thm:chains_qo}.

\begin{thm} \label{thm:chains_qo_unctbl}
Let \( \kappa \) be an infinite cardinal, and let \( \L \subseteq \Scat_\kappa\) be such that \( \qok \) is transitive. 
Then \( \qok \) has chains isomorphic to any $L \in \Lin_\kappa$ (including increasing and decreasing chains of order type any \( \alpha < \kappa^+ \)) and, if \( \kappa^{< \kappa} = \kappa \), chains of order type \( \R_\kappa \); moreover, it has antichains of size \( 2^\kappa \).
\end{thm}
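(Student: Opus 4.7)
The plan is to reduce the chain claims to Lemma~\ref{lem:open_int_qo_unctbl} by constructing an appropriate dense host order with a small dense subset, and the antichain claim to Lemma~\ref{lem:loc_not_scat}\ref{lem:loc_not_scat-c} via pairwise non-isomorphic scattered linear orders. The combinatorial content reduces to the technical tools already developed, plus a cardinality input about scattered orders of size $\kappa$.

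For chains of order type any $L \in \Lin_\kappa$: set $L' = \{\ell_-\} + L$ (adding an artificial minimum) and $M = \eta L' = \sum_{\ell \in L'} \eta$. Then $M$ is dense (by density of $\eta$ within each summand, and by choosing a larger $\eta$-element in the lower summand across summands), has cardinality at most $\aleph_0 \cdot |L'| \leq \kappa$, and is its own dense subset. Fixing any $q_0 \in \eta$, the map $\ell \mapsto ((q_0, \ell_-), (q_0, \ell))_M$ strictly embeds $L$ into $(\Int(M), \subseteq)$. Composing with the embedding $(\Int(M), \subseteq) \hookrightarrow (\LO_\kappa, \qok)$ provided by Lemma~\ref{lem:open_int_qo_unctbl} yields a chain of order type $L$ in $(\LO_\kappa, \qok)$; specializing to $L = \boldsymbol{\alpha}$ or $L = \boldsymbol{\alpha}^*$ for $\alpha < \kappa^+$ produces the required increasing and decreasing chains. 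Under the additional hypothesis $\kappa^{<\kappa} = \kappa$, chains of order type $\R_\kappa$ are obtained by taking instead $M = \eta + \R_\kappa$: this is dense, and $\eta \cup D$ is a dense subset of size $\kappa$ where $D \subseteq \R_\kappa$ is a dense subset of size $\kappa$ provided by the hypothesis. Fixing any $p \in \eta$, the map $y \mapsto (p, y)_M$ strictly embeds $\R_\kappa$ into $(\Int(M), \subseteq)$, and Lemma~\ref{lem:open_int_qo_unctbl} transfers this to a chain of order type $\R_\kappa$ in $(\LO_\kappa, \qok)$.

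For the antichain of size $2^\kappa$: imitating the second construction in the proof of Theorem~\ref{thm:chains_qo}, fix a family $(L_\alpha)_{\alpha < 2^\kappa}$ of pairwise non-isomorphic scattered linear orders in $\LO_\kappa$, and for each $\alpha$ let $f_\alpha \colon \Q \to \Scat_\kappa \cap \LO_\kappa$ be the constant function with value $L_\alpha$. Then $\eta^{f_\alpha} \in \LO_\kappa$, since $|\eta^{f_\alpha}| = \aleph_0 \cdot \kappa = \kappa$, and by Lemma~\ref{lem:loc_not_scat}\ref{lem:loc_not_scat-c}, the relation $\eta^{f_\alpha} \qok \eta^{f_\beta}$ forces $f_\alpha(\Q) = \{L_\alpha\}$ and $f_\beta(\Q) = \{L_\beta\}$ to contain the same linear orders up to isomorphism, hence $L_\alpha \cong L_\beta$ and $\alpha = \beta$. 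Thus $\{\eta^{f_\alpha} \mid \alpha < 2^\kappa\}$ is a $\qok$-antichain of size $2^\kappa$.

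The principal obstacle is the existence of $2^\kappa$-many pairwise non-isomorphic scattered linear orders of cardinality exactly $\kappa$: for $\kappa = \aleph_0$ this is classical (due to Hausdorff), while for uncountable $\kappa$ one adapts the same idea by coding each $S \subseteq \kappa$ into a $\kappa$-indexed sum $\sum_{\alpha < \kappa} B_\alpha^S$ of ordinals (and their reverses) separated by singleton markers, chosen so that the isomorphism type of the sum recovers $S$. Apart from this standard existence statement, every step reduces directly to Lemmas~\ref{lem:open_int_qo_unctbl} and~\ref{lem:loc_not_scat}.
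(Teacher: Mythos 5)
Your proof is correct and follows essentially the same route as the paper: both reduce the chain claims to Lemma~\ref{lem:open_int_qo_unctbl} and build the antichain from the orders \( \eta^{f_\alpha} \) with \( f_\alpha \) constant with value \( L_\alpha \), invoking Lemma~\ref{lem:loc_not_scat}\ref{lem:loc_not_scat-c}. The one substantive difference is to your credit: the paper applies Lemma~\ref{lem:open_int_qo_unctbl} ``to all \( L \in \Lin_\kappa \)'' directly, even though an arbitrary \( L \) (e.g.\ \( \boldsymbol{\omega} \)) need not contain an infinite dense suborder as the lemma requires, whereas your detour through \( M = \eta(\boldsymbol{1}+L) \), which is dense and is its own dense subset of size \( \leq\kappa \), together with the embedding \( \ell \mapsto ((q_0,\ell_-),(q_0,\ell))_M \) of \( L \) into \( (\Int(M),\subseteq) \), makes the argument literally check out. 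The cardinality fact you flag (that there are \( 2^\kappa \) pairwise non-isomorphic scattered orders of size \( \kappa \)) is indeed the only external input; it is standard and is also tacitly assumed in the paper's proof, so your sketch of the coding is an acceptable way to discharge it.
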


\begin{proof}
For the chains, it is enough to apply Lemma~\ref{lem:open_int_qo_unctbl} to all linear orders \( L \in \Lin_\kappa\) (among which we find  \( \boldsymbol{\alpha} \) and \( \boldsymbol{\alpha}^* \)) and to \( \R_\kappa\), and observe that \( (\Int(L),{\subseteq}) \) has chains of type \( L \setminus \{ \min L \} \), namely \( ((-\infty,x)_L)_{x \in L \setminus \{ \min L \}} \).
Moving to the construction of an antichain of size \( 2^\kappa \), similarly to the proof of Theorem~\ref{thm:chains_qo}, we can fix a family \( (L_\alpha)_{\alpha < 2^\kappa} \) of pairwise non-isomorphic scattered linear orders of size \( \kappa \) and for each \( \alpha < 2^\kappa \) consider the linear order \( \eta^{f_\alpha} \), where \( f_\alpha \colon \Q \to \Scat_\kappa \) is constant with value \( L_\alpha \).
\end{proof}

In order to study dominating families with respect to \( \qok \), we first need to generalize Proposition~\ref{prop:wo_boundedness}\ref{wo_not_in_L}.

\begin{prop} \label{prop:WO_kappa}
Let \( \kappa \) be an infinite cardinal, and let \( \L \subseteq \Lin_\kappa\) with $\WO_\kappa \nsubseteq \L$ be such that \( \qok \) is transitive. 
Then \( \WO_\kappa \cap \LO_\kappa \) is unbounded with respect to \( \qok \).
\end{prop}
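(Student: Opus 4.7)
The plan is to follow the scheme of Proposition~\ref{prop:wo_boundedness}\ref{wo_not_in_L}, replacing the appeal to Proposition~\ref{prop:comb_prop_cvx}\ref{prop:WO_unbounded_cvx} by a direct cardinality argument tailored to \(\kappa\). First I would fix some \(\bb \in \WO_\kappa \setminus \L\), which exists by hypothesis. Given an arbitrary \(L \in \LO_\kappa\), the goal is to produce an ordinal \(\alpha < \kappa^+\) with \(|\aa| = \kappa\) and \(\aa \not\trianglelefteq L\); then \(\aa\bb\) has cardinality \(\kappa\), so belongs to \(\WO_\kappa \cap \LO_\kappa\), and Proposition~\ref{prop:nqo} applied with \(M = \bb \notin \L\) forbids \(\aa\bb \trianglelefteq^\L L\), establishing the \(\qok\)-unboundedness of \(\WO_\kappa \cap \LO_\kappa\) in \(\LO_\kappa\).

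The main content is to establish the bound \(\mu_L < \kappa^+\), where \(\mu_L := \sup\{\alpha : \aa \trianglelefteq L\}\). To this end, for each \(\ell \in L\) I would consider
\[
D(\ell) = \bigcup\{C \csube L \mid C \text{ is well-ordered with } \min C = \ell\}
\]
and verify directly that \(D(\ell)\) is itself convex and well-ordered: convexity is immediate since all \(C\)'s in the union share the common minimum \(\ell\), while for well-orderedness, given a nonempty \(S \subseteq D(\ell)\) and any \(s \in S\) lying in some \(C\) of the union, the minimum of \(S\) coincides with that of \(S \cap [\ell,s]\), which exists inside the well-ordered interval \([\ell,s] \subseteq C\). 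Every convex well-ordered subset of \(L\) is contained in some \(D(\ell)\) (take \(\ell\) to be its minimum), hence \(\mu_L = \sup_{\ell \in L}\ot(D(\ell))\). Each \(\ot(D(\ell))\) lies below \(\kappa^+\) since \(|D(\ell)| \leq \kappa\), and the indexing set has size \(\leq \kappa\), so the regularity of \(\kappa^+\) yields \(\mu_L < \kappa^+\).

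With this bound in hand, setting \(\alpha = \kappa + \mu_L + 1 < \kappa^+\) produces an ordinal of cardinality \(\kappa\) with \(\aa \not\trianglelefteq L\): indeed, any convex embedding of \(\aa\) into \(L\) would restrict to a convex embedding of its final segment of order type \(\mu_L + 1\), contradicting the definition of \(\mu_L\). This supplies the desired \(\alpha\) and completes the argument outlined above.

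The main obstacle is the combinatorial lemma \(\mu_L < \kappa^+\). A naive attempt to enumerate maximal convex well-ordered subsets under \(\subseteq\) fails, since such maxima need not exist---for instance, in \(\z\) every well-ordered tail \([n,+\infty)\) is strictly contained in a longer one. Parameterising instead by the leftmost endpoint \(\ell \in L\) sidesteps this issue, giving at most \(|L| = \kappa\) pieces whose order types each lie in \(\kappa^+\); the regularity of \(\kappa^+\) then forces the supremum to be bounded.
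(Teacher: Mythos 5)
Your proposal is correct and follows essentially the same route as the paper: the paper also bounds, for each $x \in L$, the supremum $\alpha_{x,L}$ of order types of well-ordered convex subsets with minimum $x$ (noting it is attained, which is exactly what your set $D(\ell)$ witnesses), takes the supremum over the $\kappa$-many choices of $x$ using regularity of $\kappa^+$, and then passes from $\trianglelefteq_\kappa$-unboundedness to $\qok$-unboundedness via Proposition~\ref{prop:nqo} applied to a product with some $\bb \notin \L$. Your write-up merely makes explicit the verification that $D(\ell)$ is convex and well-ordered, which the paper leaves implicit.
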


\begin{proof}
First we need to establish the generalization of Proposition \ref{prop:comb_prop_cvx}\ref{prop:WO_unbounded_cvx}, i.e.\ the unboundedness of $\WO_\kappa \cap \LO_\kappa$ with respect to convex embeddability \( \trianglelefteq_\kappa \). 
To this end, we argue as in the proof of \cite[Proposition 3.6]{IMMRW22}, where Proposition \ref{prop:comb_prop_cvx}\ref{prop:WO_unbounded_cvx} was originally established. 
Fix an arbitrary \(L \in \LO_k\), and for every \(x \in L\) define
\[
\alpha_{x,L} = \sup \{\ot(L')\mid L' \text{ is a well-order},\ L'\csube L, \text{ and } x = \min L'\},
\]
where \( \ot(L') \) is the order type of \( L' \), i.e.\ the unique ordinal \( \b \) such that \( L' \cong \boldsymbol{\b}\).
Since \( \alpha_{x,L} \) is attained by definition of \( \csube \), we have \( \alpha_{x,L} < \kappa^+\). 
Let \(\alpha_L = \sup_{x \in L} \alpha_{x,L}< \kappa^+\). 
By construction, if \( L' \csube L\) is well-ordered, then \( \ot(L') \leq \alpha_L \), and hence $\max \{\boldsymbol{\kappa}, \boldsymbol{\alpha}_L + \boldsymbol{1}\} \ntrianglelefteq_\kappa L$.  
Thus for every \(L \in \LO_\kappa\) there exists \(L' \in \WO_\kappa \cap \LO_\kappa\) such that \(L'\ntrianglelefteq_\kappa L\), and hence \( \WO_\kappa \cap \LO_\kappa \) is \( \trianglelefteq_\kappa \)-unbounded in \( \LO_\kappa \).

To obtain unboudedness of $\WO_\kappa \cap \LO_\kappa$ with respect to an arbitrary \( \qok \) such that \( \WO_\kappa \nsubseteq \L \), we can now follow the proof of Proposition \ref{prop:wo_boundedness}\ref{wo_not_in_L}.
\end{proof}

%
%

The next result is the generalized version of Theorem~\ref{no_max}. (Recall that \( \L \subseteq \Scat \) if and only if \( \L \subsetneq \Lin \).)

\begin{thm} \label{no_max_unctbl}
Let \( \kappa \) be an infinite cardinal, and let \( \L \subsetneq \Lin_\kappa\) be such that \( \qok \) is transitive. Then the quasi-order \( \qok \) has no maximal element, and every dominating family with respect to \( \qok \) has size \( 2^{\kappa} \). Hence \( \mathfrak{d}(\qok) = 2^\kappa \).
\end{thm}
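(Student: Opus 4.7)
The argument parallels Theorem~\ref{no_max}. Since \( \L \subsetneq \Lin_\kappa \), fix once and for all a nonempty \( M \in \Lin_\kappa \setminus \L \); this \( M \) will serve as the non-member of \( \L \) needed to invoke Proposition~\ref{prop:nqo}.

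For the absence of maximal elements, let \( L \in \LO_\kappa \). I first produce \( L_0 \in \LO_\kappa \) with \( L \triangleleft_\kappa L_0 \) in the strict part of convex embeddability: the first paragraph of the proof of Proposition~\ref{prop:WO_kappa} already shows (with no extra hypothesis on \( \L \)) that there exists \( \alpha < \kappa^+ \) with \( \boldsymbol{\alpha}+\boldsymbol{1} \not\trianglelefteq_\kappa L \), and then \( L_0 := L + \boldsymbol{\alpha} + \boldsymbol{1} \) lies in \( \LO_\kappa \), satisfies \( L \trianglelefteq_\kappa L_0 \) trivially, while \( L_0 \trianglelefteq_\kappa L \) would send the final piece isomorphic to \( \boldsymbol{\alpha}+\boldsymbol{1} \) onto an \( L \)-convex copy of \( \boldsymbol{\alpha}+\boldsymbol{1} \), a contradiction. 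Now set \( L' := L_0 M \in \LO_\kappa \). Then \( L \trianglelefteq_\kappa L_0 \trianglelefteq_\kappa L' \) gives \( L \qok L' \), whereas \( L' \qok L \) would imply by Proposition~\ref{prop:nqo} that \( L_0 \trianglelefteq_\kappa L \), contradicting \( L \triangleleft_\kappa L_0 \). Hence \( L \triangleleft^{\L}_{\kappa} L' \), so \( L \) is not \( \qok \)-maximal.

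For the dominating number, the upper bound \( \mathfrak{d}(\qok) \le 2^\kappa \) is immediate from \( |\LO_\kappa| = 2^\kappa \). For the lower bound I copy the strategy of Theorem~\ref{no_max}: given any \( \qok \)-dominating family \( \mathcal{F} \) and any \( L \in \LO_\kappa \), pick \( F \in \mathcal{F} \) with \( LM \qok F \); by Proposition~\ref{prop:nqo} we get \( L \trianglelefteq_\kappa F \), so \( \mathcal{F} \) is also \( \trianglelefteq_\kappa \)-dominating on \( \LO_\kappa \). The theorem thus reduces to showing \( \mathfrak{d}(\trianglelefteq_\kappa) = 2^\kappa \), the uncountable counterpart of Proposition~\ref{prop:comb_prop_cvx}\ref{prop:dom_fam_cvx}.

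This last step is the real obstacle. The plan is to exhibit a family \( (N_\alpha)_{\alpha < 2^\kappa} \subseteq \LO_\kappa \) such that for every \( F \in \LO_\kappa \) the set \( \{ \alpha < 2^\kappa \mid N_\alpha \trianglelefteq_\kappa F \} \) has cardinality at most \( \kappa \); a pigeonhole on any \( \trianglelefteq_\kappa \)-dominating \( \mathcal{F} \) then forces \( |\mathcal{F}| \cdot \kappa \ge 2^\kappa \), hence \( |\mathcal{F}| = 2^\kappa \). Natural candidates are the orders \( \eta^{f_\alpha} \) of Theorem~\ref{thm:chains_qo_unctbl}, where each \( f_\alpha \colon \Q \to \Scat_\kappa \) is constant and takes one of \( 2^\kappa \) pairwise non-isomorphic scattered values of size \( \kappa \): by Lemma~\ref{lem:loc_not_scat}\ref{lem:loc_not_scat-e} they form a \( \trianglelefteq_\kappa \)-antichain, and an adaptation of the counting argument of \cite[Claim 3.10.1]{IMMRW22} to the uncountable regime should deliver the required \( \kappa \)-bound by controlling the isomorphism types of non-scattered convex suborders of any single \( F \in \LO_\kappa \) that can be of the form \( \eta^g \) with \( g \) constant.
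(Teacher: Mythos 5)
Your treatment of the first claim (no maximal elements) and your reduction of the dominating-number claim to the case \( \L = \{\1\} \) follow the paper's proof essentially verbatim: the paper likewise takes \( L' = (L+\boldsymbol{\alpha})M \) with \( \boldsymbol{\alpha} \not\trianglelefteq_\kappa L \) supplied by Proposition~\ref{prop:WO_kappa} and \( M \in \Lin_\kappa \setminus \L \), and likewise uses Proposition~\ref{prop:nqo} applied to \( LM \) to see that any \( \qok \)-dominating family is \( \trianglelefteq_\kappa \)-dominating. Both of these parts are correct.

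The gap is exactly where you write ``should deliver'': you never prove that a single \( F \in \LO_\kappa \) can \( \trianglelefteq_\kappa \)-dominate at most \( \kappa \)-many of the orders \( \eta^{f_\alpha} \), and that estimate is the only genuinely new content of the dominating-number computation --- without it the lower bound \( \mathfrak{d}(\trianglelefteq_\kappa) = 2^\kappa \) is unproved. You have chosen the same witness family as the paper (the \( \eta^{f_\alpha} \) with \( f_\alpha \) constant with value \( L_\alpha \), the \( L_\alpha \) pairwise non-isomorphic scattered orders of size \( \kappa \)), but the paper closes the gap not by counting isomorphism types of convex suborders of \( F \) but by a disjointness argument: writing \( F = F_l + \eta^{f_\alpha} + F_r \) whenever \( \eta^{f_\alpha} \trianglelefteq_\kappa F \), any convex embedding of \( \eta^{f_{\alpha'}} \) into \( F \) whose image meets the distinguished copy of \( \eta^{f_\alpha} \) forces \( \alpha' = \alpha \) by Lemma~\ref{lem:loc_not_scat}\ref{lem:loc_not_scat-d}--\ref{lem:loc_not_scat-e}, so the chosen convex copies for distinct \( \alpha \) are pairwise disjoint nonempty subsets of \( F \), of which there can be at most \( |F| = \kappa \). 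You should either supply this argument or a complete substitute; as written, the central step is asserted rather than proved.
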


\begin{proof}
Let \( L \in \LO_\kappa \). 
By Proposition~\ref{prop:WO_kappa} there is \( \kappa \leq \alpha < \kappa^+ \) such that \( \boldsymbol{\alpha} \not\trianglelefteq_\kappa L\), hence \( L \triangleleft_\kappa L+ \boldsymbol{\alpha}\). 
Fix \( M \in \Lin_\kappa \setminus \L \)  and set \( L' = (L+ \boldsymbol{\alpha})M\). 
Then \( L \triangleleft^\L_\kappa L' \) by Proposition~\ref{prop:nqo}.

We now move to dominating families. 
We first generalize Proposition \ref{prop:comb_prop_cvx}\ref{prop:dom_fam_cvx} by showing that any dominating family with respect to $\trianglelefteq_\kappa$ has size $2^\kappa$. 
Consider again the collection \( (\eta^{f_\alpha})_{\alpha < 2^\kappa } \) from the end of the proof of Theorem~\ref{thm:chains_qo_unctbl}.
We claim that \( |\{ \alpha < 2^\kappa \mid \eta^{f_\alpha} \trianglelefteq_\kappa L \}|=\kappa \) for every $L \in \LO_\kappa$.
Suppose that \( \eta^{f_\alpha} \trianglelefteq_\kappa L \), so that without loss of generality we can assume \( L = L_l + \eta^{f_\alpha} + L_r \) for some (possibly empty) \( L_l \) and \( L_r \). 
If \( f \) were a convex embedding of \( \eta^{f_{\alpha'}} \) into \( L \) with \( f(\eta^{f_{\alpha'}}) \cap \eta^{f_\alpha} \neq \emptyset \), then by Lemma \ref{lem:loc_not_scat}\ref{lem:loc_not_scat-d}--\ref{lem:loc_not_scat-e} we would have \( \alpha'= \alpha \). 
Thus \( f(\eta^{f_{\alpha'}}) \cap \eta^{f_\alpha} = \emptyset \) whenever \( \alpha, \alpha' < 2^\kappa \) are distinct. 
It follows that since \( L \) has size $\kappa$, there can be only $\kappa$-many distinct ordinals \( \alpha\) for which \( \eta^{f_\alpha} \trianglelefteq_\kappa L \) holds.
%
%
%
Now if \( \mathcal{F} \) were a dominating family with respect to \( \trianglelefteq_\kappa \) of size smaller than \(  2^{\kappa} \), then there would be \( M \in \mathcal{F} \) such that \( |\{ \alpha < 2^\kappa \mid \eta^{f_\alpha} \trianglelefteq_\kappa M \}|>\kappa \), contradicting our claim. This concludes the proof for the case \( \L = \{ \1 \} \).

Let now \(\mathcal{F}^\L\) be a dominating family with respect to \(\qok\). 
Using the same argument of the proof of Theorem \ref{no_max} with any $M \in \Lin_\kappa \setminus \L$ in place of $\eta$, one sees that $\mathcal{F}^\L$ is also a dominating family with respect to $\trianglelefteq_\kappa$, and hence $|\mathcal{F}^\L| = 2^{\kappa}$. 
\end{proof}


Using $\kappa$-sums of linear orders of size $\kappa$, one can easily prove that the unbounding number $\mathfrak{b}(\qok)$ of $\qok$ is greater than $\kappa$. 
We now provide the generalized version of Theorem \ref {thm:unbounding_number}, showing that \( \mathfrak{b}(\qok) \) is as small as possible.

\begin{thm} \label{thm:unbounding_number_unctbl}
Let \( \kappa \) be an infinite cardinal, and let \( \L \subsetneq \Lin_\kappa\) be such that \( \qok \) is transitive. 
Then there is a family \( \mathcal{F} \) of size \( \kappa^+ \) which is unbounded with respect to \( \qok \). 
Thus, \( \mathfrak{b}(\qok) = \kappa^+ \).
\end{thm}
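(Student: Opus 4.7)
The plan is to closely mimic the proof of the countable analog (Theorem~\ref{thm:unbounding_number}). Since \( \L \subsetneq \Lin_\kappa \), I would fix some \( M \in \Lin_\kappa \setminus \L \) and, for each ordinal \( \alpha \) with \( \kappa \leq \alpha < \kappa^+ \), pick an isomorphic copy of \( \boldsymbol{\alpha} M \) inside \( \LO_\kappa \) (possible since \( |\boldsymbol{\alpha} M| = \kappa \)). Let \( \mathcal{F} \subseteq \LO_\kappa \) be the resulting family of \( \kappa^+ \)-many pairwise distinct representatives.

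The key step is then to show that \( \mathcal{F} \) is \( \qok \)-unbounded. Suppose toward a contradiction that some \( L \in \LO_\kappa \) satisfies \( \boldsymbol{\alpha} M \qok L \) for every \( \kappa \leq \alpha < \kappa^+ \). Since \( M \notin \L \), Proposition~\ref{prop:nqo} yields \( \boldsymbol{\alpha} \trianglelefteq_\kappa L \) for every such \( \alpha \). This directly contradicts the fact extracted in the proof of Proposition~\ref{prop:WO_kappa}: the ordinal \( \alpha_L < \kappa^+ \) defined there satisfies \( \boldsymbol{\beta} \ntrianglelefteq_\kappa L \) for every \( \beta > \alpha_L \), and one can always choose such \( \beta < \kappa^+ \) with \( |\beta| = \kappa \) (e.g., \( \beta = \alpha_L + \kappa \)).

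Combining this with the observation made just before the theorem statement (that any subset of \( \LO_\kappa \) of size at most \( \kappa \) is \( \qok \)-bounded via a suitable \( \kappa \)-sum, since each summand convexly embeds into the sum) will yield \( \mathfrak{b}(\qok) = \kappa^+ \). I expect no substantial new difficulty, since the genuinely nontrivial work has already been done in Propositions~\ref{prop:nqo} and~\ref{prop:WO_kappa}; the only points requiring care are verifying that Proposition~\ref{prop:nqo}, stated for arbitrary linear orders, applies verbatim in the uncountable setting, and that the bound \( \alpha_L \) obtained inside the proof of Proposition~\ref{prop:WO_kappa} really is uniformly attached to every \( L \in \LO_\kappa \).
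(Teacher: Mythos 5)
Your proof is correct and follows essentially the same route as the paper: the paper also takes \( \mathcal{F} = \{ \boldsymbol{\alpha} D \mid \alpha < \kappa^+ \} \) for a fixed \( D \notin \L \), derives \( \boldsymbol{\alpha} \trianglelefteq_\kappa L \) from a putative bound \( L \) via Proposition~\ref{prop:nqo}, and contradicts the \( \trianglelefteq_\kappa \)-unboundedness of \( \WO_\kappa \cap \LO_\kappa \) established in (the proof of) Proposition~\ref{prop:WO_kappa}, with the lower bound \( \mathfrak{b}(\qok) > \kappa \) coming from \( \kappa \)-sums exactly as you indicate. The only cosmetic difference is that the paper chooses its multiplier \( D \) in \( \DLO_\kappa \setminus \L \) rather than an arbitrary \( M \in \Lin_\kappa \setminus \L \), which additionally makes \( \mathcal{F} \) a \( \qok \)-antichain; this extra property is not needed for the statement, so your choice works just as well.
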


\begin{proof}
First observe that since \( \L \neq \Lin_\kappa \) is downward \( \preceq\)-closed and every \( L \in \Lin_\kappa \) embeds into \( \eta L \in \DLO_\kappa \), there is \( D \in \DLO_\kappa \setminus \L \). 
Let \( \mathcal{F} = \{ \boldsymbol{\alpha} D \mid \alpha < \kappa^+ \} \). 

Using the same argument of the proof of Theorem~\ref{thm:unbounding_number} with $D$ in place of $\eta$, and by applying Proposition~\ref{prop:WO_kappa} instead of Proposition~\ref{prop:comb_prop_cvx}\ref{prop:WO_unbounded_cvx}, one can show that the family \( \mathcal{F} \) is a \( \qok \)-antichain of size \( \kappa^+\) which is \( \qok \)-unbounded in \( \LO_\kappa \).
\end{proof}

The following result is the generalization of Theorem~\ref{thm:fractal}.

\begin{thm} \label{thm:fractal_unctbl}
Let \( \kappa \) be an infinite cardinal, and let \( \L \subseteq \Scat_\kappa\) be such that \( \qok \) is transitive.
Then the partial order \( (\LO_\kappa, {\qok} ) \) has the fractal property with respect to its upper cones, that is, 
\((\LO_\kappa,\qok)\) embeds into \(({L_0}{\uparrow^{\L}},{\qok})\) for every
\(L_0 \in \LO_\kappa\).
\end{thm}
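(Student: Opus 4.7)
The proof follows closely the strategy of Theorem~\ref{thm:fractal}, replacing $\cvx$ by $\trianglelefteq_\kappa$ where appropriate. First I would fix $L_0 \in \LO_\kappa$ and invoke the key fact proved inside Proposition~\ref{prop:WO_kappa}: for every $L \in \LO_\kappa$ one can define $\alpha_L < \kappa^+$ bounding the order types of well-ordered convex subsets of $L$, so that in particular $\max\{\boldsymbol{\kappa},\boldsymbol{\alpha}_L+\boldsymbol{1}\} \not\trianglelefteq_\kappa L$. Applying this to $L_0$ yields an ordinal $\alpha$ with $\omega \leq \alpha < \kappa^+$ and $|\alpha| = \kappa$ such that $\aa \not\trianglelefteq_\kappa L_0$. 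This is the exact analog of the opening move in the proof of Theorem~\ref{thm:fractal}.

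Next I would define the map $\varphi\colon \LO_\kappa \to \LO_\kappa$ by
\[
\varphi(L) = (\aa\eta_0 + \eta_1 + L_0 + \eta_2)L,
\]
where $\eta_0, \eta_1, \eta_2$ are distinct copies of $\eta$ (the countable dense $\Q$ suffices, and this keeps $\varphi(L)$ of size $\kappa \cdot \kappa = \kappa$). Since $L_0 \csube (\aa\eta_0 + \eta_1 + L_0 + \eta_2)\times\{\ell\} \csube \varphi(L)$ for any $\ell \in L$, we get $L_0 \trianglelefteq \varphi(L)$, hence $L_0 \qok \varphi(L)$, so $\varphi$ maps into ${L_0}\uparrow^\L$.

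The forward direction ($L \qok L' \Rightarrow \varphi(L) \qok \varphi(L')$) is immediate by lifting the witnesses as in the countable proof. For the converse, suppose $\varphi(L) \qok \varphi(L')$ via $K \in \L \subseteq \Scat_\kappa$, a $K$-convex partition $(M_k)_{k \in K}$ of $\varphi(L)$, and an embedding $h$. The crucial point is that Lemma~\ref{lem:loc_not_scat}\ref{lem:loc_not_scat-a} applies (its hypothesis only requires $K \in \AScat$, and $\Scat_\kappa \subseteq \AScat$), so for each $\ell \in L$ I can extract $N_\ell \cong \aa\eta_0$ with $N_\ell \csube (\aa\eta_0 \times \{\ell\}) \cap M_{k_\ell}$ for some $k_\ell \in K$. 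From here the argument of Theorem~\ref{thm:fractal} transfers verbatim: the absence of convex subsets of $\aa\eta_0$ isomorphic to $\eta$ (because $\alpha > 1$) forces $h(N_\ell) \csube \aa\eta_0 \times \{\ell'\}$ or $h(N_\ell) \csube L_0 \times \{\ell'\}$ for a unique $\ell' \in L'$, and the second option is ruled out by $\aa \not\trianglelefteq_\kappa L_0$. This defines $g\colon L \to L'$; injectivity, order-preservation, construction of the $K'$-convex partition $(L_k)_{k \in K'}$ from $k_\ell$, and convexity of $g(L_k)$ all follow by the same convexity manipulations and case analyses used in the countable proof.

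The main conceptual step is the initial choice of $\alpha$, which already happens to live in the right cardinality range thanks to Proposition~\ref{prop:WO_kappa}; everything else in the argument is structural (dealing with convex sets, applications of Lemma~\ref{lem:loc_not_scat}, and order-theoretic bookkeeping) and is insensitive to whether we work in $\LO$ or in $\LO_\kappa$. There is therefore no genuinely new obstacle: the proof is a faithful translation of the one for Theorem~\ref{thm:fractal}, and the only verification needed is that each cited tool has been generalized to the uncountable context, which has been done in the preceding sections.
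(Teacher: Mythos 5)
Your proposal is correct and follows essentially the same route as the paper: the paper likewise fixes $\kappa \leq \alpha < \kappa^+$ with $\aa \ntrianglelefteq_\kappa L_0$ via Proposition~\ref{prop:WO_kappa}, uses the map $\varphi(L) = (\aa\eta + \eta + L_0 + \eta)L$, and then simply says to argue as in Theorem~\ref{thm:fractal}. Your additional checks (that Lemma~\ref{lem:loc_not_scat}\ref{lem:loc_not_scat-a} applies since $\Scat_\kappa \subseteq \AScat$, and that the cardinality bookkeeping goes through) are exactly the verifications the paper leaves implicit.
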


\begin{proof}
Fix $L_0 \in \LO_\kappa$ and, by applying Proposition~\ref{prop:WO_kappa}, let $\kappa \leq \alpha < \kappa^+$ be such that $\aa \ntrianglelefteq_\kappa L_0$. 
Consider the map \(\varphi \colon \LO_\kappa \to L_0\uparrow^{\L}\) defined by 
\[
\varphi(L) = ( \aa \eta + \eta + L_0 + \eta)L.
\]
Arguing as in the proof of Theorem~\ref{thm:fractal}, one can show that $\varphi$ is an embedding from \((\LO_\kappa,{\qok})\) to \((L_0\uparrow^{\L},{\qok})\). 
\end{proof}

For the remainder of the section we assume that $\kappa^{<\kappa}=\kappa$, so that in particular $\kappa$ is regular. 
Under this assumption, we show that more results of the previous sections regarding classes $\L \subseteq \Lin$ hold also when $\L \subseteq \Lin_\kappa$. 
A crucial role here is played by the existence (originally due to Hausdorff \cite{Ha08}, see \cite[\S 9.4]{Ros82}) of a (unique, up to isomorphism) linear order $L$ of size $\kappa$ which is \textbf{$\kappa$-saturated}, i.e.\ such that for any two (possibly empty) sets $A, B \subseteq L$ of size less than $\kappa$, if $A <_L B$ then there is $x \in L$ such that $A <_L \{ x \} <_L B$.
Following the notation of \cite{ABCDK12}, we refer to this order as $\Q_\kappa$.
Notice that $\Q_\kappa$ is the analogue of $\eta$ in size $\kappa$: it belongs to $\DLO_\kappa$ and is universal (i.e.\ every $L \in \Lin_\kappa$ is such that $L \preceq \Q_\kappa$).
Notice however that these two properties do not characterize $\Q_\kappa$ when \( \kappa > \aleph_0 \).
A concrete realization of $\Q_\kappa$ is provided by 
\[
\{x \in 2^\kappa \mid  \exists \a < \kappa \, (x(\a) = 1 \land \forall \b>\a\, x(\b)=0)\},
\] 
ordered lexicographically.
Notice that not every convex subset of $\Q_\kappa$ without extrema is isomorphic to $\Q_\kappa$, as e.g.\ $\{x \in \Q_\kappa \mid \exists n<\omega\, (x(n)=1)\}$ has coinitiality $\omega$.
However, the following generalization of a related weaker property of $\eta$ holds.

\begin{fact}\label{fact:cvx_Qk}
Let \( \kappa \) be an infinite cardinal such that $\kappa^{<\kappa}=\kappa$.
For every $L \csube \Q_\kappa$ of size $>1$ there exists $L' \csube L$ isomorphic to $\1+\Q_\kappa+\1$; in particular, \( L \) contains a convex subset isomorphic to the whole \( \Q_\kappa \), and $|L|=\kappa$.
\end{fact}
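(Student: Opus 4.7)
The plan is to pick any two distinct elements $a <_L b$ in $L$, observe that by convexity of $L$ in $\Q_\kappa$ the whole closed interval $[a,b]_{\Q_\kappa}$ is contained in $L$, and then argue that this interval is the desired $L' \cong \1 + \Q_\kappa + \1$. The heart of the argument is showing that the open interval $C = (a,b)_{\Q_\kappa}$ is isomorphic to $\Q_\kappa$, which by the uniqueness (up to isomorphism) of the $\kappa$-saturated linear order of size $\kappa$ reduces to verifying that $C$ itself is $\kappa$-saturated and has cardinality $\kappa$.

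For $\kappa$-saturation of $C$, given $A, B \subseteq C$ of size $<\kappa$ with $A <_{\Q_\kappa} B$, I will enlarge them to $A \cup \{a\}$ and $B \cup \{b\}$, which are still of size $<\kappa$ (the only place where infiniteness of $\kappa$ enters), and apply $\kappa$-saturation of $\Q_\kappa$ to produce an interpolant $x \in \Q_\kappa$; the constraints $a < x < b$ then force $x \in C$, while $A < x < B$ holds by construction. For the size bound, a transfinite recursion of length $\kappa$ produces a strictly increasing $\kappa$-sequence inside $C$: at each stage $\beta<\kappa$, feed the already constructed increasing $\beta$-sequence (of size $<\kappa$) together with $B=\emptyset$ into the saturation property of $C$ just established. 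This yields $|C| \geq \kappa$, and conversely $|C| \leq |\Q_\kappa| = \kappa$.

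To conclude, $[a,b]_{\Q_\kappa} = \{a\} \cup C \cup \{b\} \cong \1 + \Q_\kappa + \1$, and being $\Q_\kappa$-convex and contained in $L$ it is automatically $L$-convex, so it serves as the required $L'$. The additional assertions then follow at once: $C$ itself is an $L$-convex subset of $L$ isomorphic to the whole $\Q_\kappa$, and $|L| \geq |C| = \kappa$ together with $L \subseteq \Q_\kappa$ forces $|L| = \kappa$. I do not expect any serious obstacle; the one thing to keep track of is that enlarging $A$ and $B$ by a single element preserves the strict size bound $<\kappa$, which is immediate since $\kappa$ is infinite.
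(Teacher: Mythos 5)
Your proof is correct, and it shares the paper's overall strategy --- exhibit a closed interval $[a,b]_{\Q_\kappa}\subseteq L$ whose interior is $\kappa$-saturated of size $\kappa$, then invoke the uniqueness up to isomorphism of the $\kappa$-saturated order of that size --- but the key step is carried out differently. The paper does not use an arbitrary pair $a<_{\Q_\kappa}b$ from $L$: it passes to the concrete realization of $\Q_\kappa$ by eventually-zero binary sequences, picks $s\in 2^{<\kappa}$ with $\{x\mid s\subseteq x\}\subseteq (x_0,x_1)_{\Q_\kappa}$, and sets $x=s^\smallfrown 1^\smallfrown 0^\kappa$ and $y=s^\smallfrown 1^\smallfrown 1^\smallfrown 0^\kappa$, so that $(x,y)_{\Q_\kappa}$ visibly has coinitiality and cofinality $\kappa$; saturation of that particular interval is then asserted and $[x,y]_{\Q_\kappa}\cong \1+\Q_\kappa+\1$ follows by back-and-forth. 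You instead prove, independently of the realization, that \emph{every} interval $(a,b)_{\Q_\kappa}$ with endpoints in $\Q_\kappa$ is $\kappa$-saturated, by adjoining the endpoints to $A$ and $B$ before applying saturation of the ambient order; this is precisely what is needed to handle the cases $A=\emptyset$ or $B=\emptyset$ (for nonempty internal $A<B$ the ambient interpolant already lands in the interval) and it yields coinitiality and cofinality $\kappa$ for free. This does not conflict with the paper's warning that some convex subsets of $\Q_\kappa$ without extrema fail to be isomorphic to $\Q_\kappa$: that example is a segment determined by a gap, not an interval with endpoints in $\Q_\kappa$. Your cardinality argument via a length-$\kappa$ recursion and the final assembly of $[a,b]_{\Q_\kappa}\cong \1+\Q_\kappa+\1$ are both fine, so your version is slightly more general and a bit more self-contained than the paper's.
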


\begin{proof}
Pick $x_0, x_1 \in L$ with $x_0 <_{\Q_\kappa} x_1$.
Let $s \in 2^{<\kappa}$ be such that $\{x \in \Q_\kappa \mid s \subseteq x\} \subseteq (x_0,x_1)_{\Q_\kappa}$. Set $x=s^\smallfrown 1^\smallfrown 0^\kappa$ and $y=s^\smallfrown 1^\smallfrown 1^\smallfrown 0^\kappa$, where $0^\kappa$ is the constant sequence of length $\kappa$ of zeros. 
Then \( (x,y)_{\Q_\kappa} \) is \( \kappa \)-saturated (as it has both coinitiality and cofinality \( \kappa \)), thus by a back-and-forth argument it is easy to see that $L'=[x,y]_{\Q_\kappa} \cong \1+ \Q_\kappa+\1$.
\end{proof}

As in~\cite{ABCDK12}, we consider the following notion: 

\begin{defn}
    Let \( \kappa \) be an infinite cardinal such that $\kappa^{<\kappa}=\kappa$. A linear order $L$ is \textbf{$\Q_\kappa$-scattered } if $\Q_\kappa \npreceq L$. 
    We denote by $\Q_\kappa\text{-}\AScat$ the set of all $\Q_\kappa$-scattered linear orders, and by \( \QScat \) its restriction to $\Lin_\kappa$.
\end{defn}

\begin{remark}\label{rem:q_scattered}
By using the universality of $\Q_\kappa$, one can easily observe that:
\begin{enumerate-(a)}
\item\label{rem:q_scattered-a} \(\Scat_\kappa \subseteq \QScat\);
\item\label{rem:q_scattered-b} let $\L$ be nonempty and downward \( \preceq \)-closed: then $\L\subsetneq \Lin_\kappa$ if and only if $\L \subseteq \QScat$.
\end{enumerate-(a)}	
\end{remark}

The following result generalizes Proposition \ref{prop:crucial}.

\begin{prop}\label{prop:scatt_in_class}
Let \( \kappa \) be an infinite cardinal such that $\kappa^{<\kappa}=\kappa$. 
For every $\L \subseteq \Lin_\kappa$ and $L \in \Q_\kappa \text{-}\AScat$, we have $L \trianglelefteq^\L \Q_\kappa$ if and only if $L \in \L$. 
\end{prop}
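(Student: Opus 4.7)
The plan is to prove both directions separately, with the backward implication being essentially immediate from earlier material and the forward implication being the main content.

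For the backward direction, suppose $L \in \L$. Since $\L \subseteq \Lin_\kappa$, we have $|L| \leq \kappa$, and by the universality of $\Q_\kappa$ (which holds because $\Q_\kappa$ is $\kappa$-saturated of size $\kappa$) we get $L \preceq \Q_\kappa$. Then Fact \ref{fct:basic} immediately yields $L \trianglelefteq^\L \Q_\kappa$.

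For the forward direction, suppose $L \trianglelefteq^\L \Q_\kappa$ as witnessed by $K \in \L$, a $K$-convex partition $(L_k)_{k \in K}$ of $L$, and an embedding $f \colon L \to \Q_\kappa$ with $f(L_k) \csube \Q_\kappa$ for every $k \in K$. The key observation is that each $L_k$ must be a singleton. Indeed, if some $L_k$ had more than one element, then $f(L_k)$ would be a convex subset of $\Q_\kappa$ of size greater than $1$, and by Fact \ref{fact:cvx_Qk} it would contain a copy of $\Q_\kappa$. Pulling back through the embedding $f$, this would yield $\Q_\kappa \preceq L_k \preceq L$, contradicting the assumption that $L$ is $\Q_\kappa$-scattered. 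Hence every $L_k$ is a singleton, which means that the map $k \mapsto L_k$ gives an isomorphism between $K$ and $L$, so $L \cong K \in \L$ and thus $L \in \L$ by downward $\preceq$-closure (or just isomorphism invariance, which is built into our conventions).

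This is a direct analogue of the proof of Proposition \ref{prop:crucial}, with Fact \ref{fact:cvx_Qk} playing the role that Remark \ref{rem:cvx_Q} played in the countable case. The only genuinely new ingredient needed to make the argument work in the uncountable setting is precisely Fact \ref{fact:cvx_Qk}, which replaces the explicit classification of convex subsets of $\eta$ by a qualitative statement sufficient for the argument; since that fact has already been established, there is no real obstacle to overcome.
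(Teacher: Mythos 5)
Your proof is correct and follows essentially the same route as the paper: the paper's proof is literally "argue as in Proposition \ref{prop:crucial} using Fact \ref{fact:cvx_Qk} instead of Remark \ref{rem:cvx_Q}" for the forward direction, and universality of $\Q_\kappa$ plus Fact \ref{fct:basic} for the converse, which is exactly what you do. Your write-up just spells out the details that the paper leaves implicit.
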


\begin{proof}
For the forward direction, argue as in the proof of Proposition~\ref{prop:crucial} using Fact~\ref{fact:cvx_Qk} instead of Remark~\ref{rem:cvx_Q}. For the converse, one can use the fact that $\Q_\kappa$ is universal for linear orders of size at most \( \kappa \) together with Fact \ref{fct:basic}. 
\end{proof} 

Using Remark~\ref{rem:q_scattered}\ref{rem:q_scattered-b} and replacing $\eta$ with $\Q_\kappa$, $\Scat$ with $\QScat$, Proposition~\ref{prop:crucial} with Proposition~\ref{prop:scatt_in_class}, and Remark~\ref{rem:cvx_Q} with Fact~\ref{fact:cvx_Qk} in the proof of Proposition \ref{prop:refine}, we obtain the following:

\begin{prop}
Let \( \kappa \) be an infinite cardinal such that $\kappa^{<\kappa}=\kappa$, and let \( \L, \L' \subseteq \Lin_\kappa \). 
Then \( \trianglelefteq^\L \) refines \( \trianglelefteq^{\L'} \) if and only if \( \L \subseteq \L' \).
\end{prop}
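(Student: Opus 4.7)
The plan is to imitate the proof of Proposition~\ref{prop:refine} with the substitutions explicitly suggested by the authors just before the statement: $\eta \rightsquigarrow \Q_\kappa$, $\Scat \rightsquigarrow \QScat$, Proposition~\ref{prop:crucial} $\rightsquigarrow$ Proposition~\ref{prop:scatt_in_class}, and Remark~\ref{rem:cvx_Q} $\rightsquigarrow$ Fact~\ref{fact:cvx_Qk}. The backward direction is immediate: a witness of $L \trianglelefteq^\L L'$ is automatically a witness of $L \trianglelefteq^{\L'} L'$ whenever $\L \subseteq \L'$.

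For the forward direction, assume $\trianglelefteq^\L$ refines $\trianglelefteq^{\L'}$. First I would verify that $\L \cap \QScat \subseteq \L' \cap \QScat$: given $L \in \L \cap \QScat$, universality of $\Q_\kappa$ in $\Lin_\kappa$ yields $L \preceq \Q_\kappa$, so $L \trianglelefteq^\L \Q_\kappa$ by Fact~\ref{fct:basic}; by refinement $L \trianglelefteq^{\L'} \Q_\kappa$, and Proposition~\ref{prop:scatt_in_class} then forces $L \in \L'$. Coupled with Remark~\ref{rem:q_scattered}\ref{rem:q_scattered-b}, this reduces the remaining task to showing that $\L = \Lin_\kappa$ forces $\L' = \Lin_\kappa$.

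So suppose $\L = \Lin_\kappa$ and, for contradiction, $\L' \subsetneq \Lin_\kappa$, which by Remark~\ref{rem:q_scattered}\ref{rem:q_scattered-b} gives $\L' \subseteq \QScat$. Since $\Q_\kappa \in \L$, Fact~\ref{fct:basic} together with the refinement hypothesis produce $\Q_\kappa \trianglelefteq^{\L'} \mathbf{2}\Q_\kappa$; fix witnesses $K \in \L' \subseteq \QScat$, a $K$-convex partition $(L_k)_{k \in K}$ of $\Q_\kappa$, and an embedding $f \colon \Q_\kappa \to \mathbf{2}\Q_\kappa$ with each $f(L_k) \csube \mathbf{2}\Q_\kappa$. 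If every $L_k$ were a singleton, then $K \cong \Q_\kappa$, contradicting $K \in \QScat$; hence some $L_k$ has at least two elements, and by Fact~\ref{fact:cvx_Qk} it contains a convex copy of $\Q_\kappa$. Composing with $f$ restricted to that copy yields $\Q_\kappa \trianglelefteq \mathbf{2}\Q_\kappa$.

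The only step going beyond a routine transcription is closing the contradiction by verifying $\Q_\kappa \not\trianglelefteq \mathbf{2}\Q_\kappa$ (the analogue of the tacit $\eta \not\trianglelefteq \mathbf{2}\eta$ used in Proposition~\ref{prop:refine}). This is immediate once one observes that any convex subset $C \csube \mathbf{2}\Q_\kappa$ of size at least $2$ must contain a consecutive pair: if the projection of $C$ onto the second coordinate is a singleton, then $C$ is contained in a fibre $\mathbf{2} \times \{q\}$ whose elements are consecutive; otherwise that projection is convex and contains at least two points, and by density of $\Q_\kappa$ it contains an entire interval, so any fibre $\mathbf{2} \times \{q\}$ over an interior point of that interval lies entirely inside $C$ and contributes a consecutive pair. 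Since $\Q_\kappa \in \DLO_\kappa$ is dense, no such $C$ can be order-isomorphic to $\Q_\kappa$, completing the contradiction.
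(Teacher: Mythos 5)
Your proof is correct and follows exactly the route the paper itself prescribes: it explicitly states that this proposition is obtained from the proof of Proposition~\ref{prop:refine} by the substitutions you carry out ($\eta \rightsquigarrow \Q_\kappa$, $\Scat \rightsquigarrow \QScat$, Proposition~\ref{prop:crucial} $\rightsquigarrow$ Proposition~\ref{prop:scatt_in_class}, Remark~\ref{rem:cvx_Q} $\rightsquigarrow$ Fact~\ref{fact:cvx_Qk}). Your explicit verification that $\Q_\kappa \not\trianglelefteq \mathbf{2}\Q_\kappa$ correctly fills in the step left tacit in the original argument.
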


We can also generalize Theorem \ref{thm:ccs_trans} and show that when \( \kappa^{< \kappa} = \kappa \), the ccs property for classes $\L \subseteq \Lin_\kappa$ is not only sufficient for the transitivity of $\qok$, but is also necessary. 

\begin{thm}\label{thm:ccs_trans_kappa}
Let \( \kappa \) be an infinite cardinal such that $\kappa^{<\kappa}=\kappa$, and let $\L \subseteq \Lin_\kappa$ be nonempty and downward \( \preceq \)-closed. Then the following are equivalent:
\begin{enumerate-(i)}
	\item\label{thm:ccs_trans_kappa-1} $\L$ is ccs;
	\item\label{thm:ccs_trans_kappa-2} $\trianglelefteq^{\L}$ is transitive;

    \item\label{thm:ccs_trans_kappa-3} the restriction of $\trianglelefteq^{\L}$ to $\Lin_{\kappa}$ is transitive;
	\item\label{thm:ccs_trans_kappa-4} $\qok$ is transitive.
\end{enumerate-(i)}
\end{thm}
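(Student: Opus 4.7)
The plan is to follow the structure of the proof of Theorem~\ref{thm:ccs_trans}, using $\Q_\kappa$ in place of $\eta$, Fact~\ref{fact:cvx_Qk} in place of Remark~\ref{rem:cvx_Q}, and Proposition~\ref{prop:scatt_in_class} in place of Proposition~\ref{prop:crucial}. The implications \ref{thm:ccs_trans_kappa-1} $\Rightarrow$ \ref{thm:ccs_trans_kappa-2} $\Rightarrow$ \ref{thm:ccs_trans_kappa-3} $\Rightarrow$ \ref{thm:ccs_trans_kappa-4} are handled as in the countable case: the first follows from the discussion preceding Definition~\ref{def:appropriate}, which goes through for arbitrary linear orders, and the rest are trivial.

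To bridge the gap between $\Lin_\kappa$ and $\LO_\kappa$, we first establish \ref{thm:ccs_trans_kappa-4} $\Rightarrow$ \ref{thm:ccs_trans_kappa-3}. Since $\kappa^{<\kappa} = \kappa$ implies $\kappa$ is regular, every tail of the well-order $\boldsymbol{\kappa}$ is inherently cofinal. By Proposition~\ref{prop:fin_zeta_l}, the map $L \mapsto \boldsymbol{\kappa} L$ sends $\Lin_\kappa$ into $\LO_\kappa$ (since $|\boldsymbol{\kappa} L| = \kappa$ for any nonempty $L \in \Lin_\kappa$) and preserves $\trianglelefteq^\L$ in both directions. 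Hence, given $L_1, L_2, L_3 \in \Lin_\kappa$ with $L_1 \trianglelefteq^\L L_2 \trianglelefteq^\L L_3$, we obtain $\boldsymbol{\kappa} L_1 \qok \boldsymbol{\kappa} L_2 \qok \boldsymbol{\kappa} L_3$; transitivity of $\qok$ yields $\boldsymbol{\kappa} L_1 \qok \boldsymbol{\kappa} L_3$, and one more application of Proposition~\ref{prop:fin_zeta_l} gives $L_1 \trianglelefteq^\L L_3$.

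It remains to prove \ref{thm:ccs_trans_kappa-3} $\Rightarrow$ \ref{thm:ccs_trans_kappa-1}. The cases $\L = \{\1\}$ and $\L = \Lin_\kappa$ are trivial. If $\Fin \nsubseteq \L$ and $\L \neq \{\1\}$, downward $\preceq$-closure forces $\L = \L_{\preceq \n}$ for some $n > 1$, and we adapt Example~\ref{qo_no_trans} by replacing $\z$ with any unbounded $Z \in \LO_\kappa \cap \QScat$ (e.g.\ $Z = \z \boldsymbol{\kappa}$), obtaining a triple that contradicts the transitivity of $\trianglelefteq^\L$ on $\Lin_\kappa$. So we may assume $\Fin \subseteq \L \subsetneq \Lin_\kappa$, hence $\L \subseteq \QScat$ by Remark~\ref{rem:q_scattered}\ref{rem:q_scattered-b}. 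Given $K, K' \in \L$ and $(K'_k)_{k \in K}$ as in Definition~\ref{def:appropriate}, set $L = \sum_k K'_k$ and define $L' = \sum_{k \in K}(K'_k + Q_k)$ with $Q_k = \Q_\kappa$ if $K'_k \cap K'_j \neq \emptyset$ for some $j >_K k$, and $Q_k = \emptyset$ otherwise. If all $Q_k$'s are empty, then the $K'_k$'s are pairwise disjoint so $L \preceq K' \in \L$, yielding $L \in \L$ by downward closure. Otherwise, using $K'' = \bigcup_k K'_k \in \L$ and the $K''$-convex partition $(L'_{k''})_{k'' \in K''}$ of $L'$ (where $L'_{k''}$ is the $L'$-convex hull of $\{(k'', k) \mid k \in A_{k''}\}$ for $A_{k''} = \{k \in K \mid k'' \in K'_k\}$), one shows $L' \trianglelefteq^\L \Q_\kappa$. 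Since $L \trianglelefteq^\L L'$ trivially and $L \in \Q_\kappa\text{-}\AScat$ (a $\Q_\kappa$-scattered sum of $\Q_\kappa$-scattered orders, as verified via Fact~\ref{fact:cvx_Qk}), transitivity of $\trianglelefteq^\L$ on $\Lin_\kappa$ gives $L \trianglelefteq^\L \Q_\kappa$, so $L \in \L$ by Proposition~\ref{prop:scatt_in_class}.

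The main obstacle is the technical verification that each piece $L'_{k''}$ convex-embeds into $\Q_\kappa$. Each $L'_{k''}$ is either a singleton or, by analyzing the structure of $A_{k''}$ as a convex $\Q_\kappa$-scattered subset of $K$ (so that $K'_k = \{k''\}$ for all $k$ in the interior of $A_{k''}$), an alternating sum of singletons and copies of $\Q_\kappa$ indexed by $A_{k''}$; a careful back-and-forth argument exploiting the $\kappa$-saturation of $\Q_\kappa$ via Fact~\ref{fact:cvx_Qk}, together with the freedom to embed each intermediate $\Q_\kappa$-summand inside a convex subinterval of $\Q_\kappa$ (which itself contains a convex copy of $\Q_\kappa$ by Fact~\ref{fact:cvx_Qk}), then produces the desired convex embedding of $L'_{k''}$ into $\Q_\kappa$.
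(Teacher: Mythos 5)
Your handling of \ref{thm:ccs_trans_kappa-1}$\Rightarrow$\ref{thm:ccs_trans_kappa-2}$\Rightarrow$\ref{thm:ccs_trans_kappa-3}$\Rightarrow$\ref{thm:ccs_trans_kappa-4} and of \ref{thm:ccs_trans_kappa-4}$\Rightarrow$\ref{thm:ccs_trans_kappa-3} via Proposition~\ref{prop:fin_zeta_l} matches the paper, as does the reduction of \ref{thm:ccs_trans_kappa-3}$\Rightarrow$\ref{thm:ccs_trans_kappa-1} to the case $\Fin \subseteq \L \subseteq \QScat$. The gap is in the main construction. You carry over the countable definition $L' = \sum_{k \in K}(K'_k + Q_k)$ with $Q_k \in \{\emptyset, \Q_\kappa\}$ and claim that each convex-hull piece $L'_{k''}$ --- an alternating sum $\sum_{k \in A_{k''}}(\1 + \Q_\kappa)$, with the last $\Q_\kappa$ dropped when $A_{k''}$ has a maximum --- convex-embeds into $\Q_\kappa$ by back-and-forth. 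This is false. Take, say, $K = \boldsymbol{\omega}+\1$, $K' = \boldsymbol{3}$, $K'_{k_0} = \{0,1\}$, $K'_{k_n} = \{1\}$ for $0 < n < \omega$, and $K'_{k_\omega} = \{1,2\}$ (legitimate whenever $\boldsymbol{\omega}+\1 \in \L$, e.g.\ for $\L = \WO_\kappa$ or $\Scat_\kappa$). Then $A_1$ has order type $\omega+1$ and $L'_1 \cong \sum_{n<\omega}(\1+\Q_\kappa)+\1$, in which the final singleton is the least upper bound of the increasing $\omega$-sequence of earlier singletons. No convex subset of $\Q_\kappa$ realizes such a configuration: by $\kappa$-saturation (with $\kappa > \aleph_0$), for any increasing $\omega$-sequence $(z_n)$ and any $z$ above all of them there is $w$ with $z_n < w < z$ for every $n$, and this property passes to convex subsets. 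So $L'_1 \ntrianglelefteq \Q_\kappa$ and your partition does not witness $L' \trianglelefteq^\L \Q_\kappa$. In the countable case the issue is invisible because every countable dense order with endpoints is $\1+\eta+\1$ by Cantor's theorem; the analogous uniqueness for $\Q_\kappa$ requires $\kappa$-saturation, which these alternating sums do not have.

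The paper avoids this by not taking convex hulls at all: it sets $L' = \sum_{k' \in K''} L'_{k'}$ where $L'_{k'}$ is chosen \emph{directly} to be $\1$, or one of $\Q_\kappa$, $\1+\Q_\kappa$, $\Q_\kappa+\1$, $\1+\Q_\kappa+\1$, according only to whether $A_{k'}$ is a singleton and whether it has a minimum or maximum. Each such piece convex-embeds into $\Q_\kappa$ by Fact~\ref{fact:cvx_Qk}, and the pieces can be placed coherently using a discrete embedding of $K''$ coming from $\boldsymbol{3}K'' \preceq \Q_\kappa$, giving $L' \trianglelefteq^\L \Q_\kappa$. One still has $L \trianglelefteq^\L L'$ (witnessed by $K$ and $(K'_k)_{k\in K}$) because the interior of each $A_{k'}$ need only be \emph{embedded}, not convexly, into the $\Q_\kappa$-part of $L'_{k'}$ --- which is possible by universality of $\Q_\kappa$ --- while convexity of the image of each $K'_k$ holds since only the endpoints of $K'_k$ can lie in a non-singleton block $L'_{k'}$. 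Your argument needs to be repaired along these lines. (A minor remark: in the case $\L = \L_{\preceq \n}$ there is no need to replace $\z$ by $\z\boldsymbol{\kappa}$, since the orders of Example~\ref{qo_no_trans} are countable and hence already lie in $\Lin_\kappa$.)
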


\begin{proof}
By the argument preceding Definition~\ref{def:appropriate} we have \ref{thm:ccs_trans_kappa-1} $\Rightarrow$ \ref{thm:ccs_trans_kappa-2}, while \ref{thm:ccs_trans_kappa-2} $\Rightarrow$ \ref{thm:ccs_trans_kappa-3} and \ref{thm:ccs_trans_kappa-3} $\Rightarrow$ \ref{thm:ccs_trans_kappa-4} are obvious. 
We now show \ref{thm:ccs_trans_kappa-4} $\Rightarrow$ \ref{thm:ccs_trans_kappa-3} and \ref{thm:ccs_trans_kappa-3} $\Rightarrow$ \ref{thm:ccs_trans_kappa-1}.

For \ref{thm:ccs_trans_kappa-4} $\Rightarrow$ \ref{thm:ccs_trans_kappa-3}, we use the fact that $\kappa$ has inherently cofinal tails in the sense of Section~\ref{sec:technical}. 
Let $L, L', L'' \in \Lin_\kappa$ be such that $L \trianglelefteq^{\L} L'$ and $L'\trianglelefteq^{\L} L''$. 
Then by Proposition \ref{prop:fin_zeta_l} it follows that $\kappa L \qok \kappa L'$ and $\kappa L' \qok \kappa L''$, and since $\qok$ is transitive by~\ref{thm:ccs_trans_kappa-4}, we have $\kappa L \qok \kappa L''$. 
Applying Proposition \ref{prop:fin_zeta_l} once again, we obtain $L \trianglelefteq^{\L} L''$, as desired.

It remains to show \ref{thm:ccs_trans_kappa-3} $\Rightarrow$ \ref{thm:ccs_trans_kappa-1} and, arguing as in Theorem \ref{thm:ccs_trans}, it suffices to prove this when $\Fin \subseteq \L \subseteq \QScat$.

Suppose that the restriction of \( \trianglelefteq^{\L} \) to $\Lin_\kappa$ is transitive:
given \(K,K' \in \L\) and \((K'_k)_{k \in K}\) such that \( \emptyset \neq K'_k \csube K'\) and \(\forall k_0,k_1 \in K\ (k_0 <_K k_1 \Rightarrow K'_{k_0} \leq_{K'} K'_{k_1})\), we want to show that $L=\sum_{k \in K} K'_k \in \L$ (notice that the size of $L$ is not necessarily $\kappa$). 
Let \(K''= \bigcup_{k \in K} K'_k\), so that $K'' \in \L$ (since $\L$ is downward $\preceq$-closed and \( K'' \preceq K' \)). 
For every $k' \in K''$ we set $A_{k'}=\{k \in K \mid k' \in K'_k\}$, and we let $L'_{k'}=\1$ if \( A_{k'} \) is a singleton, or we let $L'_{k'}$ be one of \( \Q_\kappa \), \( \1 + \Q_\kappa \), \( \Q_\kappa+\1\), \( \1+\Q_\kappa+\1 \) if \( A_{k'} \) is not a singleton, the four cases depending on whether \( A_{k'} \) has a minimum or a maximum.
Let \( L'=\sum_{k' \in K''} L'_{k'} \in \Lin_\kappa \).

First notice that $L \trianglelefteq^{\L} L'$, as witnessed by $K \in \L$ and $(K'_k)_{k \in K}$.
Moreover, one can see that \( L' \trianglelefteq^{\L} \Q_\kappa \) with witnesses $K'' \in \L$ and $(L'_{k'})_{k' \in K''}$ using the fact that $\boldsymbol{3}K'' \preceq \Q_\kappa$ (so that we can discretely embed $K''$ into $\Q_\kappa$) and Fact~\ref{fact:cvx_Qk}. 
By the transitivity of $\trianglelefteq^{\L}$ on $\Lin_\kappa$ we have $L \trianglelefteq^{\L} \Q_\kappa$, and since $L \in \Q_\kappa\text{-}\AScat$ we can apply Proposition~\ref{prop:scatt_in_class} to conclude that $L \in \L$.
\end{proof}

Notice that by the previous theorem, when $\kappa^{<\kappa}=\kappa$ we can reformulate all the results of this section preceding Fact \ref{fact:cvx_Qk} replacing in their hypotheses the transitivity of $\qok$ with the ccs property for $\L$.

Using $\Q_\kappa$ instead of $\eta$ and Fact \ref{fact:cvx_Qk}, it is also easy to prove an analogue of Lemma \ref{lem:loc_not_scat} where we replace $\AScat$ with $\Q_\kappa\text{-}\AScat$ and require $D \in \ADLO$ to belong to $\Lin_\infty \setminus \Q_\kappa\text{-}\AScat$. 
Applying this, it is easy to obtain the following versions of some of the previous results under the optimal hypothesis \( \L \subsetneq \Lin_\kappa \), which is equivalent to \( \L \subseteq \QScat \) by Remark~\ref{rem:q_scattered}\ref{rem:q_scattered-b}.

\begin{thm}
Let \( \kappa \) be an infinite cardinal such that $\kappa^{<\kappa}=\kappa$,
and let \( \L \subsetneq \Lin_\kappa \) be ccs.
Then the conclusions about \( \qok \) stated in Lemma \ref{lem:open_int_qo_unctbl} and Theorems \ref{thm:chains_qo_unctbl} and \ref{thm:fractal_unctbl} hold.
\end{thm}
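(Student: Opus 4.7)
The plan is to repeat the proofs of Lemma~\ref{lem:open_int_qo_unctbl} and Theorems~\ref{thm:chains_qo_unctbl} and~\ref{thm:fractal_unctbl}, systematically replacing the rational order \(\eta\) by the \(\kappa\)-saturated order \(\Q_\kappa\) and Lemma~\ref{lem:loc_not_scat} by the \(\QScat\)-analog alluded to in the paragraph preceding the statement. Concretely, that analog has the same formulation as Lemma~\ref{lem:loc_not_scat} except that every occurrence of \(\AScat\) is changed to \(\QScat\), the hypothesis on \(D\) in parts \ref{lem:loc_not_scat-a}--\ref{lem:loc_not_scat-c} becomes \(D \in \ADLO \setminus \QScat\), and parts \ref{lem:loc_not_scat-d}--\ref{lem:loc_not_scat-e} are stated for \(D = \Q_\kappa\). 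Its proof is the verbatim transcription of that of Lemma~\ref{lem:loc_not_scat}, with the following substitutions: Fact~\ref{fact:cvx_Qk} in place of Remark~\ref{rem:cvx_Q}; a length-\(\kappa\) back-and-forth (legitimate because \(\kappa^{<\kappa} = \kappa\)) in place of the classical one for part~\ref{lem:loc_not_scat-d}; and the characterization \(K \in \QScat\) iff \(\Q_\kappa \not\preceq K\) in place of the analogous one for \(\AScat\) and \(\Q\).

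With this in hand, the proof of Lemma~\ref{lem:open_int_qo_unctbl} carries over after setting \(D = \Q_\kappa L'\), which lies in \(\ADLO \setminus \QScat\) since \(\Q_\kappa \csube D\), and choosing an injective \(f \colon D \to \QScat \cap \LO_\kappa\); the map \((x,y) \mapsto D^f_{A_{(x,y)}}\) is then the required \(\qok\)-embedding of \((\Int(L), {\subseteq})\) into \(\LO_\kappa\). The chain portion of Theorem~\ref{thm:chains_qo_unctbl} follows exactly as before. For the antichain of size \(2^\kappa\) one fixes a family \((L_\alpha)_{\alpha < 2^\kappa}\) of pairwise non-isomorphic scattered (hence \(\Q_\kappa\)-scattered) linear orders of size \(\kappa\), whose existence is already used in the proof of Theorem~\ref{thm:chains_qo_unctbl}, and lets \(f_\alpha \colon \Q_\kappa \to \QScat \cap \LO_\kappa\) be constant with value \(L_\alpha\); part~\ref{lem:loc_not_scat-c} of the \(\QScat\)-analog of Lemma~\ref{lem:loc_not_scat} then yields that \(\{\Q_\kappa^{f_\alpha} \mid \alpha < 2^\kappa\}\) is a \(\qok\)-antichain.

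For the fractal property (Theorem~\ref{thm:fractal_unctbl}) I would define \(\varphi \colon \LO_\kappa \to {L_0}{\uparrow^\L}\) by
\[
\varphi(L) = (\aa Q_0 + Q_1 + L_0 + Q_2) L,
\]
where \(Q_0, Q_1, Q_2\) are three disjoint copies of \(\Q_\kappa\) and \(\kappa \leq \alpha < \kappa^+\) is chosen so that \(\aa \not\trianglelefteq_\kappa L_0\); such an \(\alpha\) exists by the unboundedness of \(\WO_\kappa \cap \LO_\kappa\) with respect to \(\trianglelefteq_\kappa\) established in the first half of the proof of Proposition~\ref{prop:WO_kappa}, which does not involve \(\L\). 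The verification that \(\varphi\) is a \(\qok\)-embedding is the word-by-word transcription of the proof of Theorem~\ref{thm:fractal}, using part~\ref{lem:loc_not_scat-a} of the \(\QScat\)-analog (applied to \(\aa \Q_\kappa \cong \Q_\kappa^f\) with \(f\) constant of value \(\aa\)) to locate copies of \(\aa \Q_\kappa\) inside pieces of the form \(M_k \cap (\aa Q_0 \times \{\ell\})\), and part~\ref{lem:loc_not_scat-b} of the \(\QScat\)-analog to rule out that such copies are mapped inside \(L_0 \times \{\ell'\}\).

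The main obstacle is the honest verification of the \(\QScat\)-analog of Lemma~\ref{lem:loc_not_scat}, most notably the length-\(\kappa\) back-and-forth in part~\ref{lem:loc_not_scat-d}, which is precisely what forces the assumption \(\kappa^{<\kappa} = \kappa\); once this is done, everything else is a mechanical translation of the scattered-setting proofs obtained by replacing \(\eta\) with \(\Q_\kappa\) throughout.
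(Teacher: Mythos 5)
Your proposal matches the paper's intended argument essentially verbatim: the paper itself only sketches this result by asserting the $\QScat$-analogue of Lemma~\ref{lem:loc_not_scat} (obtained by replacing $\AScat$ with $\Q_\kappa\text{-}\AScat$, requiring $D \in \ADLO \setminus \Q_\kappa\text{-}\AScat$, and using Fact~\ref{fact:cvx_Qk} in place of Remark~\ref{rem:cvx_Q}) and then declaring that the three proofs carry over with $\eta$ replaced by $\Q_\kappa$, which is exactly your plan, including the choices $D = \Q_\kappa L'$, the constant maps $f_\alpha$ with pairwise non-isomorphic scattered values, and $\varphi(L) = (\aa Q_0 + Q_1 + L_0 + Q_2)L$. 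Your identification of where $\kappa^{<\kappa} = \kappa$ is actually used (the length-$\kappa$ back-and-forth for saturation of $\Q_\kappa$) is also the correct reading of the paper's hypothesis.
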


Using linear orders $D \in \ADLO$ belonging to $\Lin_\infty \setminus \Q_\kappa\text{-}\AScat$, and applying Lemma \ref{lem:loc_not_scat} in a suitable way, one could also generalize results about $\qok$-minimal elements and $\qok$-antichains similar to those stated in Proposition \ref{prop:min_el_qo}, Theorem \ref{thm:basis2} and Proposition \ref{prop:scatantichains}; however, we think that in the uncountable context Theorem~\ref{thm:basisforuncountable} is more significant, and thus we do not fully develop the details of such results.

Finally notice that when $\kappa^{<\kappa}=\kappa$ and $\L=\Lin_\kappa$, then the conclusions of Proposition~\ref{prop:WO_kappa} and Theorems~\ref{no_max_unctbl}--\ref{thm:unbounding_number_unctbl} fail by maximality of $\Q_\kappa$ with respect to $\preceq_\kappa$,
which means that also those results are stated under optimal hypotheses and cannot be further improved.

\section{Open problems}

In this paper we characterized the classes $\L \subseteq \Lin$ such that the binary relation $\qo$ is a quasi-order, and we described many combinatorial properties and lower bounds with respect to Borel reducibility of $\L$-convex (bi-)embeddability. 
Here we briefly discuss a few questions which in our opinion deserve further investigation.

Definition~\ref{def:appropriate} characterizes those classes \( \L \subseteq \Lin \) for which \( \trianglelefteq^\L \) is transitive (and hence a quasi-order) via a combinatorial property, namely the ccs property, whose validity is not always immediate to check.
It is thus natural to ask whether there is a reformulation of being ccs which can be used to establish the transitivity of a given \( \trianglelefteq^\L \) in an easier way.
To see some of the difficulties in finding a more manageable characterization of ccs classes, notice that while $\L_{\preceq \boldsymbol{\o}^*+\boldsymbol{\o}}$ is ccs by Proposition~\ref{prop:classes_zeta}, Remark~\ref{rem:non_ccs} implies that $\L_{\preceq \boldsymbol{\o}+\boldsymbol{\o}^*}$ is not ccs. 
Analogously, using an argument similar to that from the proof of Proposition~\ref{prop:classes_zeta} one can show that $\L_{\preceq \boldsymbol{\o}^*+\boldsymbol{\o}^2}$ is ccs, while $\L_{\preceq \boldsymbol{\o}^2+\boldsymbol{\o}^*}$ is not ccs (again by Remark~\ref{rem:non_ccs}).




In Theorem \ref{thm:red_E1_eq} we showed that $E_1$ is a lower bound with respect to Borel reducibility for $\eq$. 

\begin{question}
Given a Borel ccs class $\L\subseteq \Scat$ different from $\{\1\}$, is $\eq$ complete for analytic equivalence relations?
\end{question}

Moreover, in Section~\ref{sec:examples} we obtained a few Borel reductions among different quasi-orders $\qo$.

\begin{question}
Are the Borel reducibilities from Theorems~\ref{thm:red_qog_qolg},~\ref{thm:red_threshold}, and~\ref{thm:red_fin_zeta} strict?
\end{question}

More in general, it would be interesting to establish more Borel reducibilities and/or prove some non-reducibilities between different quasi-orders $\qo$.


\newcommand{\etalchar}[1]{$^{#1}$}

\end{document}